\documentclass[a4paper,10pt]{article}
\usepackage[utf8x]{inputenc}
\usepackage[T1]{fontenc}
\pdfoutput=1

\usepackage{ifthen}
\newif\ifdraftmode
\draftmodefalse

\usepackage{times}
\usepackage{geometry}
\usepackage[T1]{fontenc}
\usepackage{color}
\usepackage{amsmath}
\usepackage{amssymb}
\usepackage{MnSymbol}
\usepackage{array}
\usepackage{amsthm}
\usepackage{graphicx}
\usepackage{hyperref}
\usepackage{setspace}
\usepackage{stmaryrd}
\usepackage[normalem]{ulem} 
\usepackage[capitalize]{cleveref}
\usepackage{mathtools} 
\usepackage{slashed} 
\usepackage{enumerate}
\usepackage{tikz}
\usepackage{tikz-cd}
\ifdraftmode
\usepackage{refcheck}
\usepackage{booktabs}
\fi

\theoremstyle{plain}
\newtheorem{theorem}{Theorem}[section]
\newtheorem{proposition}[theorem]{Proposition}
\newtheorem{corollary}[theorem]{Corollary}
\newtheorem{lemma}[theorem]{Lemma}
\newtheorem*{theorem*}{Theorem}

\theoremstyle{definition}
\newtheorem{setup}[theorem]{Setup}
\newtheorem{definition}[theorem]{Definition}

\newtheorem{example}[theorem]{Example}

\theoremstyle{remark}
\newtheorem{remark}[theorem]{Remark}

\numberwithin{equation}{section}

\let\div\undefined

\let\phi\varphi
\let\epsilon\varepsilon
\let\theta\vartheta

\let\ol\overline

\newcommand{\Z}{\mathbb{Z}}
\newcommand{\R}{\mathbb{R}}

\newcommand{\C}{\mathbb{C}}

\newcommand{\CCl}{\C\mathrm{l}}

\newcommand{\Rho}{\mathrm{P}}

\newcommand{\grad}{\mathrm{grad}}
\newcommand{\supp}{\mathrm{supp}}

\newcommand{\scal}{\mathrm{scal}}
\newcommand{\id}{\mathrm{id}}
\newcommand{\Ric}{\mathrm{Ric}}
\newcommand{\ric}{\mathrm{ric}}
\newcommand{\Ein}{\mathrm{Ein}}
\newcommand{\trace}{\mathrm{tr}}
\DeclareMathOperator{\tr}{\trace}
\newcommand{\dvol}{\mathrm{dvol}}

\newcommand{\volume}{\mathrm{vol}}
\newcommand{\vol}{\volume}

\newcommand{\Diff}{\mathrm{Diff}}

\newcommand\upd{\mathrm{d}}
\newcommand\Lie{\mathcal{L}}

\newcommand{\del}{\partial}
\DeclareMathOperator{\div}{div}

\newcommand\II{\mathrm{I}\hskip-.3mm\mathrm{I}}

\newcommand\bullette{{\scalebox{0.5}{$\bullet$}}}
\newcommand\argu{\,\raisebox{.1\keyheightlength}{\bullette}\,}

\newcommand\ceq{\coloneqq} 

\newcommand{\ie}{i.\,e.~}
\newcommand{\eg}{e.\,g.~}
\newcommand{\cf}{cf.~}

\newcommand\LorMan{\overline{M}}

\newcommand\LorMet{\bar g}
\newcommand\lt{\tau}
\newcommand{\rc}{\rho}
\newcommand{\normal}{N}
\newcommand{\ModSp}{\mathcal{M}_{\ric = 0,\rc,+}}
\newcommand{\ModSpPar}{\mathcal{M}_{\parallel,\rc,+}}
\newcommand{\Mod}{\mathcal{M}_{\ric = 0,+}}
\newcommand{\ModPar}{\mathcal{M}_{\parallel,+}}
\newcommand{\Sol}{\mathrm{Sol}}
\newcommand{\quotsim}{/\!\!\sim}
\newcommand{\gsph}{g_{\mathrm{sph}}}

\definecolor{lightgreen}{rgb}{.9,1,.9}

\definecolor{darkgreen}{rgb}{.2,.6,.2}
\definecolor{darkblue}{rgb}{.2,.2,.8}
\definecolor{darksharpblue}{rgb}{.4,.2,.8}

\newcommand\red[1]{\textcolor{red}{#1}}

\newcommand\magenta[1]{\textcolor{magenta}{#1}}

\newcommand\hidden[1]{}

\ifdraftmode

\newcommand\footbernd[1]{\footnote{\magenta{B2KJ:\ #1}}}
\newcommand\bernd[1]{\magenta{B2KJ:\ #1}}
\newcommand\klaus[1]{\red{K2BJ:\ #1}}

\renewcommand\sout{\bgroup\markoverwith{\textcolor{red}{\rule[0.7ex]{3pt}{1.4pt}}}\ULon}
\newcounter{mnotecount}[section]
\newcommand{\mnote}[1]{\protect{\stepcounter{mnotecount}}%
		\ensuremath{{\raisebox{0.5\baselineskip}[0pt]{\makebox[0pt][c]{\tiny\em{\red{$\bullet$\themnotecount}}}}}} %
		\marginpar{\raggedright\tiny\em $\!\!\!\!\!\!\,\bullet$\themnotecount: #1}\ignorespaces}
\else

\newcommand\footbernd[1]{}
\newcommand\bernd[1]{}
\newcommand\klaus[1]{}
\renewcommand\sout[1]{}
\newcommand\mnote[1]{}
\fi

\input{stelle.txi}

\title{On pp-waves with lightlike parallel spinors}
\author{Bernd Ammann
\and Jonathan Glöckle  
\and Klaus Kr\"oncke}

\begin{document}
\maketitle

\begin{abstract}
We parametrize pp-wave spacetimes with compact codimension $2$ hypersurfaces.
In the vacuum case, we show that these spacetimes are locally in one-to-one correspondence with smooth curves of Riemannian Ricci-flat metrics modulo smooth curves of diffeomorphisms.
We also prove that this one-to-one correspondence extends to pp-waves with prescribed null Ricci curvature.
Moreover, the pp-wave spacetime carries a lightlike parallel spinor if and only if one (and hence all) of the Ricci-flat metrics carries a parallel spinor.		
\end{abstract}
\section{Introduction}

In general relativity, pp-wave spacetimes, or pp-waves for short, give rise to an important family of exact solutions of Einstein's field equation. 
There are various definitions of (generalized) pp-waves in the literature. We will use the following conventions throughout this paper:
A \emph{pp-wave} is defined as a Lorentzian manifold with a lightlike parallel vector field. If a pp-wave is Ricci-flat, we call it \emph{vacuum pp-wave}.

Despite being a classical topic, during the last decade they played an important role in the study of Lorentzian special holonomy, see for example \cite{galaev.15,baum.leistner.lischewski:16,leistner.lischewski:17,baum.leistner:18} (in chronolical order).  
Even more recently, they gained additional interest in the context of initial data rigidity results (\cf \cite{Eichmair.Galloway.Mendes:2021,gloeckle:2023,Chai.Wan:2024p,gloeckle:2025p}) and the borderline case $E = |P|$ in the positive mass theorem (\cf \cite{Hirsch.Zhang:2024p}).

Our interest in these notions arises from the attempt to classify Lorentzian manifolds $(\overline{M},\overline{g})$  with a lightlike parallel spinor, in particular in view of the studies \cite{Ammann.Gloeckle:2023} of the first and second author on Dirac-Witten harmonic spinors on initial data sets subject to the dominant energy condition. Such manifolds always admit a lightlike parallel vector field and therefore are pp-waves. In addition, it is well known that the Ricci tensor of such manifolds is always null, i.e.\ $\Ric(X)$ is null for every vector field $X$ on the manifold~$\overline{M}$. For clarification, we use the following notational convention throughout the paper: A vector field $Y$ is called null, if $\overline{g}(Y,Y)\equiv0$ and it is additionally called lightlike, if it is nowhere vanishing.

Every pp-wave is locally isometric to
\begin{align}\label{eq:AGK_metrics}
(J\times I\times Q, \upd v\otimes \upd s+ \upd s\otimes \upd v+u^{-2}\upd s^2+g_s),
\end{align}
where $I,J$ are intervals, $Q$ is a manifold, $u:I\times Q\to\R_{>0}$ is a smooth function and $g_s$, $s\in I$, is a smooth family of Riemannian metrics on $Q$, \cf \cref{Rem:PPLocally} below. In \eqref{eq:AGK_metrics}, the (canonical) lightlike parallel vector field is given by the coordinate vector field $\frac{\partial}{\partial v}$. 
It was shown in \cite{leistner.lischewski:17} that if the spacetime carries a lightlike parallel spinor (with associated vector field $\frac{\del}{\del v}$), that all Riemannian metrics $g_s$ are Ricci-flat and carry a parallel spinor. 
 Consequently, an isometry class of spacetimes with lightlike parallel spinor gives rise to a curve in the moduli space of Ricci-flat metrics with parallel spinors.
By using suitable coordinates, one can furthermore locally achieve $u\equiv 1$, see \cref{Prop:GeodNormCoord} below.

Assuming $Q$ to be closed, we prove in this article a converse to this statement. In fact, we show that every Lorentzian metric of the form \eqref{eq:AGK_metrics} carries a lightlike parallel spinor, provided that its Ricci tensor is null and the metrics $g_s$ all carry a parallel spinor.

A partial result was already shown by the first and third author and M\"{u}ller \cite{ammann.kroencke.mueller:21}: Given a closed manifold~$Q$ and a smooth curve $[\hat g_s]$ in the moduli space of Ricci-flat metrics with parallel spinor on $Q$, we choose a representative $g_s$ of $[\hat g_s]$ in the space of metrics, such that $\div^{g_s}(\frac{\upd}{\upd s}g_s)=0$ for all $s\in I$. Then, the pp-wave 
\begin{align}\label{eq:AKM_metrics}
\upd v\otimes \upd s+\upd s\otimes \upd v+\upd s^2+g_s,
\end{align}
always carries a lightlike parallel spinor.

Now consider conversely the existence of a lightlike parallel spinor. As said before, this implies the existence of lightlike parallel vector field and thus it is locally of the form \eqref{eq:AGK_metrics}. Recall that this parallel spinor also implies that the metric~\eqref{eq:AGK_metrics} has null Ricci curvature, which is equivalent to $ \ric^{g_s}=0$ and
     \begin{align}\label{eq:momentum_constraint}
 \div^{g_s}\dot{g}_s-\upd\,\trace^{g_s} \dot{g}_s&=0
     \end{align}
     for all $s\in I$, where $\upd$ is the differential on $Q$ and we write $\dot g_s$ for the derivative of $g_s$ with respect to~$s$.
In this article, we generalize the construction of \cite{ammann.kroencke.mueller:21} and we see that all metrics of the above form do carry a lightlike parallel spinor.
\begin{theorem}\label{thm:par_spinor}
Let $Q$ be a closed spin manifold. Consider a pp-wave of the form \eqref{eq:AGK_metrics} on $\R\times I\times Q$ such that the metrics $g_s$ are Ricci-flat and satisfy \eqref{eq:momentum_constraint}.
If there is an $s_0\in I$ such that the Riemannian metric~$g_{s_0}$ carries a parallel unit spinor $\phi$, then
$\overline{g}\ceq \upd v\otimes \upd s+ \upd s\otimes \upd v+u^{-2}\upd s^2+g_s$
carries a unique parallel spinor $\Psi$ whose Dirac current (\cf \cref{Def:DiracCurr}) is proportional to the canonical lightlike parallel vector field $\frac{\del}{\del v}$ and such that
\begin{align*}
	\Psi|_{\left\{0\right\}\times \left\{s_0\right\}\times Q}
	\overset{\cong}{\longmapsto} 
	\phi.
\end{align*}
under the bundle isomorphism~\eqref{eq:IdentifySpinors}.
\end{theorem}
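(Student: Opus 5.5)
We reduce the problem to a transport problem for spinors along the curve of metrics $g_s$ on the closed manifold $Q$. We use the standard description of spinors on a pp-wave of the form \eqref{eq:AGK_metrics}. Along each slice $\{v\}\times\{s\}\times Q$, the spinor bundle of $\overline{g}$ restricted to spinors annihilated by Clifford multiplication with $\frac{\del}{\del v}$ is identified with the spinor bundle of $(Q,g_s)$; this is the isomorphism~\eqref{eq:IdentifySpinors}. A spinor $\Psi$ whose Dirac current is proportional to $\frac{\del}{\del v}$ is exactly one lying in this subbundle. We make the ansatz that $\Psi$ is independent of $v$ and corresponds, on each slice $\{s\}\times Q$, to a spinor $\phi_s$ on $(Q,g_s)$. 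We compute $\overline{\nabla}\Psi$ in the three types of directions: $X\in TQ$, $\frac{\del}{\del v}$, and $\frac{\del}{\del s}$.
\begin{itemize}
\item Along $\frac{\del}{\del v}$ the equation holds trivially, since $\frac{\del}{\del v}$ is parallel and Killing and Clifford multiplication by it kills $\Psi$.
\item Along $X\in TQ$, the second fundamental form terms of the slices involve $\dot g_s$ and $\upd u$, but they always appear Clifford-multiplied by $\frac{\del}{\del v}$. They therefore drop out on the subbundle, and the condition becomes $\nabla^{g_s}_X\phi_s=0$.
\item Along $\frac{\del}{\del s}$ we get an equation of the form
\begin{align*}
\nabla^{\mathrm{BG}}_{\frac{\del}{\del s}}\phi_s + \tfrac14\,(\text{terms in } \dot g_s,\ \upd u^{-2})\cdot\phi_s=0 .
\end{align*}
Here $\nabla^{\mathrm{BG}}$ is the Bourguignon--Gauduchon connection identifying spinor bundles of different metrics. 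The extra term involves $\upd(u^{-2})$ and the antisymmetric part arising from the $s$-dependence of the metric, again Clifford-multiplied by $\frac{\del}{\del v}$ and hence dropping out.
\end{itemize}
Concretely, the plan is to define $\phi_s$ by Bourguignon--Gauduchon parallel transport of $\phi=\phi_{s_0}$ along $s\mapsto g_s$, and to set $\Psi$ to be the corresponding $v$-independent spinor. The $s$-equation then holds by construction, and uniqueness follows immediately from this ODE in $s$ together with the initial condition at $(0,s_0)$.

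The main obstacle is showing that $\phi_s$ remains $g_s$-parallel for all $s$, i.e.\ that the $TQ$-equations are preserved by the flow. We differentiate $\nabla^{g_s}\phi_s$ in $s$ using the variation formula for the spinorial Levi-Civita connection under $\dot g_s$ and the BG transport. The result should be a linear ODE for $\omega_s\ceq\nabla^{g_s}\phi_s$ of the form $\frac{\upd}{\upd s}\omega_s = A_s(\omega_s) + \mathcal{E}_s\cdot\phi_s$, where $\mathcal{E}_s$ is built from $\nabla\dot g_s$. Since $\phi_s$ is only known to be parallel at $s_0$, the inhomogeneous term must vanish on parallel spinors. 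We expect to show this by using Ricci-flatness of $g_s$, which gives $\ric^{g_s}=0$, the linearized Einstein equation for $\dot g_s$, and the momentum constraint \eqref{eq:momentum_constraint}. Alternatively, we can argue more geometrically, exploiting that the curvature $\overline{R}(\frac{\del}{\del s},X)$ acts on $\Psi$ through $\overline{\Ric}$. Null Ricci curvature is exactly equivalent to $\ric^{g_s}=0$ plus \eqref{eq:momentum_constraint}, and it makes $\overline{R}(\cdot,\cdot)\Psi$ vanish once $\Psi$ is parallel along the slice. Consequently $\frac{\upd}{\upd s}\omega_s$ is linear in $\omega_s$, and Gronwall's inequality on the compact $Q$ yields $\omega_s\equiv0$.

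Alternatively, and more robustly, we can use the result of \cref{Prop:GeodNormCoord} to pass locally to $u\equiv 1$. We then invoke the construction of \cite{ammann.kroencke.mueller:21}: the momentum constraint shows that the divergence-free gauge is reached by an $s$-dependent diffeomorphism isotopy, and that result gives the parallel spinor, which we pull back. Compactness of $Q$ is used to integrate the gauge-fixing vector fields. Normalization and the claim about the Dirac current follow because the current is parallel and equal to a multiple of $\frac{\del}{\del v}$ at the initial slice.
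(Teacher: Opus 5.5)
\textbf{There is a genuine gap in the central step of your main route.} Your $s$-transport is fine. Up to \eqref{eq:IdentifySpinors} it produces the same spinor the paper starts from, namely $\psi$ parallel along the $s$-lines. The failure is in propagating $\omega_s=\nabla^{g_s}\phi_s=0$.

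For $X\in TQ$ commuting with $\frac{\del}{\del s}$ you get $\frac{\nabla}{\upd s}\overline{\nabla}_X\Psi=\overline{R}(\frac{\del}{\del s},X)\Psi$. The right-hand side is a zeroth-order expression in $\Psi$, not in $\omega_s$. Even if it vanished whenever $\Psi$ is slice-parallel, that would not bound it by $|\omega_s|$. The reason is that $\phi_s$ is not known to be $g_s$-parallel for $s\neq s_0$, which is exactly what is to be proved. So ``$\frac{\upd}{\upd s}\omega_s$ is linear in $\omega_s$'' does not follow, and Gr\"onwall cannot be applied. Closing the argument this way would require projecting onto $g_s$-parallel spinors with a uniform spectral gap. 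That presupposes that every $g_s$ carries parallel spinors of constant dimension, which is not assumed.

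Moreover, null Ricci curvature does not give the untraced vanishing pointwise. Via \eqref{eq:momentum_constraint} it gives only the Clifford trace $\sum_i e_i\cdot\overline{R}(\nu,e_i)\psi=0$, valid for \emph{every} $\psi$ with $-\nu\cdot\psi=e_0\cdot\psi$ (\cref{Cor:JEq}). The paper notes that $\overline{R}(\nu,X)\psi=0$ is not accessible before parallelism is known.

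The missing idea is to exploit precisely the traced identity by monitoring the Dirac--Witten operator instead of $\overline{\nabla}\psi$:
\begin{itemize}
\item Set $G(s)=\|\overline{D}\psi\|^2_{L^2(\{s\}\times Q)}$. Its derivative produces $\sum_i e_i\cdot\overline{R}(\nu,e_i)\psi=0$, plus terms bounded by $\|\overline{\nabla}\psi\|\,\|\overline{D}\psi\|$ and by $G$.
\item The leafwise Schr\"odinger--Lichnerowicz formula \eqref{eq:LeafSL} on the closed leaf, with $\scal^{g_s}\ge 0$, gives $\|\overline{\nabla}\psi\|\le C\|\overline{D}\psi\|$.
\item Hence $|G'|\le CG$ and $G(s_0)=0$, so Gr\"onwall gives $G\equiv 0$ and then $\overline{\nabla}\psi\equiv 0$.
\end{itemize}
This is where closedness of $Q$ enters essentially, not merely for integrating vector fields.

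\textbf{Your ``more robust'' alternative also fails as stated.} A leafwise isotopy $\phi_s$ alone is not an isometry of $\overline{g}$. By \cref{Lem:ChangeHypersurface} it must be coupled with a shift $v\mapsto v+f$, which replaces $u^{-2}$ by $\phi_s^*(u^{-2}+2\dot f_s-|\upd f_s|^2)$. This destroys the normalization $u\equiv 1$ that you obtained, and only locally, from \cref{Prop:GeodNormCoord}.

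By \cref{Rem:AKM-subset} the metrics reached by \cite{ammann.kroencke.mueller:21} form a proper subclass: there $\rc$ is determined up to functions of $s$ alone. So a general metric as in the theorem is not isometric to one covered by that construction, and there is no spinor to pull back. In addition, \cite{ammann.kroencke.mueller:21} requires parallel spinors on every $g_s$, whereas here only $g_{s_0}$ is assumed to carry one.

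To rescue this route you would need at least two further ingredients, neither of which appears in your proposal:
\begin{itemize}
\item a proof that the $\upd s^2$-coefficient does not affect parallelism of spinors in $\ker(\frac{\del}{\del v}\cdot)$;
\item a rigidity theorem producing parallel spinors on all $g_s$.
\end{itemize}
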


Note that if $g_s$, $s\in I$ are Ricci-flat metrics on a closed  manifold~$Q$, then the condition $\div^{g_s}\dot{g}_s=0$ implies that $\trace^{g_s} \dot{g}_s$ is locally constant on $Q$, and thus we obtain \eqref{eq:momentum_constraint}.
Therefore, the metrics of the form \eqref{eq:AKM_metrics} with all $g_s$ Ricci-flat and  $\div^{g_s}\dot{g}_s=0$ form a subclass of the metrics \eqref{eq:AGK_metrics} with \eqref{eq:momentum_constraint}.
In fact, the metrics obtained in \cite{ammann.kroencke.mueller:21} form a proper subclass -- even though there is more flexibility than stated in~\eqref{eq:AKM_metrics} above, \cf \cref{Rem:AKM-subset}.

The spinor $\Psi$ is obtained by $\nabla^{\overline{g}}$-parallel transport starting from $\{0\} \times \{s_0\} \times Q$, where it is determined by $\phi$ via~\eqref{eq:IdentifySpinors}.
This can be viewed as a generalization of the parallel transport of $\phi$ along the curve~$g_s$, $s \in I$, in \cite{ammann.kroencke.mueller:21}, which takes the function~$u$ and the condition \eqref{eq:momentum_constraint} into account, but is conceptionally simpler.
The obtained spinor on $\{0\} \times I\times Q$ can be considered as an imaginary W-Killing spinor in the sense of~\cite{baum.leistner.lischewski:16}.
It will then be extended to a lightlike parallel spinor on $\R \times I\times Q$.

We will see in \cref{prop_splitting_constraint_solutions} below that \eqref{eq:momentum_constraint} is satisfied if and only if $\dot{g}_s$ is the sum of a divergence free tensor and a Lie-derivative of $g_s$ in the direction of a gradient vector field. 
Therefore, in order to describe a general metric
\eqref{eq:AGK_metrics} with \eqref{eq:momentum_constraint} one needs additional parameters, compared to the special case  \eqref{eq:AKM_metrics} in \cite{ammann.kroencke.mueller:21}.
In particular, a curve $[g_s]$ in the moduli space of Ricci-flat metrics has many representatives~$\tilde{g}_s$ satisfying \eqref{eq:momentum_constraint}. These lead to many nonisometric Lorentzian metrics of the form \eqref{eq:AGK_metrics} with null Ricci curvature.
They can be distinguished through the one remaining, not necessarily vanishing, component of their Ricci tensor.
In fact, using the additional freedom of a function~$u$ in~\eqref{eq:AGK_metrics} and allowing to rescale the $g_s$, it is possible to freely prescribe this curvature component, as explained in the following theorem.
We note again that it follows from \cref{Lem:ChangeHypersurface,Prop:GeodNormCoord} that locally around each $s \in I$ the function $u$ can be set to $1$ at the expanse of changing the representative $\tilde{g}_s$ of the class $[g_s]$.

\begin{theorem} \label{Thm:Ex}
	Let $Q$ be a closed manifold and $g_s$, $s\in I$, a smooth family of unit-volume Ricci-flat metrics on $Q$.
	Then there is a smooth family $\varphi_s$, $s \in I$, in $\Diff(Q)$, such that $\tilde{g}_s \coloneqq \varphi_s^*g_s$ satisfies $\div^{\tilde{g}_s}(\dot{\tilde{g}}_s) = 0$ and $tr^{\tilde{g}_s}(\dot{\tilde{g}}_s) = 0$.
	In particular, \eqref{eq:momentum_constraint} holds for $\tilde{g}_s$, \ie for all smooth functions $\lambda \colon I\to \R$ and $u \colon I\times Q\to \R_{>0}$ the pp-wave metric
	\begin{align} \label{eq:ppMetric}
		\overline{g} \coloneqq \upd v\otimes \upd s+\upd s\otimes \upd v+ u^{-2} \upd s^2 + \lambda^2 \tilde{g}_s,
	\end{align}
	defined on $\R\times (I\setminus \lambda^{-1}(0))\times Q$ has null Ricci curvature.
	
	Moreover, if $\tilde{g}_s$, $s \in I$ is a family of unit-volume Ricci-flat metrics on $Q$ with $\div^{\tilde{g}_s}(\dot{\tilde{g}}_s) = 0$ and $tr^{\tilde{g}_s}(\dot{\tilde{g}}_s) = 0$, then for any function $\rc\in C^{\infty}(I\times Q)$, there exist $\lambda \colon I\to \R$ and $u \colon I\times Q\to \R_{>0}$ such that $\lambda$ has a discrete set of zeros and the (null) Ricci curvature of~\eqref{eq:ppMetric} is given by $\ric^{\overline{g}} = \rc\, \upd s^2$. 
\end{theorem}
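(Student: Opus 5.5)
The plan has three stages: build the diffeomorphisms as the flow of a time-dependent vector field fixed by an elliptic equation, check that the trace then vanishes automatically, and for the second statement split the prescription of $\rc$ into a mean part (an ODE for $\lambda$) and a mean-zero part (a Poisson equation for $u^{-2}$).

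\emph{Existence of $\varphi_s$.} We look for $\varphi_s$ as the flow of a time-dependent vector field $X_s$, meaning $\frac{\upd}{\upd s}\varphi_s=X_s\circ\varphi_s$ and $\varphi_{s_0}=\id$. The flow exists for all $s\in I$ because $Q$ is closed. Then
\[
\dot{\tilde g}_s=\varphi_s^*\bigl(\dot g_s+\Lie_{X_s}g_s\bigr),
\]
so it suffices to solve $\div^{g_s}(\Lie_{X_s}g_s)=-\div^{g_s}\dot g_s$ with $X_s$ depending smoothly on $s$.

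The operator $P_s\colon X\mapsto \div^{g_s}(\Lie_Xg_s)$ is (up to sign) $\nabla^*\nabla X-\upd\div X-\Ric(X)$. It is elliptic, since it is the linearization of the Bianchi-gauge operator, and formally self-adjoint. Its kernel consists of the Killing fields. On a Ricci-flat closed manifold, Bochner's argument shows that Killing fields are parallel and that parallel vector fields correspond exactly to harmonic $1$-forms. Hence $\dim\ker P_s=b_1(Q)$ is independent of $s$. A family of elliptic self-adjoint operators with kernel of constant dimension has smoothly varying kernel projections and a smoothly varying partial inverse.

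The right-hand side lies in the image of $P_s$. For any Killing field $K$, integration by parts gives $\int\langle\div\dot g_s,K\rangle=-\tfrac12\int\langle\dot g_s,\Lie_Kg_s\rangle=0$, so $\div\dot g_s\perp\ker P_s$. We therefore take $X_s=-P_s^{-1}\div\dot g_s$, chosen orthogonal to $\ker P_s$, and this is smooth in $s$.

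\emph{Trace-freeness.} The curve $\tilde g_s$ consists of Ricci-flat metrics, so its scalar curvature is constantly zero. Differentiating gives
\[
0=\Delta\tr\dot{\tilde g}_s+\div\div\dot{\tilde g}_s-\langle\Ric,\dot{\tilde g}_s\rangle=\Delta\tr\dot{\tilde g}_s .
\]
So $\tr\dot{\tilde g}_s$ is locally constant on $Q$. Diffeomorphisms preserve volume, hence $\vol(\tilde g_s)=1$ and $\int\tr\dot{\tilde g}_s=2\frac{\upd}{\upd s}\vol=0$. On connected $Q$ this forces $\tr\dot{\tilde g}_s=0$. If $Q$ is disconnected, I would work componentwise and use the unit-volume normalization on each component; this point needs care.

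For the metric $\lambda^2\tilde g_s$ we have $(\lambda^2\tilde g_s)^{\cdot}=2\lambda\lambda'\tilde g_s+\lambda^2\dot{\tilde g}_s$. Its divergence vanishes, and its trace $2n\lambda'/\lambda$ is constant on $Q$. Hence \eqref{eq:momentum_constraint} holds, and the earlier characterization (null Ricci curvature $\iff$ $\ric^{g_s}=0$ and \eqref{eq:momentum_constraint}) gives null Ricci curvature for \eqref{eq:ppMetric}.

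\emph{Prescribing $\rc$.} The remaining component is $\ric(\partial_s,\partial_s)$. For $\upd v\,\upd s+\upd s\,\upd v+H\upd s^2+g_s$ the standard computation gives
\[
\ric_{ss}=-\tfrac12\Delta_{g_s}H-\tfrac12\tr(g^{-1}\ddot g)+\tfrac14|\dot g|^2 .
\]
Inserting $H=u^{-2}$, $g_s=\lambda^2\tilde g_s$, $\tr\dot{\tilde g}=0$ and $\tr(\tilde g^{-1}\ddot{\tilde g})=|\dot{\tilde g}|^2$, I expect to get
\[
\ric_{ss}=-\tfrac12\lambda^{-2}\Delta_{\tilde g_s}(u^{-2})-n\frac{\lambda''}{\lambda}-\tfrac14|\dot{\tilde g}_s|^2_{\tilde g_s}
\]
with $n=\dim Q$. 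These constants must be double-checked; that is the main computational risk.

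Integrating over $Q$ with unit volume removes the Laplacian term. So we choose $\lambda$ to solve the linear ODE
\[
\lambda''=f(s)\lambda,\qquad f(s)=-\tfrac1n\int_Q\Bigl(\rc+\tfrac14|\dot{\tilde g}_s|^2\Bigr),
\]
with $\lambda(s_0)=1$ and $\lambda'(s_0)=0$. Linearity gives a global smooth solution on $I$. It is nontrivial, so all its zeros are simple and hence discrete.

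The remaining mean-zero part, $\Delta_{\tilde g_s}H_0=-2\lambda^2(\rc-\text{mean})$, is solved by a Poisson equation for $H_0$ with zero mean; $H_0$ is smooth in $s$ by the same smooth-inverse argument as above. Finally set $H=H_0+c(s)$, where $c$ is smooth with $c(s)>\max_Q|H_0(s,\cdot)|+1$, and put $u=H^{-1/2}$.

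The main obstacles are the smooth dependence of the inverse of $P_s$, which is where constancy of $\dim\ker P_s=b_1$ is used, and getting the exact formula for $\ric_{ss}$ right.
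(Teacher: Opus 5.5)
Your argument is essentially the paper's proof of \cref{Lem:MakeTT} followed by \cref{Prop:SolveLaplace}, with two minor differences. First, you reprove the divergence-free gauge lemma that the paper imports from \cite[Lem.~31]{ammann.kroencke.mueller:21}; your operator $X\mapsto\div^{g_s}(\Lie_Xg_s)$, whose kernel consists of parallel fields and has constant dimension $b_1(Q)$, works. Second, you get $\Delta\tr\dot{\tilde g}_s=0$ from the first variation of scalar curvature rather than quoting Berger--Ebin, which amounts to the same computation. Your formula for $\ric_{ss}$ is correct and agrees with \eqref{eq:CalcRc} up to the sign convention for $\Delta$. This includes the factor $\lambda^{-2}$ in front of the Laplacian and the coefficient $-n\lambda''/\lambda$ (your $n$ is the paper's $n-1$). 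Your ODE for $\lambda$ is exactly \eqref{eq:ScaleODE}. Keeping the factor $\lambda_s^2$ in the Poisson equation is also right; the displayed equation in \cref{Prop:SolveLaplace} drops it, which is only a slip there.

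There is one concrete error, in your last step. Once the ODE has absorbed the mean of $\rc_s+\tfrac14|\dot{\tilde g}_s|^2_{\tilde g_s}$, your own formula leaves the equation
\[
\Delta_{\tilde g_s}H_0=-2\lambda_s^2\Bigl(\rc_s+\tfrac14|\dot{\tilde g}_s|^2_{\tilde g_s}-\int_Q\bigl(\rc_s+\tfrac14|\dot{\tilde g}_s|^2_{\tilde g_s}\bigr)\dvol^{\tilde g_s}\Bigr).
\]
You wrote only $-2\lambda_s^2(\rc_s-\text{mean})$. With your equation the spacetime has $\ric_{ss}=\rc_s-\tfrac14\bigl(|\dot{\tilde g}_s|^2_{\tilde g_s}-\int_Q|\dot{\tilde g}_s|^2_{\tilde g_s}\dvol^{\tilde g_s}\bigr)$. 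This is not $\rc_s$ unless the TT-tensor $\dot{\tilde g}_s$ has pointwise norm constant on $Q$. The corrected right-hand side is still smooth on all of $I$, including at zeros of $\lambda$, and has mean zero. So the rest of your argument, including adding $c(s)$ and setting $u=H^{-1/2}$, goes through unchanged.

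Finally, drop the idea of handling disconnected $Q$ componentwise. Unit total volume does not give unit volume on each component. A continuous family $\varphi_s$ permutes the components in an $s$-independent way, so it cannot change how the component volumes evolve. Take $T^2\sqcup T^2$ with flat metrics of volumes $a(s)$ and $1-a(s)$. Then $\tr\dot{\tilde g}_s$ is a nonzero constant on a component wherever $\dot a\neq0$. In the second part, a single $\lambda$ cannot match different means of $\rc_s$ on different components. So the statement genuinely needs $Q$ connected, as the paper assumes throughout; its Berger--Ebin step uses this too.
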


\begin{remark}\label{rem:scalefactor}
The function $\lambda: I\to\R$ in \cref{Thm:Ex} satisfies the second order differential equation
	\begin{align}\label{eq:ODE_lambda}
		\ddot{\lambda} = -\frac{1}{n-1} \left(\Rho + \frac{1}{4} \Sigma \right) \lambda
	\end{align}
        where $\Rho$ and $\Sigma$ are functions that will be defined in \cref{prop:ScaleODE} below, and that only depend on $g_s$, $s \in I$, and $\rc$.
 In general, we cannot avoid that $\lambda$ has zeros.
 Due to uniqueness under prescribed initial values, however, the zeros of $\lambda$ are always simple and therefore only occur at discrete times.
\end{remark}
\begin{remark}
For functions $c:I\to \R$ and $f:I\times Q\to\R$, we will often use the notation $c_s:=c(s)$ and $f_s=f(s,.)$, respectively, if we wish to interpret them as $s$-dependent objects. The derivative in the $s$-direction will be denoted by a dot.
\end{remark}
\begin{definition}
	Let $Q$ be a manifold.
	A \emph{parameterized simple pp-wave modeled on $Q$} is a Lorentzian manifold
\begin{align*}
	(\R\times I\times Q, \upd v\otimes \upd s+\upd s\otimes \upd v+ u^{-2} \upd s^2 +  g_s),
\end{align*}
where
\begin{itemize}
\item $I\subset\R$ is an open interval,
\item $g_s$, $s \in I$, is a smooth family of Riemannian metrics on $Q$ (without any restriction on the volume),
\item and $u \colon I \times Q \to \R_{>0}$ is a smooth function.
\end{itemize}
A Lorentzian manifold is called a \emph{simple pp-wave modeled on $Q$}, if it is isometric to a parameterized simple pp-wave modeled on $Q$.
      \end{definition}
In view of \eqref{eq:AGK_metrics} every pp-wave is locally isometric to a simple one, thus the crucial fact in this definition is that this holds globally.
In the following, we will generally assume that $Q$ is closed and connected.
In particular, this allows us to write the metrics on $Q$ as $\lambda_s^2 g_s$ with $\vol(Q,g_s)=1$ and $\lambda_s>0$.
In Section \ref{sec.hypersurfaces.exp-maps}, we will give a geometric characterization of simple pp-waves modeled on a such a manifold $Q$.
We are now looking for an efficient way to parametrize the ones with null Ricci curvature.	

Given a smooth family $g_s$, $s \in I$, of Riemannian metrics on $Q$ and a function $\rc \in C^\infty(I \times Q)$, we denote by $\Sol((g_s,\rc_s), s \in I) \subset C^\infty(I)$ the (two-dimensional) space of solutions $\lambda\colon I \to \R$ to the ODE~\eqref{eq:ODE_lambda}  associated to the parametrized curve $s \mapsto (g_s,\rc_s)$.
Let $\Sol_{+}((g_s,\rc_s), s \in I) \subset \Sol((g_s,\rc_s), s \in I)$ be the convex cone of positive solutions.
Observe that $\Sol_{+}((g_s,\rc_s), s \in I)$ can be empty but becomes nonempty when we restrict the curve to a sufficiently small interval.

We now consider parametrized smooth curves $I \ni s \mapsto (g_s,\rc_s,\lambda_s)$, where $g_s$, $s \in I$, is a smooth family of Ricci-flat metrics of unit volume,
$\rc \in C^\infty(I \times Q)=C^{\infty}(I,C^{\infty}(Q))$ and $\lambda \in \Sol_{+}((g_s,\rc_s), s \in I)$.
We use them as input data for our construction underlying \cref{Thm:Ex}.
Some of these curves yield isometric Lorentzian manifolds and to compensate for this we divide out an action by smooth families of diffeomorphisms.
Let
\begin{align*}
\mathcal{R}_{\vol=1}(Q)=\left\{g\mid g\text{ smooth metric on }Q\text{ s.t. }\vol^g(Q)=1\right\},
\end{align*}
and consider
\begin{align*}  
	\mathcal{C}=C^{\infty}(I,\mathcal{R}_{\vol=1}(Q)\times C^{\infty}(Q)\times \R)/C^\infty(I,\Diff(Q)),
\end{align*}
where a smooth family of diffeomorphisms $\phi_s$, $s \in I$, acts by pulling back $(g_s,\rc_s)$ and trivially on $\lambda_s$.

In \cref{prop:ScaleODE} below, we observe that the ODE~\eqref{eq:ODE_lambda} for $\lambda$ is invariant under pullback with the family $\varphi_s$ and thus 
\begin{align*}\Sol_+((\phi_s^*g_s,\phi_s^*\rc_s), s \in I) = \Sol_+((g_s,\rc_s), s \in I).
\end{align*}

Now we define 
\begin{align*}
\ModSp(I,Q)=\left\{ \bigl[(g_s,\rc_s,\lambda_s)\bigr]\in \mathcal{C} \,\Bigm|\, \ric^{g_s}=0\text{ for all }s\in I, \lambda\in \Sol_{+}((g_s,\rc_s), s \in I)\right\}
\end{align*}
We call elements of $\ModSp(I,Q)$ \emph{parametrized moduli curves}.
We also write $\ModSp(\bullet,Q)\ceq \coprod_I \ModSp(I,Q)$ where the disjoint union $\coprod_I$ runs over all open intervals.
We will discuss that some quotient of this space of parametrized moduli curves ``coincides'' with the space of isometry classes of simple pp-waves modeled on $Q$ with null Ricci curvature, see \cref{thm:1-1}.
Here, the correspondence is such that $[(g_s, \rc_s, \lambda_s), s \in I]$ gets associated to the simple pp-wave spacetime $\R \times I \times Q$ with metric \eqref{eq:ppMetric}, where $\tilde{g}_s = \phi_s^* g_s$ and where $\phi_s$, $s \in I$, and $u \colon I \times Q \to \R_{>0}$ are chosen such that the spacetime has null Ricci curvature $\ric^{\overline{g}} = \phi_s^* \rc_s \upd s \otimes \upd s$.
While the existence of at least one such choice is guaranteed by \cref{Thm:Ex}, we will still have to show that different such choices lead to isometric Lorentzian manifolds and in a second step that the well-defined map obtained this way is indeed a bijection.
  
\begin{remark}
It seems natural to just consider curves in the moduli space $\mathcal{R}_{\vol=1}(Q)/ \Diff(Q)$.
We however avoid this moduli space for subtle reasons, see \cref{Rem:ModuliSpaces} and \cref{ex_ModuliSpaceII}.
\end{remark}

The invariance of parametrized moduli curves under pullbacks by smooth families of diffeomorphisms is meant to compensate for certain isometries of simple pp-waves changing the parametrization, \cf \cref{Lem:ChangeHypersurface}.
A further action on the space of parametrized simple pp-waves that leaves the isometry class of the Lorentzian manifolds unchanged is given by affine reparametrization in the $s$-parameter. 
More precisely, for $(\alpha,\beta)\in (\R\setminus \left\{0\right\})\times \R$ the diffeomorphisms
\begin{equation} \label{eq:ScalingDiffeo}
	\begin{aligned}
		\Phi \colon \R\times I\times Q &\longrightarrow \R\times J \times Q \\
		(v,s,x) &\longmapsto (\alpha^{-1}v,\alpha s+\beta, x)   
	\end{aligned}
\end{equation}
preserve the form \eqref{eq:AGK_metrics}, although with the families $g_s$ and $u_s$ reparametrized.
In order to factor out the action of these diffeomorphisms, we call two parametrized moduli curves
 $[(g_s,\rc_s,\lambda_s)] \in \ModSp(I,Q)$ and $[(\hat{g}_t,\hat{\rc}_t,\hat{\lambda}_t)] \in \ModSp(J,Q)$ \emph{equivalent}, denoted as $[(g_s,\rc_s,\lambda_s), s \in I]\sim [(\hat{g}_t,\hat{\rc}_t,\hat{\lambda}_t), t \in J]$, if
there are $(\alpha,\beta)\in (\R\setminus \left\{0\right\})\times \R$ such that $I\to J$, $s\mapsto \alpha s+\beta$ is a bijection and 
\begin{align}\label{eq:equivalent_curves}
[(g_s,\rc_s,\lambda_s), s \in I]=[(\hat{g}_{\alpha s+\beta},\alpha^2\hat{\rc}_{\alpha s+\beta},\hat{\lambda}_{\alpha s+\beta}), s \in I].
\end{align}
The scaling of $\rc$ in this equivalence relation comes from diffeomorphism invariance of the Ricci tensor and the scaling of the $s$-coordinate since the spacetime has Ricci curvature $\ric^{\overline{g}} = \rc \, \upd s^2$.
The ODE~\eqref{eq:ODE_lambda} transforms in the parameter and gains an additional factor $\alpha^2$ on the right hand side, so that we have indeed $\bigl(s\mapsto \hat{\lambda}(\alpha s+\beta)\bigr) \in \Sol_+((\hat{g}_{\alpha s+\beta},\alpha^2\hat{\rc}_{\alpha s+\beta}), s \in I)$.

\begin{theorem}\label{thm:1-1}
	Up to isometries,
	simple pp-waves modeled on the closed connected manifold~$Q$ which have null Ricci curvature 
	are in one-to-one correspondence with $\ModSp(\bullet,Q)\quotsim$, \ie equivalence classes of parametrized moduli curves $[(g_s,\rc_s,\lambda_s)] \in \ModSp(I,Q)$, $I$ being any open interval.
\end{theorem}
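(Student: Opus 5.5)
We construct a map $\Theta$ from $\ModSp(\bullet,Q)\quotsim$ to isometry classes of simple pp-waves modeled on $Q$ with null Ricci curvature, and show that it is well defined, surjective and injective.

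\emph{Definition of $\Theta$.} Let $[(g_s,\rc_s,\lambda_s)]\in\ModSp(I,Q)$ be given. By the first part of \cref{Thm:Ex} we choose a family $\phi_s$ with $\tilde g_s=\phi_s^*g_s$ divergence- and trace-free in the sense of that theorem. The second part of \cref{Thm:Ex} and \cref{rem:scalefactor} then give a $u$ such that \eqref{eq:ppMetric} has $\ric^{\overline g}=\phi_s^*\rc_s\,\upd s^2$, with scale factor the given positive solution $\lambda$. Here we use that $\lambda$ solves \eqref{eq:ODE_lambda}, which is invariant under $\phi_s$ by \cref{prop:ScaleODE}. The equation for $u$ should be a linear elliptic equation on each slice, presumably of the form $\Delta$ of some power of $u$ equal to a source determined by $\rc$ and $\ddot\lambda$. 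Its solvability condition (integral zero) is exactly the ODE for $\lambda$, and its solution is unique up to an additive function of $s$, or up to a normalization.

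\emph{Well-definedness.} Two choices $\phi_s$, $\phi'_s$ differ by $\psi_s$ with $\psi_s^*\tilde g_s$ again divergence- and trace-free. Together with $u$, these two choices must give isometric spacetimes. I would realize this isometry by a diffeomorphism of the form
\[
(v,s,x)\mapsto\bigl(v+f(s,x),\,s,\,\psi_s(x)\bigr),
\]
which is the type of map in \cref{Lem:ChangeHypersurface}. The cross terms $\upd s\otimes\upd x$ created by $\partial_s\psi_s$ must be absorbed by $\upd f$. This forces the generator of $\psi_s$ to be a gradient field, consistent with \cref{prop_splitting_constraint_solutions}. The freedom in $u$, an additive function of $s$ in the slice equation, should correspond to a reparametrization $v\mapsto v+h(s)$. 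The equivalence relation $\sim$ is handled by the maps \eqref{eq:ScalingDiffeo}, which scale $\rc$ by $\alpha^2$ as stated.

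\emph{Surjectivity.} A simple pp-wave with null Ricci curvature is isometric to one of the form $\upd v\,\upd s+\upd s\,\upd v+u^{-2}\upd s^2+h_s$. Since $Q$ is closed and connected, we write $h_s=\lambda_s^2g_s$ with $\vol(Q,g_s)=1$. Null Ricci curvature gives $\ric^{g_s}=0$, the constraint \eqref{eq:momentum_constraint}, and $\ric^{\overline g}=\rc\,\upd s^2$ for some $\rc$. The remaining Ricci component then forces $\lambda$ to solve \eqref{eq:ODE_lambda} by \cref{prop:ScaleODE}, and $\lambda>0$. Hence $[(g_s,\rc_s,\lambda_s)]$ is a preimage. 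We need the metric to fit \eqref{eq:ppMetric}, i.e.\ $g_s$ divergence- and trace-free. The splitting \cref{prop_splitting_constraint_solutions} writes $\dot g_s$ as divergence-free plus $\Lie_{\grad f}g_s$. The gradient part is removed by the isometry from the well-definedness step, and the trace of the divergence-free part is constant and absorbed into $\lambda$.

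\emph{Injectivity (main obstacle).} Suppose two images are isometric via $F$. We must show that $F$ maps the canonical lightlike parallel field $\partial_v$ to a constant multiple $\alpha^{-1}\partial_v$ of the other one. I expect this to use closedness of $Q$: parallel null fields on such spacetimes are unique up to scale unless the geometry is flat in the relevant directions, and that degenerate case needs separate treatment. Granting this, $F$ preserves the foliation by the leaves $\{s=\mathrm{const}\}$ of $\partial_v^\flat=\upd s$, so $s\mapsto\alpha s+\beta$. Using the geometric characterization of the hypersurfaces $\{v=c,\,s=\mathrm{const}\}$ from Section~\ref{sec.hypersurfaces.exp-maps} together with \cref{Lem:ChangeHypersurface}, $F$ has the form $(v,s,x)\mapsto(\alpha^{-1}v+f,\alpha s+\beta,\psi_s(x))$. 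Reading off the induced metrics on the slices identifies $\lambda$ and the unit-volume $g_s$ up to $\psi_s$, and comparing $\ric^{\overline g}$ identifies $\rc$ up to $\alpha^2$. This gives exactly the relation \eqref{eq:equivalent_curves}. The hardest part is the rigidity that $F$ preserves $\partial_v$ up to scale.
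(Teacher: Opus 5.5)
\textbf{Main gap: injectivity.} The gap is exactly the step you ``grant''. You give no argument for the uniqueness of the null parallel direction, and you do not treat the ``degenerate case''. This is where the content of injectivity lies. For $u\equiv 1$ and a constant Ricci-flat family $g_s\equiv g$ (flat or not), the spacetime is $\R^{1,1}\times(Q,g)$. It has at least two linearly independent lightlike parallel fields, so an isometry $F$ need not map $\partial_v$ to a multiple of $\partial_v$.

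The paper never proves uniqueness; it uses \cref{Lem:TwoLlParV} instead. Suppose $V=\partial_v$ and $\tilde V\coloneqq \upd F(\partial_v)$ are linearly independent; $\tilde V$ is the canonical field of the pushed-forward presentation, so the lemma applies. Then:
\begin{itemize}
\item $\overline g(V,\tilde V)$ is a non-zero constant and $[V,\tilde V]=0$.
\item Since $\upd\tilde s(V)$ is a non-zero constant, each $s^{-1}(s_0)\cap\tilde s^{-1}(\tilde s_0)$ is a graph, hence a copy of $Q$.
\item Flowing along $V$ and $\tilde V$ gives global coordinates in which $\overline g=c(\upd v\otimes\upd\tilde v+\upd\tilde v\otimes\upd v)+g$.
\end{itemize}
Swapping $v$ and $\tilde v$ is an isometry exchanging $V$ and $\tilde V$. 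Composing $F$ with it, you may assume $\upd F(\partial_v)=\alpha^{-1}\partial_v$; in the dependent case $\alpha$ is automatically constant because both fields are parallel. An affine reparametrization \eqref{eq:ScalingDiffeo} and \cref{Prop:InjModSp} then finish the argument. The form $F(v,s,x)=(v+f,s,\phi_s(x))$ follows directly from $\upd(s\circ F)=\upd s$ and $F_*\partial_v=\partial_v$, without any characterization of hypersurfaces.

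\textbf{Second gap: well-definedness.} This step also has a hole, which it shares with the paper's \cref{Prop:WellDefModSp}. Suppose $\tilde g_s$ and $\psi_s^*\tilde g_s$ are both TT-gauged, and let $Y_s$ be the generator of $\psi_s$. Then $\Lie_{Y_s}\tilde g_s$ is TT, hence zero, because Lie derivatives are $L^2$-orthogonal to TT-tensors. So $Y_s$ is Killing, i.e.\ parallel on the closed Ricci-flat $Q$, and it is a gradient only if it vanishes. In a general j-gauge one only gets that $Y_s-\grad f_s$ is Killing. The cross term $\tilde g_s(Y_s,\cdot)\,\upd s$ can be absorbed into $\upd f_s$ only if $Y_s^\flat$ is exact. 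Hence your argument works when $b_1(Q)=0$, but not, for example, for flat tori.

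There it genuinely fails. Take $T^2$ with the flat metric $g$, $\lambda\equiv1$, $\rc=\rc(x_1)\not\equiv 0$ of mean zero, and $\psi_s(x)=x+se_1$. Let $w>0$ satisfy $\frac12\Delta w=\rc$. The metrics $\upd v\otimes\upd s+\upd s\otimes\upd v+w(x_1)\upd s^2+g$ and $\upd v\otimes\upd s+\upd s\otimes\upd v+w(x_1+s)\upd s^2+g$ represent the same element of $\ModSp(\R,T^2)$. Yet they are not isometric. Realized over the universal cover of the first, the second has the deck transformation $(v,s,x)\mapsto(v-1,s,x+e_1)$. Since $\partial_v$ spans the only null parallel direction there, a direct computation shows that every isometry of the cover conjugates it to a transformation with non-zero $v$-shift. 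The deck group of the first metric contains no such element. So for $b_1(Q)>0$ the quotient by all of $C^\infty(I,\Diff(Q))$ is too coarse. You need either to assume that $Q$ carries no parallel vector fields or to adjust the quotient.
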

\begin{remark}
  If $\Sol_{+}((g_s,\rc_s), s \in I)$ is empty for some family $(g_s,\rc_s)_{s \in I}$,
  then there is no corresponding Lorentzian metric in \cref{thm:1-1}.
	In this case, $\lambda$ turns zero somewhere for all the corresponding solutions in \cref{Thm:Ex}.
\end{remark}
\begin{remark}
  Replacing the equivalence class of $[(g_s,\rc_s,\lambda_s), s \in I]$ by $[(g_s,\rc_s,c\cdot\lambda_s), s \in I]$ for some constant $c>0$ corresponds to a homothetic rescaling of the  Lorentzian metric $\overline{g}$ by a factor $c^2$.

\end{remark}

By setting $\rc_s\equiv0$, we directly obtain a simplified correspondence for vacuum pp-waves. For this purpose, we introduce the notation
\begin{align*}
	\Mod(I,Q) &\coloneqq \{[(g_s,0,\lambda_s)]\in \ModSp(I,Q) \}.
\end{align*}
We can canonically identify this with a subset of $C^{\infty}(I,\mathcal{R}_{\vol=1}(Q) \times \R)	 /C^\infty(I,\Diff(Q))$ by leaving out the second component.
The equivalence relation defined before \cref{thm:1-1} naturally descends to an equivalence relation for this simplified kind of parametrized moduli curves.

\begin{corollary}\label{cor:Ricci-flat}
	Up to isometries,
	simple vacuum pp-waves modeled on the closed connected manifold~$Q$
	are in one-to-one correspondence with $\Mod(\bullet,Q)\quotsim$, \ie equivalence classes of parametrized moduli curves 
	$[(g_s,\lambda_s)] \in \Mod(I,Q)$.
\end{corollary}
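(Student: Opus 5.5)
The corollary should follow by specializing \cref{thm:1-1} to $\rc \equiv 0$. The plan has three steps: restrict the correspondence of \cref{thm:1-1} to the appropriate subsets on both sides, check that these subsets correspond to each other, and check that the equivalence relation restricts as claimed.

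\textbf{Step 1: the spacetime side.} A simple pp-wave modeled on $Q$ is vacuum exactly when it has null Ricci curvature and its remaining Ricci component vanishes. Under the correspondence of \cref{thm:1-1}, a parametrized moduli curve $[(g_s,\rc_s,\lambda_s)]$ is associated to a spacetime of the form \eqref{eq:ppMetric} with $\ric^{\overline{g}} = \phi_s^*\rc_s\,\upd s^2$ for a family of diffeomorphisms $\phi_s$. Since pulling back by diffeomorphisms preserves vanishing, $\ric^{\overline{g}}=0$ holds if and only if $\rc\equiv 0$.

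The vanishing of $\rc$ is a property of the class in $\mathcal{C}$: the action of $C^\infty(I,\Diff(Q))$ pulls back $\rc_s$ and so preserves $\rc \equiv 0$. It is also a property of the equivalence class under $\sim$, because \eqref{eq:equivalent_curves} only rescales $\rc$ by $\alpha^2$ and reparametrizes it affinely, which again preserves $\rc\equiv 0$. We also need that isometric spacetimes correspond to the same class, but this is already contained in \cref{thm:1-1}. Hence the bijection of \cref{thm:1-1} restricts to a bijection between isometry classes of simple vacuum pp-waves and $\{[(g_s,0,\lambda_s)]\}/\!\sim$, that is, $\Mod(\bullet,Q)/\!\sim$.

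\textbf{Step 2: dropping the zero component.} The map $[(g_s,0,\lambda_s)]\mapsto[(g_s,\lambda_s)]$ is well defined and injective as a map into $C^\infty(I,\mathcal{R}_{\vol=1}(Q)\times\R)/C^\infty(I,\Diff(Q))$. It is well defined because the diffeomorphism action on the middle component is trivial on $0$. It is injective because two triples with $\rc = 0$ are related by a family $\phi_s$ exactly when the pairs are.

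\textbf{Step 3: compatibility with $\sim$.} Under the identification of Step 2, the equivalence relation becomes: $[(g_s,\lambda_s)]\sim[(\hat g_t,\hat\lambda_t)]$ if and only if $[(g_s,\lambda_s)] = [(\hat g_{\alpha s+\beta},\hat\lambda_{\alpha s+\beta})]$ for some $(\alpha,\beta)$. This is exactly \eqref{eq:equivalent_curves} with $\rc=\hat\rc=0$.

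The only point needing care is in Step 1: the statement that the class of $\rc \equiv 0$ is invariant in both $\mathcal{C}$ and under $\sim$. This is immediate from the form of both actions on $\rc$, so I expect no real obstacle. The whole content lies in \cref{thm:1-1}.
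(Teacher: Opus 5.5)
Your proposal is correct and follows the paper's route. The paper obtains \cref{cor:Ricci-flat} directly from \cref{thm:1-1} by setting $\rc\equiv 0$. Your three steps spell out what that specialization needs: the associated spacetime has $\ric^{\overline{g}}=\phi_s^*\rc_s\,\upd s^2$, the condition $\rc\equiv 0$ is preserved by the diffeomorphism action and by $\sim$, and dropping the zero component is the canonical identification the paper already makes.
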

\begin{remark}
Let us discuss heuristically why we have the right number of parameters on the right hand side of the one-to-one correspondences in \cref{thm:1-1} and \cref{cor:Ricci-flat}. 
Because $C^\infty(Q) \cong C^\infty(Q) \times \R$ -- for example by applying some Laplace operator to get the first component and calculating a mean for the second one -- it only makes sense to count the number of functions in $C^\infty(I \times Q)$, neglecting any functions in $C^\infty(I)$. 
Let $\tilde{g}_s$, $s\in I$, be a given smooth parametrized curve of unit-volume Ricci-flat metrics on $Q$. In order to get from $\tilde{g}_s$ to all possible metrics of the form \eqref{eq:AGK_metrics} with $g_s = \lambda_s^2 \phi_s^* \tilde{g}_s$ satisfying \eqref{eq:momentum_constraint}, we are a priori free to prescribe the functions $u \in C^\infty(I \times Q)$, $\lambda \in C^\infty(I)$ and the family of diffeomorphisms~$\phi_s$, $s \in I$.

According to \cref{Prop:GeodNormCoord} below, we can however always choose coordinates so that $u\equiv 1$. Therefore, by quotienting out the action of the diffeomorphisms, we effectively eliminate the degree of freedom given by the choice of $u$. The remaining freedom is to choose $\lambda$ and $\phi_s$, $s \in I$, such that $g_s$ satisfies \eqref{eq:momentum_constraint}. The latter condition is equivalent to
 \begin{align*}
\dot{g}_s=c_s\cdot g_s+\nabla^{g_s,2}v_s+h_s
\end{align*}
for some $c\in C^{\infty}(I)$, $v\in C^{\infty}(I\times Q)$ and family $h_s$ of (with respect to $g_s$) divergence-free and trace-free tensor fields, which is for each $s$ in the kernel of the Lichnerowicz Laplacian of the metric $g_s$, see \cref{prop_splitting_constraint_solutions} and \cref{rem:splitting_constraint_equations} below. Modulo initial conditions at some $s_0\in I$, the scaling (\ie $\lambda$) and gauge (\ie $\phi_s$, $s \in I$) freedom is encoded in the tuple $(c,v)$. Because we can only count modulo $C^\infty(I)$, the remaining degrees of freedom are effectively given by a single function $v\in C^{\infty}(I\times Q)$.

It thus seems natural to determine choice scaling and gauge through prescribing another function on $I\times Q$. Prescribing the Ricci curvature is a natural choice as it is a geometric invariant quantity. Assuming that it is null furthermore implies that it has exactly one non-zero component function $\rc\in C^{\infty}(I\times Q)$.
Let us moreover remark that, although not detectable by our simple count, also $\lambda$ has a natural geometric interpretation:
It can be viewed as the volumes of the horizons, \ie the lightlike hypersurfaces, orthogonal to the lightlike parallel vector field on  $(\overline{M},\overline{g})$.

\end{remark}
Finally, we turn our attention back to lightlike parallel spinors and formulate a spinorial version of \cref{thm:1-1} and \cref{cor:Ricci-flat}. We define the subsets 
\begin{align*}
\ModSpPar(I,Q)\subset\ModSp(I,Q),\qquad \ModPar(I,Q)\subset\Mod(I,Q),
\end{align*}
where the condition of Ricci-flatness is replaced by the stronger condition of having a nontrivial parallel spinor.
The equivalence relation obviously descends to these subspaces.

\begin{theorem}\label{thm:1-1_spin_case}
Up to isometries,
	simple pp-waves with a lightlike parallel spinor modeled on the closed connected manifold $Q$ 
	are in one-to-one correspondence with $\ModSpPar(\bullet,Q)\quotsim$, \ie equivalence classes of parametrized moduli curves $[(g_s,\rc_s,\lambda_s)] \in \ModSpPar(I,Q)$, $I$ being any open interval. 
	In particular,  simple vacuum pp-waves with a lightlike parallel spinor are in one-to-one correspondence  with $\ModPar(\bullet,Q)\quotsim$, \ie equivalence classes of parametrized moduli curves $[(g_s,\lambda_s)] \in \ModPar(I,Q)$.
\end{theorem}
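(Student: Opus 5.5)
The plan is to deduce \cref{thm:1-1_spin_case} from \cref{thm:1-1} by checking that, under the bijection there, the simple pp-waves admitting a lightlike parallel spinor are exactly the images of classes in $\ModSpPar(\bullet,Q)\quotsim$. Two preliminary observations are needed. First, the subsets are well defined. Having a nontrivial parallel spinor is invariant under pullback by diffeomorphisms, and it is also invariant under affine reparametrization, so membership in $\ModSpPar$ depends only on the equivalence class. Second, a lightlike parallel spinor forces null Ricci curvature, as recalled in the introduction, so every such spacetime already lies in the domain of \cref{thm:1-1}. Injectivity is then inherited from \cref{thm:1-1}, and it remains to show that the image of $\ModSpPar(\bullet,Q)\quotsim$ is exactly the set of isometry classes carrying a lightlike parallel spinor.

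\emph{Spacetime $\Rightarrow$ curve.} Suppose a simple pp-wave $\overline g$ with null Ricci curvature carries a lightlike parallel spinor $\Psi$, and let $[(g_s,\rc_s,\lambda_s)]$ be the class assigned to it. Its Dirac current is a lightlike parallel vector field. I would first arrange that it is proportional to $\frac{\del}{\del v}$ for the parametrization realizing the correspondence. For this I would invoke the geometric characterization of simple pp-waves from Section~\ref{sec.hypersurfaces.exp-maps}, together with \cref{Lem:ChangeHypersurface}, to pass to a parametrization adapted to the Dirac current. I expect this to be the main obstacle. If the lightlike parallel vector field is not unique up to scale (e.g.\ when $Q$ has flat factors), one must check that the parametrization built from the Dirac current still represents the same isometry class; otherwise one must argue that the statement concerns only the canonical field. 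With this in place, \cite{leistner.lischewski:17} shows that each slice metric $\lambda_s^2\tilde g_s$ on $\{v\}\times\{s\}\times Q$ is Ricci-flat and carries a parallel spinor. Rescaling by the constant $\lambda_s$ and pulling back by $\phi_s^{-1}$ preserves this, so each $g_s$ has a nontrivial parallel spinor and the class lies in $\ModSpPar(I,Q)$.

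\emph{Curve $\Rightarrow$ spacetime.} Given $[(g_s,\rc_s,\lambda_s)]\in\ModSpPar(I,Q)$, take the representative spacetime~\eqref{eq:ppMetric} from \cref{Thm:Ex}, with $\tilde g_s=\phi_s^*g_s$. The metrics $\lambda_s^2\tilde g_s$ are Ricci-flat and satisfy~\eqref{eq:momentum_constraint}: the divergence-free and trace-free conditions persist after multiplying by $\lambda_s^2$, because the trace of the derivative becomes a function of $s$ only. Fix $s_0\in I$. The metric $\lambda_{s_0}^2\tilde g_{s_0}$ carries a parallel spinor, which can be normalized to unit length since parallel spinors have constant length. \cref{thm:par_spinor} then yields a parallel spinor on the spacetime whose Dirac current is proportional to $\frac{\del}{\del v}$, hence lightlike. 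This gives surjectivity.

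The vacuum statement follows by restricting to $\rc\equiv0$, exactly as \cref{cor:Ricci-flat} follows from \cref{thm:1-1}.
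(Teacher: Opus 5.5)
Your overall route is the paper's. You restrict the bijection of \cref{thm:1-1}, use \cref{thm:par_spinor} to produce the spinor for classes in $\ModSpPar$, and use the restriction of a lightlike parallel spinor with Dirac current $\frac{\del}{\del v}$ to the leaves for the converse.

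There is a gap, however, at exactly the step you flag: you never show that the Dirac current $W \coloneqq V_\Psi$ can be taken proportional to $V \coloneqq \frac{\del}{\del v}$. The route you sketch cannot do this in general.
\begin{itemize}
\item \cref{Lem:ChangeHypersurface} sends $\frac{\del}{\del v}$ to $\frac{\del}{\del v}$, so it never changes which field is canonical.
\item $W$ need not be the canonical field of any parametrization. If $W$ is not proportional to $V$, then $\upd s(W) = \overline{g}(V,W)$ is a non-zero constant. Hence the flow of $W$ leaves $\R \times I \times Q$ in finite time whenever $I$ is bounded. But \cref{lem:global_form} needs a complete gradient field, and canonical fields are always complete.
\end{itemize}
Your fallback of reading the statement as concerning only the canonical field would prove less than is claimed. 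The paper's one-line deduction from \cref{thm:1-1} and \cref{thm:par_spinor} tacitly works with spinors whose Dirac current is $\frac{\del}{\del v}$, so your concern is legitimate. Still, it has to be closed.

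The missing idea is that no reparametrization is needed. Suppose $W$ is not proportional to $V$, and set $\Phi \coloneqq V \cdot \Psi$.
\begin{itemize}
\item $\Phi$ is parallel, and $V \cdot \Phi = -\overline{g}(V,V)\Psi = 0$.
\item Since $\Psi$ is lightlike, $W \cdot \Psi = 0$. Hence $W \cdot \Phi = -V \cdot W \cdot \Psi - 2\overline{g}(V,W)\Psi = -2\overline{g}(V,W)\Psi \neq 0$, so $\Phi \neq 0$.
\item By the remark after \cref{Def:DiracCurr}, a non-zero spinor annihilated by the future-lightlike $V$ has Dirac current proportional to $V$. So $V_\Phi$ is a positive multiple of $V$. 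The factor is constant because both fields are parallel.
\item After rescaling $\Phi$ by a constant, it is a lightlike parallel spinor with Dirac current exactly $V$.
\end{itemize}
You can then apply \cite{leistner.lischewski:17}, or directly \cref{Cor:RestrictSpinorBundlesAll}: a parallel section of the parallel subbundle $\ker(V\cdot)$ is sent by~\eqref{eq:IdentifySpinors} to a non-zero $\nabla^{g_s}$-parallel spinor on each leaf. Either way, every $g_s$ carries a parallel spinor, hence so does every unit-volume representative. The rest of your argument then goes through as written.
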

Finally, we address the question which simple pp-waves modeled on a closed manifold admit spatially closed quotients in the sense that in the quotient there exists a compact spacelike hypersurface $M$ without boundary such that every integral curve of the lightlike parallel vector field intersects $M$ exactly once. 
We call these quotients \emph{spatially compact pp-waves}.
Obviously, such quotients can be constructed if  $I=\R$ and there exists a parameter $\ell$ and a diffeomorphism $\varphi\in\mathrm{Diff}(Q)$ such that $u_{s+\ell}=\varphi^* u_s$, $g_{s+\ell}=\varphi^*g_s$ and $\lambda_{s+\ell}=\lambda_s$ for all $s\in\R$. 
From the physical perspective, pp-waves satisfying the dominant energy condition are particularly interesting. These can have spatially closed quotients only in very rigid cases, as the following theorem shows.
\begin{theorem}\label{thm:spatially_compact}
A spatially compact pp-wave $(\overline{M},\ol{g})$ with null Ricci curvature which satisfies the dominant energy condition is Ricci-flat, hence a vacuum pp-wave. Moreover, it is 
 locally isometric to 
\begin{align*}
(I\times J\times Q,\upd v \otimes \upd s + \upd s \otimes \upd v + \upd s^2 +g),
\end{align*}
where $g$ is a Ricci-flat metric on $Q$, not depending on $s$.
\end{theorem}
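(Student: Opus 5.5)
We prove \cref{thm:spatially_compact} in three steps: reduce to a pp-wave with compact leaves, compute the one remaining Ricci component, and integrate over the compact hypersurface.

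\textbf{Step 1: the null Ricci tensor and the dominant energy condition.} Let $V$ be the lightlike parallel vector field on $\overline{M}$. Since $\Ric$ is null and $\Ric(V)=0$ (because $V$ is parallel), we get $\ric^{\overline{g}}=\rc\, V^\flat\otimes V^\flat$ for some function~$\rc$. In local coordinates \eqref{eq:AGK_metrics} this reads $\ric^{\overline g}=\rc\,\upd s^2$, since $V^\flat=\upd s$. The Einstein tensor differs from $\ric^{\overline g}$ by a multiple of $\overline g$. The scalar curvature is $\tr\ric^{\overline g}=\rc\,\overline g(V,V)=0$, so the Einstein tensor equals $\rc\,\upd s^2$. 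The dominant energy condition then forces $\rc\ge 0$ everywhere. It therefore suffices to show $\rc\equiv 0$, and then to identify the metric.

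\textbf{Step 2: local computation of $\rc$.} Work locally in the normal form \eqref{eq:AGK_metrics}, and use \cref{Prop:GeodNormCoord} if convenient to set $u\equiv1$. From the computation underlying \cref{prop:ScaleODE} and \cref{Thm:Ex}, I expect a formula for $\rc$ of the form
\begin{align*}
\rc = -\tfrac12\tr^{g_s}\ddot g_s+\tfrac14|\dot g_s|^2_{g_s} + (\text{terms in } u),
\end{align*}
where the $u$-terms are a divergence, such as $\Delta_{g_s}(u^{-2})$, on each slice. Hence, for the volume function $V(s)=\vol(Q_s,g_s)$ of the codimension-two leaves,
\begin{align*}
\int_{Q_s}\rc\,\dvol_{g_s} = -\ddot V(s) - \tfrac14\int_{Q_s}\bigl|\dot g_s^{\,\circ}\bigr|^2\dvol_{g_s} - c_n\,\dot V(s)^2/V(s),
\end{align*}
a Raychaudhuri-type identity with trace-free part $\dot g_s^{\circ}$ and a constant $c_n>0$. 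The same identity appears as the ODE~\eqref{eq:ODE_lambda}, with $\lambda\sim V^{1/(n-1)}$ and $\Rho,\Sigma\ge0$ containing $\int\rc$ and $\int|\dot{\tilde g}|^2$, respectively. So $\ddot\lambda\le0$, with equality iff $\rc\equiv0$ and $\dot{\tilde g}_s\equiv0$.

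\textbf{Step 3: globalization.} In the spatially compact quotient, each integral curve of $V$ meets the compact $M$ exactly once, so $M$ is a global transversal. The function $s$ (with $\upd s = V^\flat$, locally exact) descends to a circle-valued function, or equivalently becomes periodic on a cover: $g_{s+\ell}=\varphi^*g_s$ and $\lambda_{s+\ell}=\lambda_s$, as in the construction preceding the theorem. This is the step I expect to be the main obstacle. One has to show from compactness of $M$ and the geometric characterization of simple pp-waves in Section~\ref{sec.hypersurfaces.exp-maps} that the leaves of $V^\perp$ are compact and that $\lambda$ becomes a positive periodic function on $\R$. A positive concave periodic function is constant, so $\ddot\lambda\equiv0$. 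Equality in Step 2 then gives $\rc\equiv0$, which means the spacetime is vacuum, and $\dot{\tilde g}_s=0$. Finally, with $\tilde g$ constant in $s$ and $\rc=0$, the remaining equation for $u$ on the closed slice $Q$ states that $u^{-2}$ is harmonic on $Q$, hence constant in $x$. A reparametrization of $s$ (and the corresponding rescaling of $v$ from \eqref{eq:ScalingDiffeo}, applied locally) then gives $u\equiv1$, yielding the claimed local form with $g$ Ricci-flat and independent of~$s$.
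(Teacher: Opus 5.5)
Your analytic core is the paper's. The dominant energy condition gives $\rc\ge 0$, and \cref{prop:ScaleODE} gives $\ddot\lambda=-\frac{1}{n-1}\bigl(\Rho+\frac14\Sigma\bigr)\lambda\le 0$ for $\lambda_s=\vol^{g_s}(Q)^{1/(n-1)}$. A positive concave function on all of $\R$ is constant, so $\Rho\equiv\Sigma\equiv 0$, hence $\rc\equiv 0$ and the trace and TT parts of $\dot g_s$ vanish.

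\textbf{The gap is Step~3.} You flag it but do not carry it out, and it is the whole content of the paper's \cref{thm:mapping_torus}, not a formality. Appealing to ``the construction preceding the theorem'' assumes the conclusion: that paragraph only exhibits quotients with $g_{s+\ell}=\varphi^*g_s$. It does not show that every spatially compact pp-wave arises this way.

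What you actually need is that the simple pp-wave $\R\times I\times Q$ covering $\overline M$ via $\pi\colon\R\times I\times Q\to\overline M$ has $I=\R$. This is where compactness of $M$ enters.
\begin{itemize}
\item Along $M$ write $V=ue_0-U$ and set $Z\coloneqq -u^{-2}U$, so that $\overline g(V,Z)=1$.
\item Since $M$ is compact, $Z$ is complete.
\item So is its lift $\tilde Z$ to $\tilde M\coloneqq\pi^{-1}(M)$, which is a covering of $M$.
\item On the cover $V=\grad^{\overline g}s$, hence $\upd s(\tilde Z)=1$. So $s$ grows at unit speed along complete integral curves, and $I=s(\tilde M)=\R$.
\end{itemize}
The last part of \cref{lem:ODEComparison} then applies directly; you do not even need periodicity of $\lambda$. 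The paper goes further. It takes $Q\coloneqq M\cap\mathcal L$, which is compact, and shows that $\R\times Q\to M$, $(t,x)\mapsto \mathrm{Fl}^Z_t(x)$, is a covering with deck group $\ell\Z$: $\ell>0$ because $M$ is compact, and the group is discrete because $Q$ is compact. This gives your periodic picture and the mapping-torus description.

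\textbf{Two smaller corrections.}

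First, your Raychaudhuri-type identity is wrong as written. With $\mathcal V(s)\coloneqq\vol^{g_s}(Q)$ the correct identity is $\int_Q\rc_s\,\dvol^{g_s}=-\ddot{\mathcal V}-\frac14\int_Q|\sigma_s|^2_{g_s}\dvol^{g_s}+\frac{n-2}{n-1}\,\dot{\mathcal V}^2/\mathcal V$. The last term has a plus sign, and it involves the TT part $\sigma_s$, not the trace-free part. Getting $\sigma_s$ requires the j-equation, \cref{prop_splitting_constraint_solutions}, and the Bochner identity $\int|\nabla^2f|^2=\int(\Delta f)^2$ on the Ricci-flat slices. So $\mathcal V$ itself need not be concave, which is exactly why one works with $\lambda=\mathcal V^{1/(n-1)}$. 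Since you end up using the $\lambda$-ODE, your argument survives.

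Second, the final normalization needs fixing.
\begin{itemize}
\item Once $c_s=0=\sigma_s$, you have $\dot g_s=\nabla^2 f_s$.
\item Gauge with $\dot\varphi_s=-\frac12\grad^{g_s}(f_s)\circ\varphi_s$. Then \cref{Lem:ChangeHypersurface} applies with no mixed $\upd s$--$TQ$ terms, and $\varphi_s^*g_s$ is constant.
\item The new $\upd s^2$-coefficient $w$ is harmonic on each slice, hence a function of $s$ alone.
\item You cannot normalize $w$ by reparametrizing $s$: the affine maps \eqref{eq:ScalingDiffeo} only rescale it by a constant, and non-affine maps destroy the form.
\item Instead shift $v\mapsto v+h(s)$ with $2\dot h=1-w$, i.e.\ apply \cref{Lem:ChangeHypersurface} with $f=f(s)$.
\end{itemize}
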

Note in particular that the universal cover of $(\overline{M},\ol{g})$ as in the theorem is isometric to a product of the $1+1$-dimensional Minkowski space with a simply-connected Ricci-flat manifold.
Note that a simply-connected Ricci-flat manifold may be decomposed as Riemannian product into Euclidean factors and closed simply-connected Ricci-flat ones.
For a more detailed statement of \cref{thm:spatially_compact}, see \cref{thm:mapping_torus}.
As a corollary, we get:
\begin{corollary}
Let $(\overline{M},\ol{g})$ be a spatially compact pp-wave with lightlike parallel spinor which satisfies the dominant energy condition. Then the assertions of \cref{thm:spatially_compact} do hold.
\end{corollary}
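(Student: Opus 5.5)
The statement to prove is the corollary: a spatially compact pp-wave with lightlike parallel spinor satisfying the dominant energy condition falls under \cref{thm:spatially_compact}. The plan is to check the hypotheses of \cref{thm:spatially_compact} and then apply it directly. Three hypotheses are needed: the spacetime is a spatially compact pp-wave, it has null Ricci curvature, and it satisfies the dominant energy condition. Spatial compactness and the dominant energy condition are assumed in the corollary. So only two points remain: that $(\overline{M},\ol{g})$ is a pp-wave in the sense used in \cref{thm:spatially_compact}, and that it has null Ricci curvature.

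Let $\Psi$ be the lightlike parallel spinor, and let $V$ be its Dirac current (\cf \cref{Def:DiracCurr}). Since $\Psi$ is parallel, $V$ is parallel. Since $\Psi$ is lightlike, $V$ is a lightlike vector field. Hence $(\overline{M},\ol{g})$ carries a lightlike parallel vector field, which is our definition of a pp-wave. The notion of spatial compactness refers to integral curves of this lightlike parallel vector field. I will take it to be $V$, or a constant multiple of $V$; on a simple pp-wave this agrees with the canonical field $\frac{\del}{\del v}$ up to a constant factor.

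For null Ricci curvature, I will use the standard integrability argument, as recalled in the introduction. A parallel spinor satisfies $R(X,Y)\Psi=0$ for all $X,Y$. Taking Clifford contraction gives $\Ric(X)\cdot\Psi=0$ for every $X$. Then $0=\Ric(X)\cdot\Ric(X)\cdot\Psi=-\ol{g}(\Ric(X),\Ric(X))\Psi$. Since a parallel spinor is nowhere zero, $\Ric(X)$ is null for every $X$.

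With all hypotheses verified, \cref{thm:spatially_compact} gives both conclusions: $(\overline{M},\ol{g})$ is Ricci-flat, and it is locally isometric to the stated product form. No step here is a real obstacle, since this is a direct corollary. The only point needing care is that the lightlike parallel vector field used in the definition of spatially compact is the one coming from the spinor, which is immediate from how the statement is set up.
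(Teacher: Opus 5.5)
Your proposal is correct and is the same argument the paper leaves implicit: the lightlike parallel spinor gives null Ricci curvature as in \cref{Lem:LlSpinor}, and then \cref{thm:spatially_compact} applies directly. The point you single out as needing care is moot, since null Ricci curvature is a property of $\ric^{\ol{g}}$ alone and forces $\ric^{\ol{g}} = \rc\, W^\flat \otimes W^\flat$ for \emph{every} lightlike parallel field $W$; your side claim that the Dirac current agrees with $\frac{\del}{\del v}$ up to a constant is false in general (e.g.\ when $\ol{g}$ admits two independent lightlike parallel fields), but nothing in your argument depends on it.
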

\begin{remark}
We cannot exclude the assumption of null Ricci curvature. For example, the spatially compact simple pp-wave $(\R\times S^1\times S^{n-1},\upd v \otimes \upd s + \upd s \otimes \upd v + \upd s^2 +\gsph)$ satisfies the dominant energy condition but its Ricci curvature is not null. Here $\gsph$ is the standard metric on $S^{n-1}$. 
\end{remark}
This paper is structured as follows. In \cref{sec.hypersurfaces.exp-maps}, we collect some facts and properties of pp-waves, including the construction of special coordinate systems and a useful initial value problem formulation for pp-waves. In \cref{sec.classification}, we prove Theorem \ref{Thm:Ex} for pp-waves with prescribed Ricci curvature and the main classification Theorem \ref{thm:1-1}. In Section \ref{sec.constr.cyl.ids.llps}, we focus on spin manifolds and prove Theorem \ref{thm:par_spinor} which together with Theorem \ref{thm:1-1} implies the classification Theorem \ref{thm:1-1_spin_case} for pp-waves with lightlike parallel spinor. Finally, we focus on the spatially compact case in Section \ref{sec.spatially_compact}, where we prove \cref{thm:spatially_compact}.

\subsection*{Acknowlegements}
The authors want to thank Thomas Leistner for many good conversations and talks related to this article. Furthermore,
J.G.\ wants to thank Xiaoxiang Chai for some discussions on the topics related to \cref{Prop:FlatCurvVan}. We also thank Piotr Chrusciel, Anton Galaev and Olaf Müller for stimulating discussions on nearby topics.  

	All authors have been partially supported by SPP 2026 (Geometry at
	infinity), B.A.\ and J.G.\ also by the SFB 1085 (Higher Invariants), and J.G. by the Walter Benjamin grant no.~556505019, all funded by the
	DFG (German Science Foundation).
	K.K.\ acknowledges additional financial support by the Göran Gustafsson foundation.

\section{Properties of pp-wave spacetimes}\label{sec.hypersurfaces.exp-maps}
\subsection{Geometric conditions for simple pp-waves}

We start this subsection with a geometric characterization of simple pp-waves.
\begin{proposition}\label{lem:global_form}
A Lorentzian manifold $(\overline{M},\overline{g})$ is a simple pp-wave (modeled on a compact and connected manifold $Q$) if and only if the following assertions do hold:
\begin{itemize}
\item There exists a lightlike parallel vector field $V$ which is complete and the gradient field of a globally defined function $s\in C^{\infty}(\overline{M})$.
\item There is a connected spacelike hypersurface $M\subset\overline{M}$ such that
\begin{itemize}
\item Each integral curve of $V$ intersects $M$ exactly once
\item All level sets $M\cap s^{-1}(c)$ are compact and connected.
\end{itemize}
\end{itemize}
\end{proposition}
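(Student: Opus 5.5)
One direction is a direct check, the other is a construction; I would do them in that order.

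\emph{Necessity.} Take a parameterized simple pp-wave $\R\times I\times Q$ with metric $\upd v\otimes \upd s+\upd s\otimes \upd v+u^{-2}\upd s^2+g_s$. Then $V=\frac{\del}{\del v}$ is lightlike and parallel, and it is complete because its flow is $v\mapsto v+t$. Lowering the index gives $\overline g(V,\argu)=\upd s$, so $V=\grad s$. For $M$ I would take $\{0\}\times I\times Q$. It is spacelike because the induced metric $u^{-2}\upd s^2+g_s$ is Riemannian. It is connected, each $V$-orbit $\R\times\{(s,x)\}$ meets it exactly once, and $M\cap s^{-1}(c)=\{0\}\times\{c\}\times Q$ is compact and connected.

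\emph{Sufficiency.} Let $\Phi_t$ be the flow of $V$ and define $F\colon\R\times M\to\overline M$ by $F(t,p)=\Phi_t(p)$.
\begin{itemize}
\item Injectivity and surjectivity of $F$ follow from the assumption that every integral curve meets $M$ exactly once, together with the fact that $V$ has no closed orbits, since $s$ is strictly increasing along...
\end{itemize}
That last claim is false: $V(s)=\overline g(V,V)=0$, so $s$ is constant along $V$ and gives no obstruction to closed orbits. Instead, a closed orbit would meet $M$ once and then again, periodically, contradicting the hypothesis. So $F$ is a bijection.

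$F$ is a local diffeomorphism because $V$ is transverse to the spacelike $M$; a lightlike vector is never tangent to a spacelike hypersurface. Hence $F$ is a diffeomorphism.

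Next I would show that $s|_M$ is a submersion. $V$ is lightlike and $M$ is spacelike, so $V$ is not orthogonal to $TM$, hence $\upd s|_{TM}=\overline g(V,\argu)|_{TM}\neq 0$. Then $s(M)=I$ is an open interval because $M$ is connected. The level sets are compact and connected by assumption, so Ehresmann's theorem makes $s|_M$ a trivial fibration. Concretely, I would flow along the vector field $W=\grad^{M}s/|\grad^{M}s|^2$ on $M$, which is complete on compact fibres over $I$. This gives $M\cong I\times Q$ with $s$ the first coordinate. The flow lines of $W$ furnish the coordinate $x\in Q$.

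I expect the main obstacle to be bringing $\overline g$ into the exact normal form. Using the coordinates $(t,s,x)$, I would first note $\Phi_t^*\overline g=\overline g$ since $V$ is Killing, and $\overline g(V,\argu)=\upd s$. So with $\del_t=V$ the components are $g_{tt}=0$, $g_{ts}=1$, $g_{tx}=0$, all independent of $t$. On $M$ the metric reads $a\,\upd s^2+2\beta\,\upd s+g_s$, where $a>0$ and $\beta$ is a $1$-form on $Q$, both depending on $s$. The choice of $W$ orthogonal to the $s$-levels is exactly what makes $\beta=0$, since then $\del_s=W\perp T(\text{level})$. It remains to check the cross term with $\upd t$. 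Since $\overline g(\del_t,\del_s)=\upd s(\del_s)=1$, the full metric is $\upd t\otimes\upd s+\upd s\otimes\upd t+a\,\upd s^2+g_s$. Setting $v=t$ and $u=a^{-1/2}$ yields the form of a parameterized simple pp-wave. Here $a=\overline g(W,W)>0$ because $W$ is tangent to the spacelike $M$.

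Finally I would verify that no $t$-dependence enters, which follows from $t$-invariance. Connectedness of $Q$ comes from connectedness of the fibres.
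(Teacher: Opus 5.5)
Your proof is correct and follows essentially the paper's route: your $W=\grad^{M}s/|\grad^{M}s|^2$ is exactly the paper's $Z=-u^{-2}U$ (because $U=-\grad^{\gamma}s$ and $u=|U|_\gamma$), and you read off the metric the same way, from $V$-invariance, $\overline g(V,\cdot)=\upd s$ and the fact that $W$ is orthogonal to the level sets. The one compressed step is the triviality of $s|_M$: Ehresmann's theorem as usually stated needs properness, which you have not checked, whereas the paper argues directly that the flow image of a compact level set is open and closed in the target level set, hence equal to it by connectedness.
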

\begin{proof}
If $(\overline{M},\overline{g})$ is globally isometric to a metric of the form
\begin{align}\label{eq:AGK_metrics_global}
 (\R\times I\times Q, \upd v\otimes \upd s+\upd s\otimes \upd v+ u^{-2} \upd s^2 + g_s),
	\end{align}
	then all the stated assertions hold with $s:\R\times I\times Q\to I$ being the projection onto the $s$-coordinate, $V=\frac{\partial}{\partial v}$ and $M=\left\{0\right\}\times I\times Q$.
	
Conversely, let $(\overline{M},\overline{g})$ be a Lorentzian manifold, which admits a vector field $V$, a function $s$ and a hypersurface $M$ with properties as stated in the proposition.
Let $e_0$ be the time-like unit normal field of~$M$ pointing in the direction of $V$. Along $M$, the lightlike parallel vector field $V$ splits up uniquely as
\begin{align*}
V=u\cdot e_0-U,
\end{align*}
where $u:M\to\R$ is a smooth positive function and $U$ is a smooth vector field on $M$ which is nowhere vanishing. Let $\gamma$ be the induced metric on $M$ and note that $u = |U|_\gamma$ since
\begin{align*}
0=\overline{g}(V,V)=-u^2+\overline{g}(U,U)=-u^2+\gamma(U,U).
\end{align*}
Because $V=\grad^{\overline{g}}s$ for a function $s\in C^{\infty}(\overline{M})$, we get $U=-\grad^{\gamma}s$. Because $U$ is nowhere vanishing, $s$ has no critical points on $M$. In particular, there $s$ does not admit maximum and minimum on $M$. Because $M$ is connected, the set 
$I:=s(M)\subset \R$ is an open interval.

Consider the vector field $Z=-u^{-2}U$. It is proportional to $\mathrm{grad}^{\gamma}s$, hence orthogonal to the level sets $Q_z=M\cap s^{-1}(z)\subset M$. Let
\begin{align*}
\mathrm{Fl}^Z:\R\times M\supset \mathcal{U}\longrightarrow M 
\end{align*}
be the flow generated by $Z$.  As the level sets $Q_z$ are compact, we find for each $z\in I$ an $\epsilon>0$ such that for all $x\in Q_z$ the flow $\mathrm{Fl}^Z_t(x)=\mathrm{Fl}^Z(t,x)$ is defined for all $t\in (-\epsilon,\epsilon)$. 
The proportional factor for $Z$ is chosen so that $\mathrm{Fl}^Z_t$ maps $Q_z$ to $Q_{t+z}$. Since $Q_z$ is compact, its image $\mathrm{Fl}^Z_t(Q_z)\subset Q_{z+t}$ is also compact. On the other hand, since $\mathrm{Fl}^Z_t$ is a local diffeomorphism, the image is also open. Because all $Q_z$ are connected, $\mathrm{Fl}^Z_t:Q_z\to Q_{z+t}$ is a diffeomorphism. Patching these diffeomorphisms together, we see that for every $z\in I$ and every $t\in\R$ such that $t+z\in I$, the flow restricts to a diffeomorphism
\begin{align*}
\mathrm{Fl}^Z_t:Q_z\longrightarrow Q_{z+t}.
\end{align*}
In particular, all the level sets $Q_z$ are diffeomorphic.
By choosing $z_0\in I$ and letting $Q:=Q_{z_0}$, we therefore get a global diffeomorphism
\begin{align*}
\Phi:I\times Q\longrightarrow M,\qquad (s,x)\longmapsto \mathrm{Fl}^Z_{s-z_0}(x).
\end{align*}
Notice that the first coordinate function coincides with the previously defined function $s$, so there is no conflicting notation in using the letter $s$ here.
Because $\gamma(Z,Z)=u^{-4}\gamma(U,U)=u^{-2}$ and $Z$ is orthogonal to the level sets $Q_z$, we obtain $\Phi^*\gamma=u^{-2}\upd s^2+g_s$.

To conclude the proof, we recall that $V$ is complete. We use its flow to define a map
\begin{align*}
\Psi:\R\times I\times Q\longrightarrow \overline{M},\qquad (v,s,x)\longmapsto \mathrm{Fl}^V_{v}(\Phi(s,x)).
\end{align*}
Because the flow lines of $V$ intersect $M$ exactly once, the map $\Psi$ defines a global diffeomorphism.
Since $\overline{g}(\grad^{\overline{g}} s, V) = \overline{g}(V,V) = 0$, the flow of $V$ preserves the $s$-level sets.
So again, calling the second coordinate function $s$ causes no ambiguity. 

It now remains to compute $\Psi^*\overline{g}$.
Because $V$ is in particular a Killing vector field, $\mathrm{Fl}^V_{v}$ is a family of isometries.
So the coefficients of $\Psi^* \overline{g}$ are independent on $v$ and it suffices to calculate it along $\{0\} \times I \times Q$.
Here, both $\frac{\del}{\del v} = \upd (\Psi^{-1})(V)$ and $\frac{\del}{\del s} = \upd (\Psi^{-1})(Z)$ are $\overline{g}$-orthogonal to the level sets $Q_s$ and the induced metric on $Q_s$ is $g_s$.
Since, moreover,
\begin{align*}
	\overline{g}(V,V)=0,\qquad \overline{g}(V,Z)=\overline{g}(u\cdot e_0-U,Z)=\overline{g}(-U,Z)=1, \qquad \overline{g}(Z,Z)=u^{-2}
\end{align*}
the claimed formula follows immediately.
\end{proof}
\begin{remark} \label{Rem:PPLocally}
As a byproduct of the above proof, we obtain the well-known fact that every Lorentzian manifold with a lightlike parallel vector field is locally of the form 
\begin{align}\label{eq:AGK_metrics_2}
(J\times I\times Q,\upd v\otimes \upd s+\upd s\otimes \upd v+u^{-2}\upd s^2+g_s)
\end{align}
stated in the introduction
\end{remark}
\begin{remark}
The potential function $s\in C^{\infty}(\overline{M})$ is uniquely determined up to a constant. We may furthemore multiply $s$ with a constant. This corresponds to a constant rescaling of the vector field $V$.
\end{remark}
In the form \eqref{eq:AGK_metrics_global}, the initial spacelike hypersurface $M$ corresponds to the hypersurface $\left\{v=0\right\}$. A different spacelike hypersurface $\widetilde{M}$ can in these coordinates be represented by a graph 
\begin{align*}
\widetilde{M}=\left\{(f(s,x),s,x)\mid f\in C^{\infty}(I\times Q)\right\}.
\end{align*}
A natural diffeomorphism on $\overline{M}$ mapping $M$ onto $\widetilde{M}$ is given by
\begin{align*}
F:(\lt ,s,x)\mapsto (\lt  +f(s,x),s,x).
\end{align*}
A quick computation shows that 
\begin{align*}
F^*\overline{g}_{(\lt ,s,x)}=\begin{pmatrix} 0 & 1 & 0\\
 1 & u^{-2}+2\dot{f} & df_s\\
 0 & df_s & g_s
 \end{pmatrix}=
 \begin{pmatrix} 0 & 1 & 0\\
 1 & v & df_s\\
 0 & df_s & g_s
 \end{pmatrix},
\end{align*}
where  $v:= u^{-2} + 2\dot{f}$.

In order to obtain the form \eqref{eq:AGK_metrics_global}, we have to modify the above diffeomorphism.
We define an $s$-dependant family of vector fields on $Q$ by $X_s=\grad^{g_s} (f_s)$. We then have
  $$g_s(X,X)=df_s(X)=F^*\overline{g}(\partial_s,X).$$
Let $\phi_s\in \mathrm{Diff}(Q)$ be the diffeomorphisms generated by field $-X_s$, i.e.
  $$\dot{\phi}_s=-X_s\circ\phi_s, \quad \phi_0=\id_Q.$$
Let $\Phi:(\lt ,s,x)\mapsto (\lt ,s,\phi_s(x))$. Then we get
\begin{align*}
\Phi^*(F^*\overline{g})_{(\lt ,s,x)}&= \begin{pmatrix} 0 & 1 & 0\\
 1 & v\circ\phi_s-g_s(X_s,X_s)\circ\phi_s & 0\\
 0 & 0 & \phi_s^*g_s
 \end{pmatrix}
 = 
 \begin{pmatrix} 0 & 1 & 0\\
 1 & \phi_s^*\left(u_s^{-2} + 2\dot{f}_s - |\upd f_s|_{g_s}^2\right) & 0\\
 0 & 0 & \phi_s^*g_s
 \end{pmatrix}.
\end{align*}
Thus, the diffeomorphism
\begin{align*}F\circ \Phi:(\lt ,s,x)\mapsto \Bigl(\lt +f\bigl(s,\phi_s(x)\bigr),s,\phi_s(x)\Bigr).
\end{align*}
preserves the form \eqref{eq:AGK_metrics_global}. Note that since $Q$ is closed, the family of vector fields $X_s$ is complete. Therefore, $F\circ \Phi$ is defined on all of $\overline{M}$.
We summarize these calculations in the following lemma.
\begin{lemma} \label{Lem:ChangeHypersurface}
	Suppose that $(\overline{M},\overline{g})$ is given by \eqref{eq:AGK_metrics_global}.
	Let $f \in C^\infty(I \times Q)$ and $\phi_s$, $s \in I$, be a family of diffeomorphisms such that $\dot{\phi}_s = -\grad^{g_s}(f_s) \circ \phi_s$.	
	Then
	\begin{align*}
	\Psi \colon \R \times I \times Q &\longrightarrow \R \times I \times Q,\\
	(v,s,x) &\longmapsto \left(v+f\left(s,\phi_s(x)\right),s,\phi_s(x)\right)
	\end{align*}
	defines a diffeomorphism and
	\begin{gather*}
	\Psi^* \overline{g} = \upd v\otimes \upd s+\upd s\otimes \upd v+ \phi_s^*\left(u_s^{-2} + 2\dot{f}_s - |\upd f_s|_{g_s}^2 \right) \upd s^2 + \phi_s^*g_s.
	\end{gather*}
\end{lemma}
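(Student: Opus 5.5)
The lemma only summarizes the calculation preceding it, so the proof consists of organizing that calculation. I would write $\Psi = F\circ\Phi$, with $F(v,s,x) = (v+f(s,x),s,x)$ and $\Phi(v,s,x) = (v,s,\phi_s(x))$, and treat the two factors separately.

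\emph{Step 1: $\Psi$ is a diffeomorphism.} Since $Q$ is closed, the time-dependent vector field $-\grad^{g_s}(f_s)$ is complete on $Q$. Hence the ODE $\dot\phi_s = -\grad^{g_s}(f_s)\circ\phi_s$ with prescribed initial value at some $s_0\in I$ has a solution $\phi_s$ for all $s\in I$. This solution depends smoothly on $s$ and consists of diffeomorphisms, with inverse given by the backward flow. Therefore $\Phi$ is a diffeomorphism of $\R\times I\times Q$. The map $F$ is a diffeomorphism with inverse $(v,s,x)\mapsto (v-f(s,x),s,x)$. So $\Psi$ is a diffeomorphism.

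\emph{Step 2: pull back by $F$.} Since $F^*\upd v = \upd v + \dot f\,\upd s + \upd f_s$, where $\upd f_s$ is the differential along $Q$, we get
\[
F^*(\upd v\otimes \upd s+\upd s\otimes\upd v) = \upd v\otimes\upd s+\upd s\otimes\upd v + 2\dot f\,\upd s^2 + \upd f_s\otimes \upd s+\upd s\otimes\upd f_s .
\]
The terms $u^{-2}\upd s^2+g_s$ are unchanged because $F$ preserves $s$ and $x$. This gives the displayed matrix with $(s,s)$-entry $u^{-2}+2\dot f$ and mixed entries $\upd f_s$.

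\emph{Step 3: pull back by $\Phi$.} We have $\Phi_*\partial_v=\partial_v$ and $\Phi_*\partial_s = \partial_s + \dot\phi_s\circ\phi_s^{-1} = \partial_s - X_s$ at the image point, where $X_s=\grad^{g_s} f_s$. For $Y$ tangent to $Q$ we have $\Phi_*Y = \upd\phi_s(Y)$. Write $h\ceq F^*\overline g$ and use $h(\partial_s,Z)=\upd f_s(Z)=g_s(X_s,Z)$. Then:
\begin{itemize}
\item The $(\partial_s,Y)$ entry is $\upd f_s(\upd\phi_s Y) - g_s(X_s,\upd\phi_s Y)=0$.
\item The $(\partial_v,\partial_s)$ entry remains $1$.
\item The $(\partial_s,\partial_s)$ entry is $(u^{-2}+2\dot f) - 2\,\upd f_s(X_s) + g_s(X_s,X_s) = u^{-2}+2\dot f - |\upd f_s|^2_{g_s}$, evaluated at $(s,\phi_s(x))$.
\item The $Q$-block becomes $\phi_s^*g_s$.
\end{itemize}
This yields the claimed formula.

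No step is a real obstacle. The only point needing care is the sign and base point in $\Phi_*\partial_s$: one must evaluate the vector field at $\phi_s(x)$, which is exactly what produces the pullbacks $\phi_s^*$ of the coefficient functions.
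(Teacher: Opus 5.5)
Your proposal is correct and follows essentially the same route as the paper. The paper also factors $\Psi = F\circ\Phi$ with $F(v,s,x)=(v+f(s,x),s,x)$ and $\Phi(v,s,x)=(v,s,\phi_s(x))$, first computes $F^*\overline{g}$ (which gives the $2\dot f$ term and the mixed $\upd f_s$ entries), then uses $\upd\Phi(\partial_s)=\partial_s - X_s$ to remove the mixed terms and produce $-|\upd f_s|^2_{g_s}$, and it appeals to closedness of $Q$ for completeness of $X_s$ just as in your Step 1.
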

\subsection{Initial data sets}
For simple pp-waves, there is a good correspondence between the spacetime and data induced on a spacelike hypersurface intersecting all the flow lines once.
Later, when we construct parallel spinors, this allows us later to work with the Riemannian data on such a hypersurface, for which the analysis is better behaved, rather than on a Lorentzian manifold.

We recall that an \emph{initial data set} on a manifold $M$ is a pair $(\gamma,k)$ consisting of a Riemannian metric $\gamma$ on $M$ and a symmetric $2$-tensor $k$ on $M$.
If $M$ is a co-oriented spacelike hypersurface within a Lorentzian manifold $(\overline{M},\overline{g})$ then the \emph{induced initial data set} on $M$ consists of the induced metric $\gamma$ and the induced second fundamental form $k$, where the latter satisfies for all $X,Y \in \Gamma(TM)$
\begin{gather*}
	\nabla^{\overline{g}}_X Y = \nabla^{\gamma}_X Y + k(X,Y) e_0,
\end{gather*}
where $e_0$ is the unit normal defining the co-orientation.
It is an important property of initial data sets that by virtue of the Gauß and Codazzi equation certain components of the Einstein tensor of $(\overline{M}, \overline{g})$ are determined by $(\gamma,k)$.
More precisely, $\Ein^{\overline{g}}(e_0,\bullette) = -\mu \overline{g}(e_0, \bullette) + j$ for
\begin{equation} \label{eq:Constraints}
	\begin{aligned}
		\mu &\coloneqq \frac{1}{2}(\scal^\gamma + \tr^\gamma(k) - |k|_\gamma^2) \\
		j &\coloneqq \div^\gamma(k) - \upd \tr^\gamma(k).
	\end{aligned}
\end{equation}

If $(\overline{M},\overline{g})$ carries a lightlike parallel vector field $V$, then $V$ determines a time-orientation of $(\overline{M},\overline{g})$ and every spacelike hypersurface carries a co-orientation such that $\overline{g}(e_0,V) < 0$.
Then there is a unique decomposition $V_{|M} = ue_0 -U$ with positive $u \in C^\infty(M)$ and $U \in \Gamma(TM)$ and lightlikeness implies $u = |U|_{\gamma}$.
The tangential and the normal part of the equation $\nabla^\gamma_X V = 0$ for $X \in TM$ translate to
\begin{align*}
	\nabla^{\gamma}_X U &= uk(X, \bullette)^{\sharp^{\gamma}} \\
	\upd u(X) &= k(U,X),
\end{align*}
where actually the second equation follows from the first one if $u = |U|_{\gamma}$.

\begin{definition}
	Given an initial data set $(\gamma,k)$ on a manifold $M$, an nowhere vanishing vector field $U \in \Gamma(TM)$ is said to be \emph{lightlike-parallel} if 
	\begin{gather}  \label{eq:UPar}
		\nabla^{\gamma}_X U = |U|_{\gamma}k(X, \bullette)^{\sharp^{\gamma}}
	\end{gather}
	holds.
	If there is a lightlike-parallel vector field for $(\gamma,k)$, then we call it a \emph{pp-wave initial data set}.
\end{definition}

\begin{remark} \label{Rem:DetK}
	Through \eqref{eq:UPar}, the second fundamental form $k$ is completely determined by $\gamma$ and $U$ if $U$ is lightlike-parallel for $(\gamma,k)$.
\end{remark}

The discussion so far explains how to pass from pp-wave spacetimes to pp-wave initial data sets.
It is well-known that it is also possible to go back.
The construction is a special case of the Killing development studied by Beig and Chruściel \cite[eq.~(2.14)]{Beig.Chrusciel:1996}.

\begin{proposition}
	Let $(\gamma,k)$ be an initial data set on $M$ with lightlike-parallel vector field $U$.
	Then
	\begin{align} \label{eq:KilligDev}
		(\overline{M},\overline{g}) \coloneqq (\R \times M, -\upd v \otimes \gamma(U,\bullette) - \gamma(U,\bullette) \otimes \upd v + \gamma),
	\end{align}
	where $v$ is the $\R$-coordinate, is a pp-wave spacetime with complete lightlike parallel vector field $\frac{\del}{\del v}$ such that the induced initial data set on $M = \{0\} \times M$ is given by $(\gamma,k)$ and the orthogonal projection of $\frac{\del}{\del v}_{|M}$ on $TM$ is given by $-U$.
\end{proposition}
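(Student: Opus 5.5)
We verify the claims by direct computation on $\R\times M$ with the metric \eqref{eq:KilligDev}. Write $\omega \coloneqq \gamma(U,\argu)$, a one-form on $M$ pulled back to $\R\times M$, so that $\overline{g} = -\upd v\otimes\omega - \omega\otimes\upd v + \gamma$.

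\emph{Lorentzian signature.} At a point, complete $U/|U|_\gamma$ to a $\gamma$-orthonormal basis of $T_pM$. In the basis $\frac{\del}{\del v}, U, E_2,\dots$ the matrix of $\overline{g}$ is block diagonal, with the $2\times 2$ block
\[
\begin{pmatrix} 0 & -|U|^2 \\ -|U|^2 & |U|^2 \end{pmatrix}
\]
and identity elsewhere. The $2\times 2$ block has negative determinant. Since $U$ is nowhere vanishing, $\overline{g}$ is Lorentzian everywhere.

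\emph{Parallel null vector field.} The coefficients of $\overline{g}$ do not depend on $v$, so $V\coloneqq\frac{\del}{\del v}$ is Killing. It is null because $\overline{g}(V,V)=0$, and it is complete because its flow is translation in $v$. Its metric dual is $V^\flat = -\omega$. For a Killing field, $\nabla V$ is skew and $\upd V^\flat = 2\nabla V^\flat$. Hence $V$ is parallel if and only if $\upd\omega = 0$ on $M$, i.e.\ $\upd\omega(X,Y)=\gamma(\nabla^\gamma_X U,Y)-\gamma(\nabla^\gamma_Y U,X)=0$. By \eqref{eq:UPar} we have $\gamma(\nabla^\gamma_X U,Y)=|U|_\gamma k(X,Y)$, which is symmetric in $X,Y$ because $k$ is symmetric. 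So $\upd\omega=0$ and $V$ is parallel.

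\emph{Induced data on $\{v=0\}$.} The tangent space of $\{v=0\}$ is $TM$, on which $\overline{g}$ restricts to $\gamma$; in particular the slice is spacelike. The unit normal $e_0$ satisfies $\overline{g}(e_0,X)=0$ for all $X\in TM$. Try $e_0 = a\frac{\del}{\del v} + bU$. Then $\overline{g}(e_0,X) = -a\,\omega(X) + b\,\omega(X)$, which vanishes for all $X$ exactly when $a=b$. Normalisation gives $\overline{g}(e_0,e_0)=a^2(-2|U|^2+|U|^2)=-1$, so $a=1/|U|$. We choose the sign with $\overline{g}(e_0,V)=-a|U|^2<0$, which matches the stated co-orientation. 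Then
\[
V = |U|\,e_0 - U,
\]
so the orthogonal projection of $V$ onto $TM$ is $-U$, and $u=|U|$.

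\emph{Second fundamental form.} Let $\tilde k$ denote the induced second fundamental form. Differentiating $V = |U|e_0 - U$ along $X\in TM$ and using $\nabla^{\overline{g}}V=0$, the tangential part gives
\[
\nabla^\gamma_X U = |U|\,\tilde k(X,\argu)^\sharp,
\]
exactly as derived before the Definition. Comparing with \eqref{eq:UPar} yields $|U|(\tilde k-k)(X,\argu)=0$. Since $U$ is nowhere zero, $\tilde k = k$, in line with \cref{Rem:DetK}.

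The only step requiring care is the parallelity of $V$, and it reduces to the closedness of $\omega$ via the Killing identity as above.
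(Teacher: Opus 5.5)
Your proof is correct and follows essentially the paper's argument. Both use that $\frac{\partial}{\partial v}$ is Killing, so that $\nabla^{\overline{g}}V$ is skew and its vanishing reduces to the antisymmetric part of $\gamma(\nabla^\gamma_X U,Y)=|U|_\gamma k(X,Y)$, and both then recover $k$ from \eqref{eq:UPar}. The only minor difference is that you phrase the parallelity as closedness of $V^\flat=-\gamma(U,\cdot)$, which is pulled back from $M$ and so gives $\nabla^{\overline{g}}V=0$ on all of $\mathbb{R}\times M$ at once; the paper instead checks the components along $M$ and then propagates along the flow of the Killing field.
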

\begin{proof}
	It is clear that $V \coloneqq \frac{\del}{\del v}$ is a complete lightlike Killing vector field.
	Moreover, for all $X,Y \in TM$
	\begin{align*}
		\overline{g}(X,Y) &= \gamma(X,Y) & &\text{ and } &\overline{g}(V,X) &= -\gamma(U,X),
	\end{align*}
	and thus the induced metric on $M = \{0\} \times M$ is $\gamma$ and the orthogonal projection of $V$ on $M$ is $-U$.
	We shall prove that $\nabla^{\overline{g}} V = 0$ on $M$.
	Since every flow line of $V$ intersects $M$ and $V$ is a Killing vector field this implies that $V$ is parallel.
	Then $U$ is lightlike-parallel for the induced initial data set on $M$.
	Since the induced metric on $M$ is $\gamma$ and \eqref{eq:UPar} holds for $(\gamma,k)$, the induced second fundamental form must be given by $k$.
	
	To show that $\nabla^{\overline{g}} V = 0$ on $M$, we first observe that
	\begin{gather*}
		\overline{g}(\nabla^{\overline{g}}_W V, V) = \frac{1}{2} \del_W \overline{g}(V, V) = 0
	\end{gather*}
	for all $W \in T\overline{M}_{|M}$.
	Because of the Killing equation, this also implies that
	\begin{gather*}
		\overline{g}(\nabla^{\overline{g}}_V V, X) = -\overline{g}(\nabla^{\overline{g}}_X V, V) = 0
	\end{gather*}
	for all $X \in TM$.
	The remaining components of $\nabla^{\overline{g}} V$ are also zero, because for all $X, Y \in TM$ we have
	\begin{align*}
		2\overline{g}(\nabla^{\overline{g}}_X V, Y) 
			&= \overline{g}(\nabla^{\overline{g}}_X V, Y) - \overline{g}(\nabla^{\overline{g}}_Y V, X) \\
			&= \del_X \overline{g}(V, Y) - \del_Y \overline{g}(V, X) - \overline{g}(V, [X,Y]) \\
			&= -\del_X \gamma(U,Y) + \del_Y \gamma(U,X) - \gamma(U, [X,Y]) \\
			&= -\gamma(\nabla^{\gamma}_X U, Y) + \gamma(\nabla^{\gamma}_Y U, X) \\
			&= -|U|_{\gamma} (k(X,Y) - k(Y,X)) = 0. \qedhere
	\end{align*}  
\end{proof}

\begin{corollary} \label{Cor:1-1-IDS}
	The restriction and extension procedures yield a one-to-one correspondence between
	\begin{itemize}
		\item simple pp-waves modeled on $Q$ together with a spacelike hypersurface intersecting all flow lines of the canonical lightlike parallel vector field precisely once, and
		\item pp-wave initial data sets on $I \times Q$ for $I \subset Q$ an open interval, where the metric is of the form
		\begin{gather*}
		\gamma = u^{-2} \upd s^2 + \lambda^2g_s
		\end{gather*}
		and $-u^2\frac{\del}{\del s}$ is lightlike-parallel for some positive $u \in C^\infty(I \times Q)$ and a smooth family of Riemannian metrics $g_s$, $s \in I$, on $Q$.
	\end{itemize}  
\end{corollary}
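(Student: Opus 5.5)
The correspondence is obtained by composing the two constructions already available: restriction of the spacetime to the hypersurface, and the Killing development \eqref{eq:KilligDev}. We then check that each construction preserves the form of the data on both sides and that the two are mutually inverse.

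\emph{From spacetimes to initial data.} Start with a parameterized simple pp-wave $(\R\times I\times Q,\overline g)$, $\overline g = \upd v\otimes\upd s+\upd s\otimes\upd v+u^{-2}\upd s^2+g_s$, together with a spacelike hypersurface $\widetilde M$ meeting each flow line of $V=\frac{\del}{\del v}$ exactly once. Such an $\widetilde M$ is a graph $\{v=f(s,x)\}$. By \cref{Lem:ChangeHypersurface}, after applying the diffeomorphism $\Psi$ we may assume $\widetilde M=\{0\}\times I\times Q$, and the metric keeps its form. The induced metric is then $u^{-2}\upd s^2+g_s$, which has the stated shape after writing $g_s=\lambda_s^2\hat g_s$ with unit-volume $\hat g_s$ (or, more simply, absorbing $\lambda$). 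By the computation in the proof of \cref{lem:global_form}, the tangential part of $V$ along $M$ is $-U$ with $U=-\grad^\gamma s=-u^2\frac{\del}{\del s}$. Consequently $-u^2\frac{\del}{\del s}$ is lightlike-parallel for the induced pair $(\gamma,k)$, by the discussion preceding \eqref{eq:UPar}.

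\emph{From initial data to spacetimes.} Conversely, let $(\gamma,k)$ be such an initial data set, with $U=-u^2\frac{\del}{\del s}$ lightlike-parallel. The Killing development \eqref{eq:KilligDev} produces a pp-wave on $\R\times I\times Q$. Since $\gamma(U,\argu)=-u^2\cdot u^{-2}\upd s=-\upd s$, its metric is
\begin{align*}
\upd v\otimes\upd s+\upd s\otimes\upd v+u^{-2}\upd s^2+\lambda^2 g_s,
\end{align*}
which is a parameterized simple pp-wave. The hypersurface $\{0\}\times I\times Q$ meets each $v$-line exactly once, and by the preceding proposition its induced data are exactly $(\gamma,k)$.

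\emph{The two maps are mutually inverse.} Extend-then-restrict returns $(\gamma,k)$ by the proposition; here we use \cref{Rem:DetK}, namely that $k$ is determined by $\gamma$ and $U$. For restrict-then-extend, note that in the normalized coordinates above the original metric is literally of the Killing development form $-\upd v\otimes\gamma(U,\argu)-\dots+\gamma$, so the original pair (spacetime, hypersurface) is recovered up to isometry. Isometries of such pairs correspond to isometries of initial data sets preserving $U$. Here one uses that $V$ is the canonical parallel field, determined up to the scalings discussed in the remark on $s$.

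The main care point is the normalization step: showing that every admissible hypersurface can be brought to $\{v=0\}$ while keeping the simple form. \cref{Lem:ChangeHypersurface} handles this. The remaining subtlety is only bookkeeping of what ``one-to-one'' means, namely identification up to isometry of pairs.
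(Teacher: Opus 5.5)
Your proposal is correct and follows the same route as the paper's proof. You normalize the hypersurface to $\{0\}\times I\times Q$ via \cref{Lem:ChangeHypersurface}, read off $\gamma$ and the tangential projection $u^2\frac{\del}{\del s}$ of $\frac{\del}{\del v}$, and conversely note that $\gamma(U,\cdot)=-\upd s$ makes the Killing development \eqref{eq:KilligDev} reproduce the simple pp-wave metric; your explicit check that the two procedures are mutually inverse (via \cref{Rem:DetK}) just spells out what the paper leaves implicit.
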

\begin{proof}
	After potentially applying \cref{Lem:ChangeHypersurface}, we may assume that the spacelike hypersurface is given by $\{0\} \times I \times Q$ inside
	\begin{gather} \label{eq:ppWave}
		(\overline{M},\overline{g}) \coloneqq (\R\times I\times Q, \upd v\otimes \upd s+\upd s\otimes \upd v+ u^{-2} \upd s^2 + g_s)
	\end{gather}
	for some open interval $I \subset \R$, a positive function $u \in C^\infty(I \times Q)$ and a smooth family $g_s$, $s \in I$, of Riemannian metrics on $Q$.
	It is clear that the induced metric $\gamma$ on $M \coloneqq \{0\} \times I \times Q$ is as stated and due to
	\begin{align*}
		\overline{g}\left(\frac{\del}{\del v}, X\right) = \upd s(X) = \gamma\left(u^2\frac{\del}{\del s}, X\right)
	\end{align*}
	for all $X \in TM$ the tangential projection of the canonical lightlike parallel vector field $\frac{\del}{\del v}$ is given by $-U = u^2 \frac{\del}{\del s}$.
	
	Conversely, the formula~\eqref{eq:KilligDev} for the Killing development gives precisely back~\eqref{eq:ppWave} from the pp-wave initial data set $(\gamma, k)$ on $I \times Q$ with $\gamma = u^{-2} \upd s^2 + g_s$ and lightlike-parallel vector field $U = -u^2 \frac{\del}{\del s}$, because
	\begin{gather*}
		\gamma(U,\bullette) = -\upd s. \qedhere
	\end{gather*}
\end{proof}

This correspondence also extends to spinors that are parallel in a suitable sense.
We will discuss spinors in \cref{sec.constr.cyl.ids.llps}.

\subsection{Geodesic coordinates}
Note that if $(\overline{M},\overline{g})$ is of the form \eqref{eq:AGK_metrics_2}, the lightlike hypersurfaces $\left\{s=const\right\}$ are the integral manifolds of the distribution $V^{\perp}$, since $V$ is the $\overline{g}$ dual of $\upd s$. 

We know a priori that if the lightlike vector field $V$ is parallel, $V^{\perp}$ is an integrable distribution since the $1$-form $\overline{g}(V,.)$ is closed.
We can therefore find the hypersurfaces $\left\{s=const\right\}$ without assuming the local form \eqref{eq:AGK_metrics_2} by defining them as the leaves of the foliation defined by $V^{\flat}$. In the following, we will demonstrate that we may set $u\equiv 1$ in \eqref{eq:AGK_metrics_2}.
\begin{proposition}[Metric in geodesic normal coordinates] \label{Prop:GeodNormCoord}
	Let $(\overline{M},\overline{g})$ be a Lorentzian manifold with lightlike parallel vector field $V$, $\mathcal{L}$ be a leaf of the foliation defined by $V^\flat$ and $Q$ a spacelike hypersurface of $\mathcal{L}$.
	Then an open neighborhood $\mathcal{O}$ of $Q$ in $\mathcal{L}$ can be parametrized by a real parameter and a point in $Q$ using the normal exponential map\footnote{or, which is the same in this case: the flow of $V$} $(v,x) \mapsto \exp_x(vV)$.
	Let, furthermore, $\normal$ be the vector field along $\mathcal{O}$ with $\overline{g}(\normal,\normal) = 1$, $\overline{g}(\normal,V) = 1$ and $\normal \perp TQ_t$ for the foliation of $\mathcal{O}$ determined by 
\begin{align*}	
	Q_v = \{\exp_x(vV) \in \mathcal{O} \,|\, x \in Q\},\qquad v \in J.
	\end{align*}
	Define geodesic normal coordinates defined with respect to $\mathcal{O}$ and $\normal$, i.\,e.\ 
	\begin{align*}
		\Phi \colon \R \times \R \times Q \supset \mathcal{D} &\longrightarrow \overline{M} \\
			(v,s,x) &\longmapsto \exp_{\exp_x(vV)}(s\normal).
	\end{align*}
	Then, the pull-back metric on the domain of definition is of the form 
\begin{align*}	
	\Phi^* \overline{g} = \upd v \otimes \upd s + \upd s \otimes \upd v + \upd s^2 + g_s,
\end{align*}	
	 where $g_s$ is a Riemannian metric on a subset of $Q$ for each $s$.
	If $Q$ is compact, $\mathcal{D}$ can be chosen to be of the form $J \times I \times Q$ for intervals $J,I \subset \R$ with $J = \R$ if $V$ is complete.
\end{proposition}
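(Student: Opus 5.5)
We verify directly that $\Phi$ is a local diffeomorphism near $\{(v,0,x)\}$ and compute $\Phi^*\overline{g}$ coefficient by coefficient, using only that $V$ is parallel and that the curves $s\mapsto\Phi(v,s,x)$ are geodesics. First, on $\mathcal{L}$ we have $\nabla_V V=0$, so $v\mapsto\exp_x(vV)$ is just the flow of $V$. Since $V\perp T\mathcal{L}$ is null and $Q$ is spacelike of codimension one in $\mathcal{L}$, $V$ is transversal to $Q$ inside $\mathcal{L}$. The usual tubular neighbourhood argument then gives the parametrization of a neighbourhood $\mathcal{O}$, and the leaves $Q_v$ are spacelike. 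The field $\normal$ is well defined: the orthogonal complement of $TQ_v$ in $T\overline{M}$ is a $2$-dimensional Lorentzian plane containing the null vector $V$. In it the conditions $\overline{g}(\normal,V)=1$ and $\overline{g}(\normal,\normal)=1$ pick a unique vector. Indeed, write $\normal=W+aV$ with $W$ the other null direction normalized by $\overline{g}(W,V)=1$; then $|N|^2=2a=1$. The differential of $\Phi$ at $s=0$ is $(\del_v,\del_s,X)\mapsto(V,\normal,X)$, which is invertible. So $\Phi$ is a local diffeomorphism near $s=0$, and by compactness of $Q$ it is defined on $J\times I\times Q$. If $V$ is complete, we may take $J=\R$, since flowing along $V$ preserves everything (see below).

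Next we compute the coefficients. (i) $\del_s$ is the velocity of a geodesic, so $\overline{g}(\del_s,\del_s)$ is constant in $s$ and equals $\overline{g}(\normal,\normal)=1$. (ii) $\overline{g}(\del_s,V)$ is constant along the geodesic because $V$ is parallel, so it equals $1$. (iii) We claim $\del_v=V$ everywhere. Since $V$ is Killing, its flow consists of isometries. The flow preserves $\mathcal{L}$ and maps $Q_v$ to $Q_{v+t}$, so it maps $\normal$ to $\normal$ by uniqueness in the characterization above. Hence $\mathrm{Fl}^V_t\circ\Phi(v,s,x)=\Phi(v+t,s,x)$, which gives $\del_v=V\circ\Phi$. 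Consequently $\overline{g}(\del_v,\del_v)=0$ and $\overline{g}(\del_v,\del_s)=1$.

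(iv) It remains to show $\overline{g}(\del_s,X)=0$ and $\overline{g}(\del_v,X)=0$ for $X$ tangent to $Q$. For the second, $\overline{g}(V,X)=\upd s(X)\cdot$const. More directly, $V^\flat$ is closed and the geodesics starting in $\mathcal{L}$ keep $\overline{g}(V,\del_s)=1$, so each level $\{s=c\}$ lies in a single leaf of $V^\perp$. Hence $X\perp V$. For the first, use the Gauss lemma argument: $\del_s\overline{g}(\del_s,X)=\overline{g}(\del_s,\nabla_{\del_s}X)=\overline{g}(\del_s,\nabla_X\del_s)=\tfrac12 X\overline{g}(\del_s,\del_s)=0$, since $[\del_s,X]=0$. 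At $s=0$ the inner product vanishes because $\normal\perp TQ_v$. Together, these give $\Phi^*\overline{g}=\upd v\otimes\upd s+\upd s\otimes\upd v+\upd s^2+g_{(v,s)}$, where $g_{(v,s)}$ is the induced metric on the slice.

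Finally, the $v$-independence of $g$ follows from (iii), since $\Phi$ intertwines translation in $v$ with isometries. Positive definiteness of $g_s$ holds because the slices are orthogonal to both $V$ and $\del_s$, which span a Lorentzian plane. The main subtle step is (iii): identifying $\del_v$ with $V$ off $s=0$, which relies on equivariance of the whole construction under the Killing flow.
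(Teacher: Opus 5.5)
Your proof is correct. Steps (i), (ii) and the Gauss-lemma half of (iv) coincide with the paper's argument; the other two steps take a different route.

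\textbf{Step (iii), $\partial_v = V$.} The paper argues along each normal geodesic. Both $V$ and $J_V(s)=\frac{\upd}{\upd v}\gamma_{v,x}(s)$ are Jacobi fields with the same initial value. The paper then checks componentwise, against $N$, $V$ and $TQ$ (with $X$ extended by the $V$-flow), that $\nabla^{\overline{g}}_V N=0$, so the initial derivatives also agree. You instead use that $N$ is uniquely characterized and hence invariant under the isometric flow of the Killing field $V$. This gives the equivariance $\mathrm{Fl}^V_t\circ\Phi(v,s,x)=\Phi(v+t,s,x)$. Your route is more conceptual and yields more:
\begin{itemize}
\item it gives at once the $v$-independence of all metric coefficients, which the paper leaves implicit;
\item it justifies the claim $J=\R$ for complete $V$, since the $s$-interval found at $v=0$ transports to every $v$. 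The paper does not argue this.
\end{itemize}

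\textbf{Step (iv), $\overline{g}(V,J_X)=0$.} The paper differentiates $\overline{g}(J_X,V)$ along the geodesic and uses parallelism of $V$ together with $\overline{g}(\partial_s,V)=1$. Your use of closedness of $V^\flat$ is the same mechanism, packaged as the global identity $\Phi^*V^\flat=\upd s$.

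As written, though, your opening sentence there (``$\overline{g}(V,X)=\upd s(X)\cdot$const'') is asserted before it is justified. The phrase ``level sets lie in a single leaf'' also tacitly uses a local potential. A clean version avoids potentials:
\begin{itemize}
\item Set $\alpha\coloneqq\Phi^*V^\flat-\upd s$. It is closed, and $\alpha(\partial_s)=0$ by (ii).
\item By Cartan's formula, $\alpha$ is invariant under $s$-translation, on the domain where each $s$-slice is an interval containing $0$.
\item At $s=0$, $\alpha$ vanishes, since $\Phi(v,0,x)=\mathrm{Fl}^V_v(x)\in\mathcal{L}$ and $\partial_v=V$ there.
\item Hence $\alpha\equiv 0$.
\end{itemize}
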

\begin{proof}
	We write $\gamma_{v,x}$ for the geodesic defined by $\gamma_{v,x}(s) = \exp_{\exp_x(vV)}(s\normal)$.
	We see that 
	\begin{align*}\frac{\del}{\del s}\big\vert_{{v,s,x}} = \dot{\gamma}_{v,x}(s).
	\end{align*}
	 Because geodesics have constant speed and $\dot{\gamma}_{v,x}(0) = \normal|_{\exp_x(vV)}$, we have
\begin{align*}
\overline{g}\left(\frac{\del}{\del s}, \frac{\del}{\del s}\right) = \overline{g}(\normal,\normal) = 1.
\end{align*}
Let $X \in TQ$ and extend it to a vector field on $\mathcal{L}$ by flowing it along the isometries generated by $V$.	We consider the Jacobi field $J_X$ along $\gamma_{v,q}$ given by 
\begin{align*}J_X(s) = \frac{\upd}{\upd \tau}\big\vert_{\tau = 0} \gamma_{t,x(\tau)}(s)
\end{align*}
 for any curve $\tau \mapsto q(\tau)$ with $\dot{q}(0) = X$.
	Since 
	\begin{align*}
	\overline{g}(J_X(0), \dot{\gamma}_{v,q}(0)) = \overline{g}(X,\normal) = 0
\end{align*}	
	 and 
	\begin{align*}
\frac{d}{d s}\overline{g}(J_X(s),\dot{\gamma}_{v,x}(s))
& =	\overline{g}\left(\frac{\overline{\nabla}}{\upd s} J_X(s), \dot{\gamma}_{v,x}(s)\right)\\
& = \overline{g}\left(\frac{\overline{\nabla}}{\upd \tau}\big\vert_{\tau=0} \frac{\upd}{\upd s}\gamma_{v,x(\tau)}(s), \dot{\gamma}_{v,q}(s)\right) = \frac{1}{2} \del_X \overline{g}(\normal,\normal) = 0,
	\end{align*}
we obtain
\begin{align*}
\overline{g}\left(J_X(s),\frac{\del}{\del s}\right)=0.
\end{align*}
In order to compute the remaining components of $\Phi^*\overline{g}$, we first want to show that
\begin{align}\label{eq:two_Jacobi_fields}
V= \frac{\del}{\del v}=\frac{\upd}{\upd v} \gamma_{v,q}(s).
\end{align}
Along a geodesic of the form $s\mapsto  \gamma_{v,q}(s)$, $\frac{\del}{\del v}$ restricts to the Jacobi field $J_V(s) = \frac{\upd}{\upd v} \gamma_{v,q}(s)$. Because $V$ is parallel, it also restricts to a Jacobi field along  $s\mapsto  \gamma_{v,q}(s)$. In order to show \eqref{eq:two_Jacobi_fields}, it suffices to prove that along any such geodesic, these two Jacobi fields coincide. By construction, we already know that $J_V(0) = V$ so that it remains to show that the initial derivatives coincide.
We compute 
\begin{align*}
\frac{\overline{\nabla}}{\upd s}J_V(0) = \frac{\overline{\nabla}}{\upd v}\frac{\upd}{\upd s}\big\vert_{s=0} \gamma_{t,q}(s) = \overline{\nabla}_V \normal 
\end{align*}
and 
\begin{align*}
	\overline{g}(\overline{\nabla}_V \normal, \normal) = \frac{1}{2} \del_V \overline{g}(\normal,\normal) &= 0,\\
	 \overline{g}(\overline{\nabla}_V \normal, V) = - \overline{g}(\normal,\overline{\nabla}_V V) &= 0,\\
\overline{g}(\overline{\nabla}_V \normal, X) = - \overline{g}(\normal,\overline{\nabla}_V X) = -\overline{g}(\normal,\overline{\nabla}_X V)& = 0.
\end{align*}
In the last line, we used that any vector field $X \in TQ$ is extended to $\mathcal{L}$ by flowing it along the isometries generated by $V$.
Consequently, $\frac{\overline{\nabla}}{\upd s}J_V(0)=\overline{\nabla}_V \normal =0$. Because $V$ is parallel, \eqref{eq:two_Jacobi_fields} follows.

\noindent Because $V$ is parallel and $s\mapsto \gamma_{v,x}(s)$ is a geodesic,
\begin{align*}
\frac{d}{d s}\overline{g}(V,\dot{\gamma}_{v,x}(s))=0,
\end{align*}
so that 
\begin{align}\label{eq:geod_coord_sp}
\overline{g}\left(\frac{\del}{\del s}, V\right) = 1.
\end{align}
In order to finish the proof of the theorem, it remains to show that
\begin{align*}
\overline{g}\left(J_X(s), V\right)=0.
\end{align*}
We know that this equality holds for $s=0$. Differentiating in the $s$-direction and using that $V$ is parallel yields
\begin{align*}
\frac{d}{d s}\overline{g}(J_X(s),V)=\overline{g}(\frac{\overline{\nabla}}{\upd s}J_X(s),V)=
 \overline{g}\left(\frac{\overline{\nabla}}{\upd \tau}\big\vert_{\tau=0} \frac{\upd}{\upd s}\gamma_{v,x(\tau)}(s), V\right) 
 =\frac{d}{d \tau}\big\vert_{\tau=0}\overline{g}\left(\frac{\del}{\del s}|_{v,x(\tau),s},V\right)=0,
\end{align*}
where we used \eqref{eq:geod_coord_sp} in the last equality. This finishes the proof.
\end{proof}
\begin{remark}
Even if $(\overline{M},\overline{g})$ is of the form \eqref{eq:AGK_metrics_global}, the geodesic coordinates in \cref{Prop:GeodNormCoord} may not be defined on all of $\overline{M}$. The geodesic sent out at $\mathcal{L}$ in the direction of $\normal$ could a priori develop focal points at which the map $\Phi$ fails to be a diffeomorphism.
\end{remark}

\subsection{Lightlike Ricci curvature}\label{sec.compute.ricci}

\begin{lemma}\label{lem.lor.ricci-flat}
	Let $Q$ be a manifold, $(g_s)$, $s\in I$ be a curve of Riemannian metrics on $Q$ and $u \in C^{\infty}(I \times Q)$ a positive function.
	On $\LorMan\ceq\R\times (a,b) \times Q$ we define 
\begin{align*}	
	\LorMet \coloneqq \upd v \otimes \upd s + \upd s \otimes \upd v + u^{-2}\upd s^2 + g_s.
	\end{align*}
	Then $\LorMet$ has null Ricci curvature if and only if
     \begin{gather} \nonumber
   		\ric^{g_s}=0, \text{ and}\\
   		\label{eq.level.ricci}
   		\div^{g_s}\dot{g}_s-\upd\,\trace^{g_s} \dot{g}_s =0,
     \end{gather}
	for each $s\in I$.
	In this case, $\ric^{\LorMet}=\rc_s\,\upd s^2$ for a function $\rc:I\times Q\to\R$, which is given by
	\begin{align} \label{eq:CalcRc}
		 \rc_s &= \frac{1}{2}\Delta^{g_s} (u^{-2}) -\frac{1}{2}\frac{\upd}{\upd s}\trace^{g_s}\dot{g}_s-\frac{1}{4}|\dot{g}_s|^2_{g_s},
	\end{align}
	for each $s$.
\end{lemma}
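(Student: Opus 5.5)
The plan is to compute $\ric^{\LorMet}$ directly in a frame adapted to the splitting $T\LorMan = \R\partial_v \oplus \R\partial_s \oplus TQ$ and then read off the condition for it to be null. Since $V=\partial_v$ is parallel, $\ric^{\LorMet}(V,\cdot) = 0$. Null Ricci curvature means $\ric^{\LorMet}(X)$ is null for every $X$. At each point this forces the image of the endomorphism $\ric$ to be spanned by $V$, so $\ric^{\LorMet}=\rc\,\upd s^2$. Indeed, $\ric$ is self-adjoint, $\ric(V)=0$, and its image lies in the null cone. Hence its image is a totally isotropic subspace of $V^\perp$ containing no vector linearly independent of $V$, so it is contained in $\R V$. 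It therefore suffices to compute the components $\ric^{\LorMet}(X,Y)$ and $\ric^{\LorMet}(\partial_s,X)$ for $X,Y\in TQ$, and to show that they vanish exactly under $\ric^{g_s}=0$ and \eqref{eq.level.ricci}. The remaining component $\ric^{\LorMet}(\partial_s,\partial_s)$ then gives $\rc$.

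First I would compute the Christoffel symbols. The inverse metric satisfies $\LorMet^{-1}(\upd s,\upd s)=0$, $\LorMet^{-1}(\upd s,\upd v)=1$ and $\LorMet^{-1}(\upd v,\upd v)=-u^{-2}$, and on $Q$ it is $g_s^{-1}$. The coefficients do not depend on $v$, and $\partial_v$ is parallel. The relevant covariant derivatives, for $X,Y$ coordinate fields on $Q$, are
\begin{align*}
\nabla_X Y &= \nabla^{g_s}_X Y - \tfrac12 \dot g_s(X,Y)\,\partial_v,\\
\nabla_{\partial_s} X &= \tfrac12 \dot g_s(X)^{\sharp} + \tfrac12 X(u^{-2})\,\partial_v,\\
\nabla_{\partial_s}\partial_s &= -\tfrac12 \grad^{g_s}(u^{-2}) + (\ldots)\partial_v .
\end{align*}
I will verify these by the Koszul formula. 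The leaves $\{s=\mathrm{const}\}$ are then totally geodesic for the degenerate metric, with "second fundamental form" $-\tfrac12\dot g_s$ in the $V$ direction.

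Next I would take traces for the Ricci tensor, using the frame $\partial_v,\partial_s$ together with a $g_s$-orthonormal frame $e_i$ of $TQ$. Since $R(\cdot,V)=0$, the $\partial_v$–$\partial_s$ cross terms in the trace contribute only through $R(\cdot,\partial_v)\cdot$, which vanishes. This yields the following.

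\begin{enumerate}
\item[(i)] $\ric^{\LorMet}(X,Y)=\ric^{g_s}(X,Y)$. The Gauss-type correction terms come with a factor $\LorMet(V,V)=0$, so they drop out.
\item[(ii)] $\ric^{\LorMet}(\partial_s,X)=\tfrac12\bigl(\div^{g_s}\dot g_s - \upd\tr^{g_s}\dot g_s\bigr)(X)$, up to sign. This is a Codazzi-type computation, analogous to the momentum constraint $j$ in \eqref{eq:Constraints}.
\item[(iii)] $\ric^{\LorMet}(\partial_s,\partial_s)=\tfrac12\Delta^{g_s}(u^{-2})-\tfrac12\partial_s\tr^{g_s}\dot g_s-\tfrac14|\dot g_s|^2_{g_s}$.
\end{enumerate}

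For (iii), the $\Delta(u^{-2})$ term comes from $\nabla_{\partial_s}\partial_s$. The $-\tfrac12\partial_s\tr\dot g - \tfrac14|\dot g|^2$ part is the familiar Raychaudhuri-type expression $-\partial_s\tr\bigl(\tfrac12 g^{-1}\dot g\bigr)-\tr\bigl((\tfrac12 g^{-1}\dot g)^2\bigr)$. Here one uses $\partial_s(g^{-1}\dot g)=g^{-1}\ddot g - (g^{-1}\dot g)^2$.

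As a cross-check, and possibly as a shortcut, I would use \cref{Cor:1-1-IDS}. On $\{v=0\}$ the second fundamental form is determined by \eqref{eq:UPar} with $U=-u^2\partial_s$. Then (ii) can be recovered from $j=\div k-\upd\tr k$ in \eqref{eq:Constraints}, because $\Ein(e_0,X)=\ric(e_0,X)$ for $X\in TQ$.

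Combining, $\ric^{\LorMet}$ is null iff (i) and (ii) vanish, i.e.\ iff $\ric^{g_s}=0$ and \eqref{eq.level.ricci} hold. Conversely, if they vanish, then $\ric^{\LorMet}=\rc\,\upd s^2$ with $\rc$ given by (iii), which is \eqref{eq:CalcRc}, and $\upd s^2$ is clearly null. The main obstacle is the bookkeeping in (iii), namely getting the signs and the factors $\tfrac12$ and $\tfrac14$ right and confirming that the $u$-dependence enters only through $\tfrac12\Delta^{g_s}(u^{-2})$. I would first carry this out in local coordinates $(v,s,x^i)$ using $\ric_{ss}=\partial_k\Gamma^k_{ss}-\partial_s\Gamma^k_{ks}+\Gamma\Gamma$-terms, with all $\Gamma^{v}$ entries dropping out of the traces.
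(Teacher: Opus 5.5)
Your proposal is correct and follows essentially the same route as the paper. Both arguments use $\ric^{\LorMet}(V,\cdot)=0$ to reduce null Ricci curvature to the Ricci endomorphism taking values in $\R V$ (equivalently $\ric^{\LorMet}=\rc\,\upd s^2$), and then compute the Christoffel symbols and the components $\ric_{ij}$, $\ric_{si}$, $\ric_{ss}$ in the coordinates $(v,s,x^i)$. Your covariant derivatives and the formulas (i)--(iii) agree with the paper's, with $\ric_{si}=+\tfrac12(\div^{g_s}\dot g_s-\upd\tr^{g_s}\dot g_s)_i$ and $\Delta^{g_s}=-\tr^{g_s}\nabla^2$, so the initial-data cross-check for (ii) is optional.
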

\begin{proof}
First we observe that since $V$ is parallel,
\begin{align*}
\overline{g}(\Ric^{\overline{g}}(X),V) = \ric^{\overline{g}}(X,V) = 0
\end{align*}
for all vector fields $X$. If 
$\Ric^{\overline{g}}(X)$ is null for all $X \in T\overline{M}$, we therefore  obtain that $\Ric^{\overline{g}}(X)=\alpha(X)\cdot V$ for some $1$-form $\alpha(X)$ on $\overline{M}$.

The remainder of the proof is a straightforward computation in local coordinates. Let $\left\{i,j,k,\ldots\right\}$ be indices on $Q$, denote by $s,v$ the coordinates on $I$ and $\R$, respectively. Let $\left\{\mu,\nu,\lambda,\ldots\right\}$ be coordinates on $\overline{M}$. In matrix form, the metric and its inverse are
\begin{align}\label{def.LorMet}
\LorMet=\begin{pmatrix} 0 & 1 & 0\\
1 & u^{-2} & 0\\
0 & 0 & g_s,
\end{pmatrix},
\qquad \qquad
\LorMet^{-1}=\begin{pmatrix} -u^{-2} & 1 & 0\\
1 & 0 & 0\\
0 & 0 & g_s^{-1}
\end{pmatrix}.
\end{align}
The nonvanishing Christoffel symbols are
\begin{align*}
\Gamma(\LorMet)_{ij}^k&=\Gamma(g_s)_{ij}^k, & \Gamma(\LorMet)_{ij}^v&=-\frac{1}{2}(\dot{g}_s)_{ij},& \Gamma(\LorMet)_{is}^j&=\Gamma(\LorMet)_{si}^j=\frac{1}{2}(g_s)^{jk}(\dot{g}_s)_{ki}, \\
\Gamma(\LorMet)_{is}^v &= \Gamma(\LorMet)_{si}^v = \frac{1}{2}
\partial_i(u^{-2}), &
\Gamma(\LorMet)_{ss}^i &= -\frac{1}{2}(g_s)^{ij}\partial_j(u^{-2}), &
\Gamma(\LorMet)_{ss}^v &= \frac{1}{2}\frac{\upd}{\upd s}(u^{-2})
\end{align*}
Recall that the components of the Ricci tensor are computed through the formula
\begin{align*}\ric_{\mu\nu} = \del_{\xi} \Gamma_{\mu\nu}^{\xi} + \Gamma_{\xi \tau}^{\xi}\Gamma_{\mu\nu}^{\tau} - \del_{\mu} \Gamma_{\xi\nu}^{\xi} - \Gamma_{\mu \tau}^{\xi}\Gamma_{\xi\nu}^{\tau}.
\end{align*}
Its nonvanishing components are
\begin{align*}
\ric(\LorMet)_{ij}&=\ric(g_s)_{ij},\\
\ric(\LorMet)_{ss}&=\frac{1}{2}\Delta^{g_s} (u^{-2}) -\frac{1}{2} \frac{\upd}{\upd s}\trace^{g_s}\dot{g}_s-\frac{1}{4}|\dot{g}_s|^2_{g_s},\\
\ric(\LorMet)_{si}=\ric(\LorMet)_{is}&=\frac{1}{2}[\div^{g_s}\dot{g}_s-\upd\tr^{g_s}\dot{g}_s]_i.
\end{align*}
Regarded as an endomophism, its nonvanishing components are
\begin{align*}
\Ric(\LorMet)_s^t&=\ric(\LorMet)_{ss}=\frac{1}{2}\Delta^{g_s}(u^{-2})-\frac{1}{2}\frac{\upd}{\upd s}\trace^{g_s}\dot{g}_s-\frac{1}{4}|\dot{g}_s|^2_{g_s},\\
\Ric(\LorMet)^i_s=\Ric(\LorMet)_i^v&=\frac{1}{2}[\div^{g_s}\dot{g}_s-\upd\,\trace^{g_s}\dot{g}_s]_i,\\
\Ric(\LorMet)^i_j&=\Ric(g_s)^i_j.
\end{align*}
The condition $\Ric^{\overline{g}}(X)=\alpha(X)\cdot V$ for every vector field $X$ on $\overline{M}$ is equivalent to say that
$\Ric(\LorMet)_{\beta}^{\alpha}=0$ for each $\alpha\neq v$.
The statement follows.
\end{proof}

The condition~\eqref{eq.level.ricci} will play an important role.
As it describes how the leaves $\R \times \{s\} \times Q$, $s \in I$, and their Ricci-flat spatial metrics must be \emph{joined} together in order to obtain null Ricci curvature and because of its relation to the momentum density $j$ (\cf \cref{Prop:CharJEq}), we give it the following name.
\begin{definition} \label{Def:JEq}
	A family $g_s$, $s \in I$, of Riemannian metrics is said to satisfy the \emph{j-equation} if we have $\div^{g_s}(\dot{g}_s) - \upd \trace^{g_s}(\dot{g}_s) = 0$. 
\end{definition}

We now want to understand equation~\eqref{eq.level.ricci} more closely.
Following \cite[Chapter 4]{besse:87}, the space of symmetic $2$-tensors on a a closed Ricci-flat manifold $(Q,g)$ admits the decomposition 
\begin{align}\label{eq:2tensor_splitting}
C^{\infty}(S^2Q)=C^{\infty}(Q)\cdot g\oplus \left\{\mathcal{L}_{X}g\mid X\in C^{\infty}(TQ)\right\}\oplus C^{\infty}(TT),
\end{align}
where
\begin{align*}
C^{\infty}(TT):=\left\{h\in C^{\infty}(S^2Q)\mid \trace^{g}h=0,\mathrm{div}^{g}h=0\right\}
\end{align*}
denotes the space of transverse traceless tensors, or TT-tensors for short. Using the $L^2$-orthogonal splitting
\begin{align*}
C^{\infty}(TQ):=\nabla (C^{\infty}(Q))\oplus W_{g},
\end{align*}
with 
\begin{align*}
W_{g}=\left\{ X\in C^{\infty}(TQ) \mid\mathrm{div}^{g}X=0\right\},
\end{align*}
we can refine \eqref{eq:2tensor_splitting} to
\begin{align}\label{eq:ref_2tensor_splitting}
C^{\infty}(S^2Q)=C^{\infty}(Q)\cdot g\oplus 
\nabla^2(C^{\infty}(Q))
\oplus
\left\{\mathcal{L}_{X}g\mid X\in W_{g}\right\}\oplus C^{\infty}(TT).
\end{align}

\begin{proposition}\label{prop_splitting_constraint_solutions}
Let $(Q,g)$ be a connected closed Ricci-flat manifold and $h\in C^{\infty}(S^2Q)$ be a solution of the equation
\begin{align}\label{eq:constraint_eq_analyzed}
\div^{g}h-\upd\,\trace^{g} h=0.
\end{align}
Then there exists a function $f$, a constant $c$ and a $TT$-tensor $\sigma$ such that
\begin{align*}
h=c g+\nabla^2f+\sigma.
\end{align*}
All these components are $L^2$-orthogonal to each other.
The components $c\cdot g$ and $\sigma$ are unique, and $f$ is unique up to an additive constant.
\end{proposition}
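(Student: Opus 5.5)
Since $g$ is Ricci-flat and $Q$ is closed, we may use the refined splitting \eqref{eq:ref_2tensor_splitting} and write
\begin{align*}
h = \psi g + \nabla^2 f + \mathcal{L}_X g + \sigma,
\end{align*}
where $\psi, f \in C^\infty(Q)$, $X \in W_g$ and $\sigma \in C^\infty(TT)$. We then substitute this into \eqref{eq:constraint_eq_analyzed} and compute the operator $\beta(h) \coloneqq \div^g h - \upd\,\trace^g h$ on each summand, using $\ric^g = 0$ throughout.

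For the individual pieces:
\begin{itemize}
\item $\beta(\sigma) = 0$, by the definition of TT-tensors.
\item $\beta(\psi g) = \upd\psi - n\,\upd\psi = -(n-1)\upd\psi$.
\item With the convention $\div^g h = \nabla^i h_{ij}$, commuting derivatives gives $\div^g(\nabla^2 f) = \upd(\nabla^i\nabla_i f) + \ric(\nabla f) = -\upd(\Delta f)$ (with $\Delta$ the nonnegative Laplacian). Since $\trace^g \nabla^2 f = -\Delta f$, we get $\beta(\nabla^2 f) = 0$.
\item $\mathcal{L}_X g = 2\,\mathrm{sym}\nabla X^\flat$ has trace $2\div X = 0$, and $\div^g(\mathcal{L}_X g) = \nabla^i\nabla_i X_j + \nabla^i\nabla_j X_i = -\nabla^*\nabla X^\flat + \upd(\div X) + \ric(X) = -\nabla^*\nabla X^\flat$. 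Hence $\beta(\mathcal{L}_X g) = -\nabla^*\nabla X^\flat$.
\end{itemize}
Altogether, \eqref{eq:constraint_eq_analyzed} becomes
\begin{align*}
-(n-1)\upd\psi - \nabla^*\nabla X^\flat = 0.
\end{align*}

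The main step is to separate these two terms. On a Ricci-flat manifold, Bochner's formula gives $\nabla^*\nabla = \Delta_{\mathrm{Hodge}}$ on $1$-forms. The Hodge Laplacian commutes with $\delta$, and $\delta X^\flat = 0$, so $\nabla^*\nabla X^\flat$ is co-closed. It is therefore $L^2$-orthogonal to the exact form $\upd\psi$ (the Hodge decomposition). Both terms must vanish:
\begin{itemize}
\item $\upd\psi = 0$, so $\psi = c$ is constant because $Q$ is connected.
\item $\nabla^*\nabla X^\flat = 0$; integrating against $X$ gives $\|\nabla X\|^2 = 0$, so $X$ is parallel and $\mathcal{L}_X g = 0$.
\end{itemize}
This yields $h = cg + \nabla^2 f + \sigma$.

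For orthogonality, note that
\begin{align*}
\langle \nabla^2 f, g\rangle_{L^2} = -\int_Q \Delta f\,\dvol = 0 \quad\text{and}\quad \langle \nabla^2 f, \sigma\rangle_{L^2} = \langle f, \div\div\sigma\rangle_{L^2} = 0,
\end{align*}
while $\langle g,\sigma\rangle_{L^2} = \int_Q \trace^g\sigma\,\dvol = 0$.

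Uniqueness follows from orthogonality. If $cg + \nabla^2 f + \sigma = 0$, pairing with $g$ gives $c = 0$, pairing with $\sigma$ gives $\sigma = 0$, and $\nabla^2 f = 0$ then gives $\Delta f = 0$, so $f$ is constant.

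The obstacle I expect is bookkeeping of sign conventions in the divergence and Laplacian. The essential idea is the Ricci-flat Bochner and Hodge orthogonality argument, which forces the $W_g$-component to vanish and the conformal factor to be constant.
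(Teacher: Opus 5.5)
Your proof is correct and follows the paper's argument essentially step by step. It uses the same refined splitting and the same computation, which reduces \eqref{eq:constraint_eq_analyzed} to a multiple of $\upd\psi$ plus $-\nabla^*\nabla X^\flat = -\Delta_H X^\flat$; it then uses the same orthogonality of co-closed and exact forms, while your direct checks of $L^2$-orthogonality and uniqueness spell out what the paper leaves implicit. Your coefficient $-(n-1)$ assumes $\dim Q = n$, whereas the paper sets $\dim Q = n-1$ and gets $(2-n)$; in either convention it is nonzero as long as $\dim Q \geq 2$.
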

\begin{proof}In the following we assume $\dim Q=n-1$. All geometric operators will be with respect to~$g$, and we suppress~$g$ in the notation.
According to \eqref{eq:ref_2tensor_splitting}, $h\in C^{\infty}(S^2Q)$ can be uniquely written as
\begin{align}\label{eq:decomp_symm_tensor}
h=u\cdot g+\nabla^2f+\mathcal{L}_{X}g+\sigma,
\end{align}
where $u,f\in C^{\infty}(Q)$, $X\in W_g$ and $\sigma \in C^{\infty}(TT)$. This completely determines $u$ and $\sigma$. The vector field $X$ is unique up to a Killing vector field. The function $f$ is unique up to a function $v$ with $\nabla^2v=0$. Taking the trace, we see that $v$ is harmonic. Therefore, it is constant, since $Q$ is closed.

Applying \eqref{eq:constraint_eq_analyzed} to the decomposition \eqref{eq:decomp_symm_tensor} yields
\begin{align*}
\div^{g}h-\upd\,\trace^{g} h
  &=(2-n)\upd u+\div^{g}(\nabla^2f)-\upd\,\trace^{g}(\nabla^2f)
  +\div^{g}\mathcal{L}_{X}g-\upd\,\trace^{g}\mathcal{L}_{X}g\\
  &=(2-n)\upd u-\nabla^*\nabla\upd f+\upd\Delta f  +\div^{g}(\mathcal{L}_{X}g)
-2  \upd(\div^gX)
  \\
  &=(2-n)\upd u-\Delta_H(X^{\flat}).
\end{align*}

Here $\Delta_H$ denotes the Hodge-Laplacian on $1$-forms and $X^{\flat}$ is the metric dual of $X$, i.e. $X^{\flat}=g(X,.)$.
In the last equation, we used that $X\in W_g$. Furthermore, we used some standard commutation formulas and the Ricci-flatness of $g$.

Now, observe that $\Delta_HX^{\flat}$ is divergence-free because $X^{\flat}$ is. Thus, it is orthogonal to $\upd u$. Now if  $\div^{g}h-\upd\,\trace^{g} h=0$, we get $\upd u=0$ and $\Delta_HX^{\flat}=0$ separately. These equations imply that $u=c\in\R$ and that $X$ is a Killing vector field. This completes the proof.
\end{proof}
\begin{remark}\label{rem:splitting_constraint_equations}
If $h\in C^{\infty}(Q)$ satisfies \eqref{eq:constraint_eq_analyzed} and $\frac{\upd}{\upd s}_{|s=0}\ric_{g+sh}=0$, then we additionally have that the TT-part $\sigma$ of the decomposition in \cref{prop_splitting_constraint_solutions} satisfies $\Delta_L\sigma=0$, where $\Delta_L=\nabla^*\nabla-2\mathring{R}$ is the Lichnerowicz Laplacian, c.f. \cite[Chapter 12D]{besse:87} for details.
\end{remark}

\subsection{Additional curvature vanishing} \label{subsec:CurvVan}
Using the observations from \cref{sec.compute.ricci}, we can answer a question posed by Chai and Wan. 
The results in this subsection will not be needed in the rest of the paper.

Suppose that $(\overline{M},\overline{g})$ is a pp-wave spacetime with null Ricci curvature $\ric^{\overline{g}} = \rc V^\flat \otimes V^\flat$, where $V$ is a lightlike parallel vector field and $\rc$ is some function.
Consider a spacelike hypersurface $M \subset \overline{M}$ equipped with the unit normal $e_0$ such that $\overline{g}(e_0,V) < 0$.
Then we can write $V_{|M} = u(e_0 + \nu)$ for some positive $u \in C^\infty(M)$ and some unit vector field  $\nu \in \Gamma(TM)$.
Given $p \in M$ and $Z \in \nu^\perp \cap T_pM = V^\perp \cap T_pM$, we have $\ric^{\overline{g}}(\nu,Z) = \rc \overline{g}(V,\nu) \overline{g}(V,Z) = 0$.
Notice that due to $\nabla^{\overline{g}} V = 0$, we have
\begin{equation} \label{eq:TracedCurv}
	\begin{aligned}
		0= \ric^{\overline{g}}(\nu,Z) &= -\frac{1}{2}R^{\overline{g}}(\nu, e_0 + \nu, e_0 - \nu, Z) - \frac{1}{2}R^{\overline{g}}(\nu, e_0 - \nu, e_0 + \nu, Z) + \sum_{i=1}^{n-1} R^{\overline{g}}(\nu,e_i,e_i,Z) \\
		&= \sum_{i=1}^{n-1} R^{\overline{g}}(\nu,e_i,e_i,Z),
	\end{aligned}
\end{equation}
where $(e_1, \ldots, e_n, \nu)$ is an orthonormal basis of $T_pM$.
This would of course be the case if
\begin{gather*}
	R^{\overline{g}}(\nu,X,Y,Z) = 0
\end{gather*}
for all $X,Y,Z \in \nu^{\perp} \cap T_pM$.
In \cite[Rem.~4.2]{Chai.Wan:2024p}, Chai and Wan wonder whether all these components of the ambient Riemann curvature tensor vanish in the context of the initial data rigidity theorem of Eichmair, Galloway and Mendes \cite{Eichmair.Galloway.Mendes:2021}.
It was recently shown by the second-named author \cite{gloeckle:2025p} that the initial data sets considered there embed into pp-wave spacetimes.
Actually, they are contained in simple pp-wave spacetimes modeled on a torus $T^{n-1}$ where the family of metrics $g_s$ on $T^{n-1}$ consists of flat metrics.
With the help of the following proposition, we can thus give an affirmative answer to the question by Chai and Wan.

\begin{proposition} \label{Prop:FlatCurvVan}
	Consider a simple pp-wave spacetime modeled on a closed manifold $Q$
	\begin{gather*}
	(\overline{M},\overline{g}) \coloneqq (\R \times I \times Q, \upd v \otimes \upd s + \upd s \otimes \upd v + u^{-2}\upd s^2 + g_s)
	\end{gather*}
	with null Ricci curvature.
	Assume that $g_s$ is flat for all $s \in I$.
	Then
	\begin{gather*}
	R^{\overline{g}}(\bullette,X,Y,Z) = 0
	\end{gather*}
	for all $X,Y,Z \in (\frac{\del}{\del v}_{|p})^\perp$, $p \in \overline{M}$.
\end{proposition}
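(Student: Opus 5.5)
The claim is that all components $R^{\overline{g}}(\bullette,X,Y,Z)$ with $X,Y,Z\in V^\perp$ vanish, where $V=\frac{\del}{\del v}$. We compute them in the coordinates $(v,s,x)$ with the Christoffel symbols listed in the proof of \cref{lem.lor.ricci-flat}. Since $V$ is parallel, $R^{\overline{g}}(V,\bullette,\bullette,\bullette)=0$, and $R(\bullette,\bullette,\bullette,V)=0$ by symmetry. Moreover $V^\perp=\operatorname{span}\{\del_v,\del_i\}$. Hence it suffices to take $X,Y,Z$ tangent to $Q$ (i.e.\ among the $\del_i$) and the first slot equal to either $\del_s$ or $\del_l$.

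For the first slot $\del_l$ we get $R^{\overline{g}}(\del_l,\del_i,\del_j,\del_k)$. The leaves $\{v=c,\,s=c'\}$ have induced metric $g_s$ and normal bundle spanned by $\del_v$ and $\del_s - u^{-2}\del_v$ (up to normalization). The Gauss equation gives the ambient curvature as $R^{g_s}$ plus quadratic terms in the second fundamental form. From $\Gamma_{ij}^v=-\frac12(\dot g_s)_{ij}$ and $\Gamma_{ij}^s=0$, the second fundamental form is $-\frac12\dot g_s\otimes\del_v$. This is null, so the quadratic terms involve $\overline{g}(\del_v,\del_v)=0$ and vanish. Thus $R^{\overline{g}}(\del_l,\del_i,\del_j,\del_k)=R^{g_s}_{lijk}=0$ by flatness. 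This step can also be checked directly from the formula $R_{\mu\nu\lambda}{}^\kappa$, because all products of Christoffel symbols pairing a $v$-index with another $v$-index vanish, since no symbol has a lower $v$ index.

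The main step is the first slot $\del_s$, i.e.\ the Codazzi-type components $R^{\overline{g}}(\del_s,\del_i,\del_j,\del_k)$. A direct computation should give
\[
R^{\overline{g}}(\del_k,\del_j,\del_i,\del_s)=\tfrac12\bigl(\nabla^{g_s}_k\dot g_{ij}-\nabla^{g_s}_j\dot g_{ik}\bigr),
\]
which is the Codazzi equation for the null second fundamental form $\frac12\dot g_s$ along the transversal $\del_s$. So we need $\dot g_s$ to be a Codazzi tensor on the flat $(Q,g_s)$. Here we use null Ricci curvature: by \cref{lem.lor.ricci-flat} the family satisfies the j-equation, and \cref{prop_splitting_constraint_solutions} gives $\dot g_s=c_s g_s+\nabla^2 f_s+\sigma_s$ with $\sigma_s$ a TT tensor. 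The term $c_sg_s$ is parallel. For $\nabla^2 f_s$, the antisymmetrized derivative $\nabla_k\nabla_j\nabla_i f-\nabla_j\nabla_k\nabla_i f$ is a curvature term and vanishes since $g_s$ is flat.

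The remaining issue is $\sigma_s$, and this is where we expect the main obstacle. Because $g_s$ stays flat, differentiating $\ric^{g_s}=0$ in $s$ shows that $\dot g_s$ is an infinitesimal Ricci-flat deformation, so by \cref{rem:splitting_constraint_equations} we have $\Delta_L\sigma_s=0$. On a flat manifold $\Delta_L=\nabla^*\nabla$, so integrating over the closed $Q$ gives $\nabla\sigma_s=0$. Thus $\sigma_s$ is parallel and trivially Codazzi. Combining the three pieces, all components with first slot $\del_s$ vanish. Together with the previous step and the vanishing of all components involving $V$, this proves the proposition. The care needed is in verifying the Codazzi-type formula, including that $u$ does not enter: $u$ appears only in $\Gamma_{is}^v$, $\Gamma_{ss}^i$ and $\Gamma_{ss}^v$, and these contribute only to components with two $s$-slots or a $v$-slot.
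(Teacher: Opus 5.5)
Your proposal is correct and follows essentially the same route as the paper. The Gauss equation with the null second fundamental form $-\frac{1}{2}\dot g_s\,\frac{\partial}{\partial v}$ kills the purely tangential components, the Codazzi equation reduces the remaining ones to $\dot g_s$ being a Codazzi tensor, and this follows from the splitting $\dot g_s = c_s g_s + \nabla^2 f_s + \sigma_s$ together with $\sigma_s$ being parallel, since $\Delta_L = \nabla^*\nabla$ on the flat closed $(Q,g_s)$ (the sign of your Codazzi formula differs from the paper's, which is immaterial because only its vanishing is used).
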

\begin{proof}
	Let $p = (v,s,x) \in \overline{M}$.
	In the following, we will identify $Q$ with $\{v\} \times \{s\} \times Q$.
	First of all, we show that the the (vector-valued) second fundamental form $\vec{\II}$ of $Q$ in $\overline{M}$ is given by
	\begin{gather*}
	\vec{\II} = -\frac{1}{2}\dot{g}_s \frac{\del}{\del v}.
	\end{gather*}
	This directly follows from the observation that the normal bundle of $Q$ in $\overline{M}$ is spanned by $\frac{\del}{\del s}$ and $\frac{\del}{\del v}$ and the calculations
	\begin{align*}
	\overline{g}\left(\vec{\II}(X,Y), \frac{\del}{\del v}\right)
	&= -\frac{1}{2} \Lie_{\frac{\del}{\del v}} \overline{g}(X,Y) = 0 \\
	\overline{g}\left(\vec{\II}(X,Y), \frac{\del}{\del s}\right)
	&= -\frac{1}{2} \Lie_{\frac{\del}{\del s}} \overline{g}(X,Y) = -\frac{1}{2}\dot{g}_s(X,Y)
	\end{align*}
	for $X,Y \in TQ$.
	
	Now it is easy to calculate the relevant components of the curvature tensor.
	Notice that it suffices to consider $X,Y,Z \in T_xQ$ since $\left(\frac{\del}{\del v}_{|p} \right)^\perp = \R \cdot \frac{\del}{\del v}_{|p} \oplus T_xQ$ and $\frac{\del}{\del v}$ is parallel.
	Because $\frac{\del}{\del v}$ is lightlike and $g_s$ is flat, the Gauß formula shows that
	\begin{gather*}
	R^{\overline{g}}(X,Y,Z,W) = R^{g_s}(X,Y,Z,W) - \overline{g}(\vec{\II}(X,W),\vec{\II}(Y,Z)) + \overline{g}(\vec{\II}(X,Z),\vec{\II}(Y,W)) = 0.
	\end{gather*}
	for all $X,Y,Z,W \in T_xQ$.
	As $\frac{\del}{\del v}$ is parallel, the Codazzi formula allows us to calculate
	\begin{align*}
	R^{\overline{g}}(X,Y,Z,N) &= \overline{g}(\overline{\nabla}_X (\vec{\II}(Y,Z)) - \vec{\II}(\nabla^{g_s}_X Y, Z) - \vec{\II}(Y, \nabla^{g_s}_X Z), N) \\
	&\phantom{=}\;- \overline{g}(\overline{\nabla}_Y (\vec{\II}(X,Z)) - \vec{\II}(\nabla^{g_s}_Y X, Z) - \vec{\II}(X, \nabla^{g_s}_Y Z), N) \\
	&= -\frac{1}{2} \left(\nabla^{g_s}_X \dot{g}_s(Y,Z) - \nabla^{g_s}_Y \dot{g}_s(X,Z) \right) \overline{g}\left(\frac{\del}{\del v},N \right)
	\end{align*}
	for all $X,Y,Z \in T_xQ$ and $N \in T_xQ^\perp$.
	Thus the claim follows once we have shown that
	\begin{gather*}
	\nabla^{g_s}_X \dot{g}_s(Y,Z) - \nabla^{g_s}_Y \dot{g}_s(X,Z) = 0
	\end{gather*}
	for all $X,Y,Z \in T_xQ$.
	
	To do so, we recall from \cref{lem.lor.ricci-flat,prop_splitting_constraint_solutions} that we have a decomposition $\dot{g}_s = cg_s + \nabla^2 f + \sigma$, where $c \in \R$, $f \in C^\infty(Q)$ and $\sigma$ is a TT-tensor of $(Q,g_s)$.
	Moreover, according to \cref{rem:splitting_constraint_equations}, $\sigma$ is in the kernel of the Lichnerowicz Laplacian, which in this case is just the connection Laplacian since $g_s$ is flat.
	Thus partial integration over the compact manifold $Q$ shows that $\sigma$ is parallel with respect to the Levi-Civita connection of $(Q,g_s)$.
	Hence,
	\begin{align*}
	\nabla^{g_s}_X \dot{g}_s(Y,Z) - \nabla^{g_s}_Y \dot{g}_s(X,Z) &=
	\nabla^{g_s}_X(\nabla^{g_s} \upd f)(Y,Z) - \nabla^{g_s}_Y (\nabla^{g_s} \upd f)(X,Z) \\
	&= R^{g_s}(X,Y,\grad^{g_s}f,Z) = 0
	\end{align*}
	for all $X,Y,Z \in T_xQ$.
\end{proof}

\begin{corollary}
	Let $(\overline{M},\overline{g})$ be a simple pp-wave spacetime modeled on a closed manifold $Q$ with null Ricci curvature and denote its canonical lightlike parallel vector field by $V$.
	Assume that $\dim(\overline{M}) \leq 5$.
	Then
	\begin{gather*}
	R^{\overline{g}}(\bullette,X,Y,Z) = 0
	\end{gather*}
	for $X,Y,Z \in (V)^\perp$. 
\end{corollary}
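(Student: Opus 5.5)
The plan is to reduce to \cref{Prop:FlatCurvVan} by showing that, for $\dim(\overline{M})\le 5$, every Ricci-flat metric $g_s$ on the closed manifold $Q$ is automatically flat. Since $(\overline{M},\overline{g})$ is a simple pp-wave, it is isometric to a parametrized one,
\begin{align*}
(\R\times I\times Q,\ \upd v\otimes\upd s+\upd s\otimes\upd v+u^{-2}\upd s^2+g_s),
\end{align*}
and $V$ corresponds to $\frac{\del}{\del v}$. The statement is invariant under isometries, so I will work in this model. In this model $\dim Q=\dim\overline{M}-2\le 3$.

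By \cref{lem.lor.ricci-flat}, null Ricci curvature of $\overline{g}$ gives $\ric^{g_s}=0$ for every $s\in I$. In dimension at most $3$ the Weyl tensor vanishes identically, so the Riemann tensor is determined algebraically by the Ricci tensor through the Kulkarni--Nomizu decomposition. The cases are as follows. If $\dim Q=3$, then $R=(\mathrm{Schouten})\owedge g$, and the Schouten tensor vanishes when $\ric=0$. If $\dim Q=2$, then $R=\tfrac{\scal}{2}\,\tfrac12 g\owedge g$ with $\scal=0$. If $\dim Q\le 1$, the curvature vanishes trivially. In every case $g_s$ is flat for all $s$.

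The hypotheses of \cref{Prop:FlatCurvVan} are then satisfied: $Q$ is closed, the spacetime has null Ricci curvature, and all $g_s$ are flat. Its conclusion $R^{\overline{g}}(\bullette,X,Y,Z)=0$ for $X,Y,Z\in(\frac{\del}{\del v})^\perp=V^\perp$ transfers back to $(\overline{M},\overline{g})$ through the isometry.

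There is no real obstacle. The only point to check is the low-dimensional fact that Ricci-flat implies flat for $\dim Q\le 3$. The degenerate cases $\dim Q\in\{0,1\}$ are also harmless, since there $V^\perp$ is spanned by $V$ together with at most one further direction, and the curvature components vanish by antisymmetry together with $\nabla V=0$.
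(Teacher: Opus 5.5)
Your proposal is correct and is exactly the paper's argument. By \cref{lem.lor.ricci-flat} the leaf metrics $g_s$ are Ricci-flat, which in dimension $\dim Q \le 3$ forces flatness, and \cref{Prop:FlatCurvVan} then gives the claim; your remarks on the vanishing Weyl tensor and the degenerate cases $\dim Q \le 1$ only spell out details the paper leaves implicit.
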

\begin{proof}
	This follows directly from \cref{Prop:FlatCurvVan} since up to dimension $3$ every Ricci-flat metric is flat.
\end{proof}

In the proof of \cref{Prop:FlatCurvVan}, we crucially used flatness of all the $g_s$, $s \in I$.
As we shall prove in \cref{sec.classification}, there are many more (simple) pp-wave spacetimes with null Ricci curvature.
An example is the simple pp-wave spacetime
\begin{gather*}
	(\overline{M}, \overline{g}) \coloneqq (\R \times \R \times Q, \upd v \otimes \upd s + \upd s \otimes \upd v + u^{-2}\upd s^2 + g),
\end{gather*} 
where $(Q,g)$ is a closed Ricci-flat Riemannian manifold and $u \in C^\infty(\R \times Q)$ is positive.
The calculations in the proof of \cref{Prop:FlatCurvVan} also show that $R^{\overline{g}}(X,Y,Z,N) = 0$ for all $X,Y,Z \in T_p(\{v\} \times \{s\} \times Q)$ and $N \in T_p(\{v\} \times \{s\} \times Q)^\perp$, $p = (v,s,x) \in \overline{M}$.
So if $M$ is the spacelike hypersurface $\{0\} \times \R \times Q$, then $R^{\overline{g}}(\nu,X,Y,Z) = 0$ for all $X,Y,Z \in \nu^\perp \cap T_pM$, $p \in M$, where $\nu = u\frac{\del}{\del s}$ is the normalized projection of $\frac{\del}{\del v}$ on $TM$.
However, we will show below that unless $g$ is flat as in \cref{Prop:FlatCurvVan}, this curvature vanishing is not very stable and crucially depends on the choice of spacelike hypersurface:
It is possible to slightly perturb $M$ inside $\overline{M}$ such that $R^{\overline{g}}(\nu,X,Y,Z) = 0$ no longer holds true for all $X,Y,Z \in \nu^\perp \cap T_pM$.
Thus we have constructed a lot of examples where this strict form of curvature vanishing does not hold.

It is worth noting that this picture does also not change if we require $(\overline{M},\overline{g})$ to carry a lightlike parallel spinor for, by \cref{Thm:ExParSpinors}, the only additional condition is that $(Q,g)$ carries a parallel spinor.
Nonetheless, the existence of a lightlike parallel spinor implies that a certain spinoral curvature condition is satisfied, which is weaker than the discussed vanishing of $R^{\overline{g}}(\nu,X,Y,Z)$ but a priori stronger than vanishing of its trace~\eqref{eq:TracedCurv}, \cf \cref{Cor:SpinCurvVan}.

\begin{proposition}
	Let $M$ be a spacelike hypersurface within a spacetime $(\overline{M},\overline{g})$ with lightlike parallel vector field $V$.
	Suppose that $p \in M$, $Q \subset M$ is the leaf of the foliation defined by $V^\perp$ on $M$ and $g$ is the induced metric on $Q$.
	Assume that the curvature of $(Q,g)$ does not vanish in $p$.
	Then, given any open neighborhood $U$ of $p$, any norm $\|\bullette\|$ on $C^\infty_c(U)$ and any $\epsilon > 0$, there is a compactly supported function $f \in C^\infty(M)$ with $\supp(f) \subset U$ and $\|f\| \leq \epsilon$ such that
	\begin{gather*}
	\tilde{M} = \{\mathrm{Fl}^V(f(q),q) \mid q \in M\}
	\end{gather*}
	is a spacelike hypersurface of $(\overline{M}, \overline{g})$ through $p$ with the following property:
	There are $X,Y,Z \in \nu^\perp \cap T_p\tilde{M}$ such that
	\begin{gather*}
	R^{\overline{g}}(\nu,X,Y,Z) \neq 0,
	\end{gather*}
	where $\nu \in \Gamma(T\tilde{M})$ denotes the orthogonal projection of $V$ on $T\tilde{M}$ normalized to unit length.
\end{proposition}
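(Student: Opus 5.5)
The plan is to reduce to a local computation near $p$, using the normal form of \cref{Prop:GeodNormCoord} and \cref{Lem:ChangeHypersurface}, and then to read off how the component $R^{\overline{g}}(\nu,X,Y,Z)$ changes when $M$ is replaced by a graph over it. We start by choosing local coordinates near $p$ in which
\begin{align*}
\overline{g}=\upd v\otimes \upd s+\upd s\otimes \upd v+u^{-2}\upd s^2+g_s ,
\end{align*}
with $M=\{v=0\}$. Here $Q$ corresponds to the slice $\{s=s_0\}$ of $M$ and $g_{s_0}=g$. In these coordinates the graph $\tilde M=\{v=f\}$ is exactly the hypersurface described in the statement. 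Spacelikeness is an open condition, so it is preserved once $f$ is $C^1$-small. Since $\|\bullette\|$ is an arbitrary norm on $C^\infty_c(U)$, we will take $f=t f_0$ for a fixed bump function $f_0$ and let $t$ be small. We also require $f_0(p)=0$, so that $p\in\tilde M$.

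Next we identify $\nu^\perp\cap T_p\tilde M$. The leaf of $V^\perp$ through $p$ is unchanged, so this space is $\R\cdot V|_p\oplus T_xQ$ intersected with $T_p\tilde M$. Concretely it is $\{X+\upd f(X)\,\partial_v : X\in T_xQ\}$, using that $\tilde M$ is the graph of $f$. Because $V$ is parallel, $R^{\overline{g}}(\cdot,\cdot,\cdot,V)=0$, so only the $T_xQ$ components of $X,Y,Z$ contribute. The unit vector $\nu$ along $\tilde M$ is the normalized projection of $V$. It can be written as $\nu=a\,\partial_s+b\,\partial_v+W$ with $W\in T_xQ$, where $W$ is proportional to $-\grad^{g}f$ at $p$ to first order; this is the same gradient that appears in \cref{Lem:ChangeHypersurface}. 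Consequently
\begin{align*}
R^{\overline{g}}(\nu,X,Y,Z)=a\,R^{\overline{g}}(\partial_s,X,Y,Z)+R^{\overline{g}}(W,X,Y,Z),
\end{align*}
since the $\partial_v$ term vanishes.

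The key observation is that the Gauss equation from the proof of \cref{Prop:FlatCurvVan} gives $R^{\overline{g}}|_{TQ}=R^{g}$ whenever $V$ is lightlike, because the second fundamental form $\vec{\II}$ is proportional to $V$. Hence
\begin{align*}
R^{\overline{g}}(W,X,Y,Z)=R^{g}(W,X,Y,Z).
\end{align*}
The term $a\,R^{\overline{g}}(\partial_s,X,Y,Z)$ is a fixed Codazzi-type term. The factor $a$ equals $u^2$ times a normalization and changes only to second order in $\upd f(p)$, so this term is independent of $f$ to first order. We now compare $M$ and $\tilde M_t$ for $f=tf_0$ with $f_0(p)=0$ and $\upd f_0(p)=\xi$. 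If the original $M$ already has a nonzero component, we simply take $f=0$. Otherwise the component for $\tilde M_t$ equals
\begin{align*}
-t\,c\,R^{g}(\xi^\sharp,X,Y,Z)+O(t^2), \qquad c>0 .
\end{align*}

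It remains to choose $\xi$ and $X,Y,Z$ so that $R^{g}(\xi^\sharp,X,Y,Z)\neq 0$. This is possible because $R^g_p\neq 0$: the curvature tensor is nonzero in some component $R^g(e_a,e_b,e_c,e_d)$. We take $\xi^\sharp=e_a$ and then pick $f_0$ supported in $U$ with $f_0(p)=0$ and $\upd f_0(p)=\xi$. For small $t$ the first-order term dominates the $O(t^2)$ remainder, so the component is nonzero, and $\|tf_0\|\le\epsilon$. I expect the main obstacle to be the bookkeeping in the second step: one has to verify that both the second-order terms in $\nu$ and the change of the tangent space of $\tilde M$ contribute only $O(t^2)$, or terms proportional to $V$ that vanish. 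An alternative is to apply \cref{Lem:ChangeHypersurface} directly, turning $\tilde M$ into $\{v=0\}$ for the new data $\phi_s^*g_s$ and $u$, and to compute $R^{\overline{g}}(\partial_s,X,Y,Z)$ there via the Codazzi formula; I would use this as a cross-check of the first-order term.
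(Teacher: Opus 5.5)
Your proposal is correct and follows essentially the paper's argument. You dispose of the case where the original $M$ already has a non-vanishing component, note that the normalized projection of $V$ to $T_p\tilde M$ has $T_pQ$-component a positive multiple of $-\grad^g f$, and drop all $V$-components because $V$ is parallel. You then use the Gauss equation to reduce to $R^g(\grad^g f,X,Y,Z)$ and take $f=tF$ with $\grad^g F(p)$ in a direction where $R^g_p\neq 0$.

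The only difference is that the paper works coordinate-free with the explicit vector $\tilde N = N+\upd f(N)V-\overline{g}(N,V)\bigl(\grad^g f+|\grad^g f|_g^2V\bigr)$. This vector spans the orthogonal complement of $T_p\tilde Q$ in $T_p\tilde M$, and it shows that your expansion is exact, with no $O(t^2)$ remainder. So the bookkeeping you flag as the main obstacle does not arise. Your factor $a$ does in general vary to first order in $\partial_s f(p)$, but this is irrelevant because the $\partial_s$-term vanishes identically in the case you treat.
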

\begin{proof}
	Let $0 \neq N \in T_pM$ such that $N \perp T_pQ$.
	Note for later that this implies $\overline{g}(N,V) \neq 0$.
	We can assume in the following that $R^{\overline{g}}(N,X,Y,Z) = 0$ for all $X,Y,Z \in T_pM$ for otherwise $f \equiv 0$, and thus $\tilde{M}=M$, does the job.
	
	Now for any smooth function $f \in C^\infty(M)$ with $f(p) = 0$, we consider hypersurface 
	\begin{gather*}
	\tilde{M} = \{\mathrm{Fl}^V(f(q),q) \mid q \in M\} \subset \overline{M},
	\end{gather*}
	where $\mathrm{Fl}^V$ denotes the flow of $V$.
	Clearly, $p \in \tilde{M}$ and $T_p\tilde{M} = \{X + \upd f(X) V_{|p} \mid X \in T_pM\}$.
	We let $\tilde{Q}$ be the leaf through $p$ of the foliation defined by $V^\perp$ on $\tilde{M}$.
	Note that if $X \in T_pQ$, then $X + \upd f(X) V_{|p} \in T_p\tilde{Q}$ and vice versa.
	Hence,
	\begin{gather*}
	\tilde{N} \coloneqq N + \upd f(N) V_{|p} - \overline{g}(N,V) \left(\grad^g(f) + |\grad^g(f)|_g^2 V_{|p} \right) \in T_p \tilde{M}
	\end{gather*}
	is perpendicular to $T_p\tilde{Q}$.
	It suffices to show that there are a suitable function $f$ and $X,Y,Z \in T_pQ$ such that
	\begin{gather*}
	R^{\overline{g}}(\tilde{N}, X + \upd f(X) V_{|p}, Y + \upd f(Y) V_{|p}, Z + \upd f(Z) V_{|p}) \neq 0.
	\end{gather*}
	
	Since $V$ is parallel and we assumed $R^{\overline{g}}(N,X,Y,Z) = 0$,
	\begin{align*}
	R^{\overline{g}}(\tilde{N}, X + \upd f(X) V_{|p}, Y + \upd f(Y) V_{|p}, Z + \upd f(Z) V_{|p}) &= R^{\overline{g}}(N - \overline{g}(N,V) \grad^g(f), X, Y, Z) \\
	&= -\overline{g}(N,V) R^{\overline{g}}(\grad^g(f),X,Y,Z)
	\end{align*}
	for any $X,Y,Z \in T_pM$.
	Applying the Gauß formula as in the proof of \cref{Prop:FlatCurvVan}, we obtain for any $X,Y,Z \in T_pM$ that
	\begin{gather*}
	R^{\overline{g}}(\grad^g(f),X,Y,Z) = R^g(\grad^g(f),X,Y,Z).
	\end{gather*}
	
	Now, since $g$ is not flat in $p$, there are $X,Y,Z,W \in T_pM$ such that $R^g(W,X,Y,Z) \neq 0$.
	We choose a function $F \in C^\infty(M)$ such that $\grad^g(F) = W$, which is always possible.
	Of course, $F$ can be chosen such that $F$ is compactly supported and $\supp(F) \subset U$.
	The function we are supposed to construct will be $f = t F$ for a small constant $t > 0$.
	If $t$ is small enough, then $\|f\| < \epsilon$ is satisfied.
	Moreover, the hypersurface $\tilde{M}$ will be spacelike if the $C^1$-norm of $f$ is small enough, which can be achieved by choosing $t$ sufficiently small.
	Finally, taking the equations above together, we obtain
	\begin{gather*}
	R^{\overline{g}}(\tilde{N}, X + \upd f(X) V_{|p}, Y + \upd f(Y) V_{|p}, Z + \upd f(Z) V_{|p}) = -\overline{g}(N,V) R^g(t W,X,Y,Z) \neq 0,
	\end{gather*}
	which was left to show.
\end{proof}


\section{Classification of simple pp-waves with null Ricci curvature}\label{sec.classification}
\subsection{The role of the leafwise volume}
We let again $Q$ be a closed $(n-1)$-dimensional manifold (admitting Ricci-flat metrics).
We consider metrics of the form \eqref{eq:AGK_metrics} and analyze the volume function $s \mapsto \vol^{g_s}(Q)$ or rather its $(n-1)$-st root.

\begin{proposition}\label{prop:ScaleODE}
	Given a smooth family $g_s$, $s \in I$, of Riemannian metrics on $Q$ and a smooth function $\rc \in C^\infty(I \times Q)$, let the functions $\Rho, \Sigma \in C^\infty(I)$ be defined by
	\begin{equation} \label{eq:DefRhoSigma}
		\begin{aligned}
			\Rho_s &\coloneqq \strokedint_Q \rc_s \dvol^{g_s} \\
			\Sigma_s &\coloneqq \strokedint_Q |\sigma_s|_{g_s}^2 \dvol^{g_s}.
		\end{aligned}
	\end{equation}
	Here, $\sigma_s$ denotes the TT-part of $\dot{g}_s$ (with respect to $g_s$).
	Also, we use the shorthand $\strokedint_Q \bullet \;\dvol^{g_s} \coloneqq \vol^{g_s}(Q)^{-1}\int_Q \bullet \;\dvol^{g_s}$ for the mean.
	These $\Rho$ and $\Sigma$ are invariant under pulling back $(g_s,\rho_s)$, $s \in I$, along a smooth family $\phi_s$, $s \in I$, of diffeomorphisms of $Q$ and under rescaling $g_s$, $s \in I$, with a positive function $c \in C^\infty(I)$.
	Suppose now that
	\begin{gather} \label{eq:AGK_metrics_3}
		(\overline{M},\overline{g}) = (\R \times I \times Q, \upd v \otimes \upd s + \upd s \otimes \upd v + u^{-2}\upd s + g_s)	
	\end{gather}
	has null Ricci curvature $\ric^{\overline{g}} = \rc_s \upd s^{2}$.
	Then $\lambda_s \coloneqq \sqrt[n-1]{\vol^{g_s}(Q)}$ is subject to the ODE
	\begin{gather} \label{eq:ScaleODE}
		\ddot{\lambda}_s = -\frac{1}{n-1} \left(\Rho_s + \frac{1}{4} \Sigma_s \right) \lambda_s,
	\end{gather}
	where $\Rho_s$ and $\Sigma_s$ are the quantities determined by $s \mapsto (g_s,\rho_s)$.
\end{proposition}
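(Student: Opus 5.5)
We first dispose of the invariance claims. Pulling back $(g_s,\rc_s)$ along a family $\phi_s$ gives $\frac{\upd}{\upd s}\phi_s^*g_s = \phi_s^*\dot g_s + \Lie_{X_s}\phi_s^*g_s$ for the time-dependent generator $X_s$. The Lie-derivative term has vanishing TT-part by the decomposition \eqref{eq:2tensor_splitting}, and TT-parts are natural under diffeomorphisms. Means are diffeomorphism invariant, so $\Rho$ and $\Sigma$ are unchanged. Under rescaling $g_s\mapsto c_s g_s$, the derivative becomes $c_s\dot g_s + \dot c_s g_s$. The added pure-trace term does not affect the TT-part, and the TT-condition is scale invariant, so $\sigma\mapsto c\sigma$. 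Then $|c\sigma|^2_{cg}=|\sigma|^2_g$, and means are scale invariant. Hence $\Sigma$ is unchanged, and $\Rho$ trivially so.

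For the ODE, null Ricci curvature together with \cref{lem.lor.ricci-flat} gives that each $g_s$ is Ricci-flat, satisfies the j-equation, and \eqref{eq:CalcRc} holds. By \cref{prop_splitting_constraint_solutions}, applied for each $s$ (with $Q$ connected), we may write $\dot g_s = c_s g_s + \nabla^2 f_s + \sigma_s$ with $c_s$ constant on $Q$ and the three summands $L^2$-orthogonal. Put $V_s=\vol^{g_s}(Q)$. Then
\begin{align*}
\dot V_s=\tfrac12\int_Q \tr\dot g_s\,\dvol = \tfrac{n-1}{2}c_sV_s,
\end{align*}
since $\int\Delta f_s=0$. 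Hence $\lambda_s=V_s^{1/(n-1)}$ satisfies $\dot\lambda/\lambda=c/2$, and so
\begin{align*}
\ddot\lambda/\lambda=\tfrac{\dot c}{2}+\tfrac{c^2}{4}.
\end{align*}
It remains to compute $\dot c_s$. Average \eqref{eq:CalcRc} over $Q$. The Laplacian term integrates to zero. Next, $\tr\dot g_s=(n-1)c_s+\Delta f_s$, so $\frac{\upd}{\upd s}\tr\dot g_s=(n-1)\dot c_s+\frac{\upd}{\upd s}(\Delta^{g_s}f_s)$.

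The main obstacle is the mean of $\frac{\upd}{\upd s}(\Delta^{g_s}f_s)$. The plan is to differentiate the identity $\int_Q\Delta^{g_s}f_s\,\dvol^{g_s}=0$ in $s$. This gives
\begin{align*}
\int\tfrac{\upd}{\upd s}(\Delta f_s)\,\dvol=-\tfrac12\int\Delta f_s\,\tr\dot g_s\,\dvol=-\tfrac12\int(\Delta f_s)^2\,\dvol,
\end{align*}
using $\int \Delta f_s\, c_s=0$. With the sign convention in which $\tr\nabla^2f=\Delta f$, Bochner's formula on a Ricci-flat manifold yields $\int|\nabla^2f|^2=\int(\Delta f)^2$. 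Orthogonality gives
\begin{align*}
\int|\dot g|^2=(n-1)c^2V+\int|\nabla^2f|^2+\int|\sigma|^2,
\end{align*}
with no cross term $2c\int\Delta f=0$. Substituting into the averaged \eqref{eq:CalcRc} gives
\begin{align*}
\Rho=-\tfrac{n-1}{2}\dot c+\tfrac14\strokedint(\Delta f)^2-\tfrac{n-1}{4}c^2-\tfrac14\strokedint(\Delta f)^2-\tfrac14\Sigma .
\end{align*}
The $f$-terms cancel. Therefore $\frac{\dot c}{2}+\frac{c^2}{4}=-\frac1{n-1}(\Rho+\frac14\Sigma)$, which is \eqref{eq:ScaleODE}. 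I would double-check the sign convention of $\Delta$ in \eqref{eq:CalcRc}. Since the Laplacian term there integrates to zero anyway, only the internal consistency of the Bochner step matters.
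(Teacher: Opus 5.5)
Your proposal is correct and follows essentially the paper's proof. Both use \cref{lem.lor.ricci-flat} to average \eqref{eq:CalcRc} over $Q$, then apply the decomposition $\dot g_s=c_sg_s+\nabla^2f_s+\sigma_s$ from \cref{prop_splitting_constraint_solutions} with its $L^2$-orthogonality and the Ricci-flat Bochner identity $\int_Q|\nabla^2f_s|^2\dvol^{g_s}=\int_Q(\Delta f_s)^2\dvol^{g_s}$; your invariance argument also matches the paper's. The differences are only bookkeeping: you introduce $c_s$ at the outset to get $\dot\lambda_s/\lambda_s=c_s/2$ and compute $\dot c_s$, whereas the paper computes $\frac{\upd^2}{\upd s^2}\vol^{g_s}(Q)$ in two ways and reduces to an identity for $\Sigma_s$ only at the end.
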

\begin{proof}
	The invariance of $\Rho_s = \strokedint_Q \rc_s \dvol^{g_s}$ is immediate, since the mean of a function is invariant under diffeomorphisms and independent of the scaling of the metric.
	Next, we show that $\Sigma_s$ stays unchanged under these operations.
	If $\tilde{g}_s = c_s \phi_s^* g_s$ for a positive function $c \in C^\infty(I)$ and a smooth family of diffeomorphisms $\phi_s$, $s \in I$, then $\dot{\tilde{g}}_s = c_s \phi_s^* \dot{g}_s + \dot{c}_s \phi_s^* g_s + c_s \phi_s^*(\Lie_{X_s} g_s)$ for $X_s \circ \phi_s = \dot{\phi}_s$.
	This shows if $\sigma_s$ is the TT-part of $\dot{g}_s$ with respect to $g_s$, then the TT-part of $\dot{\tilde{g}}_s$ with respect to $\tilde{g}_s$ is given by $\tilde{\sigma}_s \coloneqq c_s \phi_s^* \sigma_s$.
	Thus $|\tilde{\sigma}_s|_{\tilde{g}_s}^2 = \phi_s^* |\sigma_s|_{g_s}^2$ and so $\Sigma_s$ is the same for $g_s$ and $\tilde{g}_s$, $s \in I$.
	
	Now we derive the ODE.
	Recall that $\vol^{g_s}(Q) = \lambda_s^{n-1}$ and observe that we have
	\begin{gather*}
		 \frac{1}{2}\strokedint_Q \tr^{g_s}(\dot{g}_s) \dvol^{g_s} = \lambda_s^{-(n-1)} \frac{\upd}{\upd s} \vol^{g_s}(Q) = (n-1)\lambda_s^{-1}\dot{\lambda}_s.
	\end{gather*}
	Similarly, we obtain that on the one hand
	\begin{align*}
		\lambda_s^{-(n-1)}\frac{\upd^2}{\upd s^2} \vol^{g_s}(Q) &= (n-1) \lambda_s^{-1} \ddot{\lambda}_s +  (n-2)(n-1) \lambda_s^{-2} \dot{\lambda}_s^2,
	\intertext{while on the other hand}
		\lambda_s^{-(n-1)}\frac{\upd^2}{\upd s^2} \vol^{g_s}(Q) &= \lambda_s^{-(n-1)}\frac{\upd}{\upd s} \int_Q \frac{1}{2} \tr^{g_s}(\dot{g}_s) \dvol^{g_s} \\
			&= \lambda_s^{-(n-1)}\int_Q \left(\frac{1}{2} \frac{\upd}{\upd s} \tr^{g_s}(\dot{g}_s) + \frac{1}{4} \tr^{g_s}(\dot{g}_s)^2 \right) \dvol^{g_s} \\
			&= \strokedint_Q \left(-\rc_s + \frac{1}{4} \tr^{g_s}(\dot{g}_s)^2 - \frac{1}{4} |\dot{g}_s|_{g_s}^2  \right) \dvol^{g_s}\\
			&= - \strokedint_Q \rc_s \dvol^{g_s} -\frac{1}{4} \strokedint_Q (|\dot{g}_s|_{g_s}^2 - \tr^{g_s}(\dot{g}_s)^2) \dvol^{g_s}, 
	\end{align*}
	where we used \eqref{eq:CalcRc} in the third step.
	Adding
	\begin{align*}
	0 &= \frac{n-2}{n-1} \left((n-1)\lambda_s^{-1}\dot{\lambda}_s -\frac{1}{2}\strokedint_Q \tr^{g_s}(\dot{g}_s) \dvol^{g_s} \right)\left((n-1)\lambda_s^{-1}\dot{\lambda}_s + \frac{1}{2}\strokedint_Q \tr^{g_s}(\dot{g}_s) \dvol^{g_s}\right) \\
	&= - \frac{1}{4} \cdot \frac{n-2}{n-1} \left(\strokedint_Q \tr^{g_s}(\dot{g}_s) \dvol^{g_s}\right)^2 + (n-2)(n-1)\lambda_s^{-2}\dot{\lambda}_s^2,
	\end{align*}
	we just have to show that 
	\begin{gather*}
	\Sigma_s = \strokedint_Q (|\dot{g}_s|_{g_s}^2 - \tr^{g_s}(\dot{g}_s)^2) \dvol^{g_s} + \frac{n-2}{n-1} \left(\strokedint_Q \tr^{g_s}(\dot{g}_s) \dvol^{g_s}\right)^2
	\end{gather*}
	in order to obtain
	\begin{gather*}
		(n-1)\lambda_s^{-1} \ddot{\lambda}_s = -\Rho_s -\frac{1}{4} \Sigma_s,
	\end{gather*}
	which is equivaltent to the ODE~\eqref{eq:ScaleODE}.
	
	To do so, we recall from \cref{lem.lor.ricci-flat} that $g_s$, $s \in I$, has to be a family of Ricci-flat metrics satisfying the j-equation~\eqref{eq.level.ricci}.
	Thus $\dot{g}_s$ decomposes as $\dot{g}_s = c_s g_s + \nabla^2 f_s + \sigma_s$ according to \cref{prop_splitting_constraint_solutions}.
	Since this decomposition is $L^2$-orthogonal, we have
	\begin{align*}
	\strokedint_Q |\dot{g}_s|_{g_s}^2 \dvol^{g_s} &= (n-1)c_s^2 + \strokedint_Q |\nabla^2 f_s|_{g_s}^2 \dvol^{g_s} + \strokedint_Q |\sigma_s|_{g_s}^2 \dvol^{g_s}.
	\intertext{Furthermore, we have}
	\strokedint_Q \tr^{g_s}(\dot{g}_s)^2 \dvol^{g_s}
	&= \strokedint_Q ((n-1)c_s - \Delta f)^2 \dvol^{g_s} \\
	&= (n-1)^2 c_s^2 + \strokedint_Q (\Delta f)^2 \dvol^{g_s}
	\intertext{and}
	\left(\strokedint_Q \tr^{g_s}(\dot{g}_s) \dvol^{g_s} \right)^2 &= (n-1)^2 c_s^2,
	\end{align*}
	where we used in both cases that $\Delta f$ integrates to zero over the closed manifold $Q$.
	Now it is easy to see that the $c_s$-dependent terms cancel.
	Since moreover
	\begin{align*}
	\strokedint_Q |\nabla^2 f_s|_{g_s}^2 \dvol^{g_s} &= \strokedint_Q g_s(\nabla f_s, \Delta \nabla f_s) \dvol^{g_s} \\
	&= \strokedint_Q g_s(\nabla f_s, \nabla \Delta f_s) \dvol^{g_s} \\
	&= \strokedint_Q (\Delta f_s)^2 \dvol^{g_s}
	\end{align*}
	holds -- where we used $\ric^{g_s}(\nabla f_s, \nabla f_s) = 0$ in the second step -- also the terms containing $f_s$ cancel, so that we are just left with $\strokedint_Q |\sigma_s|_{g_s}^2 \dvol^{g_s}$, as desired.
\end{proof}

Notice that $|\lambda_s|$ has a natural interpretation as length scaling factor.
If $g_s$ was obtained by scaling the lengths of a unit-volume metric by the factor of $|\lambda_s|$, \ie the unit-volume metric was $\lambda_s^{-2} g_s$, then indeed its volume is given by $|\lambda_s|^{n-1} = \vol^{g_s}(Q)$.
Although in the end we are only interested in the positive solutions of~\eqref{eq:ScaleODE}, which are the ones leading to non-degenerate metrics, the possibility of the solution of~\eqref{eq:ScaleODE} turning zero simplifies the analysis.

\begin{lemma} \label{lem:ODEComparison}
	All local solutions to the ODE~\eqref{eq:ScaleODE} can be extended to global solutions and these global solutions form $2$-dimensional vector space.
	Let $\lambda$ be a non-zero solution.
	Then $\lambda$ has a discrete set of simple zeros $D \subset I$.
	Moreover, if there is some $C > 0$ such that 
	\begin{align*}
	\frac{1}{n-1}\left(\Rho_s + \frac{1}{4} \Sigma_s\right) \leq C
	\end{align*}
	for all $s \in I$, then the distance of any elements in $D$ of $\lambda$ is at least $\frac{\pi}{\sqrt{C}}$.
	In particular, if the upper bound $C$ can be chosen to be zero, then $D$ has at most one element.
	If there is some $c > 0$ such that 
	\begin{align*}
	c \leq \frac{1}{n-1}\left(\Rho_s + \frac{1}{4} \Sigma_s\right)
	\end{align*}
	for all $s \in I$, then each connected component of $I \setminus D$ has at most length $\frac{\pi}{\sqrt{c}}$.
	Furthermore, if the lower bound $c$ can be chosen to be zero and $I=\R$, then $D$ has at least one element unless $\frac{1}{n-1}\left(\Rho_s + \frac{1}{4} \Sigma_s\right) \equiv 0$ and $\lambda$ is a constant solution.
\end{lemma}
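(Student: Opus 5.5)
The ODE \eqref{eq:ScaleODE} is linear of second order, $\ddot\lambda = -q(s)\lambda$ with $q \coloneqq \frac{1}{n-1}(\Rho + \frac14\Sigma) \in C^\infty(I)$. Smoothness of $q$ follows from smoothness of $g_s$ and $\rc$, together with smooth dependence of the TT-projection on $s$. By standard linear ODE theory, every local solution extends uniquely to all of $I$, and solutions are determined by $(\lambda(s_0),\dot\lambda(s_0))$, so they form a $2$-dimensional space. Suppose a nonzero solution satisfies $\lambda(s_1)=\dot\lambda(s_1)=0$. Then uniqueness forces $\lambda\equiv 0$, a contradiction. Hence all zeros are simple. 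A simple zero is isolated by the implicit function theorem, so $D$ is discrete.

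For the bounds I use Sturm comparison. Suppose $q\le C$ and let $s_1<s_2$ be consecutive zeros of $\lambda$, with $\lambda>0$ on $(s_1,s_2)$ after changing sign. Assume $s_2-s_1<\pi/\sqrt C$ and set $w(s)=\sin(\sqrt C(s-s_1)+\epsilon)$, with $\epsilon>0$ small so that $w>0$ on $[s_1,s_2]$. The Wronskian $W=\lambda\dot w-\dot\lambda w$ satisfies $\dot W=(q-C)\lambda w\le 0$ on $(s_1,s_2)$. At $s_1$ we have $W=-\dot\lambda(s_1)w(s_1)<0$, and at $s_2$ we have $W=-\dot\lambda(s_2)w(s_2)>0$, since $\dot\lambda(s_1)>0$ and $\dot\lambda(s_2)<0$. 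This contradicts monotonicity. For $C=0$, $\lambda$ is concave where positive, so it cannot vanish at two points while being positive in between: the Wronskian with $w\equiv1$ gives $W=-\dot\lambda$ nonincreasing, which again yields a contradiction. So $|D|\le1$. Since zeros are consecutive in pairs, the general distance bound follows.

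For the lower bound $q\ge c>0$, suppose a component $(a,b)$ of $I\setminus D$ has length $>\pi/\sqrt c$. Choose a subinterval $[t_0,t_0+\pi/\sqrt c]\subset(a,b)$ on which $\lambda>0$. Compare with $w=\sin(\sqrt c(s-t_0))$, which vanishes at both endpoints and is positive inside. Now $W=\lambda\dot w-\dot\lambda w$ has $\dot W=(q-c)\lambda w\ge0$, while $W(t_0)=\sqrt c\lambda(t_0)>0$ and $W(t_0+\pi/\sqrt c)=-\sqrt c\lambda(t_0+\pi/\sqrt c)<0$. This is a contradiction. For $c=0$ with $I=\R$, suppose $\lambda>0$ everywhere. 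Then $\ddot\lambda=-q\lambda\le0$, so $\lambda$ is concave and positive on $\R$, hence constant. Consequently $q\lambda=0$, so $q\equiv0$. The same argument applies if $\lambda<0$ everywhere.

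The main obstacle is not the ODE analysis, which is classical Sturm theory, but the well-definedness and smoothness of $q$. In particular, the TT-part $\sigma_s$ must depend smoothly on $s$, which I would justify via the smooth dependence of the elliptic projections in \eqref{eq:ref_2tensor_splitting} on the metric.
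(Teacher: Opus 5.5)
Your proof is correct, but in the comparison steps it argues differently from the paper. The paper works with a single Prüfer-type angle $\alpha_s$, the polar angle of $(\sqrt{C}\lambda_s,-\dot\lambda_s)$, and computes $\dot\alpha_s=\sqrt{C}\,\frac{\dot\lambda_s^2+q_s\lambda_s^2}{C\lambda_s^2+\dot\lambda_s^2}$ with your $q=\frac{1}{n-1}(\Rho+\frac14\Sigma)$. If $q\le C$, then $\dot\alpha\le\sqrt C$, while $\alpha$ must travel from $-\frac{\pi}{2}$ to $\frac{\pi}{2}$ between consecutive zeros. The same formula with $c$ in place of $C$ gives $\dot\alpha\ge\sqrt c$ on a component where $|\alpha|<\frac{\pi}{2}$. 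The case $C=0$ is then obtained by letting $C\to 0$.

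You instead run the textbook Sturm comparison: Wronskians against explicit solutions of $\ddot w=-Cw$ (shifted by $\epsilon$ to stay positive on the closed interval) and $\ddot w=-cw$, with the degenerate case $C=0$ handled directly by $w\equiv 1$. The Prüfer angle gives a direct rotation-speed bound that yields both inequalities from one computation. Your version needs a separate comparison function and a contradiction argument for each bound, but it only uses sign information at the endpoints and is completely elementary. The last case ($c=0$, $I=\R$) is the same concavity argument in both.

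You single out the smoothness of $q$ as the main obstacle, but it is not really an issue for this lemma. The paper takes $\Rho,\Sigma\in C^\infty(I)$ as given in \cref{prop:ScaleODE}, and mere continuity of $q$ already suffices for the linear ODE facts you use.
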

\begin{proof}
	The first statement is a general fact about linear ODEs (of second order).
	If $\lambda$ is a solution and $s \in I$ is such that $\lambda_s = 0$ and $\dot{\lambda}_s \neq 0$, then $\lambda \equiv 0$ since solutions are uniquely determined by a pair of initial values.
	This shows that the zeros of a non-zero solution $\lambda$ are simple and in particular form a discrete set.
	
	Now assume an upper bound $C > 0$ and let $s \in D$.
	Without loss of generality, we assume $s=0$.
	We just need to consider the case $\dot{\lambda}_0 > 0$ and show that there is no zero of $\lambda$ on $(0,\frac{\pi}{C}) \cap I$.
	The other cases follow by multiplying $\lambda$ with $-1$ and/or precomposition of $\lambda$, $\Rho$ and $\Sigma$ with $s \mapsto -s$.
	We denote by $(r, \phi) \colon \R^2 \setminus (\R_{\leq 0} \times \{0\}) \to \R_{>0} \times (-\pi,\pi)$ polar coordinates.
	Then we consider the function $\alpha \colon s \mapsto \phi(\lambda_s,-\frac{1}{\sqrt{C}}\dot{\lambda}_s) = \phi(\sqrt{C}\lambda_s,-\dot{\lambda}_s)$.
	We have $\alpha_0 = -\frac{\pi}{2}$ and, since $\upd_{(x,y)} \phi = \frac{1}{x^2+y^2}(-y,x)$,
	\begin{gather*}
	\dot{\alpha}_s = \frac{\sqrt{C}\dot{\lambda}_s^2 - \sqrt{C}\lambda_s\ddot{\lambda}_s}{C\lambda_s^2 + \dot{\lambda}_s^2}
	= \sqrt{C} \; \frac{\dot{\lambda}_s^2 + \frac{1}{n-1}\left(\Rho_s + \frac{1}{4} \Sigma_s \right)\lambda_s^2}{C\lambda_s^2 + \dot{\lambda}_s^2}
	\leq \sqrt{C}.
	\end{gather*}
	Because $\alpha$ takes the value $\frac{\pi}{2}$ at the next zero of $\lambda$, it follows that this cannot happen before $\frac{\pi}{\sqrt{C}}$.
	The “in particular” part follows by sending $C \longrightarrow 0$.
	
	If we have a lower bound $c$, we consider a connected component $J$ of $I \setminus D$ and assume without loss of generality that $\lambda$ is positive on it.
	Then we can run the same construction as above on $J$ with $c$ instead of $C$ and obtain the inequality $\dot{\alpha}_s \geq \sqrt{c}$.
	Since $\lambda$ has no zero on $J$, we must have $|\alpha_s| < \frac{\pi}{2}$ for all $s \in J$ and thus $J$ can  at most have length $\frac{\pi}{\sqrt{c}}$.
	
	Lastly, we assume $c = 0$ and $I = \R$.
	We suppose that $\lambda$ has no zeros; without loss of generality $\lambda > 0$.
	Let us assume for contradiction that $\dot{\lambda}_s \neq 0$ somewhere.
	After shifting and possibly reflecting the $\R$-coordinate, we may assume that this is the case in $s=0$ and that $\dot{\lambda}_0 < 0$.
	Since $\lambda > 0$, we have $\ddot{\lambda} \leq 0$ and thus $\dot{\lambda}_s \leq \dot{\lambda}_0 < 0$ for all $s \geq 0$.
	Then $\lambda$ has to hit zero at $s = \frac{\lambda_0}{\dot{\lambda}_0}$ at latest, which is a contradiction.
	So $\lambda$ has to be constant, which implies $\frac{1}{n-1}\left(\Rho_s + \frac{1}{4} \Sigma_s\right) \equiv 0$.
\end{proof}

We now start proving one of the main results by constructing Lorentzian manifolds with prescribed null Ricci curvature from curves of Ricci-flat Riemannian metrics.
\begin{lemma} \label{Lem:MakeTT}
	Suppose that $g_s$, $s \in I$, is a smooth family of unit-volume Ricci-flat metrics on a closed manifold $Q$.
	Then there is a smooth family $\phi_s$, $s \in I$, of diffeomorphisms of $Q$, such that $\dot{\tilde{g}}_s = \frac{\upd}{\upd s} \tilde{g}_s$ is a TT-tensor with respect to $\tilde{g}_s \coloneqq \varphi_s^*g_s$ for all $s \in I$.
	In particular, $\tilde{g}_s$ satisfies the j-equation \eqref{eq.level.ricci}.
\end{lemma}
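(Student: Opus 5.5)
Given $g_s$, I look for $\phi_s$ generated by a time-dependent vector field: $\dot\phi_s = X_s\circ\phi_s$. Then
\begin{align*}
\dot{\tilde g}_s=\phi_s^*\bigl(\dot g_s+\Lie_{X_s}g_s\bigr).
\end{align*}
Since $\tilde g_s$ is the pullback of $g_s$ by $\phi_s$, the tensor $\dot{\tilde g}_s$ is TT with respect to $\tilde g_s$ if and only if $\dot g_s+\Lie_{X_s}g_s$ is TT with respect to $g_s$. The problem therefore reduces to a pointwise-in-$s$ choice of $X_s$, depending smoothly on $s$, followed by integration of the flow. The flow exists for all $s\in I$ because $Q$ is closed.

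To choose $X_s$, I decompose $\dot g_s$ according to \eqref{eq:ref_2tensor_splitting} with respect to the Ricci-flat metric $g_s$:
\begin{align*}
\dot g_s=w_s\,g_s+\nabla^2 f_s+\Lie_{Y_s}g_s+\sigma_s .
\end{align*}
Here $\nabla^2 f_s=\tfrac12\Lie_{\nabla f_s}g_s$. I then set $X_s\coloneqq -\tfrac12\nabla f_s-Y_s$, which gives $\dot g_s+\Lie_{X_s}g_s=w_s g_s+\sigma_s$.

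The remaining point is that $w_s$ vanishes. Because $Q$ is closed and Ricci-flat, the unit-volume metrics near $g_s$ form the premoduli space. Its tangent vectors can be put in divergence-free gauge, and there the trace must vanish: for an infinitesimal Ricci-flat deformation $h$, linearizing the scalar curvature gives $\Delta\tr h+\div\div h=0$. Concretely, I linearize $\scal=0$ along the curve. This gives
\begin{align*}
0=\tfrac{\upd}{\upd s}\scal^{g_s}=\Delta\tr\dot g_s+\div\div\dot g_s-\langle\ric,\dot g_s\rangle .
\end{align*}
Inserting the decomposition, the $\sigma_s$ and $\Lie_Y g_s$ terms drop out, the latter by diffeomorphism invariance of $\scal$. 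The $\nabla^2 f_s$ term also contributes zero, since it is a Lie derivative. What remains is $(n-2)\Delta w_s=0$ (for $\dim Q = n-1\geq 2$; dimension one is trivial), so $w_s$ is constant. The volume constraint gives $\int\tr\dot g_s\,\dvol=0$, and the traces of the $\nabla^2f_s$ and Lie parts integrate to zero, so $(n-1)w_s=0$ and hence $w_s=0$.

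The resulting $\dot{\tilde g}_s=\phi_s^*\sigma_s$ is then TT with respect to $\tilde g_s$, and \eqref{eq.level.ricci} follows trivially.

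The main obstacle is smooth dependence on $s$. I need $f_s$ and $Y_s$ to vary smoothly in $s$. For $f_s$ this holds: it is obtained by inverting the elliptic operators (Laplacian and $\div$) on the orthogonal complements of their kernels, which are constants and Killing fields, both of constant dimension. The Killing fields of a Ricci-flat closed metric are parallel, and their dimension equals $b_1$-type data that stays constant along the family. For $Y_s$ I would take the unique solution orthogonal to the Killing fields, or more simply pick $X_s$ as the solution of $\div(\dot g_s+\Lie_Xg_s)-\tfrac{1}{n-1}\upd\tr(\ldots)=0$ via the operator $X\mapsto\div\Lie_Xg_s$. This operator is elliptic with kernel the Killing fields, whose dimension is locally constant in $s$, so smooth parameter-dependent elliptic theory yields a smooth family $X_s$.
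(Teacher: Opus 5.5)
Your proof is correct, but it carries out the two steps differently from the paper. The paper cites \cite[Lem.~31]{ammann.kroencke.mueller:21} for a smooth family $\phi_s$ achieving the divergence-free gauge $\div^{\tilde g_s}(\dot{\tilde g}_s)=0$. It then deduces $\tr^{\tilde g_s}(\dot{\tilde g}_s)=0$ from the unit-volume condition and the Berger--Ebin theorem \cite[Thm.~12.30]{besse:87} on infinitesimal Einstein deformations. You instead remove the entire Lie-derivative component in \eqref{eq:ref_2tensor_splitting}, i.e.\ you make the trace-free part of $\dot g_s+\Lie_{X_s}g_s$ divergence-free. You then replace Berger--Ebin by the identity $0=\frac{\upd}{\upd s}\scal^{g_s}=(n-2)\Delta w_s$ together with the volume normalization. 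This is a self-contained proof of exactly the Ricci-flat case of Berger--Ebin that is needed, and it shows where $\dim Q\geq 2$, connectedness of $Q$ and unit volume enter. The same computation also works in the paper's gauge, where it reads $\Delta\tr^{\tilde g_s}(\dot{\tilde g}_s)=0$. In fact the two gauges give the same $X_s$ up to Killing fields, since $\Lie_Zg$ is TT only for Killing $Z$. The price of avoiding the citations is that you must justify smooth $s$-dependence yourself. Your sketch is sound: Killing fields of a closed Ricci-flat metric are parallel, so they form a space of constant dimension $b_1(Q)$. One slip: the operator matching the equation you write down is $X\mapsto\div(\Lie_Xg_s)-\frac{2}{n-1}\upd\div X$, not $X\mapsto\div(\Lie_Xg_s)$. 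This operator is formally self-adjoint, and its kernel consists of the conformal Killing fields. On a closed Ricci-flat manifold of dimension at least $2$ these are parallel, by the Bochner identity $\int_Q|\nabla X|^2\,\dvol^{g_s}+\bigl(1-\frac{2}{n-1}\bigr)\int_Q(\div X)^2\,\dvol^{g_s}=0$. Writing $\mathring{\dot g}_s$ for the trace-free part of $\dot g_s$, the right-hand side $-\div\mathring{\dot g}_s$ is $L^2$-orthogonal to this kernel because $\Lie_Zg_s=0$ for Killing $Z$. Hence solvability and smooth dependence go through as you claim.
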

\begin{proof}
	According to \cite[Lem.~31]{ammann.kroencke.mueller:21}, there is a smooth family of diffeomorphisms $\phi_s$, $s \in I$, such that $\tilde{g}_s \coloneqq \varphi_s^*g_s$ is divergence-free in the sense $\div^{\tilde{g}_s}(\dot{\tilde{g}}) = 0$.
	Since all the $g_s$ and thus the $\tilde{g}_s$ have unit volume, we have $\int_Q \tr^{\tilde{g}}(\dot{\tilde{g}}_s) \dvol^{\tilde{g}_s} = 0$.
	In addition, $g_s$, $s \in I$, and thus $\tilde{g}_s$, $s \in I$, are families of Ricci-flat metrics, making $\tilde{g}_s$, $s \in I$, an infinitesimal Einstein deformation in the sense of \cite[Def.~12.19]{besse:87}.
	A theorem due to Berger and Ebin \cite[Thm.~12.30]{besse:87} then allows us to conclude that $\tr^{\tilde{g}_s}(\dot{\tilde{g}}_s) = 0$.
	So $\dot{\tilde{g}}_s$ is a TT-tensor with respect to $\tilde{g}_s$.
\end{proof}

\begin{proposition} \label{Prop:SolveLaplace}
	Suppose that $g_s$, $s \in I$, is a smooth family of unit-volume Ricci-flat metrics on a closed manifold $Q$ such that $\dot{g}_s$ is a TT-tensor with respect to $g_s$ for all $s \in I$ and let $\rc \in C^\infty(I \times Q)$ be a smooth function.
	Let $\lambda \in C^\infty(I)$ be a non-zero solution to the ODE~\eqref{eq:ScaleODE} determined by the parametrized curve $s \mapsto (g_s,\rc_s)$ and $D \coloneqq \lambda^{-1}(\{0\}) \subset I$ its discrete set of zeros.
	Then there is a positive function $u \in C^\infty(I \times Q)$ such that the Lorentzian manifold
	\begin{gather*}
		(\overline{M},\overline{g}) \coloneqq (\R \times (I \setminus D) \times Q, \upd v \otimes \upd s + \upd s \otimes \upd v + u^{-2}\upd s^2 + \lambda_s^2 g_s)
	\end{gather*}
	has $\frac{\del}{\del v}$ as complete lightlike parallel vector field and null Ricci curvature
	\begin{gather*}
		\ric^{\overline{g}} = \rc_s {\textstyle \frac{\del}{\del v}}^\flat \otimes {\textstyle \frac{\del}{\del v}}^\flat = \rc_s \upd s^2.
	\end{gather*}
\end{proposition}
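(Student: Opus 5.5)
Since $\frac{\del}{\del v}$ is complete and parallel for any metric of this form (it is Killing and gradient of $s$), the plan is to apply \cref{lem.lor.ricci-flat} to the family $\hat g_s \coloneqq \lambda_s^2 g_s$ on each component of $I\setminus D$. Ricci-flatness of $\hat g_s$ is immediate from that of $g_s$, since constant rescaling does not change the Ricci tensor. For the j-equation we compute
\begin{gather*}
\dot{\hat g}_s = 2\lambda_s\dot\lambda_s g_s + \lambda_s^2 \dot g_s .
\end{gather*}
As $\dot g_s$ is TT with respect to $g_s$ and $2\lambda\dot\lambda g_s$ is a constant multiple of the metric, both $\div^{\hat g_s}$ and $\upd\tr^{\hat g_s}$ of $\dot{\hat g}_s$ vanish. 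Hence \eqref{eq.level.ricci} holds and $\overline g$ has null Ricci curvature. It remains to choose $u>0$ so that the single remaining component \eqref{eq:CalcRc} equals $\rc_s$.

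For this we compute the terms of \eqref{eq:CalcRc} for $\hat g_s$. We have $\tr^{\hat g_s}\dot{\hat g}_s = 2(n-1)\dot\lambda_s/\lambda_s$ and $|\dot{\hat g}_s|^2_{\hat g_s} = 4(n-1)\dot\lambda_s^2/\lambda_s^2 + |\dot g_s|^2_{g_s}$, where the cross term vanishes by tracelessness. Also $\Delta^{\hat g_s}=\lambda_s^{-2}\Delta^{g_s}$. The condition becomes
\begin{gather*}
\tfrac12\lambda_s^{-2}\Delta^{g_s}(u^{-2}) = \rc_s + (n-1)\tfrac{\upd}{\upd s}\bigl(\dot\lambda_s/\lambda_s\bigr) + (n-1)\dot\lambda_s^2/\lambda_s^2 + \tfrac14|\dot g_s|^2_{g_s}
= \rc_s + (n-1)\ddot\lambda_s/\lambda_s + \tfrac14 |\dot g_s|_{g_s}^2 .
\end{gather*}
Using the ODE~\eqref{eq:ScaleODE}, and noting that here $\sigma_s=\dot g_s$, the right-hand side equals $F_s \coloneqq (\rc_s - \Rho_s) + \tfrac14(|\dot g_s|^2_{g_s} - \Sigma_s)$. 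This function has zero $g_s$-mean for each $s$, since $g_s$ has unit volume. This solvability condition is the key point: it is exactly why $\lambda$ must solve \eqref{eq:ScaleODE}.

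Now set $w_s \coloneqq G_s(2\lambda_s^2 F_s)$, where $G_s$ is the inverse of $\Delta^{g_s}$ on mean-zero functions. Then $\Delta^{g_s} w_s = 2\lambda_s^2 F_s$, which is meaningful even at zeros of $\lambda$. The main technical step is smoothness of $w$ in $s$ jointly on $I\times Q$. This follows from standard elliptic theory: $G_s$ depends smoothly on $s$ as a family of bounded operators between Sobolev spaces, since $\Delta^{g_s}$ restricted to mean-zero functions is a smooth family of isomorphisms; alternatively, differentiate the equation in $s$ and bootstrap.

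Finally, choose a smooth function $C\colon I\to\R$ with $C_s > -\min_Q w_s + 1$. This is possible since $s\mapsto\min_Q w_s$ is continuous, so a smooth majorant exists. Put $u \coloneqq (w+C)^{-1/2}$, which is smooth and positive on $I\times Q$. Then $\Delta^{g_s}(u^{-2}) = 2\lambda_s^2F_s$, so by the computation above and \cref{lem.lor.ricci-flat}, $\ric^{\overline g}=\rc_s\,\upd s^2$ on $\R\times(I\setminus D)\times Q$. Adding constants to $u^{-2}$ does not affect this, because only its Laplacian enters.
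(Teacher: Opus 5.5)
Your proof is correct and follows the same route as the paper: check Ricci-flatness and the j-equation for $\lambda_s^2 g_s$, evaluate \eqref{eq:CalcRc}, use the ODE~\eqref{eq:ScaleODE} with $\sigma_s=\dot g_s$ to see that the right-hand side has zero mean, solve a smooth family of Laplace equations, shift by a smooth family of constants to get positivity, and set $u=w^{-1/2}$. You are in fact more careful than the paper's write-up, which solves $\tfrac12\Delta^{g_s}w_s=F_s$ and so drops the factor $\lambda_s^{-2}$ in $\Delta^{\lambda_s^2 g_s}=\lambda_s^{-2}\Delta^{g_s}$; your equation $\Delta^{g_s}w_s=2\lambda_s^2F_s$ is the correct one and, as you note, remains smooth across the zeros of $\lambda$.
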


\begin{proof}	
	We consider $\tilde{g}_s = \lambda_s^2g_s$, where $\lambda$ is the non-zero solution to the ODE~\eqref{eq:ScaleODE}, whose set of zeros $D$ is discrete according to \cref{lem:ODEComparison}.
	For all $s \in I \setminus D$, we notice that $\dot{\tilde{g}}_s = 2 \dot{\lambda}_s\lambda_s^{-1} \tilde{g}_s + \lambda_s^2\dot{g}_s$ and that $\dot{g}_s$ is a TT-tensor with respect to $\tilde{g}_s$.
	In view of~\eqref{eq:CalcRc}, we solve
	\begin{align*}
		\frac{1}{2}\Delta^{g_s}(w_s) &= \rc_s + \frac{1}{2}\frac{\upd}{\upd s} \tr^{\tilde{g}_s}(\dot{\tilde{g}}_s) + \frac{1}{4} |\dot{\tilde{g}}_s|_{\tilde{g}_s}^2 \\
			&= \rc_s + (n-1)\ddot{\lambda}_s\lambda_s^{-1} - (n-1)\dot{\lambda}_s^2\lambda_s^{-2} + (n-1)\dot{\lambda}_s^2\lambda_s^{-2} + \frac{1}{4}|\dot{g}_s|_{g_s}^2 \\
			&= \rc_s - \left(\Rho_s + \frac{1}{4} \Sigma_s\right) + \frac{1}{4}|\dot{g}_s|_{g_s}^2.
	\end{align*}
	Notice that the both the left hand side and the last line on the right hand side are independent of $\lambda$ and in particular well-defined for all $s \in I$.
	As the right hand side integrates to zero due to~\eqref{eq:DefRhoSigma}, the family of Laplace equations can be solved, yielding a smooth family of solutions $w_s$, $s \in I$.
	After potentially adding a smooth family of constants, we may assume that all the $w_s$ are positive.
	Then we set $u \colon I \times Q \to \R,\, (s,x) \mapsto (w_s(x))^{-\frac{1}{2}}$.
	On $\R \times (I \setminus D) \times Q$, the symmetric bilinear form $\LorMet \coloneqq \upd v \otimes \upd s + \upd s \otimes \upd v + u^{-2}\upd s^2 + \tilde{g}_s$ defines a Lorentzian metric for which $\frac{\del}{\del v}$ is a complete lightlike parallel vector field.
	According to \eqref{eq:CalcRc}, the Lorentzian metric~$\overline{g}$ has the desired Ricci curvature.
\end{proof}
	
\begin{proof}[Proof of \cref{Thm:Ex}]
	Recall that by \cref{lem.lor.ricci-flat}, the Lorentzian metric~\eqref{eq:ppMetric} has null Ricci curvature if and only if $\lambda^2 \tilde{g}_s$, $s \in I$, or equivalently $\tilde{g}_s$, $s \in I$, is a smooth family of Ricci-flat metrics subject to the j-equation~\eqref{eq.level.ricci}.
	Now, noting that by \cref{lem:ODEComparison} the ODE~\eqref{eq:ScaleODE} always has a two-dimensional vector space of solutions and thus in particular a non-zero solution, \cref{Thm:Ex} is an immediate consequence from \cref{Lem:MakeTT,Prop:SolveLaplace}.
\end{proof}	

For future reference, we also summarize these results in the following proposition:

\begin{proposition} \label{Prop:SurjModSp}
	Suppose that $g_s$, $s \in I$, is a smooth family of unit-volume Ricci-flat metrics on a closed manifold $Q$ and $\rc \in C^\infty(I \times Q)$ is a smooth function.
	Let $\lambda \in C^\infty(I)$ be a non-zero solution to the ODE~\eqref{eq:ScaleODE} determined by the parametrized curve $s \mapsto (g_s,\rc_s)$ and $D \coloneqq \lambda^{-1}(\{0\}) \subset I$ its discrete set of zeros.
	Then there is a smooth family $\phi_s$, $s \in I$, of diffeomorphisms of $Q$, and a positive function $u \in C^\infty(I \times Q)$ such that the Lorentzian manifold
	\begin{gather*}
		(\overline{M},\overline{g}) \coloneqq (\R \times (I \setminus D) \times Q, \upd v \otimes \upd s + \upd s \otimes \upd v + u^{-2}\upd s^2 + \lambda_s^2\phi_s^* g_s)
	\end{gather*}
	has $\frac{\del}{\del v}$ as complete lightlike parallel vector field and null Ricci curvature
	\begin{gather*}
		\ric^{\overline{g}} = (\phi_s^*\tilde{\rc}_s) {\textstyle \frac{\del}{\del v}}^\flat \otimes {\textstyle \frac{\del}{\del v}}^\flat = (\phi_s^*\tilde{\rc}_s) \upd s^2.
	\end{gather*} 
	Moreover, after possibly restricting the interval $I$, the choices of $\phi_s$, $s \in I$, and $u$ can be made such that $u \equiv 1$.
\end{proposition}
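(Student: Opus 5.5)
The first part of \cref{Prop:SurjModSp} follows by chaining \cref{Lem:MakeTT} and \cref{Prop:SolveLaplace}. Here $\tilde{\rc}_s$ should be read as $\rc_s\circ\phi_s^{-1}$, so that $\phi_s^*\tilde{\rc}_s=\rc_s$. Equivalently, one prescribes the pulled-back function, which is possible because $\phi_s^*$ is a bijection on $C^\infty(I\times Q)$.

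\textbf{Step 1: gauge fixing.} Apply \cref{Lem:MakeTT} to the family $g_s$. This gives a smooth family $\phi_s$ such that $\hat g_s\coloneqq\phi_s^*g_s$ is a family of unit-volume Ricci-flat metrics with $\dot{\hat g}_s$ a TT-tensor.

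\textbf{Step 2: scaling and lapse.} By \cref{prop:ScaleODE}, the quantities $\Rho$ and $\Sigma$ do not change under pullback by a smooth family of diffeomorphisms. Hence the ODE~\eqref{eq:ScaleODE} for $(\hat g_s,\phi_s^*\tilde\rc_s)$ is the same as the one for the original data. In particular, the given $\lambda$ still solves it and has the same discrete zero set $D$. Now apply \cref{Prop:SolveLaplace} to $(\hat g_s,\phi_s^*\tilde\rc_s)$ with this $\lambda$. It yields a positive $u$ such that the metric on $\R\times(I\setminus D)\times Q$ has $\frac{\del}{\del v}$ as complete lightlike parallel vector field and Ricci tensor $(\phi_s^*\tilde\rc_s)\,\upd s^2$.

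\textbf{Step 3: achieving $u\equiv1$.} Fix $s_0\in I\setminus D$ and restrict to a small interval $I'\ni s_0$ with $I'\cap D=\emptyset$. Apply \cref{Lem:ChangeHypersurface} with a function $f$ satisfying
\begin{align*}
2\dot f_s=1-u_s^{-2}+|\upd f_s|^2_{\lambda_s^2\hat g_s},\qquad f_{s_0}=0 .
\end{align*}
This is a Hamilton--Jacobi-type equation on the closed manifold $Q$ in the parameter $s$. Alternatively, and more cleanly, one can invoke \cref{Prop:GeodNormCoord} with $\mathcal{L}$ the leaf $\{s=s_0\}$ and $Q$ replaced by $\{0\}\times\{s_0\}\times Q$. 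By compactness of $Q$ and completeness of $V$, this gives coordinates on $\R\times I'\times Q$ in which the metric has the form $\upd v\otimes\upd s+\upd s\otimes\upd v+\upd s^2+g'_s$. The geodesic normal coordinates produce a map of the type in \cref{Lem:ChangeHypersurface}, that is, $(v,s,x)\mapsto(v+f,s,\psi_s(x))$. This holds because the coordinate $s$ is intrinsic: it is the potential of $V$ up to a constant, normalized via $\overline g(\del_s,V)=1$. Consequently the new spatial metrics are $\psi_s^*(\lambda_s^2\hat g_s)=\lambda_s^2(\phi_s\circ\psi_s)^*g_s$. Replacing $\phi_s$ by $\phi_s\circ\psi_s$ then gives $u\equiv1$. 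The Ricci tensor transforms to $\psi_s^*\phi_s^*\tilde\rc_s\,\upd s^2$, which is consistent with the statement once $\tilde\rc$ is reinterpreted for the new family.

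\textbf{Main obstacle.} The delicate point is Step 3. One has to check that the geodesic normal coordinates of \cref{Prop:GeodNormCoord} exist on a uniform product $\R\times I'\times Q$, which requires no focal points for small $|s-s_0|$. Compactness of $Q$ together with invariance under the flow of $V$ gives this uniformity in both $x$ and $v$. One also has to confirm that the resulting coordinate change is exactly of the form in \cref{Lem:ChangeHypersurface}, which is why the interval must be restricted.
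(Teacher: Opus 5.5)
\textbf{The gap: your reading of $\tilde\rc$ breaks Step~2.} Your overall route is the paper's. You use \cref{Lem:MakeTT} for a TT gauge, the invariance in \cref{prop:ScaleODE}, \cref{Prop:SolveLaplace} to produce $u$, and then \cref{Prop:GeodNormCoord}, absorbing the resulting $\psi_s$ into $\phi_s$. Your Step~3 is fine and more detailed than the paper's two-sentence treatment, up to the harmless shift $s\mapsto s-s_0$ of the normal coordinate.

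You set $\phi_s^*\tilde\rc_s=\rc_s$. That is, you feed the pair $(\phi_s^*g_s,\rc_s)$ into \cref{Prop:SolveLaplace} and claim $\ric^{\overline g}=\rc_s\,\upd s^2$. But the invariance in \cref{prop:ScaleODE} concerns the \emph{joint} pullback $(g_s,\rc_s)\mapsto(\phi_s^*g_s,\phi_s^*\rc_s)$. The ODE attached to $(\phi_s^*g_s,\rc_s)$ has
\[
\Rho_s=\strokedint_Q\rc_s\,\dvol^{\phi_s^*g_s}=\strokedint_Q(\rc_s\circ\phi_s^{-1})\,\dvol^{g_s},
\]
and this is not $\strokedint_Q\rc_s\,\dvol^{g_s}$ in general.

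For the gauge from \cref{Lem:MakeTT}, the density $\dvol^{\phi_s^*g_s}$ is even independent of $s$, because $\dot{\tilde g}_s$ is trace-free. By contrast, $\dvol^{g_s}$ need not be constant in $s$, so for suitable $\rc$ the two means differ. In that case:
\begin{itemize}
\item your claim that this ODE ``is the same as the one for the original data'' is false;
\item $\lambda$ does not solve the ODE of the pair you use, so the hypothesis of \cref{Prop:SolveLaplace} fails;
\item \cref{prop:ScaleODE}, applied on $I\setminus D$ where $\lambda\neq0$, shows that no $u$ whatsoever gives $\ric=\rc_s\,\upd s^2$ for the spatial family $\lambda_s^2\phi_s^*g_s$.
\end{itemize}
Bijectivity of $\phi_s^*$ does not rescue this. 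The solution $\lambda$ pins down the leafwise mean of the Ricci function with respect to the metric actually used. So you are not free to choose which function gets pulled back.

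\textbf{The fix.} The tilde in the statement is a typo. The paper's proof sets $\tilde\rc_s\coloneqq\phi_s^*\rc_s$ and applies \cref{Prop:SolveLaplace} to $(\phi_s^*g_s,\phi_s^*\rc_s)$. It obtains $\ric^{\overline g}=(\phi_s^*\rc_s)\,\upd s^2$, which is also the form used later in the proof of \cref{thm:1-1}. Once you transport $\rc$ together with $g$, your Step~2 is correct verbatim. After Step~3 the Ricci function becomes $(\phi_s\circ\psi_s)^*\rc_s$, which matches the new family $\phi_s\circ\psi_s$ without any further reinterpretation.
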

\begin{proof}
	According to \cref{Lem:MakeTT}, we may choose a smooth family $\phi_s$, $s \in I$, of diffeomorphisms of $Q$ such that $\dot{\tilde{g}}_s$ is a TT-tensor with respect to $\tilde{g}_s \coloneqq \phi_s^* g_s$ for all $s \in I$.
	Since by \cref{prop:ScaleODE}, $\lambda$ is also a non-zero solution of the ODE~\eqref{eq:ScaleODE} associated to $s \mapsto (\tilde{g}_s, \tilde{\rc}_s)$ for $\tilde{\rc}_s \coloneqq \phi_s^* \rc_s$, \cref{Prop:SolveLaplace} applies and yields the main part of the result.
	
	Moreover, choosing a spacelike hypersurface within a leaf $\mathcal{L}$ of the distribution defined on $(\overline{M},\overline{g})$ by $\frac{\del}{\del v}^\flat$ and invoking \cref{Prop:GeodNormCoord}, we find a way to express the same Lorentzian metric in different coordinates where $u \equiv 1$.
	Note that the family of metrics on $Q$ appearing in \cref{Prop:GeodNormCoord} is related to $\tilde{g}_s$, $s \in I$, through pulling back with a family of $s$-dependent diffeomorphisms.	
\end{proof}	

\subsection{Proving the classification}
We next discuss how to build a one-to-one correspondence.
Together with \cref{Prop:SurjModSp}, the following proposition allows us to produce a well-defined map assigning to a smooth family $g_s$, $s \in I$, of unit-volume Ricci flat metrics on $Q$, a smooth function $\rc \in \C^\infty(I \times Q)$ and a solution $\lambda$ of the ODE~\eqref{eq:ScaleODE} an isometry class of Lorentzian metrics.

\begin{proposition} \label{Prop:WellDefModSp}
	Suppose that $g_s$ and $\tilde{g}_s$, $s \in I$, are smooth families of unit-volume Ricci-flat metrics on a closed manifold $Q$, which are both subject to the j-equation.
	Assume further that they are related as $\tilde{g}_s = \psi_s^* g_s$ via a smooth family of diffeomorphisms $\psi_s$, $s \in I$, on $Q$.
	Let $\rc \in C^\infty(I \times Q)$ and define $\tilde{\rc}$ through $\tilde{\rc}_s \coloneqq \psi_s^* \rc_s$.
	Let furthermore $\lambda$ be a positive solution of the ODE~\eqref{eq:ScaleODE} associated to $(g_s,\rc_s)$, which is by \cref{prop:ScaleODE} also the ODE associated to $(\tilde{g}_s,\tilde{\rc}_s)$.
	Then choose smooth families of diffeomorphisms $\phi_s, \tilde{\phi}_s$, $s \in I$, and positive functions $u,\tilde{u} \in C^\infty(I \times Q)$ such that the Lorentzian metrics on $\R \times I \times Q$ defined by 
	\begin{align*}
		\overline{g} &\coloneqq \upd v \otimes \upd s + \upd s \otimes \upd v + u^{-2} \upd s^2 + \lambda_s \phi_s^* g_s &
		&\text{and} &
		\tilde{\overline{g}} &\coloneqq \upd v \otimes \upd s + \upd s \otimes \upd v +  \tilde{u}^{-2} \upd s^2 + \lambda_s \tilde{\phi}_s^* \tilde{g}_s
	\end{align*}
	have Ricci curvature $\ric^{\overline{g}} = (\phi_s^*\rc_s) \upd s^2$ and $\ric^{\tilde{\overline{g}}} = (\tilde{\phi}_s^*\tilde{\rc}_s^*) \upd s^2$.
	Then $\overline{g}$ and $\tilde{\overline{g}}$ are isometric.
	More precisely, the isometry can be chosen to be of the kind considered in \cref{Lem:ChangeHypersurface}.
\end{proposition}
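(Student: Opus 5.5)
Both metrics are simple pp-waves over the same base family up to diffeomorphism. My plan is to reduce the statement to finding a single function $f$ and applying \cref{Lem:ChangeHypersurface}. (I read $\lambda_s$ in the statement as the length scale, so the spatial metrics are $\lambda_s^2\phi_s^*g_s$; the argument does not depend on this.)

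First I absorb all the diffeomorphisms into one family. Put $\hat g_s \coloneqq \lambda_s^2\phi_s^* g_s$, which is the spatial part of $\overline g$. Then the spatial part of $\tilde{\overline g}$ is $\lambda_s^2\tilde\phi_s^*\psi_s^* g_s = \chi_s^*\hat g_s$ with $\chi_s \coloneqq \phi_s^{-1}\circ\psi_s\circ\tilde\phi_s$. Both $\hat g_s$ and $\chi_s^*\hat g_s$ satisfy the j-equation, because both metrics have null Ricci curvature (\cref{lem.lor.ricci-flat}). Write $X_s$ for the $s$-dependent vector field with $\dot\chi_s = X_s\circ\chi_s$. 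Differentiating gives $\frac{\upd}{\upd s}\chi_s^*\hat g_s = \chi_s^*(\dot{\hat g}_s + \Lie_{X_s}\hat g_s)$. The j-equation operator $h\mapsto \div h - \upd\tr h$ is natural under diffeomorphisms, so the j-equation for both families forces $\div^{\hat g_s}(\Lie_{X_s}\hat g_s) - \upd\tr^{\hat g_s}(\Lie_{X_s}\hat g_s) = 0$. I then apply \cref{prop_splitting_constraint_solutions} to $h = \Lie_{X_s}\hat g_s$ (legitimate since $\hat g_s$ is Ricci-flat), using the uniqueness in \eqref{eq:ref_2tensor_splitting}. Splitting $X_s = \grad f_s + W_s$ with $W_s$ divergence-free, the computation in that proof shows that $W_s$ is Killing. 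Hence $\Lie_{X_s}\hat g_s = 2\nabla^2 f_s$. Since $f_s$ is unique up to constants, normalizing its mean to zero gives a smooth family in $s$.

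The main obstacle is the Killing part $W_s$, because \cref{Lem:ChangeHypersurface} only allows gradient flows. I handle it by replacing $\chi_s$ with the flow $\eta_s$ of $-\grad^{\hat g_s} f_s$ (with the sign convention of the lemma) and absorbing the rest into an isometry. More precisely, set $\kappa_s \coloneqq \eta_s^{-1}\circ\chi_s$. Then $\frac{\upd}{\upd s}\kappa_s^*(\eta_s^*\hat g_s)$ and the related quantities show $\kappa_s^*\eta_s^*\hat g_s = \chi_s^*\hat g_s$. One checks that $\kappa_s$ is generated by vector fields that are Killing for $\eta_s^*\hat g_s$. So $(v,s,x)\mapsto(v,s,\kappa_s(x))$ pulls back the metric with spatial part $\eta_s^*\hat g_s$ to one with spatial part $\chi_s^*\hat g_s$. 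It changes the $\upd s^2$ coefficient only by pullback, and produces cross terms $\upd s\otimes\eta_s^*\hat g_s(Y_s,\cdot)$ with $Y_s$ Killing. I expect these cross terms to vanish after composing with the appropriate $\Psi$, or at least to be removable by choosing $\chi_s$ up to isometries from the start. If they resist, I would argue instead that $\tilde{\overline g}$ is defined only up to its $\phi$-choice: replacing $\tilde\phi_s$ by $\tilde\phi_s\circ\kappa_s^{-1}$ keeps the spatial metric in the same isometry class, and I am free to do so since the claim is about isometry classes.

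With only the gradient part left, \cref{Lem:ChangeHypersurface} with this $f$ and $\eta_s$ gives $\Psi$ such that $\Psi^*\overline g$ has spatial part $\eta_s^*\hat g_s$ and $\upd s^2$-coefficient $\eta_s^*(u^{-2}+2\dot f_s-|\upd f_s|^2)$. The $\upd s^2$-coefficients still have to agree, and this is where I fix the free function of $s$ in $f_s$. By \eqref{eq:CalcRc}, both $\Psi^*\overline g$ and $\tilde{\overline g}$ have the same spatial family and the same Ricci function $\tilde\phi_s^*\tilde\rc_s$ (pullback naturality), so $\Delta$ of the difference of their $\upd s^2$-coefficients vanishes. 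The difference is therefore a function $a(s)$ alone. Adding $\tfrac12\int a\,\upd s$ to $f$ changes neither $\grad f_s$ nor $\eta_s$, and it makes the coefficients equal. The result is an isometry of the kind in \cref{Lem:ChangeHypersurface}.
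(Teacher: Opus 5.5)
\textbf{The gap: the Killing part $W_s$.} Three of your steps are exactly the paper's argument: the reduction to the single family $\chi_s$, the conclusion $X_s=\grad f_s+W_s$ with $W_s$ Killing, and the closing step (the difference of the $\upd s^2$-coefficients is killed by $\Delta$, so it is a function of $s$ that you absorb into $f$). The one difference is that the paper asserts that \cref{prop_splitting_constraint_solutions} gives $X_s=\grad^{g_s}(f_s)$ outright. It therefore drops precisely the Killing part you flagged. Your own treatment of $W_s$ does not close this gap.

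Since $\hat g_s$ is Ricci-flat on a closed manifold, Bochner's formula makes $W_s$ and $Y_s$ parallel. Hence the leafwise $1$-form $\hat g_s(Y_s,\cdot)$ is harmonic, and it is exact only if it vanishes. Now consider a map $(v,s,p)\mapsto(v+f(s,p),s,\chi_s(p))$. The mixed component of the pulled-back metric is $\upd f_s$ plus the pullback of the metric dual of the generator of $\chi_s$. So it vanishes only when that generator is a pure gradient. Shifting $v$ contributes only exact forms, so it can never cancel $\upd s\otimes\hat g_s(Y_s,\cdot)$.

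Re-choosing $\tilde\phi_s$ does not help either. The metric $\tilde{\overline g}$ is an arbitrary admissible choice fixed in the hypothesis, and independence of that choice is exactly what is to be proved.

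\textbf{The obstruction is real.} This step cannot be repaired in general. Take the following data:
\begin{itemize}
\item $Q=\R^2/\Z^2$ with $g_s\equiv g_0$ flat;
\item $\rc_s\equiv\cos 2\pi x+\cos 2\pi y$, so that $\Rho=\Sigma=0$ and $\lambda\equiv1$ is admissible;
\item $\psi_s(p)=p+se$ with $e\neq0$, and $\phi_s=\tilde\phi_s=\id$;
\item $u^{-2}=w$ and $\tilde u^{-2}=w\circ\psi_s$, where $\frac12\Delta w=\rc$ and $w>0$.
\end{itemize}
By \eqref{eq:CalcRc} all hypotheses hold.

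Since $\ric^{\overline g}$ is null and nonzero, every parallel null vector field is a constant multiple of $\frac{\del}{\del v}$. So any isometry $\tilde{\overline g}\to\overline g$ has the form $(v,s,p)\mapsto(\alpha^{-1}v+f(s,p),\alpha s+\beta,\phi_s(p))$, with $\phi_s^*g_0=g_0$ and $\alpha^2\,\rc\circ\phi_s=\rc\circ\psi_s$. Comparing $L^2$-norms gives $\alpha^2=1$. Since $\rc$ has finite stabilizer in $\mathrm{Isom}(T^2,g_0)$, continuity in $s$ gives $\phi_s=B\circ\psi_s$ for one fixed isometry $B$. The mixed component $\Phi^*\overline g(\frac{\del}{\del s},X)$, $X\in TQ$, then forces $\alpha\,\upd f_s=-g_0(e,\cdot)$. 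This is impossible, because $g_0(e,\cdot)$ is not exact on $T^2$.

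Hence $\overline g$ and $\tilde{\overline g}$ are not isometric. The statement, whether proved your way or the paper's way, holds only when the metrics $g_s$ carry no nonzero parallel vector fields, for instance when $b_1(Q)=0$. In that case $W_s=0$ and your proof is complete.
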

\begin{proof}
	First of all, we realize that it suffices to prove this statement in the case where both $\phi_s$ and $\tilde{\phi}_s$ are the identity for all $s \in I$.
	The general case is then obtained by replacing $(g_s, \rc_s)$ with $\phi_s^* (g_s,\rc_s)$, $(\tilde{g}_s, \tilde{\rc}_s)$ with $\tilde{\phi}_s^* (\tilde{g}_s, \tilde{\rc}_s)$ and $\psi_s$ with $\phi_s^{-1} \circ \psi_s \circ \tilde{\phi}_s$, since $\tilde{\phi}_s^* \tilde{g}_s = \tilde{\phi}_s^* \psi_s^* (\phi_s^{-1})^* \phi_s^* g_s = (\phi_s^{-1} \circ \psi_s \circ \tilde{\phi}_s)^* \phi_s^* g_s$.
	
	We consider the family of vector fields $X_s$, $s \in I$, defined by $X_s \circ \psi_s \coloneqq -\dot{\psi}_s$ and note that $\dot{\tilde{g}}_s = \psi_s^*(\dot{g}_s - \Lie_{X_s} g_s)$. 
	Hence the j-equation~\eqref{eq.level.ricci} for $\tilde{g}_s$ is equivalent to 
	\begin{gather*}
		\div^{g_s}(\dot{g}_s - \Lie_{X_s} g_s) - \upd \tr^{g_s}(\dot{g}_s - \Lie_{X_s} g_s) = 0.
	\end{gather*}
	Because $(g_s)$ is subject to the j-equation and $\Lie_{X_s} g_s$ is $L^2$-orthogonal to $g_s$ and the TT-tensors, we deduce from \cref{prop_splitting_constraint_solutions} that there is a family of functions $(f_s)$ such that $X_s = \grad^{g_s}(f_s)$, which is unique up to adding constants.
	
	Now we consider the diffeomorphism $\Psi \coloneqq \Phi \circ F \colon \R \times I \times Q \to \R \times I \times Q$ with
	\begin{align*}
		F \colon (v,s,q) &\longmapsto (v +f_s(q),s,q) & &\text{and}&
		\Phi \colon (v,s,q) &\longmapsto (v,s,\psi_s(q)).
	\end{align*}
	\Cref{Lem:ChangeHypersurface} shows that
	\begin{align*}
		\Psi^* \overline{g} = \upd v \otimes \upd s + \upd s \otimes \upd v + \psi_s^*(u^{-2} + 2\dot{f}_s -|X_s|^2) \upd s^{2} + \lambda_s^2 \tilde{g}_s.
	\end{align*}
	Moreover, we find that
	\begin{gather*}
		\ric^{(\Phi \circ F)^* \overline{g}} = (\Phi \circ F)^* \ric^{\overline{g}} = (\Phi \circ F)^* (\rc_s \upd s^2) = \tilde{\rc}_s \upd s^2.
	\end{gather*}
	By~\eqref{eq:CalcRc}, this amounts to
	\begin{gather*}
		\tilde{\rc}_s = \frac{1}{2} \Delta^{\tilde{g}_s} \psi_s^*(u_s^{-2} + 2\dot{f}_s -|X_s|^2) - \frac{1}{2} \frac{\upd}{\upd s} \tr^{\tilde{g}_s} \dot{\tilde{g}}_s - \frac{1}{4} |\dot{\tilde{g}}_s|_{\tilde{g}_s}^2,
	\end{gather*}
	while the Ricci curvature of $\tilde{\overline{g}}$ leads to the equation
	\begin{gather*}
		\tilde{\rc}_s = \frac{1}{2} \Delta^{\tilde{g}_s} \tilde{u}_s^{-2} - \frac{1}{2} \frac{\upd}{\upd s} \tr^{\tilde{g}_s} \dot{\tilde{g}}_s - \frac{1}{4} |\dot{\tilde{g}}_s|_{\tilde{g}_s}^2.
	\end{gather*}
	Subtracting these two equations, we find that
	\begin{gather*}
		\Delta^{\tilde{g}_s} \left( \psi_s^*(u_s^{-2} + 2\dot{f}_s -|X_s|^2) - \tilde{u}_s^{-2} \right) = 0.
	\end{gather*}
	So there is a function $c \in C^\infty(I)$ such that $ \psi_s^*(u_s^{-2} + 2\dot{f}_s -|X_s|^2) - \tilde{u}_s^{-2} = 2c_s$ for all $s \in I$.
	Considering $f_s - \int_a^s c_s \upd s$ for some $a \in I$ instead, we may assume without loss of generality that $f_s$ was chosen above such that $ \psi_s^*(u_s^{-2} + 2\dot{f}_s -|X_s|^2) = \tilde{u}_s^{-2}$ holds.
	But then $(\Phi \circ F)^* \overline{g} = \tilde{\overline{g}}$, so the metrics are isometric.
\end{proof}

Recall that the Lorentzian manifold~\eqref{eq:AGK_metrics_3} has a canonical complete lightlike parallel vector field $V = \frac{\del}{\del v} = \grad^{\overline{g}} s$.
The essential idea for the injectivity part of the one-to-one correspondence is that $g_s$, $s \in I$, $\rc$ and $\lambda$ can be reconstructed as geometrical data from a Lorentzian manifold with given lightlike parallel vector field.
Before doing so, we discuss what happens when there are several charts of the kind~\eqref{eq:AGK_metrics_3} whose canonical lightlike parallel vector fields point in different directions.
We do so in the next lemma.

\begin{lemma} \label{Lem:TwoLlParV}
	Let $(\overline{M},\overline{g})$ be a Lorentzian manifold which can be written as~\eqref{eq:AGK_metrics_3}  (with $Q$ not necessarily closed) in two different ways such that the respective canonical (complete) lightlike parallel vector fields $V$ and $\tilde{V}$ are linearly independent.
	Then $\overline{g}(V,\tilde{V}) \neq 0$ is constant and there is a Riemannian metric $g$ on $Q$ and a diffeomorphism $\Phi \colon \R \times \R \times Q \to \overline{M},\,(v,\tilde{v},x) \mapsto \Phi(v,\tilde{v},x)$ such that $V = \upd \Phi(\frac{\del}{\del v})$, $\tilde{V} = \upd \Phi(\frac{\del}{\del \tilde{v}})$ and
	\begin{align*}
		\Phi^* \overline{g} = \overline{g}(V,\tilde{V}) \upd v \otimes \upd \tilde{v} +\overline{g}(V,\tilde{V}) \upd \tilde{v} \otimes \upd v + g.
	\end{align*}
	In particular, there is an isometry of $(\overline{M},\overline{g})$ mapping $V$ to $\tilde{V}$ and $\tilde{V}$ to $V$.
\end{lemma}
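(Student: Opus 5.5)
The plan is to show that $V$ and $\tilde V$ span a flat, parallel Lorentzian plane distribution, and then split $\overline{M}$ globally along it using the two commuting complete flows.

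\emph{Step 1 (the plane is Lorentzian and parallel).} Since $V$ and $\tilde V$ are both parallel, $\overline{g}(V,\tilde V)$ is constant. If it vanished, $V$ and $\tilde V$ would be two orthogonal null vectors, hence linearly dependent at every point. This contradicts the assumption, so $a:=\overline{g}(V,\tilde V)\neq 0$. The span $E=\mathrm{span}(V,\tilde V)$ is therefore a parallel Lorentzian plane field. Its orthogonal complement $E^\perp$ is also parallel, and both distributions are integrable with totally geodesic leaves. The two fields commute, because $[V,\tilde V]=\nabla_V\tilde V-\nabla_{\tilde V}V=0$. Both are complete, being canonical fields of the form \eqref{eq:AGK_metrics_3}, so together they generate an $\R^2$-action by isometries.

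\emph{Step 2 (a global slice).} Write $V=\grad s$ and $\tilde V=\grad\tilde s$ with the global functions $s,\tilde s$ provided by the two charts. Set $\Sigma:=\{s=c,\ \tilde s=\tilde c\}$ for suitable constants. We have $\upd s(V)=0$, $\upd s(\tilde V)=a$, $\upd\tilde s(V)=a$ and $\upd\tilde s(\tilde V)=0$. Hence $(s,\tilde s)$ is a submersion, and its level sets are exactly the leaves of $E^\perp$.

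Define $\Phi(v,\tilde v,x):=\mathrm{Fl}^V_v\mathrm{Fl}^{\tilde V}_{\tilde v}(x)$ for $x\in\Sigma$. Along the orbit of any point, $s$ changes by $a\tilde v$ and $\tilde s$ by $av$. So each orbit meets $\Sigma$ at most once. It remains to show that every orbit meets $\Sigma$, i.e.\ that $s$ and $\tilde s$ take the values $c,\tilde c$ everywhere. Here I use the first chart: in it $\tilde V$ has $\partial_s$-component $\upd s(\tilde V)=a$, so flowing along $\tilde V$ for the appropriate time reaches $s=c$ from any point. This also shows that the range of $s$ is all of $\R$. Symmetrically, the $V$-flow, which preserves $s$, adjusts $\tilde s$ to $\tilde c$. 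Thus $\Phi$ is a bijection, and a local diffeomorphism by Step 1, hence a diffeomorphism.

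Identifying $\Sigma$ with $Q$ uses the first chart. There $\Sigma$ is a level set $\{\tilde s=\tilde c\}$ inside $\{s=c\}\cong\R\times Q$, and $\tilde s$ is strictly monotone along the $v$-lines, since $\partial_v\tilde s=\upd\tilde s(V)=a\neq 0$. So $\Sigma$ is the graph of a function over $Q$, and therefore diffeomorphic to $Q$.

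\emph{Step 3 (the metric).} The pullback $\Phi^*\overline{g}$ is invariant under translations in $v$ and $\tilde v$. On $T\Sigma=E^\perp$ it restricts to the induced metric $g$. Further, $\partial_v$ and $\partial_{\tilde v}$ are orthogonal to $\Sigma$ and satisfy $\overline{g}(V,V)=\overline{g}(\tilde V,\tilde V)=0$ and $\overline{g}(V,\tilde V)=a$. Since $E^\perp$ is preserved by the flows, the cross terms vanish everywhere, which gives exactly the claimed form. The map $(v,\tilde v,x)\mapsto(\tilde v,v,x)$ is then an isometry exchanging $V$ and $\tilde V$.

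\emph{Main obstacle.} The delicate point is global surjectivity and injectivity of $\Phi$ in Step 2, that is, that orbits hit $\Sigma$ exactly once and that $\Sigma\cong Q$. I handle this through the linear behaviour of $s$ and $\tilde s$ along the flows together with completeness.
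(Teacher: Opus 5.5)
Your proposal is correct and follows essentially the same route as the paper. Both proofs use the same three ingredients:
\begin{enumerate}
\item identify $s^{-1}(c)\cap\tilde s^{-1}(\tilde c)$ with $Q$, as a graph over $Q$ in the first chart;
\item build $\Phi$ from the two commuting complete flows, using that $s$ and $\tilde s$ change linearly, at rate $\overline{g}(V,\tilde V)$, along $\tilde V$ and $V$ respectively;
\item read off the metric from Killing invariance and orthogonality.
\end{enumerate}
Your extra framing, via the parallel Lorentzian plane $E$ and its spacelike complement $E^\perp$, makes explicit two things the paper leaves implicit: that $\Phi$ is a local diffeomorphism, and that the induced metric $g$ is Riemannian.
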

\begin{proof}
	Let $\Psi \colon \R \times I \times Q \to \overline{M}$ and $\tilde{\Psi} \colon \R \times \tilde{I} \times Q \to \overline{M}$ be diffeomorphisms such that the metrics $\Psi^* \overline{g}$ and $\tilde{\Psi}^* \overline{g}$ are as in~\eqref{eq:AGK_metrics} and such that $V = \grad^{\overline{g}} s$ and $\tilde{V} = \grad^{\overline{g}} \tilde {s}$, where $s$ and $\tilde{s}$ are the compositions of $\Psi$ and $\tilde{\Psi}$ with the projection on the second coordinate, respectively.
	We choose $s_0 \in I$ and $\tilde{s}_0 \in \tilde{I}$ and show that $s^{-1}(s_0)$ and $\tilde{s}^{-1}(\tilde{s}_0)$ intersect in a manifold diffeomorphic to $Q$.
	
	To this aim, first note that $\overline{g}(V,\tilde{V})$ is constant since $V$ and $\tilde{V}$ are both parallel and that $\overline{g}(V,\tilde{V}) \neq 0$ as $V$ and $\tilde{V}$ are lightlike and linearly independent.
	Now	consider the restriction of $\tilde{s}$ to $s^{-1}(s_0)$.
	Since $\upd \tilde{s}(V) = \overline{g}(\tilde{V},V) \neq 0$ is constant, each flow line of $V$ has a unique point of intersection with $\tilde{s}^{-1}(\tilde{s}_0)$ (and $\tilde{I} = \R$).
	Moreover, this shows that $\Psi^{-1}(s^{-1}(s_0) \cap \tilde{s}^{-1}(\tilde{s}_0))$ is a graph over $\{0\} \times \{s_0\} \times Q$ within $\R \times \{s_0\} \times Q$, so the intersection is diffeomorphic to $Q$ in particular.
	
	We now identify $Q$ with $s^{-1}(s_0) \cap \tilde{s}^{-1}(\tilde{s}_0)$ and let $v \mapsto \Phi^{V}_v$ and $\tilde{v} \mapsto \Phi^{\tilde{V}}_{\tilde{v}}$ be the flows of $V$ and $\tilde{V}$, respectively.
	Notice that the flow of $V$ does not change the value of $s$ since $\upd s(V) = \overline{g}(V,V) = 0$ but changes the value of $\tilde{s}$ at a constant rate as seen before.
	Likewise, the flow of $\tilde{V}$ leaves $\tilde{s}$ invariant but changes $s$ at a constant rate. 
	Hence for all $(t, \tilde{t}) \in I \times \tilde{I} (= \R \times \R)$, there is a unique pair $(v, \tilde{v})$ such that $\Phi^{V}_v \circ \Phi^{\tilde{V}}_{\tilde{v}} \colon Q \to s^{-1}(t) \cap \tilde{s}^{-1}(\tilde{t})$ is a diffeomorphism.
	Thus we get a diffeomorphism
	\begin{align*}
		\Phi \colon \R \times \R \times Q &\longrightarrow \overline{M} \\
		(v,\tilde{v}, x) &\longmapsto \Phi^{V}_v \circ \Phi^{\tilde{V}}_{\tilde{v}}(x).
	\end{align*}
	It has the property that $\upd \Phi \left(\frac{\del}{\del v} \right) = V$.
	Since the flows commute due to $[V, \tilde{V}] = \nabla^{\overline{g}}_V \tilde{V} - \nabla^{\overline{g}}_{\tilde{V}} V = 0$, $\upd \Phi \left(\frac{\del}{\del \tilde{v}} \right) = \tilde{V}$ holds as well.
	
	As $V$ and $\tilde{V}$ are in particular Killing vector fields, the metric coefficients of $\Phi^* \overline{g}$ do not depend on $v$ and $\tilde{v}$.
	Since $Q$ is orthogonal to both $V$ and $\tilde{V}$ as it is contained in a level set of $s$ and $\tilde{s}$, we immediately obtain the claimed formula for the metric.
	Finally note that the diffeomorphism $\R \times \R \times Q \to \R \times \R \times Q$ swapping $v$ and $\tilde{v}$ leaves $\Phi^* \overline{g}$ invariant and thus induces an isometry on $(\overline{M},\overline{g})$ swapping $V$ and $\tilde{V}$.
\end{proof}

\begin{proposition} \label{Prop:InjModSp}
	Let $g_s$, $s \in I$, and $\tilde{g}_s$, $s \in \tilde{I}$, be smooth families of metrics on a closed, connected manifold $Q$ and $u \in C^{\infty}(I \times Q)$ and $\tilde{u} \in C^{\infty}(\tilde{I} \times Q)$ be positive functions.
	Consider the two Lorentzian metrics 
	\begin{align*}
		\overline{g} &\coloneqq \upd v \otimes \upd s + \upd s \otimes \upd v + u^{-2} \upd s^2 + g_s &
		&\text{and} &
		\tilde{\overline{g}} &\coloneqq \upd v \otimes \upd s + \upd s \otimes \upd v +  \tilde{u}^{-2} \upd s^2 + \tilde{g}_s
	\end{align*}
	on $\R \times I \times Q$ and $\R \times \tilde{I} \times Q$, respectively.
	Assume that there is an isometry between these Lorentzian manifolds that maps the canonical (complete) lightlike parallel vector fields to each other and fixes the $s$-coordinate of some point $p$.
	Then $I = \tilde{I}$ and there exists a smooth family of diffeomorphisms $\phi_s$, $s \in I$, on $Q$ such that $\tilde{g}_s = \phi_s^* g_s$.
	In particular, we have
	\begin{gather*}
		\sqrt[n-1]{\vol^{\phantom{\tilde{}}g_s}(Q)} = \sqrt[n-1]{\vol^{\overline{g}_s}(Q)}
	\end{gather*}
	for all $s \in I$.
	Moreover, if the Lorentzian metrics have null Ricci curvature, given by $\ric^{\overline{g}} = \rc_s \upd s^2$ and $\ric^{\tilde{\overline{g}}} = \tilde{\rc}_s^* \upd s^2$, respectively, then the family $\phi_s$, $s \in I$, can be chosen such that in addition $\tilde{\rc}_s = \phi_s^* \rc_s$ holds.
\end{proposition}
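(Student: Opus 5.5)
Let $\Theta \colon (\R \times I \times Q, \overline{g}) \to (\R \times \tilde{I} \times Q, \tilde{\overline{g}})$ be the given isometry, so $\upd\Theta(\frac{\del}{\del v}) = \frac{\del}{\del v}$. The plan is to show that $\Theta$ preserves the $s$-coordinate and then to read off $\phi_s$ from how $\Theta$ maps the leaves.

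First, $s = \tilde{s} \circ \Theta$. In both manifolds the canonical vector field is the gradient of the $s$-coordinate. Since $\Theta$ is an isometry mapping $V$ to $\tilde V$, we get $\grad^{\overline{g}}(\tilde{s}\circ\Theta) = \Theta^*\tilde V = V = \grad^{\overline{g}} s$. Hence $\tilde{s}\circ\Theta - s$ is constant on the connected manifold, and it vanishes at $p$ by assumption. As $\Theta$ is bijective, $I = s(\R\times I\times Q) = \tilde{s}(\R\times\tilde I\times Q) = \tilde I$.

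Second, we identify each leaf with $Q$. Consequently $\Theta$ maps each lightlike hypersurface $\mathcal{L}_s = \R\times\{s\}\times Q$ onto $\tilde{\mathcal L}_s$ and commutes with the flows of $V$ and $\tilde V$, i.e.\ with $v$-translations. The quotient of $\mathcal{L}_s$ by the $V$-flow is $Q$, and the degenerate induced metric on $\mathcal{L}_s$ descends to $g_s$ on this quotient (as $V$ spans the radical and is Killing). The same holds for $\tilde g_s$. So $\Theta$ induces diffeomorphisms $\chi_s \colon Q \to Q$ with $\chi_s^*\tilde g_s = g_s$. Concretely, write $\Theta(v,s,x) = (v + h(s,x), s, \chi_s(x))$ with $h$ smooth; the form follows from $v$-equivariance and $\tilde s\circ\Theta = s$. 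Smoothness of $s\mapsto\chi_s$ is then clear, and we set $\phi_s \coloneqq \chi_s^{-1}$, which gives $\tilde g_s = \phi_s^* g_s$. Pullback preserves volume, which yields the volume identity; I read the displayed $\vol^{\overline{g}_s}$ as $\vol^{\tilde g_s}$.

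Third, the Ricci part. Since $\Theta$ is an isometry with $\Theta^*\upd\tilde s = \upd s$, we have $\Theta^*(\tilde\rc\,\upd s^2) = \ric^{\overline g} = \rc\,\upd s^2$, so $\rc_s(x) = \tilde\rc_s(\chi_s(x))$, i.e.\ $\tilde\rc_s = \phi_s^*\rc_s$ with the same $\phi_s$. The only step needing some care is the second one. There one has to check that the quotient metric on the leaf space really equals $g_s$ under the identification $x \mapsto [(0,s,x)]$, which holds because $\overline g$ restricted to $T\mathcal L_s$ is $g_s$ plus terms involving $\upd s$ (zero on $\mathcal L_s$) and is independent of $v$. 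One also has to check that $h$ and $\chi$ are smooth jointly in $(s,x)$, which follows since they are components of the smooth map $\Theta$ restricted to $v=0$. I expect no real obstacle beyond this bookkeeping.
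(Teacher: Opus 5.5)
Your proposal is correct and follows essentially the same route as the paper. Both arguments show that the $s$-coordinate is preserved via the gradient identity and connectedness, and use $v$-equivariance to write the isometry as $(v+h(s,x),s,\chi_s(x))$. Both then pull back the degenerate induced form $\pi^*\tilde g_s$ on the leaves and compare Ricci tensors using $\Theta^*\upd s=\upd s$. Your explicit choice $\phi_s=\chi_s^{-1}$ matches the stated direction $\tilde g_s=\phi_s^*g_s$ more carefully than the paper's own write-up, which actually derives $g_s=\phi_s^*\tilde g_s$.
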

\begin{proof}
	Let us give the name $\Phi \colon \R \times I \times Q \to \R \times \tilde{I} \times Q$ to the isometry from the assumption.
	We first prove that $\Phi$ fixes the $s$-coordinate.
	By assumption, $\Phi$ maps the gradients of $s$ with respect to the respective metrics onto each other, so $\Phi_*(\grad^{\overline{g}}s) = \grad^{\tilde{\overline{g}}}s$.
	On the other hand, since $\Phi$ is an isometry, we have $\grad^{\tilde{\overline{g}}}s = \Phi_*(\grad^{\Phi^*\tilde{\overline{g}}} \Phi^*s) = \Phi_*(\grad^{\overline{g}}(s \circ \Phi))$.
	Thus $\upd s = \upd (s \circ \Phi)$.
	Since in addition $s(p) = s(\Phi(p))$ and $\R \times I \times Q$ is connected, we get $s = s \circ \Phi$ and in particular $I = \tilde{I}$.
	
	This, together with $\upd \Phi \left(\frac{\del}{\del v} \right) = \frac{\del}{\del v}$, implies that $\Phi(v,s,x) = (v+f(s,x),s,\phi_s(x))$ for some function $f \in C^\infty(I \times Q)$ and a smooth family of smooth maps $\phi_s \colon Q \to Q$, $s \in I$.
	Actually, all the $\phi_s$ have to be diffeomorphisms of $Q$ because $\Phi$ is a diffeomorphism.
	For each $s \in I$, the (degenerate) symmetric bilinear form induced by $\tilde{\overline{g}}$ on the hypersurface $\R \times \{s\} \times Q$ is given by $\pi^* \tilde{g}_s$, where $\pi$ denotes the canonical projection on $Q$.
	Letting $\iota \colon Q \to \R \times \{s\} \times Q,\, x \mapsto (0,s,x)$ and $i \colon \R \times \{s\} \times Q \to \R \times I \times Q$ be the inclusion, we can calculate
	\begin{gather*}
		g_s = \iota^* i^*\overline{g} = \iota^* i^* \Phi^* \tilde{\overline{g}} = \iota^* \Phi^* i^* \tilde{\overline{g}} = \iota^* \Phi^* \pi^* \tilde{g}_s = (\pi \circ \Phi \circ \iota)^* \tilde{g}_s = \phi_s^* \tilde{g}_s.
	\end{gather*}
	This implies of course that the volumes of $g_s$ and $\tilde{g}_s$ agree.
	
	Moreover, in the case of null Ricci curvature, we have
	\begin{gather*}
		\rc_s \upd s^2 = \ric^{\overline{g}} = \ric^{\Phi^* \tilde{\overline{g}}} = \Phi^* (\tilde{\rc}_s \upd s^2) = (\Phi^* \tilde{\rc}_s) \upd s^2,
	\end{gather*}
	because $\Phi^* \upd s = \upd (\Phi^* s) = \upd (s \circ \Phi) = \upd s$.
	This equation just tells us that $\rc_s = \phi_s^* \tilde{\rc}_s$ for all $s \in I$.
\end{proof}

\begin{proof}[Proof of \cref{thm:1-1}]
	We begin to define a map assigning to a smooth parametrized curve $s \mapsto (g_s,\rc_s,\lambda_s)$ an isometry class of Lorentzian manifolds.
	Here $g_s$, $s \in I$, is a family of unit-volume Ricci-flat metrics on $Q$, $\rc \in C^\infty(I \times Q)$ and $\lambda$ is a positive solution associated to the ODE~\eqref{eq:ScaleODE}.
	The isometry class we associate is represented by any Lorentzian manifold
	\begin{gather*}
		(\R \times I \times Q, \upd v \otimes \upd s + \upd s \otimes \upd v + u^{-2}\upd s^2 + \lambda_s^2\phi_s^* g_s),
	\end{gather*}
	where the smooth family $\phi_s$, $s \in I$, of diffeomorphisms of $Q$ and $u \in C^\infty(I \times Q)$ are chosen such that the Lorentzian manifold has null Ricci curvature $(\phi_s^* \rho_s)\upd s^2$.
	\Cref{Prop:WellDefModSp} states all Lorentzian manifolds of this kind belong to the same isometry class.
	Since \cref{Prop:SurjModSp} ensures that there is at least one of those, we get a well-defined map.
	
	Moreover, this map is obviously surjective on isometry classes of Lorentzian manifolds of the form~\eqref{eq:AGK_metrics_3} with null Ricci curvature:
	Defining $\lambda$ as in \cref{prop:ScaleODE} and $\rho \in C^\infty(I \times Q)$ through $\ric^{\overline{g}} = \rho \upd s^2$, we have that $\lambda_s^{-2}g_s$, $s \in I$, is a family of unit-volume Ricci flat metrics and by \cref{prop:ScaleODE} that $\lambda$ is a positive solution of the ODE~\eqref{eq:ScaleODE} associated to $(\lambda_s^{-2}g_s,\rho_s)$, $s \in I$.
	By definition, the curve $s \mapsto (\lambda_s^{-2}g_s,\rho_s,\lambda_s)$ gets mapped to the isometry class of~\eqref{eq:AGK_metrics_3}.
	
	This map is not yet injective.
	We make it injective by first dividing out the action of smooth families $\psi_s$, $s \in I$, of diffeomorphisms of $Q$ and afterwards affine reparametrizations of~$s$.
	For the first step it is clear that $\psi_s^*(g_s,\rc_s,\lambda_s) = (\psi_s^*g_s,\psi_s^*\rc_s,\lambda_s)$, $s \in I$, gets mapped to the same isometry class of Lorentzian manifolds since the family of diffeomorphisms $\psi_s$, $s \in I$, that gets chosen in the construction can entirely compensate for the effect of $\psi_s$, $s \in I$.
	Thus we have a well-defined map from the domain $\ModSp(I,Q)$. 
	For the second step, we note that elements in $\ModSp(I,Q)$ which are equivalent in the sense of~\eqref{eq:equivalent_curves} for some $(\alpha,\beta) \in (\R \setminus \{0\}) \times \R$ get mapped to Lorentzian manifolds that are isometric via~\eqref{eq:ScalingDiffeo}.
	So we get a well-defined surjection from $\ModSp(\bullet,Q)/\sim$ onto isometry classes of Lorentzian manifolds of the form~\eqref{eq:AGK_metrics_3} with null Ricci curvature and we will show that this map is injective.
	
	Let $(g_s,\rc_s,\lambda_s)$, $s \in I$, and $(\tilde{g}_s,\tilde{\rc}_s,\tilde{\lambda}_s)$, $s \in \tilde{I}$, each represent an element in $\ModSp(\bullet,Q)/\sim$.
	They get mapped to the isometry classes of
	\begin{align*}
		\overline{g} &\coloneqq \upd v \otimes \upd s + \upd s \otimes \upd v + u^{-2}\upd s^2 + \lambda_s^2\phi_s^* g_s &&\text{and} & \tilde{\overline{g}} &\coloneqq \upd v \otimes \upd s + \upd s \otimes \upd v + \tilde{u}^{-2}\upd s^2 + \tilde{\lambda}_s^2\tilde{\phi}_s^* \tilde{g}_s
	\end{align*}
	on $\R \times I \times Q$ and $\R \times \tilde{I} \times Q$, respectively, for suitable choices of families $\phi_s$, $s \in I$ and $\tilde{\phi}_s$, $s \in \tilde{I}$ and functions $u$ and $\tilde{u}$. 
	We assume that these isometry classes are the same, \ie that there is an isometry $\Psi \colon \R \times I \times Q \to \R \times \tilde{I} \times Q$ between the Lorentzian manifolds above.
	Consider the canonical complete lightlike parallel vector fields $\frac{\del}{\del v}$ of these metrics.
	If compared using $\Psi$, these vector fields could be linearly independent.
	In this case, \cref{Lem:TwoLlParV} shows that $\Psi$ can be chosen such that these vector fields are mapped to each other.
	Thus we can assume in any case that $\upd \Psi \left(\frac{\del}{\del v} \right) = \alpha^{-1} \frac{\del}{\del v}$ for some $\alpha \neq 0$, which is necessarily constant.
	
	Now we consider $\Phi \colon \R \times \tilde{J} \times Q \to \R \times \tilde{I} \times Q$ as in \eqref{eq:ScalingDiffeo} with $\alpha$ as above and $\beta$ chosen such that for some fixed point $p \in \R \times I \times Q$ the $s$-coordinates of $p$ and $\Phi^{-1}(\Psi(p))$ agree.
	We have that
	\begin{gather*}
		\Phi^* \tilde{\overline{g}} = \upd v \otimes \upd s + \upd s \otimes \upd v + \hat{u}^{-2}\upd s^2 + \tilde{\lambda}_{\alpha s + \beta}^2\tilde{\phi}_{\alpha s + \beta}^* \tilde{g}_{\alpha s + \beta},
	\end{gather*}
	where $\hat{u}(s,x) = \alpha^{-1}\tilde{u}(\alpha s + \beta,x)$ and $\ric^{\tilde{\overline{g}}} = \alpha^2 \tilde{\phi}_{\alpha s + \beta}^*\tilde{\rc}_{\alpha s + \beta} \upd s^2$.
	Then $\Phi^{-1} \circ \Psi$ is an isometry which maps the canonical lightlike parallel vector field of $\overline{g}$ onto the one of $\Phi^* \tilde{\overline{g}}$ and fixes the $s$-coordinate of the point $p$.
	Then \cref{Prop:InjModSp} tells us that $I = \tilde{J}$ and that there is a smooth family of diffeomorphisms $\psi_s$, $s \in I$, such that $\tilde{\lambda}_{\alpha s + \beta}^2\tilde{\phi}_{\alpha s + \beta}^* \tilde{g}_{\alpha s + \beta} = \lambda_s^2 \psi_s^*\phi_s^* g_s$ and $\alpha^2 \tilde{\phi}_{\alpha s + \beta}^*\tilde{\rc}_{\alpha s + \beta} = \psi_s^* \phi_s^*\rc_s$.
	So the parametrized moduli curves
	\begin{align*}
		[(g_s,\rc_s,\lambda_s)] &= [(\psi_s^*\phi_s^*g_s,\psi_s^*\phi_s^*\rc_s,\lambda_s)]
	\intertext{and}
	 	[(\tilde{g}_s,\tilde{\rc}_s,\tilde{\lambda}_s)] &= [(\tilde{\phi}_s^*\tilde{g}_s,\tilde{\phi}_s^*\tilde{\rc}_s,\tilde{\lambda}_s)]
	\end{align*}
	are equivalent and thus define the same element in $\ModSp(\bullet,Q)/\sim$.
\end{proof}

\subsection{Discussion and comparison with the AKM-construction}
As explained in the introduction, we aim at a strengthening of the main result of Ammann, Kröncke and Müller \cite{ammann.kroencke.mueller:21} relating initial data sets with lightlike parallel spinor with parametrized curves in the premoduli space $\mathcal{R}_{\parallel}(Q)/ \Diff_0(Q)$.
Here $\mathcal{R}_{\parallel}(Q)$ denotes the space of (Ricci-flat) Riemannian metrics on $Q$ that admit a parallel spinor and $\Diff_0(Q) \subset \Diff(Q)$ is the identity component of the space of diffeomorphisms of $Q$.
Before constructing the lightlike parallel spinors in the next section, let us briefly compare the AKM-construction with the classification developed in this section on a metric
level.

Recall that the metric part of this construction \cite[Main~Constr.~15]{ammann.kroencke.mueller:21} takes a smooth curve $I \to \mathcal{R}_{\parallel}(Q)/ \Diff_0(Q)$ and a positive smooth function $F = \sqrt{u} \colon I \to \R$ as input.
The first step is to choose a smooth representative $g_s$, $s \in I$, of the curve that is gauged such that $\div^{g_s}(\dot{g}_s) = 0$.
The initial data set then consists of $\gamma = \upd s^2 + g_s$ and $k$, where the latter is implicitely determined by the condition that $u\frac{\del}{\del s}$ should be lightlike-parallel, \cf \cref{Rem:DetK}.
By reparametrizing the $s$-coordinate, it is possible to get into the form of this article, where $\gamma = u^{-2}\upd s^2 + g_s$ and $-u^2\frac{\del}{\del s}$ is lightlike-parallel.

\begin{remark}\label{Rem:AKM-subset}
	We see that the initial data sets resulting from the AKM-construction correspond to the pp-wave metrics $\LorMet \coloneqq \upd v \otimes \upd s + \upd s \otimes \upd v + u^{-2}\upd s^2 + g_s$ with null Ricci curvature, where $u$ is constant along each leaf $\R \times \{s\} \times Q$ and $\div^{g_s}(\dot{g}_s) = 0$.
	The latter condition forces the Hessian part in the decomposition \cref{prop_splitting_constraint_solutions} to vanish.
	Writing $g_s = \lambda_s^2 \tilde{g}_s$ for a family of unit-volume metrics $\tilde{g}_s$, $s \in I$, and a positive function $\lambda \in C^\infty(I)$, and $\tilde{\sigma}_s$ for the TT-part of $\dot{\tilde{g}}_s$ with respect to $\tilde{g}_s$, we see that this pp-wave corresponds via \cref{thm:1-1} to the parametrized moduli curve
	\begin{gather*}
	[(\tilde{g}_s,\rc_s,\lambda_s)] \in \ModSpPar(\bullet,Q)/\sim, \text{ where } \rc_s = -(n-1) \frac{\ddot{\lambda}_s}{\lambda_s} - \frac{1}{4} |\tilde{\sigma}_s|_{\tilde{g}_s}^2.
	\end{gather*}
	Observe that this shows that in the AKM-construction $\rc_s$ is determined up to adding leafwise constant functions.
	To get parallel spinors on those pp-waves with null Ricci curvature where $\rc$ differs from $s \mapsto - \frac{1}{4} |\tilde{\sigma}_s|_{\tilde{g}_s}^2$ by more than a function in $C^\infty(I)$, we need the improved construction from the next section.
\end{remark}

\begin{remark} \label{Rem:ModuliSpaces}
	The formulation with the moduli space in \cite{ammann.kroencke.mueller:21} might be a bit misleading.
	First of all, it is remarkable that $\mathcal{R}_\parallel(Q) / \Diff_0(Q)$ carries the structure of a finite-dimensional smooth manifold -- which is for instance no longer true in general if $\Diff_0(Q)$ gets replaced by $\Diff(Q)$.
	The smooth structure is such that a smooth map $I \to \mathcal{R}_\parallel(Q) / \Diff_0(Q)$ is one that can be represented by a smooth map $I \to \mathcal{R}_\parallel(Q)$.
	In other words, the set $C^\infty(I,\mathcal{R}_{\parallel}(Q) / \Diff_0(Q))$ of smooth maps $I \to \mathcal{R}_{\parallel}(Q) / \Diff_0(Q)$ is the image of
	\begin{gather*}
	C^\infty(I,\mathcal{R}_{\parallel}(Q)) \hookrightarrow \mathcal{R}_{\parallel}(Q)^I\longrightarrow (\mathcal{R}_{\parallel}(Q) / \Diff_0(Q))^I.
	\end{gather*}
	The AKM-construction now starts with such a smooth representative and makes it divergence-free, but the result might a priori depend on the chosen representative.
	Two representatives lead to the same initial data sets, though, if they are related by a smooth family of diffeomorphisms.
	Thus the result of the AKM-construction depends only on the curve in $I \to \mathcal{R}_\parallel(Q) / \Diff_0(Q)$ and not on the representative (in $I \to \mathcal{R}_\parallel(Q)$) if
	\begin{gather} \label{eq:ModCurvVsCurvMod}
	C^\infty(I,\mathcal{R}_{\parallel}(Q)) / C^\infty(I, \Diff_0(Q)) \longrightarrow C^\infty(I,\mathcal{R}_{\parallel}(Q) / \Diff_0(Q))
	\end{gather}
	is injective.
	At the moment, we do neither have a proof nor a disproof for the injectivity of~\eqref{eq:ModCurvVsCurvMod}.
\end{remark}

In our analysis, we were lead to considering the full $\Diff(Q)$ instead of $\Diff_0(Q)$.
In this case, injectivity of the corresponding map~\eqref{eq:ModCurvVsCurvMod} fails.
Even worse, since we want to prescribe the Ricci tensor of the spacetime with null Ricci curvature, we have to enrich our configuration space by a smooth function.
Once we replace $\mathcal{R}_{\parallel}(Q)$ by $\mathcal{R}_{\parallel}(Q) \times C^\infty(Q)$, the analog of~\eqref{eq:ModCurvVsCurvMod} is definitely non-injective as \cref{ex_ModuliSpaceII} shows, even when we stick to $\Diff_0(Q)$.
As such, it explains why in our one-to-one correspondences in \cref{thm:1-1,cor:Ricci-flat,thm:1-1_spin_case}, we really have to consider moduli curves instead of curves in a moduli space.

\begin{example}\label{ex_ModuliSpaceII}
	Consider $Q = T^2 = \R^2 / \Z^2$ equipped with the constant family $g_s = g_{\mathrm{eucl}}$ for all $s \in \R$, where $g_{\mathrm{eucl}}$ is the standard Euclidean metric.
	Now, let $f \in C^\infty(Q)$ be a function with a unique maximum.
	Let us fix some isometry $\id \neq S \colon Q \to Q$ that is induced by a shift in $\R^2$.
	Furthermore, we fix a smooth function $\theta \colon \R \to \R$ such that $\theta$ is positive on $\R_{>0}$ and vanishes identically on $\R_{<0}$, and let $\sigma \colon \R \to \R$, $t \mapsto -t$.
	Now, consider the smooth families $\R \to \mathcal{R}_{\parallel}(Q) \times C^\infty(Q)$ given by $(g_s, \theta(s)f + \theta(\sigma(s))f)$, $s \in I$, and $(g_s, \theta(s) f + \theta(\sigma(s))(f \circ S))$, $s \in I$.
	It is easy to see that for each $s \in \R$ the pairs are related by an element in $\Diff_0(Q)$.
	In fact, we can choose the identity for $s \leq 0$ and $S \colon Q \to Q$ for $s > 0$.
	On the other hand, there is no smooth -- not even a continuous -- choice for such diffeomorphisms $\phi_s$, $s \in I$, since if $p \in Q$ is the place of the unique maximum of $f$, then we must have $\phi_s(p) = p$ for $s < 0$ and $\phi_s(p) = S(p) \neq p$ for $s > 0$.
\end{example}

\section{Constructing parallel spinors on simple pp-waves}\label{sec.constr.cyl.ids.llps}
\subsection{Parallel spinors on spacetimes and initial data sets}
The purpose of this section is to construct parallel spinors on simple pp-waves.
We do this in a two step process.
First, we show that there is a one-to-one correspondence between lightlike parallel spinors on simple pp-waves and suitable spinors on initial data sets.
Then, in the next subsections, we construct the latter.

Let $(\overline{M},\overline{g})$ be a time-oriented Lorentzian spin manifold with a chosen spin structure.
For simplicity of the presentation, we will work with the classical complex irreducible spinor bundles here, although the constructions work equally well for others as, for instance, the real $\mathrm{Cl}$-linear spinor bundle (which is considered in \cite{Gloeckle:2024_a}).
We take such a spinor bundle $\Sigma \overline{M} \to \overline{M}$ over $\overline{M}$ associated to the chosen spin structure.
As usual, it comes with a Clifford multiplication which is parallel with respect to the connection $\nabla^{\overline{g}}$ induced by the Levi-Civita connection.
In contrast to the Riemannian situation, there is no compatible positive definite scalar product, but there is an indefinite (Hermitian) inner product $\llangle \bullette, \bullette \rrangle$ that is parallel and for which the Clifford multiplication by any vector in $T\overline{M}$ is self-adjoint.
Moreover, it can be taken such that $\langle \bullette, \bullette \rangle_T \coloneqq \llangle T \cdot \bullette, \bullette \rrangle$ is positive definite for any future-timelike vector $T$.
How to obtain the inner product is for example briefly explained in \cite[end of Ch.~3.2]{Ammann.Gloeckle:2023} following the original work by Helga Baum \cite[esp.~Ch.~1.5]{Baum:1981}.

Given $\Psi \in \Sigma_p \overline{M}$, $p \in \overline{M}$, self-adjointness of the Clifford multiplication by a vector implies that $\overline{\llangle X \cdot \Psi, \Psi \rrangle} = \llangle \Psi, X \cdot \Psi \rrangle = \llangle  X \cdot \Psi, \Psi \rrangle$ for all $X \in T_p \overline{M}$.
Hence, $-\llangle  \bullette \cdot \Psi, \Psi \rrangle$ defines a real-valued $1$-form and is thus represented by a vector.
\begin{definition} \label{Def:DiracCurr}
	The (Lorentzian) \emph{Dirac current} of a spinor $\Psi \in \Sigma_p \overline{M}$, $p \in \overline{M}$, is the vector $V_{\Psi} \in T_p \overline{M}$ uniquely determined by
	\begin{gather*}
		\overline{g}(V_\Psi,X) = -\llangle X \cdot \Psi, \Psi \rrangle \qquad \text{ for all } X \in T_p \overline{M}.
	\end{gather*}
\end{definition}

A short calculation (\cf \cite[Lem.~1.3.8]{Gloeckle:2024_c}) yields that
\begin{align} \label{eq:DiracCurrFlavor}
	\langle V_\Psi \cdot \Psi, V_\Psi \cdot \Psi \rangle_T = -\overline{g}(V_\Psi, V_\Psi) \langle \Psi, \Psi \rangle_T
\end{align}
for any future-timelike vector $T \in T_p \overline{M}$.
Thus the Dirac-Current is either timelike or lightlike or zero.
Of course, it is zero if and only if $\Psi = 0$.
Otherwise, it is a future vector because $\overline{g}(V_\Psi,T) = -\langle \Psi, \Psi \rangle_T \leq 0$ for any future-timelike $T$.
Equation~\eqref{eq:DiracCurrFlavor} also shows that if $\Psi \neq 0$, then $V_\Psi$ is lightlike if and only if $V_\Psi \cdot \Psi = 0$.
Furthermore, if $V \cdot \Psi = 0$ for a future-lightlike vector $V$, then $0 = \overline{g}(V_\Psi, V)$ and thus $V_\Psi$ is proportional to $V$.

\begin{definition}
	A spinor $\Psi \in \Sigma_p \overline{M}$ is called \emph{timelike} or \emph{lightlike} if $V_\Psi$ is timelike or lightlike, respectively. 
\end{definition}

\begin{lemma} \label{Lem:LlSpinor}
	Let $\Psi \in \Gamma(\Sigma \overline{M})$ be a lightlike parallel spinor.
	Then $(\overline{M},\overline{g})$ is a pp-wave spacetime with lightlike parallel vector field $V_\Psi$ and null Ricci curvature.
\end{lemma}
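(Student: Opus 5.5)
The proof has three parts: the Dirac current $V_\Psi$ is parallel, it is lightlike (hence a pp-wave), and the Ricci curvature is null.

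\emph{Parallelism.} The Dirac current is built from $\Psi$ by a pointwise bilinear operation using only $\llangle\bullette,\bullette\rrangle$ and Clifford multiplication. Both are parallel, and $\Psi$ is parallel, so $V_\Psi$ is parallel. Concretely, for any vector field $X$ and direction $Y$, differentiate $\overline{g}(V_\Psi,X) = -\llangle X\cdot\Psi,\Psi\rrangle$ in the direction $Y$. Since $\nabla^{\overline{g}}_Y\Psi=0$ and Clifford multiplication is parallel, the right-hand side becomes $-\llangle \nabla^{\overline{g}}_YX\cdot\Psi,\Psi\rrangle = \overline{g}(V_\Psi,\nabla^{\overline{g}}_YX)$. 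Hence $\overline{g}(\nabla^{\overline{g}}_YV_\Psi,X)=0$ for all $X$, so $\nabla^{\overline{g}} V_\Psi=0$.

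\emph{Lightlikeness.} By assumption $\Psi$ is lightlike, so $V_\Psi$ is lightlike wherever it is considered. Being parallel, $V_\Psi$ has constant length and is nowhere vanishing, since $\Psi\neq0$ and $\Psi$ is parallel. Thus $(\overline{M},\overline{g})$ is a pp-wave with lightlike parallel vector field $V_\Psi$. By \eqref{eq:DiracCurrFlavor} we also record $V_\Psi\cdot\Psi=0$.

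\emph{Null Ricci curvature.} I would use the standard spinorial curvature identity. Since $\Psi$ is parallel, the spinor curvature vanishes, $R^{\Sigma}(X,Y)\Psi = \tfrac14\sum \overline{R}(X,Y,e_i,e_j)\,\epsilon_i\epsilon_j e_i\cdot e_j\cdot\Psi=0$, with the signs $\epsilon_i$ adapted to a Lorentzian orthonormal frame. Contracting with Clifford multiplication and applying the first Bianchi identity gives the usual relation $\Ric^{\overline{g}}(X)\cdot\Psi=0$, up to a nonzero constant factor, for all $X$. Now fix $X$ and let $W=\Ric^{\overline{g}}(X)$, so that $W\cdot\Psi=0$. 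Then $W\cdot W\cdot\Psi = -\overline{g}(W,W)\Psi=0$, hence $\overline{g}(W,W)=0$, that is, $W$ is null.

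For extra information I would add that if $W\neq0$, then $W$ is a lightlike vector annihilating $\Psi$. By the remark after \eqref{eq:DiracCurrFlavor} (applied to $\pm W$, choosing the future-pointing one), $V_\Psi$ is then proportional to $W$. This is consistent with the form $\Ric^{\overline{g}}(X)=\alpha(X)V$ used in the proof of \cref{lem.lor.ricci-flat}.

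The main obstacle is the contraction identity $\Ric(X)\cdot\Psi = c\sum_j \epsilon_j e_j\cdot R^\Sigma(X,e_j)\Psi$ in Lorentzian signature. This is a routine Clifford algebra computation, but care is needed with the sign conventions; since only vanishing matters, the constant is irrelevant. Everything else is immediate from the facts about Dirac currents stated just before the lemma.
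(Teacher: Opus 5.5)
Your proposal is correct and follows the paper's proof essentially step by step. You get parallelism of $V_\Psi$ from parallelism of $\Psi$, the inner product and Clifford multiplication. You then contract the curvature to get $\Ric^{\overline{g}}(X)\cdot\Psi = 0$ (the paper pins the constant down as $-\tfrac{1}{2}$), and conclude via $\Ric^{\overline{g}}(X)\cdot\Ric^{\overline{g}}(X)\cdot\Psi = -\overline{g}(\Ric^{\overline{g}}(X),\Ric^{\overline{g}}(X))\Psi = 0$. Your extra observation that any nonzero $\Ric^{\overline{g}}(X)$ is proportional to $V_\Psi$ is correct but not needed for the statement.
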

\begin{proof}
	Because the inner product, the Clifford multiplication and the metric $\overline{g}$ are parallel with respect to $\nabla^{\overline{g}}$, parallelism of $\Psi$ implies that $\nabla^{\overline{g}} V_\Psi = 0$.
	Because $V_\Psi$ is lightlike by assumption, this implies that $(\overline{M},\overline{g})$ is a pp-wave.
	
	Moreover, since $\Psi$ is parallel, $R^{\overline{g}}(X,Y) \cdot \Psi = 0$ for $X,Y \in T\overline{M}$.
	Clifford multiplying with $Y$ and tracing over $Y$, the same calculation as in the Riemannian case (\cf \eg \cite[(3.17)]{Roe:1999}) yields
	\begin{gather*}
		0 =\tr^{\overline{g}}(\bullette \cdot R^{\overline{g}}(X,\bullette) \Psi) = -\frac{1}{2}\Ric^{\overline{g}}(X) \cdot \Psi
	\end{gather*}
	for all $X \in T\overline{M}$.
	In particular, for all $X \in T\overline{M}$
	\begin{gather*}
		0 = \Ric^{\overline{g}}(X) \cdot \Ric^{\overline{g}}(X) \cdot \Psi = -\overline{g}( \Ric^{\overline{g}}(X), \Ric^{\overline{g}}(X)) \Psi
	\end{gather*}
	and so the Ricci curvature is null.
\end{proof}
\begin{remark}
	Note that if $\Psi \in \Gamma(\Sigma \overline{M})$ is parallel (and hence also $V_\Psi$), then whether $V_\Psi$ is timelike, lightlike or zero does not change within a connected component of $\overline{M}$.
	Thus for connected $\overline{M}$, it suffices to check that $\Psi$ is lightlike in one point in order to be able to draw the conclusions from \cref{Lem:LlSpinor}.
\end{remark}

We now consider a spacelike hypersurface $M$ in $(\overline{M},\overline{g})$ and denote by $(\gamma,k)$ the induced initial data set.
First of all, we remark that the spin structure of $(\overline{M},\overline{g})$ induces a spin structure on $(M,\gamma)$, and in particular $M$ is spin.
The following partial converse of this will also be relevant for us:
If there is a nowhere vanishing vector field $V$ on $(\overline{M},\overline{g})$ whose flow lines intersect $M$ precisely once, then $(\overline{M},\overline{g})$ is spin if $M$ is spin and for every spin structure of $(M,\gamma)$ there is a unique spin structure of $(\overline{M},\overline{g})$ that restricts to it.

The restriction $\Sigma \overline{M}_{|M} \to M$ has a Clifford multiplication by vectors in $TM$, which is simply obtained by restricting the Clifford multiplication to $TM \subset T\overline{M}_{|M}$.
Moreover, denoting the future unit normal on $M$ by $e_0$, it comes with an involution $e_0 \cdot$ that anti-commutes with the $TM$-Clifford multiplication and a positive definite scalar product $\langle \bullette, \bullette \rangle_{e_0}$ for which multiplication by $X \in TM$ is skew-adjoint and $e_0 \cdot$ is self-adjoint.
Note though that the $TM$-Clifford multiplication, the involution and the scalar product are in general not parallel with respect to the restricted connection $\nabla^{\overline{g}}$.

It is important to note that $\Sigma \overline{M}_{|M} \to M$ can be reconstructed from the initial data set alone.
We ignore the connection for a moment and just consider Clifford-modules with compatible scalar product.
In the following, isomorphisms are understood to preserve these structures.
There are two cases:
If $n = \dim(M)$ is even, then there are two non-isomorphic choices $\Sigma^\pm_{n,1}$ of an irreducible $\CCl_{n,1}$-module.
Whichever we take, however, the restriction of the Clifford multiplication along $\R^n \hookrightarrow \R^{n,1}$ up to isomorphism yields the unique irreducible $\CCl_n$-module $\Sigma_n$, the only difference being that in one case the involution $e_0 \cdot$ is given by multiplication with the complex volume element $\omega_\C$ of $\R^n$ and in the other with its negative.
If $n = \dim(M)$ is odd, then the irreducible $\CCl_{n,1}$-module $\Sigma_{n,1}$ is essentially unique, but there are two choices $\Sigma^+_n$ and $\Sigma^-_n$ of an irreducible $\CCl_n$-module, distinguished by whether the complex volume element acts by $1$ or $-1$.
However, there is an isomorphism $\Phi \colon \Sigma^-_n \to -\Sigma^+_n$, where the minus indicates that the Clifford multiplication is precomposed by $\R^n \to \R^n,\, X \mapsto -X$.
The restriction of the Clifford multiplication of $\Sigma_{n,1}$ along $\R^n \hookrightarrow \R^{n,1}$ splits into the $\pm1$-eigenspaces of the complex volume element of $\R^n$ and is thus isomorphic to the orthogonal sum $\Sigma^+_n \oplus \Sigma^-_n$, with $e_0 \cdot$ interchanging the components.

This motivates that we define the (classical) \emph{hypersurface spinor bundle} $\overline{\Sigma} M \to M$ to be given by the classical spinor bundle $\Sigma M \to M$ if $n$ is even and by the orthogonal sum $\Sigma^+ M \oplus \Sigma^- M \to M$ if $n$ is odd.
It comes equipped with a (positive definite) Hermitian scalar product $\langle \bullette, \bullette \rangle$ and a Clifford multiplication of $TM$.
Furthermore, it can be endowed with a self-adjoint involution $e_0 \cdot$ that anti-commutes with the $TM$-multiplication:
In the even case given by either $\omega_\C$ or $- \omega_\C$ (there are two different hypersurface spinor bundles in this case) and in the odd case by $\Sigma^+ M \oplus \Sigma^- M \ni (\psi_+,\psi_-) \mapsto (\Phi(\psi_-),\Phi^{-1}(\psi_+))$.
Moreover, we equip it with the connection
\begin{gather*}
	\overline{\nabla}_X \psi \coloneqq \nabla^{\gamma}_X \psi - \frac{1}{2} e_0 \cdot k(X,\bullette)^{\sharp^{\gamma}} \cdot \psi
\end{gather*}
for $\psi \in \Gamma(\overline{\Sigma} M)$ and $X \in TM$.

\begin{lemma} \label{Lem:RestrictSpinorBundles}
	Let $(\gamma,k)$ be the induced initial data set on a spin manifold $M$ and $\overline{\Sigma} M \to M$ be a hypersurface spinor bundle.
	Suppose that $(\overline{M},\overline{g})$ is a time-oriented Lorentzian manifold into which $M$ embeds as spacelike hypersurface such that the induced initial data set is $(\gamma,k)$.
	Assume that there is a spin structure on $(\overline{M},\overline{g})$ that restricts to the given one of $(M,\gamma)$.
	Then the (or one of the two) spinor bundle(s) $\Sigma \overline{M} \to \overline{M}$ has the property that
	\begin{gather} \label{eq:RestrictSpinorBundles}
		\Sigma \overline{M}_{|M} \overset{\cong}{\longrightarrow} \overline{\Sigma}M.
	\end{gather}
	The isomorphism respects scalar product, $TM$-Clifford multiplication and involution $e_0 \cdot$ and maps the restriction of the connection $\nabla^{\overline{g}}$ to $\overline{\nabla}$.
\end{lemma}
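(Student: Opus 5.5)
The plan is to reduce everything to a pointwise statement about Clifford modules, and then identify the connections using the spinorial Gauß formula.

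\emph{Step 1: bundle isomorphism.} Let $P_{\overline{M}}$ be the spin structure of $(\overline{M},\overline{g})$ restricting to the given one of $(M,\gamma)$. Then $P_{\overline{M}}|_M$ is obtained from $P_M$ by extending along $\mathrm{Spin}(n)\hookrightarrow\mathrm{Spin}(n,1)$, where we use frames $(e_0,e_1,\dots,e_n)$ with $e_0$ the future unit normal. Hence
\begin{gather*}
\Sigma\overline{M}_{|M}=P_{M}\times_{\mathrm{Spin}(n)}\Sigma_{n,1}|_{\mathrm{Spin}(n)}.
\end{gather*}
By the representation theory recalled before the lemma, the restricted $\CCl_n$-module is $\Sigma_n$ for $n$ even. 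There $e_0\cdot$ acts by $\pm\omega_\C$, and we choose $\Sigma^\pm_{n,1}$, i.e.\ the spinor bundle, accordingly. For $n$ odd it is $\Sigma^+_n\oplus\Sigma^-_n$, with $e_0\cdot$ acting by the swap $(\psi_+,\psi_-)\mapsto(\Phi(\psi_-),\Phi^{-1}(\psi_+))$ for a suitable normalization of $\Phi$.

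This module isomorphism is $\mathrm{Spin}(n)$-equivariant, since it intertwines Clifford multiplication by $\R^n$, which generates the action. It can be chosen to carry $\langle\cdot,\cdot\rangle_{e_0}=\llangle e_0\cdot\,\cdot,\cdot\rrangle$ to the standard Hermitian product. Both products are $\mathrm{Spin}(n)$-invariant, make vectors skew-adjoint, and are unique up to scale on irreducible pieces. In the odd case the swap fixes the relative scale. The isomorphism therefore descends to \eqref{eq:RestrictSpinorBundles}, preserving scalar product, $TM$-Clifford multiplication and $e_0\cdot$.

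\emph{Step 2: connections.} Write both connections in a local section of $P_M$ lifting an orthonormal frame $(e_1,\dots,e_n)$, and view it as a section of $P_{\overline{M}}$ via $(e_0,e_1,\dots,e_n)$. The local formula for the spinor connection then gives
\begin{gather*}
\nabla^{\overline g}_X\psi=\partial_X\psi+\tfrac14\sum_{i,j\ge1}\gamma(\nabla^\gamma_Xe_i,e_j)e_i e_j\cdot\psi-\tfrac12\sum_{i\ge1}\epsilon\,\overline g(\nabla^{\overline g}_Xe_i,e_0)\,e_0e_i\cdot\psi
\end{gather*}
for $X\in TM$. Here $\epsilon$ is the sign coming from $\overline g(e_0,e_0)=-1$, and the tangential part uses $\overline g(\nabla^{\overline g}_Xe_i,e_j)=\gamma(\nabla^\gamma_Xe_i,e_j)$. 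The first two terms give $\nabla^\gamma_X\psi$ under the isomorphism. By the definition of $k$, $\overline g(\nabla^{\overline g}_Xe_i,e_0)=-k(X,e_i)$, so the last term is a constant multiple of $e_0\cdot k(X,\cdot)^{\sharp}\cdot\psi$.

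\emph{Main obstacle: signs.} The coefficient of that last term must come out as $-\frac12$, as required by the definition of $\overline\nabla$. This depends on the Clifford convention $X\cdot X=-\overline g(X,X)$, on how $\epsilon$ enters the local formula, and on the orientation of $e_0$ and the sign of $k$. I will fix conventions by checking a known case: for $\Sigma\overline M$, $\nabla^{\overline g}e_0=0$ in flat space must be consistent with the Gauß formula. As a cross-check, the connection must make $e_0\cdot$ satisfy $\overline\nabla_X(e_0\cdot)=$ the correct term, namely
\begin{gather*}
\nabla^{\overline g}_Xe_0=-k(X,\cdot)^\sharp \ \text{(up to sign)}.
\end{gather*}
With this, the claimed isomorphism of connections follows.
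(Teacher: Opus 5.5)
Your proposal follows the same route as the paper: the restricted bundle $\Sigma\overline{M}_{|M}$ is associated to the spin structure of $(M,\gamma)$, the Clifford-module isomorphisms give the bundle isomorphism (in even dimensions you choose $\Sigma\overline{M}$ so that $e_0\cdot$ matches $\pm\omega_\C$), and the connections are compared via the local spinor-connection formula and $\nabla^{\overline g}_X Y=\nabla^\gamma_X Y+k(X,Y)e_0$, a computation the paper only cites. The sign you leave open does come out as required. With $\epsilon=-1$ your displayed formula gives exactly $-\frac12 e_0\cdot k(X,\cdot)^{\sharp}\cdot\psi$. The decisive cross-check is parallelism of $e_0\cdot$ using $\nabla^{\overline g}_X e_0=+k(X,\cdot)^{\sharp}$, which follows from your own $\overline g(\nabla^{\overline g}_X e_i,e_0)=-k(X,e_i)$. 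Your flat-space test cannot decide the sign, since $k=0$ there. The minus sign you wrote for $\nabla^{\overline g}_X e_0$ would wrongly give $+\frac12$.
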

\begin{proof}
	Because the spin structure of $(\overline{M},\overline{g})$ restricts to the one of $(M,\gamma)$, the bundle $\Sigma \overline{M}_{|M} \to M$ is associated to the spin structure of $(M,\gamma)$.
	The isomorphisms of irreducible Clifford-modules discussed above then induce the stated isomorphism.
	Moreover, the discussion shows that the definition of $e_0 \cdot$ on $\overline{\Sigma} M$ is made such it is preserved, after possibly changing to the other spinor bundle $\Sigma \overline{M} \to \overline{M}$ (or the other hypersurface spinor bundle) in the case where $n$ is even. 
	Finally, the calculation that the restriction of $\nabla^{\overline{g}}$ is given by the formula for $\overline{\nabla}$ is a well-known consequence of the identity
	\begin{align*}
		\nabla^{\overline{g}}_X Y = \nabla^{\gamma}_X Y + k(X,Y)e_0
	\end{align*}
	for $X, Y \in \Gamma(TM)$ and the definition of the spinorial connection.
	A more detailed proof for this can, for example, be found in \cite[Lem.~2.3.4]{Gloeckle:2024_c}.
\end{proof}

The main property of hypersurface spinor bundles is that they carry a natural Dirac-type operator with a favorable Schrödinger-Lichnerowicz type formula.
We will come to this later and first study the correspondence of lightlike parallel spinors.

\begin{definition}
	The (Riemannian) \emph{Dirac current} of a hypersurface spinor $\psi \in \overline{\Sigma}_p \overline{M}$, $p \in M$, is the vector $U_{\psi} \in T_p M$ uniquely determined by
	\begin{gather*}
	\gamma(U_\psi,X) = \langle e_0 \cdot X \cdot \psi, \psi \rangle \qquad \text{ for all } X \in T_p M.
	\end{gather*}
	A non-zero hypersurface spinor $\psi$ is called \emph{timelike} or \emph{lightlike} if $|\psi|^2 \coloneqq \langle \psi, \psi \rangle > |U_\psi|_{\gamma}$ or $|\psi|^2 = |U_\psi|_{\gamma}$, respectively.
\end{definition}

Similarly as in the Lorentzian case, well-definedness of the Dirac current follows from $\overline{\langle e_0 \cdot X \cdot \psi, \psi \rangle} = \langle \psi, e_0 \cdot X \cdot \psi \rangle = - \langle X \cdot e_0 \cdot \psi, \psi \rangle = \langle e_0 \cdot X \cdot \psi, \psi \rangle$ for all $X \in T_pM$ and $\psi \in \overline{\Sigma}_p M$.

\begin{remark} \label{Rem:CharLlSpinors}
	Also in this case $\overline{\Sigma}_p M \ni \psi \neq 0$ has to be either timelike or lightlike since
	\begin{gather*}
		|U_\psi|_{\gamma}^2 = |\gamma(U_\psi,U_\psi)| = |\langle e_0 \cdot U_\psi \cdot \psi, \psi \rangle| \leq |e_0 \cdot U_\psi \cdot \psi||\psi| = |U_\psi|_{\gamma} |\psi|^2.
	\end{gather*}
	The equality analysis in the Cauchy-Schwarz inequality shows that the lightlikeness can be equivalently characterized as $U_\psi \cdot \psi = |\psi|^2 e_0 \cdot \psi$ for $\psi \neq 0$.
	Furthermore, if $U \cdot \psi = |U|_\gamma e_0 \cdot \psi$ for some $0 \neq U \in T_pM$, then $|U|_\gamma |U_\psi|_\gamma \geq |\gamma(U_\psi, U)| = |U|_\gamma |\psi|^2 \geq |U|_\gamma |U_\psi|_\gamma$ shows that $U_\psi$ is proportional to $U$ and $|U_\psi| = |\psi|^2$.
\end{remark}
\begin{lemma}
	The Riemannian Dirac current $U_\psi$ of a lightlike parallel hypersurface spinor $\psi \in \Gamma(\overline{\Sigma}M)$ is lightlike-parallel.
\end{lemma}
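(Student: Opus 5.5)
The plan is a direct computation of $\nabla^\gamma U_\psi$, obtained by differentiating the defining identity $\gamma(U_\psi,Y) = \langle e_0 \cdot Y \cdot \psi, \psi \rangle$. Parallelism $\overline{\nabla}\psi = 0$ means exactly
\begin{gather*}
	\nabla^\gamma_X \psi = \tfrac{1}{2}\, e_0 \cdot K(X) \cdot \psi, \qquad K(X) \coloneqq k(X,\bullette)^{\sharp^\gamma}.
\end{gather*}
We will use that $\nabla^\gamma$ on $\overline{\Sigma}M$ is metric, commutes with $TM$-Clifford multiplication and commutes with the involution $e_0 \cdot$. The last point is clear in the even case, since $\pm\omega_\C$ is $\nabla^\gamma$-parallel. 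In the odd case it holds because $\Phi$ is induced by a parallel bundle isomorphism.

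First we check that $U_\psi$ is nowhere vanishing. A $\overline{\nabla}$-parallel section of a vector bundle vanishes either nowhere or identically on each connected component. Since $\psi$ is lightlike, it is nonzero, so $\psi$ vanishes nowhere. Lightlikeness then gives $|U_\psi|_\gamma = |\psi|^2 > 0$ pointwise.

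Next we compute the derivative. Fix $X \in TM$ and a vector field $Y$. Differentiating the defining identity gives
\begin{gather*}
	\gamma(\nabla^\gamma_X U_\psi, Y) = \langle e_0 \cdot Y \cdot \nabla^\gamma_X \psi, \psi \rangle + \langle e_0 \cdot Y \cdot \psi, \nabla^\gamma_X \psi \rangle ,
\end{gather*}
because the terms containing $\nabla^\gamma_X Y$ cancel against $\gamma(U_\psi,\nabla^\gamma_X Y)$. Inserting the formula for $\nabla^\gamma_X\psi$, we simplify using $e_0 \cdot e_0 = 1$, the anticommutation of $e_0$ with $TM$, the self-adjointness of $e_0\cdot$ and the skew-adjointness of $TM$-multiplication. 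The first term becomes $-\tfrac12\langle Y \cdot K(X) \cdot \psi, \psi\rangle$. The second becomes $\tfrac12 \langle Y\cdot\psi, K(X)\cdot\psi\rangle = -\tfrac12\langle K(X)\cdot Y \cdot \psi,\psi\rangle$. Adding them and using the Clifford relation $Y\cdot K + K \cdot Y = -2\gamma(Y,K)$, we obtain
\begin{gather*}
	\gamma(\nabla^\gamma_X U_\psi, Y) = |\psi|^2\, k(X,Y).
\end{gather*}
Finally, substituting $|\psi|^2 = |U_\psi|_\gamma$ from lightlikeness yields \eqref{eq:UPar}.

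The main obstacle is sign bookkeeping: the convention for $e_0\cdot e_0$, the signs in the Clifford relation, and which of the two hypersurface bundles (or which $\Phi$) is used. I would fix these by matching against the Lorentzian identity $\nabla^{\overline g} V_\Psi = 0$ via \cref{Lem:RestrictSpinorBundles}, where the computation is the same one in disguise. The other point to check is that $e_0\cdot$ is $\nabla^\gamma$-parallel in the odd case.
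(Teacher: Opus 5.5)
Your proposal is correct and follows essentially the same computation as the paper: differentiate $\gamma(U_\psi,Y)=\langle e_0\cdot Y\cdot\psi,\psi\rangle$, insert $\nabla^\gamma_X\psi=\frac12 e_0\cdot k(X,\cdot)^{\sharp^\gamma}\cdot\psi$, and use the Clifford relation to reach $|\psi|^2k(X,Y)=|U_\psi|_\gamma k(X,Y)$. Your explicit check that $U_\psi$ is nowhere vanishing fills in a detail the paper leaves implicit. With the stated conventions ($e_0\cdot e_0=1$, anticommutation, self-/skew-adjointness) your signs already come out right, so the Lorentzian cross-check you mention is not needed.
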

\begin{proof}
	Using that $TM$-Clifford multiplication, $\langle \bullette, \bullette \rangle$ and $e_0 \cdot$ are $\nabla^{\gamma}$-parallel, we can directly verify the claim via
	\begin{align*}
		\gamma(\nabla^{\gamma}_X U_\psi, Y) &= \del_X \gamma(U_\psi, Y) - \gamma(U_\psi, \nabla^{\gamma}_X Y) \\
			&= \del_X \langle e_0 \cdot Y \cdot \psi, \psi \rangle - \langle e_0 \cdot (\nabla^{\gamma}_X Y) \cdot \psi, \psi \rangle \\
			&= \langle e_0 \cdot Y \cdot \nabla^{\gamma}_X \psi, \psi \rangle + \langle e_0 \cdot Y \cdot \psi, \nabla^{\gamma}_X \psi \rangle \\
			&= \frac{1}{2} \langle e_0 \cdot Y \cdot e_0 \cdot k(X,\bullette)^{\sharp^\gamma} \cdot \psi, \psi \rangle + \frac{1}{2}\langle e_0 \cdot Y \cdot \psi, e_0 \cdot k(X,\bullette)^{\sharp^\gamma} \cdot \psi \rangle \\
			&= -\frac{1}{2} \langle (Y \cdot k(X,\bullette)^{\sharp^\gamma} \cdot + k(X,\bullette)^{\sharp^\gamma} \cdot Y \cdot) \psi, \psi \rangle \\
			&= k(X,Y) |\psi|^2 = |U_\psi|_{\gamma} k(X,Y). \qedhere
	\end{align*}
\end{proof}

Existence of a lightlike parallel spinor also implies that the induced metrics on the leaves of the foliation defined by $U_\psi^\perp$ are a smooth family $g_s$, $s \in I$, of Ricci-flat metrics that satisfy the j-equation $\div^{g_s}(\dot{g_s}) - \upd \tr^{g_s}(\dot{g_s}) = 0$.
This follows from the following one-to-one correspondence for (pp-wave) spacetimes and (pp-wave) initial data sets with parallel spinor, \cref{Lem:LlSpinor} and \cref{lem.lor.ricci-flat}.
It is also possible to piece together a proof purely on the level of initial data sets using the calculations in the following subsections that we need for our construction of lightlike parallel spinors, \cf \cref{Rem:NullRicci}.

\begin{lemma} \label{Lem:RestrictSpinors}
	Let $M$ be a spacelike hypersurface in a time-oriented Lorentzian manifold $(\overline{M},\overline{g})$ with induced initial data set $(\gamma,k)$.
	Let $\Sigma \overline{M} \to \overline{M}$ be a spinor bundle for $\overline{M}$ and $\overline{\Sigma} M \to M$ a hypersurface spinor bundle such that $\Sigma \overline{M}_{|M} \cong \overline{\Sigma} M$ as in \cref{Lem:RestrictSpinorBundles}.
	If $\Psi \in \Sigma \overline{M}_{|M}$ is lightlike with Dirac current $V_\psi$, then its image $\psi$ under the isomorphism~\eqref{eq:RestrictSpinorBundles} is lightlike and its Dirac current $U_\psi$ satisfies $V_\Psi = |\psi|^2 e_0 - U_\psi$.
	
	In particular, if $\Psi \in \Gamma(\Sigma \overline{M})$ is lightlike parallel with Dirac current $V_\psi$, then the hypersurface spinor $\psi$ corresponding to $\Psi_{|M}$ under the isomorphism is lightlike and parallel (with respect to $\overline{\nabla}$) with Dirac current subject to $(V_\Psi)_{|M} = |\psi|^2 e_0 - U_\psi$.		
\end{lemma}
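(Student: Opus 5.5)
The plan is to reduce everything to a pointwise statement about a single lightlike spinor at a point $p \in M$, using the structure-preserving isomorphism~\eqref{eq:RestrictSpinorBundles} from \cref{Lem:RestrictSpinorBundles}. The isomorphism preserves the $TM$-Clifford multiplication, the involution $e_0 \cdot$ and the scalar product $\langle \bullette,\bullette\rangle = \langle \bullette,\bullette\rangle_{e_0} = \llangle e_0\cdot\bullette,\bullette\rrangle$. We may therefore compute on $\Sigma\overline{M}_{|M}$ and simply write $\psi$ for $\Psi$.

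First I will express the Lorentzian Dirac current in terms of hypersurface data. Decompose $V_\Psi = a e_0 + W$ with $W \in T_pM$. Pairing with $e_0$ gives
\begin{gather*}
-a = \overline{g}(V_\Psi,e_0) = -\llangle e_0\cdot\Psi,\Psi\rrangle = -|\psi|^2 ,
\end{gather*}
so $a = |\psi|^2$. For $X \in T_pM$ we have
\begin{gather*}
\gamma(W,X) = \overline{g}(V_\Psi,X) = -\llangle X\cdot\Psi,\Psi\rrangle = -\llangle e_0\cdot e_0\cdot X\cdot \Psi,\Psi\rrangle,
\end{gather*}
using $e_0\cdot e_0 = 1$ (the sign convention consistent with $\langle\cdot,\cdot\rangle_{e_0}$ being positive). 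This equals $-\langle e_0\cdot X\cdot\psi,\psi\rangle = -\gamma(U_\psi,X)$. Hence $W = -U_\psi$, and
\begin{gather*}
V_\Psi = |\psi|^2 e_0 - U_\psi .
\end{gather*}
The one point needing care is the sign convention for $e_0\cdot e_0$ and the adjointness rules used in the middle step; I will fix these by the requirement that $\langle\cdot,\cdot\rangle_{e_0}$ is positive definite.

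Lightlikeness of $\psi$ then follows. Since $\overline{g}(V_\Psi,V_\Psi) = -|\psi|^4 + |U_\psi|_\gamma^2$ vanishes and $\Psi \neq 0$, we get $|U_\psi|_\gamma = |\psi|^2$. This is exactly the definition of $\psi$ being lightlike. (Alternatively, one can translate $V_\Psi\cdot\Psi = 0$ into $U_\psi\cdot\psi = |\psi|^2 e_0\cdot\psi$ and invoke \cref{Rem:CharLlSpinors}.)

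For the "in particular" part, suppose $\Psi$ is $\nabla^{\overline{g}}$-parallel. Its restriction to $M$ is parallel for the restricted connection along $TM$. By \cref{Lem:RestrictSpinorBundles}, that connection corresponds to $\overline{\nabla}$, so $\psi$ is $\overline{\nabla}$-parallel. Applying the pointwise statement at every $p \in M$ gives lightlikeness and the formula $(V_\Psi)_{|M} = |\psi|^2 e_0 - U_\psi$. I expect no real obstacle beyond the sign bookkeeping described above.
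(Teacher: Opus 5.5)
Your proposal is correct and follows essentially the same route as the paper: you pair $V_\Psi$ with $e_0$ and with $X \in TM$, rewrite the result via $\langle\cdot,\cdot\rangle_{e_0} = \llangle e_0\cdot\,\cdot,\cdot\rrangle$, and then obtain lightlikeness of $\psi$ and $\overline{\nabla}$-parallelism from the structure-preserving isomorphism. Your sign remark is also sound, since self-adjointness together with positivity of $\langle\cdot,\cdot\rangle_{e_0}$ forces $e_0\cdot e_0 = 1$.
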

\begin{proof}
	For the Dirac current, we observe that the stated equation from
	\begin{align*}
		\overline{g}(V_\Psi, e_0) = -\llangle e_0 \cdot \Psi, \Psi \rrangle = -\langle \Psi, \Psi \rangle_{e_0} &= -\langle \psi, \psi \rangle = -|\psi|^2 = \overline{g}(|\psi|^2e_0,e_0)
	\intertext{and}
		\overline{g}(V_\Psi, X) = -\llangle X \cdot \Psi, \Psi \rrangle = -\langle e_0 \cdot X \cdot \Psi, \Psi \rangle_{e_0} &= -\langle e_0 \cdot X \cdot \psi, \psi \rangle = -\gamma(U_\psi, X) = \overline{g}(-U_\psi,X)
	\end{align*}
	for all $X \in TM$.
	As $V_\Psi$ is lightlike, it also follows directly that $\psi$ is lightlike.
	
	Since the bundle isomorphism preserves the connection, the last statement about parallelism of $\psi$ is immediate.
\end{proof}

Conversely, lightlike parallel hypersurface spinors parallely extend to the Killing development.
The following proposition is shown in \cite[Thm.~4.5]{Ammann.Gloeckle:2023}.

\begin{proposition} \label{Prop:ExtendSpinors}
	Let $(\gamma,k)$ be an initial data set on $M$ with lightlike parallel hypersurface spinor $\psi \in \Gamma(\overline{\Sigma} M)$.
	Let $U_\psi$ be the Riemannian Dirac current of $\psi$ and consider the pp-wave spacetime
	\begin{align*}
		(\overline{M},\overline{g}) \coloneqq (\R \times M, -\upd v \otimes \gamma(U_\psi,\bullette) - \gamma(U_\psi,\bullette) \otimes \upd v + \gamma),
	\end{align*}
	where $v$ is the $\R$-coordinate, which induces $(\gamma,k)$ on $M = \{0\} \times M$ and whose lightlike parallel vector field $\frac{\del}{\del v}$ projects orthogonally to $-U_\psi$ on $M$.
	Equip $(\overline{M},\overline{g})$ with the spin structure and spinor bundle such that $\Sigma \overline{M}_{|M} \cong \overline{\Sigma} M$ (as in \cref{Lem:RestrictSpinorBundles}) and identify these bundles.
	Then $(\overline{M},\overline{g})$ admits a (unique) lightlike parallel spinor~$\Psi$ extending $\psi$, and its Dirac current is given by $\frac{\del}{\del v}$.
\end{proposition}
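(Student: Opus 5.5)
The approach is to define $\Psi$ by parallel transport along the flow lines of $V \coloneqq \frac{\del}{\del v}$ and then show that it is parallel in the remaining directions. The curvature identity $R^{\overline{g}}(V,\bullette)=0$ makes the spinor derivatives in directions tangent to $M$ satisfy a trivial transport equation along $V$.

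By the proposition on the Killing development proved earlier, $(\overline{M},\overline{g})$ is a pp-wave with complete lightlike parallel vector field $V$. It induces $(\gamma,k)$ on $M=\{0\}\times M$, and $V_{|M}$ projects orthogonally onto $-U_\psi$. Since every flow line of $V$ meets $M$ exactly once, the spin structure of $M$ extends uniquely to $\overline{M}$. By \cref{Lem:RestrictSpinorBundles} we may choose $\Sigma\overline{M}$ with $\Sigma\overline{M}_{|M}\cong\overline{\Sigma}M$. This isomorphism intertwines $\nabla^{\overline{g}}$ restricted to $TM$ with $\overline{\nabla}$.

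\emph{Construction.} Define $\Psi$ on $\R\times M$ by $\Psi(0,x)=\psi(x)$ and $\nabla^{\overline{g}}_V\Psi=0$ along each curve $v\mapsto(v,x)$. This is a linear ODE, solvable for all $v$ by completeness, and it depends smoothly on $x$. By construction, $\nabla_V\Psi=0$ everywhere. On $M$ we also have $\nabla^{\overline{g}}_X\Psi=\overline{\nabla}_X\psi=0$ for $X\in TM$, since $\psi$ is $\overline{\nabla}$-parallel.

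\emph{Parallelism in the other directions.} Let $X$ be the extension of a vector field on $M$ that is independent of $v$, so that $[V,X]=0$. Then
\begin{align*}
\nabla_V\nabla_X\Psi=\nabla_X\nabla_V\Psi+\nabla_{[V,X]}\Psi+R^{\overline{g}}(V,X)\Psi=R^{\overline{g}}(V,X)\Psi .
\end{align*}
The spinorial curvature $R^{\overline{g}}(V,X)$ is $\tfrac14\sum R^{\overline{g}}(V,X,e_i,e_j)\,e_i\cdot e_j\cdot$ up to signs and the $\epsilon_i$. Here $R^{\overline{g}}(V,X,Y,Z)=R^{\overline{g}}(Y,Z,V,X)=0$ because $V$ is parallel. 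Hence $\nabla_X\Psi$ is $\nabla_V$-parallel along the $v$-lines and vanishes at $v=0$, so it vanishes identically. Since $V$ together with such $X$ span $T\overline{M}$, $\Psi$ is parallel.

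\emph{Dirac current.} The computation in the proof of \cref{Lem:RestrictSpinors} did not use lightlikeness of $\Psi$. It gives $V_\Psi=|\psi|^2e_0-U_\psi$ on $M$. On the other hand, $V_{|M}=ue_0-U$ with $-U$ the tangential projection, so $U=U_\psi$. Moreover $u=|U|_\gamma=|U_\psi|_\gamma=|\psi|^2$ because $\psi$ is lightlike. Thus $V_\Psi=V$ on $M$. Both $V_\Psi$ and $V$ are parallel, and every point of $\overline{M}$ is joined to $M$ by a flow line of $V$, so $V_\Psi=V=\frac{\del}{\del v}$ everywhere; in particular $\Psi$ is lightlike.

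\emph{Uniqueness.} A parallel spinor is determined by its values along $M$, since it is determined by $\nabla_V$-transport from $M$. Hence any lightlike parallel extension of $\psi$ coincides with $\Psi$.

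The main obstacle I expect is the bookkeeping in the curvature step. One has to check that the commutator formula holds for the spinor connection with the stated curvature endomorphism. The vanishing $R^{\overline{g}}(V,X)=0$ as an endomorphism of $\Sigma\overline{M}$ follows from the pair symmetry of $R^{\overline{g}}$ and parallelism of $V$.
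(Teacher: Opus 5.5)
Your proof is correct. The paper gives no argument of its own for this proposition; it only cites \cite[Thm.~4.5]{Ammann.Gloeckle:2023}. So your proposal supplies a self-contained proof rather than reproducing one from the text.

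Your mechanism is the same one the paper uses in Section~\ref{sec.constr.cyl.ids.llps} for the $\nu$-direction inside $M$. There, one transports along a transversal direction, and the only obstruction to parallelism is the curvature term evaluated on tangential $X$ with vanishing bracket. In the $V$-direction this obstruction vanishes identically, because $\overline{g}(R^{\overline{g}}(V,X)Y,Z)=\overline{g}(R^{\overline{g}}(Y,Z)V,X)=0$. Consequently you need neither the j-equation, nor compactness of the leaves, nor the Dirac--Witten/Gr\"onwall argument of \cref{Thm:ExParSpinors}; the product structure $\R\times M$, with every $v$-line meeting $M$, suffices.

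Your argument also shows that the parallel extension works for every $\overline{\nabla}$-parallel hypersurface spinor on such a Killing development. Lightlikeness of $\psi$ is used in only two places:
\begin{enumerate}
\item to build the development, through the lemma that $U_\psi$ is lightlike-parallel;
\item to identify $V_\Psi$ with $\frac{\partial}{\partial v}$, via $|U_\psi|_\gamma=|\psi|^2$.
\end{enumerate}

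In that last step, state explicitly that $e_0$ is the unit normal with $\overline{g}(e_0,\frac{\partial}{\partial v})<0$. This is the paper's standing convention, under which the development induces $k$ and the identification of \cref{Lem:RestrictSpinorBundles} is made. It is what gives $u=+|U_\psi|_\gamma$, rather than $-|U_\psi|_\gamma$, in $V_{|M}=ue_0-U$, and hence $V_\Psi=V$ on $M$.
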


Recall from \cref{Cor:1-1-IDS} that restricting and forming the Killing development yield a one-to-one correspondence between simple pp-waves with fixed spacelike hypersurface and certain pp-wave initial data sets.
\Cref{Lem:RestrictSpinors,Prop:ExtendSpinors} immediately provide a correspondence of lightlike parallel spinors between the corresponding objects.
 
\begin{corollary} \label{Cor:SimpleIDSSpinors}
	Let
	\begin{gather*}
		(\overline{M},\overline{g}) \coloneqq (\R \times I \times Q, \upd v \otimes \upd s + \upd s \otimes \upd v + u^{-2} \upd s^2 + g_s)
	\end{gather*}
	be a simple pp-wave and $(u^{-2} \upd s^2 + g_s, k)$ be the induced pp-wave initial data set on $\{0\} \times I \times Q$.
	Then restriction and parallel extension yield a one-to-one correspondence between lightlike parallel spinors on $(\overline{M},\overline{g})$ with Dirac current $\frac{\del}{\del v}$ and lightlike parallel hypersurface spinors on the initial data set with Riemannian Dirac current~$-u^2 \frac{\del}{\del s}$.
\end{corollary}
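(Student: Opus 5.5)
The correspondence is assembled from the preceding results: \cref{Cor:1-1-IDS} for the metrics and \cref{Lem:RestrictSpinors,Prop:ExtendSpinors} for the spinors.

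\emph{Restriction.} Let $M=\{0\}\times I\times Q$ with induced data $(\gamma,k)$, $\gamma=u^{-2}\upd s^2+g_s$, and future unit normal $e_0$. By \cref{Lem:RestrictSpinorBundles} we identify $\Sigma\overline{M}_{|M}\cong\overline{\Sigma}M$, choosing the appropriate spinor bundle when $n$ is even. Let $\Psi$ be a lightlike parallel spinor with $V_\Psi=\frac{\del}{\del v}$. By \cref{Lem:RestrictSpinors}, $\psi:=\Psi_{|M}$ is a lightlike $\overline{\nabla}$-parallel hypersurface spinor with $\frac{\del}{\del v}_{|M}=|\psi|^2e_0-U_\psi$. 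The proof of \cref{Cor:1-1-IDS} gives the decomposition of the same vector as $u e_0+u^2\frac{\del}{\del s}$. Comparing tangential parts yields $U_\psi=-u^2\frac{\del}{\del s}$ (and $|\psi|^2=u$). If the Dirac current is only assumed proportional to $\frac{\del}{\del v}$, the constant factor is fixed by rescaling $\Psi$.

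\emph{Extension.} Conversely, let $\psi$ be lightlike and $\overline{\nabla}$-parallel with $U_\psi=-u^2\frac{\del}{\del s}$. Then $\gamma(U_\psi,\bullette)=-\upd s$, so the Killing development in \cref{Prop:ExtendSpinors} is exactly $\overline{g}$ on $\R\times I\times Q$, as in the proof of \cref{Cor:1-1-IDS}. \cref{Prop:ExtendSpinors} then gives a unique lightlike parallel $\Psi$ extending $\psi$ with Dirac current $\frac{\del}{\del v}$.

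\emph{Inverse maps.} The extension of a restriction is the original spinor, since parallel spinors on the connected manifold $\overline{M}$ are determined by their value at one point. The restriction of an extension is the original spinor by construction.

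The main obstacle is bookkeeping rather than analysis. Three things must be checked for consistency: the identification of spinor bundles and the choice of $e_0\cdot$ when $n$ is even; that the spin structure on $\overline{M}$ is the one restricting to that of $M$; and the sign conventions, namely $V=ue_0-U$ in the initial data set versus $V_\Psi=|\psi|^2e_0-U_\psi$, so that the Dirac current condition $-u^2\frac{\del}{\del s}$ matches.
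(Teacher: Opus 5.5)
Your proposal is correct and follows the paper's argument. The paper obtains the corollary immediately by combining \cref{Cor:1-1-IDS}, which shows that the Killing development of $(u^{-2}\upd s^2+g_s,k)$ with respect to $-u^2\frac{\del}{\del s}$ is exactly $\overline{g}$, with \cref{Lem:RestrictSpinors} and \cref{Prop:ExtendSpinors}; your comparison of $|\psi|^2 e_0 - U_\psi$ with $u e_0 + u^2\frac{\del}{\del s}$ just makes explicit the Dirac-current matching that the paper leaves implicit.
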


These results show that we in order to construct lightlike parallel spinors on spacetimes, we can equally well construct lightlike parallel hypersurface spinors on initial data sets.

\subsection{Analyzing parallelism for spinors on initial data sets}
Because of \cref{Cor:1-1-IDS,Cor:SimpleIDSSpinors}, our aim to construct lightlike parallel spinors on simple pp-waves modeled on closed manifolds can be achieved by constructing lightlike parallel hypersurface spinors on certain pp-wave initial data sets.
We are lead to consider the following setup, actually only for closed manifolds $Q$:
\begin{setup} \label{Setup:SimpleIDS}
	The initial data set $(\gamma, k)$ is defined on $M \coloneqq I \times Q$ with $Q$ an $(n-1)$-dimensional manifold and $I \subset \R$ an open interval.
	The metric $\gamma$ is given by
	\begin{gather*}
	\gamma = u^{-2} \upd s^2 + g_s,
	\end{gather*}
	where $s$ denotes the $I$-coordinate, $u \in C^\infty(I \times Q)$ is positive and $g_s$, $s \in I$, is a smooth family of Riemannian metrics on $Q$.
	The second fundamental form $k$ is determined by the condition that $U \coloneqq -u^2 \frac{\del}{\del s}$ is lightlike-parallel, \cf \cref{Rem:DetK}.
\end{setup}

Now notice that $M$ is spin if and only if $Q$ is spin and the (topological) spin structures of $M$ and $Q$ one-to-one correspond to each other via restriction and extension.
Here we use the unit normal $\nu \coloneqq u \frac{\del}{\del s}$ on the canonical leaves $\{s\} \times Q$ to identify the underlying frame bundles.
Irrespective of whether $n = \dim(M)$ is even or odd, $\overline{\Sigma}M_{|Q}$ is double the dimension of $\Sigma Q$.
The reason is that any irreducible complex representation $\Sigma_{n-1}$ of $\CCl_{n-1}$ gives rise to an irreducible complex representation $\Sigma_{n-1} \oplus -\Sigma_{n-1}$ of $\CCl_{n,1}$, where the additional basis vectors of $\R^{n,1}$ operate via the matrices
\begin{gather*}
	\begin{pmatrix} 0 & 1 \\ 1 & 0 \end{pmatrix} \qquad \text{ and } \qquad \begin{pmatrix} 0 & -1 \\ 1 & \phantom{-}0 \end{pmatrix}.
\end{gather*}
These, or their negatives, will correspond to $e_0 \cdot$ and $\nu \cdot$, respectively.

\begin{lemma} \label{Lem:RestrictSpinorBundlesQ}
	In \cref{Setup:SimpleIDS}, let $M$ and $Q$ be equipped with compatible spin structures, where we identify $Q$ with $\{s\} \times Q$ for an arbitrary $s \in I$.
	Then the subbundle $\{\psi \in \overline{\Sigma}M \mid -\nu \cdot \psi = e_0 \cdot \psi\} \subset \overline{\Sigma}M$ is $\overline{\nabla}$-parallel.
	Furthermore, there is a bundle isomorphism
	\begin{gather} \label{eq:RestrictSpinorBundlesQ}
		\{\psi \in \overline{\Sigma}M_{|Q} \mid -\nu \cdot \psi = e_0 \cdot \psi\} \overset{\cong}{\longrightarrow} \Sigma Q
	\end{gather}
	that is isometric, $TQ$-Clifford linear and behaves in the following way with respect to the connection:
	If $\overline{\nabla}$ denotes the restriction the connection of $\overline{\Sigma}M$ and $\nabla^{Q}$ is the pullback along the above bundle isomorphism of the Levi-Civita connection of $\Sigma Q$, then
	\begin{gather*}
		\overline{\nabla} \psi = \sqrt{u} \nabla^{Q} \frac{\psi}{\sqrt{u}}
	\end{gather*}
	for any $\psi \in \Gamma(\overline{\Sigma} M_{|Q})$ with $-\nu \cdot \psi = e_0 \cdot \psi$.
\end{lemma}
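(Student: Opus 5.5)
The plan is to do everything pointwise in an adapted orthonormal frame and then compare connections via the spinorial connection formula. First, I will show that the subbundle $E\coloneqq\{\psi \mid -\nu\cdot\psi = e_0\cdot\psi\}$ is $\overline{\nabla}$-parallel. Note that $-\nu\cdot\psi=e_0\cdot\psi$ is equivalent to $(e_0+\nu)\cdot\psi=0$, i.e.\ to $V\cdot\psi=0$ for $V=u(e_0+\nu)$, which is (up to sign conventions) the restriction of the lightlike parallel vector field. Then $\overline{\nabla}_X(V\cdot\psi)=(\overline{\nabla}_X V)\cdot\psi+V\cdot\overline{\nabla}_X\psi$. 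The first term can be computed directly on $M$: writing $V=ue_0-U$ with $U=-u^2\partial_s$, the relations $\nabla^\gamma_X U=u\,k(X,\bullette)^\sharp$ and $\upd u(X)=k(U,X)$ coming from \eqref{eq:UPar} say exactly that the combination $ue_0-U$ is parallel for the connection induced on $TM\oplus\R e_0$ by $\overline{\nabla}$. This is the content of \cref{Lem:RestrictSpinorBundles} applied to the Killing development, whose restriction to $M$ gives $\overline{\nabla}$. Hence $V\cdot\overline{\nabla}_X\psi=0$, so $E$ is preserved.

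Second, I will construct the isomorphism \eqref{eq:RestrictSpinorBundlesQ}. On $\{s\}\times Q$ the elements $e_0\cdot$ and $\nu\cdot$ act on $\overline{\Sigma}M_{|Q}$, and by the representation-theoretic remark preceding the lemma, $\overline{\Sigma}M_{|Q}\cong\Sigma Q\oplus-\Sigma Q$ with $e_0,\nu$ acting by the displayed matrices, or their negatives. The $(-1)$-eigenspace of $e_0\cdot\nu\cdot$ (equivalently the condition $-\nu\psi=e_0\psi$) is one summand. It is invariant under $TQ$-Clifford multiplication, because $X\in TQ$ anticommutes with both $e_0$ and $\nu$ and so commutes with $e_0\nu$. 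This summand is thus an irreducible $\CCl_{n-1}$-module associated to the spin structure of $Q$, after possibly switching sign conventions. That gives an isometric, Clifford-linear identification with $\Sigma Q$.

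Third, and this is the main computation, I will compare connections. For $X\in TQ$,
\begin{align*}
\overline{\nabla}_X\psi=\nabla^\gamma_X\psi-\tfrac12 e_0\cdot k(X,\bullette)^\sharp\cdot\psi,
\end{align*}
and by the Gauß formula for $Q\subset(M,\gamma)$,
\begin{align*}
\nabla^\gamma_X\psi=\nabla^Q_X\psi-\tfrac12\nu\cdot W(X)\cdot\psi,
\end{align*}
with $W$ the Weingarten map of $Q$ in $M$ (sign to be fixed). I then decompose $k(X,\bullette)^\sharp=k(X,\nu)\nu+k(X,\bullette)^{\sharp,TQ}$. From \eqref{eq:UPar} with $U=-u\nu$, the tangential part gives $\nabla^\gamma_X\nu$ and hence $k(X,Y)=\gamma(W(X),Y)$ for $Y\in TQ$ up to sign. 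The normal part gives $k(X,\nu)=-\partial_X\log u$ (via $\upd u(X)=k(U,X)$). On $E$ we have $e_0\cdot Y\cdot\psi=-Y\cdot e_0\psi=Y\cdot\nu\psi=-\nu\cdot Y\psi$, so the tangential part of the $k$-term cancels the Weingarten term. For the normal part, $e_0\cdot\nu\cdot\psi=-e_0e_0\psi=\pm\psi$ on $E$, which leaves $\overline{\nabla}_X\psi=\nabla^Q_X\psi-\frac12\partial_X\log u\,\psi=\sqrt u\,\nabla^Q_X(\psi/\sqrt u)$. The obstacle I expect is keeping the signs consistent: the signs of $k$, of $W$, the Clifford convention $X^2=-|X|^2$, and which sign choice for $e_0,\nu$ matches $E$. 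I would fix them by checking the final scalar coefficient against $k(U,X)=\upd u(X)$, which forces exactly the factor $-\frac12\upd\log u$.
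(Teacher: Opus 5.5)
Your proposal is correct and follows essentially the paper's proof: the subbundle $E=\ker(V\cdot)$ is parallel because $V=ue_0-U$ is $\overline{\nabla}$-parallel, the isomorphism comes from the $(-1)$-eigenspace of $e_0\cdot\nu\cdot$, and the connection formula follows from the spinorial Gau\ss{} formula together with $k(X,\nu)=-u^{-1}\partial_X u$. The only difference is bookkeeping. You cancel the Weingarten term against the tangential part of $k(X,\argu)^\sharp$ by hand, using $k(X,Y)=-\gamma(\nabla^\gamma_X\nu,Y)$ from \eqref{eq:UPar}. The paper instead packages this cancellation as $\frac12\nu\cdot\overline{\nabla}_X(e_0+\nu)\cdot\psi=0$, which holds since $\overline{\nabla}_X(e_0+\nu)$ is proportional to $V$.
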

\begin{proof}
	The connection $\overline{\nabla}$ defined on $\underline{\R} e_0 \oplus TM$ by
	\begin{align} \label{eq:HypersurfaceConnection}
		\overline{\nabla}_X (ye_0 + Y) \coloneqq (\del_X y + k(X,Y))e_0 + (yk(X,\bullette)^{\sharp^\gamma} + \nabla^\gamma_X Y)
	\end{align}
	for $X \in TM$, $y \in \C^\infty(M)$ and $Y \in \Gamma(TM)$ is compatible with the connection $\overline{\nabla}$ of $\overline{\Sigma}M$ in the sense that the Clifford-multiplication by $\underline{\R} e_0 \oplus TM$ becomes parallel.
	This can be checked by direct calculation but also follows immediately from the correspondence \cref{Lem:RestrictSpinorBundles} since $\overline{\nabla}$ is just the restriction of $\nabla^{\overline{g}}$ to $T\overline{M}_{|M} \cong \underline{\R} e_0 \oplus TM$.
	That $U$ is lightlike-parallel amounts to $V \coloneqq |U|e_0 -U$ being $\overline{\nabla}$-parallel.
	But this implies that $\ker(V \cdot) = \{\psi \in \overline{\Sigma}M \mid -\nu \cdot \psi = e_0 \cdot \psi\}$ is an $\overline{\nabla}$-parallel subbundle.
	
	Because the spin structure of $(M,\gamma)$ restricts to the one of $(Q,g_s)$, the bundle $\overline{\Sigma}M_{|Q} \to Q$ is associated to the spin structure of $(Q,g_s)$.
	The isomorphisms of irreducible Clifford-modules discussed above then induce an isometric, $TQ$-Clifford linear isomorphism
	\begin{gather*}
		\overline{\Sigma} M_{|Q} \overset{\cong}{\longrightarrow} \Sigma Q \oplus \Sigma Q,
	\end{gather*}
	where the summands correspond to the $\pm 1$-eigenspaces of $e_0 \cdot \nu \cdot$.
	In particular, taking the $-1$-eigenspace, we get an isomorphism as in the claim.
	
	It remains to check the formula for the connections.
	Similarly as in the proof of \cref{Lem:RestrictSpinorBundles}, the spinorial connections satisfy
	\begin{align*}
		\nabla^{\gamma}_X \psi = \nabla^{Q}_X \psi + \frac{1}{2} \nu \cdot (\nabla^\gamma_X \nu) \cdot \psi
	\end{align*}
	for $X \in TQ$, $\psi \in \Gamma(\overline{\Sigma}M_{|Q})$.
	Using this, we have for all $\psi \in \Gamma(\overline{\Sigma}M_{|Q})$ with $-\nu \cdot \psi = e_0 \cdot \psi$ that
	\begin{align*}
		\overline{\nabla}_X \psi -  \nabla^{Q}_X \psi &= \left(\nabla^\gamma_X \psi - \frac{1}{2} e_0 \cdot k(X,\argu)^{\sharp^\gamma} \cdot \psi \right) - \left(\nabla^\gamma_X - \frac{1}{2} \nu \cdot (\nabla^\gamma_X \nu) \cdot \psi \right)\\
			&= \;\frac{1}{2} k(X,\argu)^{\sharp^\gamma} \cdot e_0 \cdot \psi + \frac{1}{2} \nu \cdot (\nabla^\gamma_X \nu) \cdot \psi\\
			&= - \frac{1}{2} k(X,\argu)^{\sharp^\gamma} \cdot \nu \cdot \psi + \frac{1}{2} \nu \cdot (\overline{\nabla}_X \nu) \cdot \psi - \frac{1}{2} k(X,\nu) \nu \cdot e_0 \cdot \psi \\
			&= \frac{1}{2} \nu \cdot  k(X,\argu)^{\sharp^\gamma} \cdot \psi + k(X,\nu) \psi + \frac{1}{2} \nu \cdot (\overline{\nabla}_X \nu) \cdot \psi - \frac{1}{2} k(X,\nu) \psi \\
			&= \frac{1}{2} \nu \cdot (\overline{\nabla}_X (e_0 + \nu)) \cdot \psi + \frac{1}{2}k(X,\nu) \psi.
	\end{align*}
	Because $V = u(e_0 + \nu)$ is $\overline{\nabla}$-parallel, $\overline{\nabla}_X (e_0 + \nu)$ is proportional to $V$ and so the first summand vanishes because of $V \cdot \psi = 0$.
	For the second summand we use that $U = -u\nu$ satisfies $\del_X u = \del_X |U|_\gamma = k(X,U)$, so that $k(X,\nu) = -u^{-1} \del_X u$.
	Hence we get
	\begin{align*}
		\overline{\nabla}_X \psi &= \nabla^{Q}_X \psi - \frac{1}{2} u^{-1} (\del_X u) \psi \\
			&= \sqrt{u} \nabla^{Q}_X \left(\frac{\psi}{\sqrt{u}}\right). \qedhere
	\end{align*}	
\end{proof}

It follows from \cref{Rem:CharLlSpinors} that apart from the zero section, the subbundle described in \cref{Lem:RestrictSpinorBundlesQ} precisely consists of those lightlike spinors $\psi$ whose Riemannian Dirac current is proportional to $U$ or, equivalently, $-\nu$.
Likewise, the subbundle appearing in the following corollary consists of those spinors~$\Psi$ whose Lorentzian Dirac current is proportional to the lightlike vector field~$V$.

\begin{corollary} \label{Cor:RestrictSpinorBundlesAll}
	Let $(\overline{M},\overline{g})$ be a simple pp-wave as in~\eqref{eq:AGK_metrics_global} with $Q$ spin and set $V \coloneqq \frac{\del}{\del v}$.
	Let $M$ be the spacelike hypersurface $\{0\} \times I \times Q$.
	For some $s \in I$, identify $Q$ with $\{0\} \times \{s\} \times Q$.
	Let $(\overline{M},\overline{g})$, $M$ and $Q$ be equipped with compatible spin structures and the associated spinor bundles.
	Then the subbundle $\{\Psi \in \Sigma \overline{M} \mid V \cdot \Psi = 0\}$ is $\nabla^{\overline{g}}$-parallel.
	Taking together the bundle maps from \cref{Lem:RestrictSpinorBundles,Lem:RestrictSpinorBundlesQ} and the multiplication by $\frac{1}{\sqrt{u}}$, we obtain a bundle isomorphism
	\begin{align} \label{eq:IdentifySpinors}
		\{\Psi \in \Sigma \overline{M}_{|Q} \mid V \cdot \Psi = 0\} \overset{\cong}{\longrightarrow} \Sigma Q,
	\end{align}
	which respects the $TQ$-Clifford multiplication and maps the restriction of $\nabla^{\overline{g}}$ to $\nabla^{g_s}$.
	Moreover, it bijectively maps those spinors with Lorentzian Dirac current equal to $V$ to unit spinors in $\Sigma Q$.
\end{corollary}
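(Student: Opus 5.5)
The proof will be pieced together from \cref{Lem:RestrictSpinorBundles,Lem:RestrictSpinorBundlesQ}. We treat the claims one at a time: parallelism of the subbundle, construction of the isomorphism, compatibility with connections, and the normalisation statement.

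\emph{Parallel subbundle.} Since $V=\frac{\del}{\del v}$ is $\nabla^{\overline{g}}$-parallel and Clifford multiplication is parallel, we have for any section $\Psi$ with $V\cdot\Psi=0$
\begin{align*}
V\cdot\nabla^{\overline{g}}_X\Psi=\nabla^{\overline{g}}_X(V\cdot\Psi)=0 .
\end{align*}
Hence $\ker(V\cdot)$ is a parallel subbundle. Its rank is constant, because $V$ is lightlike and nonzero everywhere.

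\emph{The isomorphism.} First restrict to $M=\{0\}\times I\times Q$ and apply \cref{Lem:RestrictSpinorBundles}. This identifies $\Sigma\overline{M}_{|M}$ with $\overline{\Sigma}M$, preserving $e_0\cdot$, $TM$-Clifford multiplication and connections. Along $M$, \cref{Cor:1-1-IDS} gives the decomposition
\begin{align*}
V_{|M}=ue_0-U=u(e_0+\nu),\qquad U=-u^2\tfrac{\del}{\del s},\quad \nu=u\tfrac{\del}{\del s}.
\end{align*}
Therefore $V\cdot\Psi=0$ is equivalent to $e_0\cdot\psi=-\nu\cdot\psi$, so $\ker(V\cdot)_{|M}$ is carried onto exactly the subbundle of \cref{Lem:RestrictSpinorBundlesQ}. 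On $Q=\{0\}\times\{s\}\times Q$ that lemma provides an isometric, $TQ$-Clifford linear isomorphism onto $\Sigma Q$. We compose with multiplication by $u^{-1/2}$, which commutes with Clifford multiplication.

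\emph{Connections.} For $X\in TQ$ and $\Psi$ with $V\cdot\Psi=0$, let $\psi$ be its image in $\overline{\Sigma}M$. \cref{Lem:RestrictSpinorBundles} gives $\nabla^{\overline{g}}_X\Psi\mapsto\overline{\nabla}_X\psi$. \cref{Lem:RestrictSpinorBundlesQ} then gives
\begin{align*}
\overline{\nabla}_X\psi=\sqrt{u}\,\nabla^{Q}_X\bigl(u^{-1/2}\psi\bigr),
\end{align*}
and by construction $\nabla^{Q}$ is the pulled-back Levi-Civita connection of $(Q,g_s)$. So after multiplying by $u^{-1/2}$, the element $\nabla^{\overline{g}}_X\Psi$ is sent to $\nabla^{g_s}_X$ of the image of $\Psi$. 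This is precisely why the rescaling was included.

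\emph{Normalisation.} By \cref{Lem:RestrictSpinors}, a lightlike $\Psi$ satisfies $V_\Psi=|\psi|^2e_0-U_\psi$. By \cref{Rem:CharLlSpinors}, $U_\psi$ is proportional to $-\nu$. Comparing with $V=ue_0-U$, we get
\begin{align*}
V_\Psi=V\iff|\psi|^2=u,\quad\text{since } U_\psi\text{ is then forced to be }U .
\end{align*}
The first two steps are isometric, so the image in $\Sigma Q$ has norm $|\psi|/\sqrt{u}$. Thus $V_\Psi=V$ exactly when the image has unit length. Conversely, every unit spinor in $\Sigma Q$ lifts to some $\Psi\in\ker(V\cdot)$ with $|\psi|^2=u$, and such a $\Psi$ has $V_\Psi=V$ by the same argument.

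The step needing the most care is the bookkeeping of orientations and signs. One must check that the choice of spinor bundle in \cref{Lem:RestrictSpinorBundles} (relevant for even $n$) is compatible with the $-1$-eigenspace of $e_0\cdot\nu\cdot$ used in \cref{Lem:RestrictSpinorBundlesQ}. One must also check that $\nu$ agrees with the normal used there. Everything else is direct composition of the cited results.
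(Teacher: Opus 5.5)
Your proposal is correct and follows essentially the same route as the paper:
\begin{itemize}
\item parallelism of $\ker(V\cdot)$ comes from parallelism of $V$ and of Clifford multiplication;
\item the isomorphisms from \cref{Lem:RestrictSpinorBundles,Lem:RestrictSpinorBundlesQ} are composed, with the factor $u^{-1/2}$ absorbing the $\sqrt{u}$ in the connection formula;
\item the normalisation uses $V_\Psi = |\psi|^2 e_0 - U_\psi$ from \cref{Lem:RestrictSpinors}.
\end{itemize}
The differences are cosmetic: you match the subbundles directly via $V_{|M}=u(e_0+\nu)$ rather than through Dirac currents, and you pin down $U_\psi$ with \cref{Rem:CharLlSpinors}, whereas the paper uses that $V\cdot\Psi=0$ already forces $V_\Psi$ to be proportional to $V$, so only the $e_0$-component $|\psi|^2$ needs to be compared with $u$.
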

\begin{proof}
	Since both the Clifford multiplication in $\Sigma \overline{M}$ and $V$ are parallel, the given subbundle $\ker(V \cdot)$ is parallel.
	Due to the calculation of the restriction of the Dirac current in \cref{Lem:RestrictSpinors}, bundle isomorphisms~\eqref{eq:RestrictSpinorBundles} and~\eqref{eq:RestrictSpinorBundlesQ} compose.
	The statements about the $TQ$-Clifford multiplication and the connections follow immediately from \cref{Lem:RestrictSpinorBundles,Lem:RestrictSpinorBundlesQ}.
	
	Let $\Psi \in \Sigma_p \overline{M}$, $p \in Q$, with $V \cdot \Psi = 0$ and denote by $\psi$ its image under the isomorphism~\eqref{eq:RestrictSpinorBundles} and by $\sqrt{u} \phi$ the image of $\psi$ under the isomorphism~\eqref{eq:RestrictSpinorBundlesQ}, so that $\phi$ is the image of $\Psi$ under~\eqref{eq:IdentifySpinors}. 
	Note that $V_{|M} = ue_0 - U$ for $U = -u^{2} \frac{\del}{\del s}$, where (as always) we take the time-orientation with respect to which $V$ is future-pointing.
	Since all spinors in the given subbundle have Lorentzian Dirac current proportional to~$V$, we have $V_\Psi = V_{p}$ if and only if $-\overline{g}(V_\Psi, e_0) = u(p)$.
	But by \cref{Lem:RestrictSpinors}, this is the case if and only if $|\psi|^2 = u(p)$.
	Since the isomorphism~\eqref{eq:RestrictSpinorBundlesQ} is isometric, this is the case if and only if $|\sqrt{u(p)}\phi|^2 = u(p)$ or, equivalently, $|\phi|^2 = 1$, which was to show.
\end{proof}

\begin{remark}
	The first and second author plan to develop a slightly more conceptual perspective on \cref{Cor:RestrictSpinorBundlesAll} in \cite{Ammann.Gloeckle:prep} showing that the bundle map is independent from $M$.
\end{remark}

It is now clear, what we have to do, in order to obtain lightlike parallel spinors on $\overline{\Sigma}M$:
Given a parallel unit spinor $\phi$ on $\Sigma Q$ for some leaf $Q \subset M$, we consider the spinor $\psi_{|Q} \in \Gamma(\overline{\Sigma}M_{|Q})$ with $-\nu \cdot \psi_{|Q} = e_0 \cdot \psi_{|Q}$ corresponding to $\sqrt{u}\phi$ via~\eqref{eq:RestrictSpinorBundlesQ}.
The spinor $\psi_{|Q}$ will be $\overline{\nabla}$-parallel along $Q$.
Then we $\overline{\nabla}$-parallely extend $\psi_{|Q}$ along the curves $I \ni s \mapsto (s,x)$ for all $x \in Q$ and denote the result by $\psi$.
Note that $\psi$ satisfies $-\nu \cdot \psi = e_0 \cdot \psi$, because the subbundle of hypersurface spinors satisfying this condition is parallel.
If $\psi_{|Q}$ extends to a $\overline{\nabla}$-parallel spinor, then it must be given by $\psi$.

In general, though, non-vanishing of the curvature terms
\begin{gather*}
	\overline{R}(\nu,X) \psi \coloneqq \overline{\nabla}_\nu \overline{\nabla}_X \psi - \overline{\nabla}_X \overline{\nabla}_\nu \psi - \overline{\nabla}_{[\nu,X]} \psi
\end{gather*}
for $X \perp \nu$ is an obstruction to $\overline{\nabla} \psi = 0$.
In fact, $\overline{R}(\nu,X) \psi \neq 0$ for some $X \perp \nu$ is the only obstruction.
To see this, we note that $X$ can be chosen to commute with $\frac{\del}{\del s}$ due to the product decomposition $M = I \times Q$ and thus
\begin{gather*}
		\overline{R}(\nu,X) \psi = u\overline{R}\left(\frac{\del}{\del s},X \right) \psi = u\overline{\nabla}_{\frac{\del}{\del s}} \overline{\nabla}_X \psi - u\overline{\nabla}_X \overline{\nabla}_{\frac{\del}{\del s}} \psi = \overline{\nabla}_{\nu} \overline{\nabla}_X \psi,
\end{gather*}
for such vector fields $X$ because $\psi$ is parallel in $\nu$-direction by construction.
So if $\overline{R}(\nu,X) \psi = 0$ for all $X \perp \nu$, then for any vector field $X \perp \nu$ with $[\frac{\del}{\del s}, X] = 0$ the equations $\overline{\nabla}_{\nu} \overline{\nabla}_X \psi = 0$ and $(\overline{\nabla}_X \psi)_{|Q} = 0$ hold and show that $\overline{\nabla}_X \psi = 0$ on all of $M$.
This implies that $\psi$ is $\overline{\nabla}$-parallel.

However, we are not able to show $\overline{R}(\nu,X) \psi = 0$ directly, without showing that the spinor is parallel.
What we can establish though is an averaged version of this equality, which is the initial data analog of the fact that the Killing development must have null Ricci curvature in order to admit a lightlike parallel spinor.

\begin{proposition} \label{Prop:DerJEq}
	In \cref{Setup:SimpleIDS}, suppose that $\psi \in \Gamma(\overline{\Sigma} M)$ is a hypersurface spinor with $-\nu \cdot \psi = e_0 \cdot \psi$.
	Then in any point $p=(s,x) \in M = I \times Q$
	\begin{gather} \label{eq:DerJEq}
	\sum_{i = 1}^{n-1} e_i \cdot \overline{R}(\nu, e_i) \psi = \frac{1}{2}\sum_{i = 1}^{n-1} \tr^{g_s}\left((\nabla^\gamma k)(e_i,\argu) - (\nabla^\gamma_{e_i}k) \right) e_i \cdot \psi
	\end{gather}
	holds, where $e_1, \ldots, e_{n-1}$ is any orthonormal basis of $T_p(\{s\} \times Q)$.
	
	In particular, if $\psi$ is a nowhere vanishing $\overline{\nabla}$-parallel hypersurface spinor with $-\nu \cdot \psi = e_0 \cdot \psi$, then 
	\begin{gather*}
		\trace^{g_s} \left((\nabla^\gamma k)(X, \argu) - (\nabla^\gamma_X k) \right) = 0
	\end{gather*}
	for all $s \in I$ and $X \in T(\{s\} \times Q)$.
\end{proposition}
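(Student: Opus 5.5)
Since $\overline{\nabla}$ on $\overline{\Sigma}M$ is the restriction of the Killing development's spin connection (\cref{Lem:RestrictSpinorBundles}), I would compute $\overline{R}$ as the curvature of a connection of the form $\nabla^\gamma - \frac12 e_0\cdot k(X,\argu)^{\sharp^\gamma}\cdot$, i.e.\ directly on the initial data set. The standard formula for the curvature of such a modified connection gives, for $X,Y\in TM$,
\begin{align*}
\overline{R}(X,Y)\psi = R^\gamma(X,Y)\psi - \tfrac12 e_0\cdot\bigl((\nabla^\gamma_X k)(Y,\argu)^\sharp - (\nabla^\gamma_Y k)(X,\argu)^\sharp\bigr)\cdot\psi + \tfrac14\bigl[k(X,\argu)^\sharp\cdot, k(Y,\argu)^\sharp\cdot\bigr]\psi ,
\end{align*}
where the sign of the quadratic term comes from $e_0$ anticommuting with $TM$ and $e_0^2=1$. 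I then set $X=\nu$, $Y=e_i$, Clifford multiply by $e_i$ and sum over $i=1,\dots,n-1$. The three types of terms are handled separately.

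For the $R^\gamma$ term, I use the Clifford identity $\sum_j e_j\cdot R^\gamma(X,e_j)\psi = -\frac12\Ric^\gamma(X)\cdot\psi$, summed over a full frame $e_1,\dots,e_{n-1},\nu$. The missing $\nu$-term $\nu\cdot R^\gamma(\nu,\nu)\psi$ vanishes, so the partial sum over $i\le n-1$ equals $-\frac12\Ric^\gamma(\nu)\cdot\psi$. For the $k$-terms, I use $e_i\cdot e_0 = -e_0\cdot e_i$ and expand the $\nabla k$ and $kk$ pieces in the frame, then use $-\nu\cdot\psi = e_0\cdot\psi$ to convert every $e_0\cdot$ into $-\nu\cdot$ acting on $\psi$.

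To control everything, I express $R^\gamma(\nu,\argu)$, $k$ and $\nabla k$ through the lightlike-parallel equation \eqref{eq:UPar}. Differentiating $\nabla^\gamma U = u\,k(X,\argu)^\sharp$ with $U=-u\nu$ and antisymmetrizing yields the Codazzi-type identity
\begin{align*}
R^\gamma(X,Y)U = (\del_X u)k(Y,\argu)^\sharp - (\del_Y u)k(X,\argu)^\sharp + u\bigl((\nabla_X k)(Y,\argu)^\sharp - (\nabla_Y k)(X,\argu)^\sharp\bigr).
\end{align*}
Together with $\del_X u = k(U,X)$, this expresses the curvature in the $\nu$-direction purely through $k$ and $\nabla k$. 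Substituting it makes the $k\,k$ products cancel against the commutator terms (using $k(\nu,\argu)$ related to $\upd u$). The surviving terms should be exactly $\frac12\sum_i\bigl(\tr^{g_s}(\nabla k)(e_i,\argu) - \tr^{g_s}\nabla_{e_i}k\bigr)e_i\cdot\psi$; the traces over the leaf arise because the $\nu\nu$-components drop out after using $V\cdot\psi=0$. The "in particular" part is then immediate: if $\psi$ is parallel, $\overline{R}\psi=0$, so $\sum_i a_i\, e_i\cdot\psi=0$ with real coefficients $a_i$. Multiplying by $\sum_j a_j e_j$ gives $-|a|^2\psi=0$, and since $\psi\neq0$, all $a_i=0$.

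The main obstacle is the bookkeeping in the middle step: tracking signs of the Clifford products and checking that all quadratic $k$-terms and all $\nu$-components cancel, using $-\nu\cdot\psi=e_0\cdot\psi$, so that only the leafwise divergence-minus-trace-gradient remains. I would first test the computation in the case where $u$ is leafwise constant and $k$ takes the explicit form coming from $\dot g_s$, to fix the signs.
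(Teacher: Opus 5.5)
Your intrinsic route can be completed, but your displayed curvature formula has the wrong sign in the quadratic term, and that is the term your key cancellation rests on.

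With $K_X \coloneqq k(X,\bullette)^{\sharp^\gamma}\cdot$, the facts that $e_0\cdot$ anticommutes with $TM$ and $e_0\cdot e_0\cdot = 1$ give $(e_0\cdot K_X)(e_0\cdot K_Y) = -K_XK_Y$. So the commutator of the two connection terms is $-\frac14[K_X,K_Y]$, not $+\frac14[K_X,K_Y]$. This is the spinorial form of the Gauss equation $R^{\overline{g}}(X,Y,Z,W) = R^\gamma(X,Y,Z,W) + k(X,W)k(Y,Z) - k(X,Z)k(Y,W)$ for a spacelike hypersurface; your sign is the Riemannian one.

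The sign is not cosmetic. For $\psi$ with $-\nu\cdot\psi = e_0\cdot\psi$, set $\alpha \coloneqq \sum_i k(\nu,e_i)e_i$ and $\kappa \coloneqq \sum_{i,j}k(e_i,e_j)k(\nu,e_j)e_i$. Substituting your Codazzi-type identity and $\del_X u = -u\,k(\nu,X)$ gives
\begin{gather*}
-\tfrac12\Ric^\gamma(\nu)\cdot\psi = \tfrac12\sum_{i=1}^{n-1} j(e_i)\,e_i\cdot\psi - \tfrac12 j(\nu)\,e_0\cdot\psi + \mathcal{Q}, \\
\mathcal{Q} \coloneqq \tfrac12\bigl(\tr^{g_s}(k)\,\alpha - \kappa\bigr)\cdot\psi + \tfrac12\bigl(|\alpha|^2 - k(\nu,\nu)\tr^{g_s}(k)\bigr)\,e_0\cdot\psi,
\end{gather*}
while $\frac14\sum_i e_i\cdot[K_\nu,K_{e_i}]\psi = \mathcal{Q}$. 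So the quadratic terms cancel with the correct sign and double with yours.

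After this correction, the $\nabla k$-term still needs two further consequences of your Codazzi-type identity, together with $\gamma(R^\gamma(X,Y)\nu,\nu) = 0$.
\begin{itemize}
\item The leafwise symmetry of $T_{ij} \coloneqq (\nabla_\nu k)(e_i,e_j) - (\nabla_{e_i}k)(\nu,e_j)$ kills the bivector part of that term.
\item The identity $(\nabla_\nu k)(e_i,\nu) = (\nabla_{e_i}k)(\nu,\nu)$ kills its $e_i\cdot\nu\cdot\psi$-part. It also turns $j(e_i)$ into the leafwise trace appearing in \eqref{eq:DerJEq}.
\end{itemize}
So it is this identity, not $V\cdot\psi = 0$, that removes the $\nu\nu$-components. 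The remaining coefficient $-\frac12\bigl(j(\nu) + \sum_i T_{ii}\bigr)$ of $e_0\cdot\psi$ vanishes by symmetry of $k$ alone. Your argument for the ``in particular'' part is fine.

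The paper avoids this bookkeeping by embedding $(M,\gamma,k)$ into a spacetime (e.g.\ the Killing development), where $\overline{\nabla}$ is the ambient spin connection. It then expands $\overline{R}(\nu,e_i)$ in the frame $e_0,e_1,\dots,e_{n-1},\nu$. Since $u(e_0+\nu)$ is parallel, $R^{\overline{g}}(\bullette,\bullette)\nu = -R^{\overline{g}}(\bullette,\bullette)e_0$. Together with $-\nu\cdot\psi = e_0\cdot\psi$, every contribution with an $e_0$- or $\nu$-slot cancels in pairs, leaving $\frac14\sum_{i,j,l}R^{\overline{g}}(\nu,e_i,e_j,e_l)\,e_i\cdot e_j\cdot e_l\cdot\psi$. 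By pair symmetry this is a Codazzi term, linear in $\nabla^\gamma k$, and the cyclic symmetry of the Codazzi expression reduces the triple products to $e_j\cdot\psi$. Thus neither $R^\gamma$ nor any quadratic $k$-term ever appears.

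Your Codazzi-type identity is the tangential part of $R^{\overline{g}}(X,Y)V = 0$ written intrinsically. Your route therefore has to verify by hand the cancellations that the Gauss equation and this ambient identity give for free. In exchange, it stays on the initial data set and replaces the triple-product reduction by the standard Ricci contraction identity.
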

\begin{proof}
	Let $e_1, \ldots, e_{n-1}$ be an orthonormal basis of $T(\{s\} \times Q)$ in some point $p= (s, x) \in M$.
	To ease the computations, we choose some Lorentzian manifold $(\overline{M},\overline{g})$ into which $M$ embeds as spacelike hypersurface such that $(\gamma,k)$ is the induced initial data set on $M$.
	Then $e_0, e_1, \ldots, e_{n-1}, \nu$ is a generalized orthonormal basis for $\overline{g}$ in $p$.
	Since, by \cref{Lem:RestrictSpinorBundles}, $\overline{\nabla}$ coincides with the spinorial connection induced by the Levi-Civita connection of $(\overline{M}, \overline{g})$, we have
	\begin{align*}
		\sum_{i = 1}^{n-1} e_i \cdot \overline{R}(\nu, e_i) \psi
			&= - \frac{1}{4} \sum_{i = 1}^{n-1} e_i \cdot e_0 \cdot R^{\overline{g}}(\nu,e_i)e_0 \cdot \psi + \frac{1}{4} \sum_{i,j = 1}^{n-1} e_i \cdot e_j \cdot R^{\overline{g}}(\nu,e_i)e_j \cdot \psi \\
			&\phantom{=}\; + \frac{1}{4} \sum_{i = 1}^{n-1} e_i \cdot \nu \cdot R^{\overline{g}}(\nu,e_i)\nu \cdot \psi.
	\end{align*}
	Since $R^{\overline{g}}(\nu,e_i,e_0, e_0) = 0$, we have that $e_0 \cdot$ anti-commutes with the curvature term in the first summand and the same reasoning also applies to the third summand.
	Moreover, since $R^{\overline{g}}(\nu,e_i) \nu = -R^{\overline{g}}(\nu,e_i) e_0$ as $|U|_\gamma e_0 - U = u(e_0 + \nu)$ is $\nabla^{\overline{g}}$-parallel along $M$ and since $-\nu \cdot \psi = e_0 \cdot \psi$, the first and the last summand cancel.
	We are left with
	\begin{align*}
	\sum_{i = 1}^{n-1} e_i \cdot \overline{R}(\nu, e_i) \psi &= \frac{1}{4} \sum_{i,j = 1}^{n-1} e_i \cdot e_j \cdot R^{\overline{g}}(\nu, e_i)e_j \cdot \psi \\
	&= - \frac{1}{4} \sum_{i,j = 1}^{n-1} R^{\overline{g}}(\nu,e_i,e_j, e_0) e_i \cdot e_j \cdot e_0 \cdot \psi + \frac{1}{4}\sum_{i,j,k = 1}^{n-1} R^{\overline{g}}(\nu,e_i,e_j, e_k) e_i \cdot e_j \cdot e_k \cdot \psi \\
	&\phantom{=}\;+ \frac{1}{4} \sum_{i,j = 1}^{n-1} R^{\overline{g}}(\nu,e_i,e_j, \nu) e_i \cdot e_j \cdot \nu \cdot \psi.
	\end{align*}
	Again, the first and the last summand cancel, and we apply the Codazzi formula in order to obtain
	\begin{align*}
	\sum_{i = 1}^{n-1} e_i \cdot \overline{R}(\nu, e_i) \psi &= \frac{1}{4}\sum_{i,j,k = 1}^{n-1} R^{\overline{g}}(\nu,e_i,e_j, e_k) e_i \cdot e_j \cdot e_k \cdot \psi \\
	&= \frac{1}{4}\sum_{i,j,k = 1}^{n-1} R^{\overline{g}}(e_k,e_j,e_0, e_i) e_i \cdot e_j \cdot e_k \cdot \psi \\
	&= \frac{1}{4}\sum_{i,j,k = 1}^{n-1} \left((\nabla_{e_k}k)(e_j,e_i) - (\nabla_{e_j}k)(e_k,e_i) \right) e_i \cdot e_j \cdot e_k \cdot \psi.
	\end{align*}
	If $j=k$, then the term in brackets is zero.
	If $i \neq j \neq k \neq i$, then $e_i \cdot e_j \cdot e_k \cdot \psi$ is invariant under cyclic permutation of the indices.
	Since adding the term in brackets with its cyclic permutations yields zero, the sum of all those summands with $i \neq j \neq k \neq i$ contributes nothing.
	Thus we only need to consider the summands where either $i=j$ or $i=k$, leading to
	\begin{align*}
	\sum_{i = 1}^{n-1} e_i \cdot \overline{R}(\nu, e_i) \psi &= \frac{1}{4}\sum_{j \neq k = 1}^{n-1} \left((\nabla_{e_k}k)(e_j,e_j) - (\nabla_{e_j}k)(e_k,e_j) \right) e_j \cdot e_j \cdot e_k \cdot \psi \\
	&\phantom{=}\;+ \frac{1}{4}\sum_{j \neq k = 1}^{n-1} \left((\nabla_{e_k}k)(e_j,e_k) - (\nabla_{e_j}k)(e_k,e_k) \right) e_k \cdot e_j \cdot e_k \cdot \psi \\
	&= \frac{1}{2}\sum_{j, k = 1}^{n-1} \left((\nabla_{e_k}k)(e_j,e_k) - (\nabla_{e_j}k)(e_k,e_k) \right) e_j \cdot \psi \\
	&= \frac{1}{2}\sum_{j = 1}^{n-1} \tr^{g_s}\left((\nabla_{\argu}k)(e_j,\argu) - (\nabla_{e_j}k) \right) e_j \cdot \psi.
	\end{align*}
	If $\overline{\nabla} \psi = 0$, then the curvature term on the left hand side vanishes, and so the vector (or the associated $1$-form) with which $\psi \neq 0$ is multiplied on the right hand side has to vanish as well.
\end{proof}

The vector with which $\psi$ gets multiplied on the right hand side of~\eqref{eq:DerJEq} looks very similar to the momentum density $j$ defined in~\eqref{eq:Constraints}.
In fact, the following is true.

\begin{proposition} \label{Prop:CharJEq}
	Assume \cref{Setup:SimpleIDS}.
	Then 
	\begin{gather*}
	\trace^{g_s}((\nabla^\gamma k)(X, \argu) - (\nabla^\gamma_X k)) = j(X) = -\frac{1}{2} u (\div^{g_s}(\dot{g}_s) - \upd \trace^{g_s}(\dot{g}_s))(X)
	\end{gather*}	
	holds for all $s \in I$ and $X \in T(\{s\} \times Q)$.
	In particular, for each $s \in I$ the following are equivalent:
	\begin{itemize}
		\item $\trace^{g_s}\left((\nabla^\gamma k)(X, \argu) - \nabla^\gamma_X k\right) = 0$ for all $X \in T(\{s\} \times Q)$
		\item $j_{|T(\{s\} \times Q)} = 0$
		\item $\div^{g_s}(\dot{g}_s) - \upd \trace^{g_s}(\dot{g}_s) = 0$.
	\end{itemize}
\end{proposition}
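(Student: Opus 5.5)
The plan is to obtain both equalities from the Lorentzian picture, using the Killing development of \cref{Cor:1-1-IDS}. In \cref{Setup:SimpleIDS} the initial data set $(\gamma,k)$ on $M=I\times Q$ is induced by the simple pp-wave
\begin{align*}
\overline{g}=\upd v\otimes \upd s+\upd s\otimes\upd v+u^{-2}\upd s^2+g_s
\end{align*}
on $\R\times I\times Q$, with $M=\{0\}\times I\times Q$ and $V=\frac{\del}{\del v}=ue_0-U$, where $U=-u^2\frac{\del}{\del s}$. All curvature identities can then be checked in this ambient spacetime.

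For the second equality, $j(X)=-\frac12 u(\div^{g_s}\dot g_s-\upd\tr^{g_s}\dot g_s)(X)$, I use the Gauß--Codazzi relation $\Ein^{\overline{g}}(e_0,\argu)=-\mu\,\overline{g}(e_0,\argu)+j$ from \eqref{eq:Constraints}. For $X$ tangent to $\{s\}\times Q$ we have $\overline{g}(e_0,X)=0$, so $j(X)=\Ein^{\overline{g}}(e_0,X)=\ric^{\overline{g}}(e_0,X)$. Solving $V=ue_0-U$ gives
\begin{align*}
e_0=u^{-1}\frac{\del}{\del v}-u\frac{\del}{\del s}.
\end{align*}
Since $V$ is parallel, $\ric^{\overline{g}}(V,\argu)=0$. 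Hence $j(X)=-u\,\ric^{\overline{g}}(\frac{\del}{\del s},X)$. The coordinate computation in the proof of \cref{lem.lor.ricci-flat} gives $\ric(\overline{g})_{si}=\frac12[\div^{g_s}\dot g_s-\upd\tr^{g_s}\dot g_s]_i$, and this formula was derived without any curvature assumption. This yields the second equality, and the equivalence of the last two bullet points follows at once because $u>0$.

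For the first equality I expand $j=\div^\gamma k-\upd\tr^\gamma k$ in an orthonormal frame $e_1,\dots,e_{n-1},\nu$ of $T_pM$, with $\nu=u\frac{\del}{\del s}$. For $X\in T(\{s\}\times Q)$ this gives
\begin{align*}
j(X)=\sum_{i}(\nabla^\gamma_{e_i}k)(e_i,X)+(\nabla^\gamma_\nu k)(\nu,X)-\sum_i(\nabla^\gamma_Xk)(e_i,e_i)-(\nabla^\gamma_Xk)(\nu,\nu).
\end{align*}
The sums over $i$ are exactly $\tr^{g_s}((\nabla^\gamma k)(X,\argu)-\nabla^\gamma_Xk)$, using the symmetry of $k$. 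So it remains to show that the defect $(\nabla^\gamma_\nu k)(X,\nu)-(\nabla^\gamma_X k)(\nu,\nu)$ vanishes.

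By the Codazzi equation this defect equals, up to sign, $R^{\overline{g}}(\nu,X,e_0,\nu)$. As in the proof of \cref{Prop:DerJEq}, parallelity of $u(e_0+\nu)$ along $M$ gives $R^{\overline{g}}(\nu,X)e_0=-R^{\overline{g}}(\nu,X)\nu$. Therefore
\begin{align*}
R^{\overline{g}}(\nu,X,e_0,\nu)=-R^{\overline{g}}(\nu,X,\nu,\nu)=0
\end{align*}
by antisymmetry in the last two slots. The only point needing care is keeping the sign conventions of Codazzi and of $j$ consistent, but since the defect vanishes, the signs do not affect the conclusion. The equivalence of the three bullet points then follows from the two equalities.
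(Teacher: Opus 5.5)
Your proposal is correct. The first equality follows the paper's argument exactly. You isolate the defect $(\nabla^\gamma_\nu k)(X,\nu)-(\nabla^\gamma_X k)(\nu,\nu)$, identify it via Codazzi with $R^{\overline{g}}(\nu,X,e_0,\nu)$, and kill it using $R^{\overline{g}}(\nu,X)V=0$ with $V_{|M}=u(e_0+\nu)$.

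For the second equality the routes differ. The paper does not compute it and cites \cite[Lem.~2.5]{gloeckle:2025p}. You derive it inside the Killing development, from four ingredients:
\begin{itemize}
\item the constraint identity $j(X)=\Ein^{\overline{g}}(e_0,X)=\ric^{\overline{g}}(e_0,X)$ for $X$ tangent to the leaf;
\item the normal $e_0=u^{-1}\frac{\del}{\del v}-u\frac{\del}{\del s}$, which is indeed unit, normal to $\{v=0\}$, and satisfies $\overline{g}(e_0,V)=-u<0$;
\item $\ric^{\overline{g}}(V,\cdot)=0$;
\item the component $\ric(\overline{g})_{si}$ from the proof of \cref{lem.lor.ricci-flat}.
\end{itemize}
As you say, that component is a general coordinate identity needing no curvature hypothesis. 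A direct Christoffel computation confirms $\ric(\overline{g})_{si}=\frac12[\div^{g_s}\dot g_s-\upd\tr^{g_s}\dot g_s]_i$, with $\div$ the trace of $\nabla$.

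Unlike in the first equality, the signs here genuinely matter, and they check out. With the paper's convention $k(X,Y)=\overline{g}(\nabla^{\overline{g}}_X e_0,Y)$, which is forced by $\nabla^{\overline{g}}_XY=\nabla^\gamma_XY+k(X,Y)e_0$, Codazzi gives $\ric^{\overline{g}}(e_0,X)=(\div^\gamma k-\upd\tr^\gamma k)(X)$. This is consistent with \eqref{eq:Constraints}, so your factor $-\frac12 u$ is right. You should say explicitly that you rely on these conventions, since your closing remark about signs not mattering applies only to the first equality.

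What your route buys is a self-contained proof within the paper that reuses a computation already made. It also shows the j-equation literally as the vanishing of the mixed Ricci components $\ric^{\overline{g}}(\frac{\del}{\del s},\cdot)_{|TQ}$, which ties \cref{Prop:CharJEq} directly to \cref{lem.lor.ricci-flat}. The cited external lemma avoids passing through an ambient spacetime. Within this paper, though, that is no real advantage, because the Killing development is available anyway and is used in the same way in the proof of \cref{Prop:DerJEq}.
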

\begin{proof}
	We use the same notation as in the proof of \cref{Prop:DerJEq}.
	For the first equality, we recall that $j=\div^\gamma(k)- \upd \tr^\gamma(k)$, so that the difference between the two sides is $(\nabla^\gamma_\nu k)(X,\nu)-(\nabla^\gamma_X k)(\nu,\nu)$.
	Now, using the Codazzi equation and the that $|U|_\gamma e_0 - U = u(e_0+\nu)$ is $\overline{\nabla}$-parallel, we see that
	\begin{gather*}
	(\nabla^\gamma_\nu k)(X,\nu)-(\nabla^\gamma_X k)(\nu,\nu) = R^{\overline{g}}(\nu,X,e_0,\nu) = -R^{\overline{g}}(\nu,X,\nu,\nu) = 0.
	\end{gather*}
	For the proof of the second equality, we refer to \cite[Lem.~2.5]{gloeckle:2025p}.
\end{proof}

\begin{remark} \label{Rem:NullRicci}
	If $\psi$ is a nowhere vanishing $\overline{\nabla}$-parallel spinor with $-\nu \cdot \psi = e_0 \cdot \psi$, then we can conclude the following.
	Because of \cref{Lem:RestrictSpinorBundlesQ}, each of the metrics $g_s$, $s \in I$, admits a parallel spinor an is thus Ricci-flat.
	\Cref{Prop:DerJEq,Prop:CharJEq} together show that the family $g_s$, $s \in I$, satisfies the j-equation~\eqref{eq.level.ricci}.
	These conclusions also follow from \cref{lem.lor.ricci-flat} as the Killing development of $(\gamma,k)$ with respect to $U_\psi$ carries a lightlike parallel spinor by \cref{Prop:ExtendSpinors} and hence has null Ricci curvature by \cref{Lem:LlSpinor}.
\end{remark}

As an immediate consequence, we obtain the following equality, which will play a central role in the main proof of \cref{thm:par_spinor}.

\begin{corollary} \label{Cor:JEq}
	Assume \cref{Setup:SimpleIDS} with $g_s$, $s \in I$, subject to the j-equation.
	Let $\psi \in \Gamma(\overline{\Sigma} M)$ be a hypersurface spinor with $-\nu \cdot \psi = e_0 \cdot \psi$.
	Then in any point $p=(s,x) \in M = I \times Q$
	\begin{gather*}
		\sum_{i = 1}^{n-1} e_i \cdot \overline{R}(\nu, e_i) \psi = 0
	\end{gather*}
	holds, where $e_1, \ldots, e_{n-1}$ is any orthonormal basis of $T_p(\{s\} \times Q)$.
\end{corollary}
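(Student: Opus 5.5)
This is immediate from \cref{Prop:DerJEq} and \cref{Prop:CharJEq}; the only care needed is with the hypotheses. \cref{Prop:DerJEq} holds for an arbitrary hypersurface spinor $\psi$ with $-\nu \cdot \psi = e_0 \cdot \psi$; it does not require $\psi$ to be parallel. It gives, at every $p=(s,x)\in M$,
\begin{gather*}
	\sum_{i=1}^{n-1} e_i \cdot \overline{R}(\nu,e_i)\psi = \frac{1}{2}\sum_{i=1}^{n-1} \tr^{g_s}\left((\nabla^\gamma k)(e_i,\argu) - (\nabla^\gamma_{e_i} k)\right) e_i \cdot \psi .
\end{gather*}
It therefore suffices to show that the $1$-form $X \mapsto \tr^{g_s}\left((\nabla^\gamma k)(X,\argu) - \nabla^\gamma_X k\right)$ vanishes on $T(\{s\}\times Q)$.

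To see this, I invoke \cref{Prop:CharJEq}, which is stated in exactly \cref{Setup:SimpleIDS}. For $X \in T(\{s\}\times Q)$ it identifies this $1$-form with
\begin{gather*}
	-\frac{1}{2}u\left(\div^{g_s}(\dot g_s) - \upd \tr^{g_s}(\dot g_s)\right)(X).
\end{gather*}
By assumption the family $g_s$, $s\in I$, satisfies the j-equation, so this expression vanishes for every $s$. Hence every coefficient multiplying $e_i\cdot\psi$ on the right-hand side is zero, and the claimed identity holds at each point.

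No step is a real obstacle. The one point to check is that the vectors $e_1,\dots,e_{n-1}$ on both sides are the same orthonormal basis of $T_p(\{s\}\times Q)$, so that the coefficients appearing are precisely the evaluations of the $1$-form on $T(\{s\}\times Q)$ controlled by \cref{Prop:CharJEq}. This holds because both propositions use the same notation.
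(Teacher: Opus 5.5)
Your proposal is correct and is essentially the paper's own argument. The paper states the corollary as an immediate consequence of \cref{Prop:DerJEq}, which holds for any $\psi$ with $-\nu\cdot\psi = e_0\cdot\psi$, combined with \cref{Prop:CharJEq}: the latter identifies the coefficient $1$-form with $-\frac{1}{2}u(\div^{g_s}\dot g_s - \upd\tr^{g_s}\dot g_s)$, which vanishes under the j-equation.
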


\subsection{Proving parallelism}
We will now prove that the hypersurface spinor whose construction we briefly discussed before \cref{Prop:DerJEq} in the last subsection is indeed $\overline{\nabla}$-parallel under suitable assumptions.
As we remarked in \cref{Rem:NullRicci}, we must have that $g_s$, $s \in I$, in \cref{Setup:SimpleIDS} is a family of Ricci-flat metrics subject to the j-equation.
Also, we will have to assume that $Q$ is closed. 

The major tool in the argument is the Dirac-Witten operator $\overline{D} \colon \Gamma(\overline{\Sigma}M) \to \Gamma(\overline{\Sigma}M)$.
It is defined in analogy to the classical Dirac operator of a spinor bundle by the local formula
\begin{align*}
	\overline{D} \psi \coloneqq \sum_{i=1}^{n} e_i \cdot \overline{\nabla}_{e_i} \psi,
\end{align*}
where $e_1, \ldots, e_n$ is any local orthonormal frame of $TM$.

\begin{proposition}
	In \cref{Setup:SimpleIDS}, let $\psi \in \Gamma(\overline{\Sigma}M)$ satisfy $-\nu \cdot \psi = e_0 \cdot \psi$ and $\overline{\nabla}_{\nu} \psi = 0$.
	Let furthermore $s \in I$ and assume that $\psi_{|\{s\} \times Q}$ has compact support.
	Then
	\begin{gather} \label{eq:LeafSL}
		\int_{\{s\} \times Q} |\overline{D} \psi|^2 u^{-1} \dvol^{g_s} = \int_{\{s\} \times Q} |\overline{\nabla}\psi|^2 u^{-1} \dvol^{g_s} +  \frac{1}{4} \int_{\{s\} \times Q} \scal^{g_s} |\psi|^2 u^{-1} \dvol^{g_s}.
	\end{gather}
\end{proposition}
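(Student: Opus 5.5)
The idea is to reduce \eqref{eq:LeafSL} to the classical Schrödinger--Lichnerowicz formula on the Riemannian leaf $(Q,g_s)$. The tool is the identification of \cref{Lem:RestrictSpinorBundlesQ}, which turns $\overline{\nabla}$ into a conjugate of the Levi-Civita spinor connection of $g_s$. Fix $s\in I$ and identify $Q$ with $\{s\}\times Q$. At points of the leaf we use a local orthonormal frame $e_1,\ldots,e_{n-1}$ of $T(\{s\}\times Q)$ and complete it by $e_n=\nu$ to an orthonormal frame of $TM$.

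\emph{Step 1: reduce to leafwise quantities.} Since $\overline{\nabla}_\nu\psi=0$, the Dirac--Witten operator along the leaf becomes $\overline{D}\psi=\sum_{i=1}^{n-1}e_i\cdot\overline{\nabla}_{e_i}\psi$. For the same reason $|\overline{\nabla}\psi|^2=\sum_{i=1}^{n-1}|\overline{\nabla}_{e_i}\psi|^2$. Both sides of \eqref{eq:LeafSL} therefore involve only the restriction $\psi_{|Q}$ and its derivatives tangent to $Q$.

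\emph{Step 2: transfer to $\Sigma Q$.} By assumption $\psi_{|Q}$ is a section of the subbundle $\{-\nu\cdot\psi=e_0\cdot\psi\}$. By \cref{Lem:RestrictSpinorBundlesQ} this subbundle is $\overline{\nabla}$-parallel, so $\overline{\nabla}_{e_i}\psi$ again lies in it. Let $\phi\in\Gamma(\Sigma Q)$ be the image of $\psi_{|Q}/\sqrt{u}$ under the isomorphism \eqref{eq:RestrictSpinorBundlesQ}; it has compact support. That lemma gives $\overline{\nabla}_X\psi=\sqrt{u}\,\nabla^{Q}_X\phi$ for $X\in TQ$, read through the isomorphism. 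Since the isomorphism is isometric and $TQ$-Clifford linear, we obtain
\begin{gather*}
\overline{D}\psi\;\widehat{=}\;\sqrt{u}\,D^{g_s}\phi,\qquad |\overline{D}\psi|^2u^{-1}=|D^{g_s}\phi|^2,\qquad |\overline{\nabla}\psi|^2u^{-1}=|\nabla^{g_s}\phi|^2,\qquad |\psi|^2u^{-1}=|\phi|^2 .
\end{gather*}

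\emph{Step 3: apply the classical formula.} Substituting these identities, \eqref{eq:LeafSL} becomes
\begin{gather*}
\int_Q|D^{g_s}\phi|^2\dvol^{g_s}=\int_Q|\nabla^{g_s}\phi|^2\dvol^{g_s}+\frac14\int_Q\scal^{g_s}|\phi|^2\dvol^{g_s}.
\end{gather*}
This is the Schrödinger--Lichnerowicz formula $(D^{g_s})^2=(\nabla^{g_s})^*\nabla^{g_s}+\frac14\scal^{g_s}$ on $(Q,g_s)$, integrated against $\phi$. Partial integration is justified because $\phi$ has compact support, and the formally self-adjoint operators $D^{g_s}$ and $\nabla^{g_s}$ produce no boundary terms.

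The main point requiring care is Step 2: checking that the factor $\sqrt{u}$ enters $\overline{D}$ only through $\nabla^{Q}(\psi/\sqrt{u})$. This holds because the lemma's conjugation formula is applied derivative-by-derivative before Clifford multiplication, and Clifford multiplication by $e_i$ commutes with multiplication by the function $\sqrt{u}$. It is also the reason for the weight $u^{-1}$ in every integrand of \eqref{eq:LeafSL}.
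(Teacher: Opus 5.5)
Your proposal is correct and follows the paper's proof essentially step for step. Like the paper, you send $u^{-1/2}\psi_{|Q}$ to $\Sigma Q$ via \eqref{eq:RestrictSpinorBundlesQ} and use $\overline{\nabla}_\nu\psi=0$ with the conjugation formula of \cref{Lem:RestrictSpinorBundlesQ} to rewrite $u^{-1}|\overline{D}\psi|^2$, $u^{-1}|\overline{\nabla}\psi|^2$ and $u^{-1}|\psi|^2$ as the corresponding quantities for $\phi$, and then invoke the integrated Schr\"odinger--Lichnerowicz formula on $(Q,g_s)$.
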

\begin{proof}
	We identify $Q$ with $\{s\} \times Q$.
	Let $\phi$ denote the image of $(u^{-\frac{1}{2}}\psi)_{|Q}$ under the bundle isomorphism~\eqref{eq:RestrictSpinorBundlesQ}.
	Since $\phi$ has compact support the Schrödinger-Lichnerowicz formula for the Dirac operator $D^{g_s}$ of $\Sigma Q$ integrates to
	\begin{gather} \label{eq:RiemSL}
		\int_{Q} |D^{g_s} \phi|^2 \dvol^{g_s} = \int_{Q} |\nabla^{g_s} \phi|^2 \dvol^{g_s} +  \frac{1}{4} \int_{Q} \scal^{g_s} |\phi|^2 \dvol^{g_s}.
	\end{gather}
	Because~\eqref{eq:RestrictSpinorBundlesQ} is an isometry, we have $|\phi|^2 = u^{-1}|\psi|^2$.
	Moreover, its compatibility formula for the connection shows that $(u^{-\frac{1}{2}} \overline{\nabla}_X\psi)_{|Q} = \nabla^{Q}_X(u^{-\frac{1}{2}} \psi)_{|Q}$ gets sent to $\nabla^{g_s}_X \phi$ for all $X \in TQ$.
	Because $\overline{\nabla}_\nu \psi = 0$ by assumption, this implies $|\nabla^{g_s} \phi|^2 = u^{-1} |\overline{\nabla} \psi|^2$ and $|D^{g_s} \phi|^2 = u^{-1}|\overline{D}\psi|^2$.
	Replacing the $\phi$-terms in~\eqref{eq:RiemSL} by the expressions in terms of $\psi$, we get~\eqref{eq:LeafSL}.
\end{proof}
\begin{remark}
	Alternatively, the formula~\eqref{eq:LeafSL} can be derived from the Schrödinger-Lichnerowicz type formula for the Dirac-Witten operator.
	Choosing some $s_0 \in I$, its integrated version for $\psi$ reads
	\begin{gather} \label{eq:IDS_SL}
		\int_{[s_0,s] \times Q} |\overline{D} \psi|^2 \dvol^\gamma = \int_{[s_0,s] \times Q} |\overline{\nabla}\psi|^2 \dvol^\gamma +  \frac{1}{2} \int_{[s_0,s] \times Q} \langle \psi, e_0 \cdot (\mu e_0 - j^\sharp) \cdot \psi \rangle \dvol^\gamma
	\end{gather}
	for any $s \in I$.
	(If $s < s_0$, we view this as these integrals as the negative of the corresponding integrals over $[s,s_0] \times Q$.)
	It is important to note that this formula contains no boundary terms.
	The boundary term from partially integrating $\langle \overline{\nabla}^* \overline{\nabla} \psi, \psi \rangle$ gives boundary integrals over $\pm \langle \overline{\nabla}_\nu \psi, \psi \rangle$, which vanish as $\overline{\nabla}_\nu \psi = 0$.
	The boundary term from partially integrating $\langle \overline{D}^2 \psi, \psi \rangle$ gives boundary integrals over $\pm \langle \nu \cdot \overline{D} \psi, \psi \rangle$ and because 
	\begin{gather*}
		\langle \nu \cdot \overline{D} \psi, \psi \rangle = -\langle \overline{D} \psi , \nu \cdot \psi \rangle = \langle \overline{D} \psi, e_0 \cdot \psi \rangle = \langle e_0 \cdot \overline{D} \psi, \psi \rangle,
	\end{gather*}
	and $\overline{D} \psi$ is also in the subbundle from \cref{Lem:RestrictSpinorBundlesQ}, \ie satisfies $-\nu \cdot \overline{D}\psi = e_0 \cdot \overline{D}\psi$, these terms vanish as well.
	Now we can re-write the integrals in~\eqref{eq:IDS_SL} using Fubini's theorem
	\begin{gather*}
		\int_{[s_0,s] \times Q} \bullet\; \dvol^\gamma = \int_{s_0}^s \int_{\{s^\prime\} \times Q} \bullet\; u^{-1} \dvol^{g_{s^\prime}} \upd s^\prime
	\end{gather*}
	and then differentiate with respect to $s$ to almost obtain~\eqref{eq:LeafSL}.
	The only thing left is to identify the curvature term.
	First of all, a similar argument as for the $\overline{D}$-boundary term above shows that only the $TQ$-orthogonal part of $j^{\sharp}$ gives a non-zero contribution in the scalar product and so
	\begin{gather*}
		\langle \psi, e_0 \cdot (\mu e_0 - j^\sharp) \cdot \psi \rangle = \langle \psi, e_0 \cdot (\mu e_0 - j(\nu) \nu) \cdot \psi \rangle = (\mu + j(\nu)) |\psi|^2.
	\end{gather*}
	The expression $\mu + j(\nu)$ can most conveniently be computed by embedding $M$ into the Killing development $(\overline{M},\overline{g})$ of $(\gamma,k)$ with respect to $u\nu$.
	Since the canonical lightlike vector field of $(\overline{M},\overline{g})$ restricts to $u(e_0 + \nu)$ along $M$, we have $\ric^{\overline{g}}(\bullette,e_0 + \nu) = 0$ and
	\begin{gather*}
		\mu + j(\nu) = \Ein^{\overline{g}}(e_0, e_0 + \nu) = \ric^{\overline{g}}(e_0,e_0+\nu) - \frac{1}{2} \scal^{\overline{g}} \overline{g}(e_0,e_0+\nu) = \frac{1}{2} \scal^{\overline{g}}.
	\end{gather*}
	Finally, $\scal^{\overline{g}} = \scal^{g_s}$ follows with the help of the calculations in the proof of \cite[Prop.~A.1]{gloeckle:2025p}.
\end{remark}

\begin{theorem} \label{Thm:ExParSpinors}
	Assume \cref{Setup:SimpleIDS} for a closed manifold $Q$ and with a family of Ricci-flat (or scalar-non-negative) Riemannian metrics $g_s$, $s \in I$, satisfying the j-equation.
	Assume furthermore that for some $s_0 \in I$, there is a parallel spinor $\phi \in \Gamma(\Sigma Q)$ for $(Q, g_{s_0})$.
	Let $\psi \in \Gamma(\overline{\Sigma} M)$ be the unique hypersurface spinor which is $\overline{\nabla}$-parallel along all the curves $I \ni s \mapsto (s,x)$, $x \in Q$, and for which $\psi_{|\{s_0\} \times Q}$ gets mapped to $\sqrt{u}\phi$ under~\eqref{eq:RestrictSpinorBundlesQ}.
	Then $\psi$ is $\overline{\nabla}$-parallel.
\end{theorem}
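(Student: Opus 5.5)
We will show that $\overline{D}\psi = 0$ on every leaf $\{s\}\times Q$; the leafwise Schrödinger–Lichnerowicz formula~\eqref{eq:LeafSL} then gives $\overline{\nabla}\psi = 0$. By construction $\psi$ satisfies $-\nu\cdot\psi = e_0\cdot\psi$, because that subbundle is $\overline{\nabla}$-parallel by \cref{Lem:RestrictSpinorBundlesQ}. It also satisfies $\overline{\nabla}_\nu\psi = 0$, and its restriction to every leaf has compact support since $Q$ is closed. So \eqref{eq:LeafSL} applies for every $s\in I$. As $\scal^{g_s}\ge 0$ and $u>0$, a vanishing left hand side forces $\overline{\nabla}\psi|_{\{s\}\times Q}=0$. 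Since $\overline{\nabla}_\nu\psi=0$ already, this gives parallelism in all directions.

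To get $\overline{D}\psi=0$, consider the spinor $\overline{D}\psi$ itself. At $s=s_0$ we have $\overline{\nabla}_X\psi=0$ for $X\in TQ$, because $\psi_{|\{s_0\}\times Q}$ corresponds to $\sqrt u\,\phi$ and $\phi$ is $\nabla^{g_{s_0}}$-parallel (\cref{Lem:RestrictSpinorBundlesQ}). Hence $\overline{D}\psi=0$ on $\{s_0\}\times Q$. Next compute $\overline{\nabla}_\nu \overline{D}\psi$. Using $\overline{\nabla}_\nu\psi=0$ and parallelism of Clifford multiplication for the connection on $\underline{\R}e_0\oplus TM$ from~\eqref{eq:HypersurfaceConnection}, we get
\begin{align*}
\overline{\nabla}_\nu\overline{D}\psi=\sum_{i=1}^{n-1} e_i\cdot\overline{R}(\nu,e_i)\psi+(\text{terms } \overline{\nabla}_\nu e_i\cdot \overline{\nabla}_{e_i}\psi \text{ and } e_i\cdot\overline{\nabla}_{[\nu,e_i]}\psi).
\end{align*}
The curvature term vanishes by \cref{Cor:JEq}, since $g_s$ satisfies the j-equation. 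The remaining terms are of zeroth order in $\overline{\nabla}\psi$. Tangential derivatives of $\psi$ are however not controlled by $\overline{D}\psi$ alone, so this does not immediately yield a closed linear ODE for $\overline{D}\psi$ along the $s$-lines.

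For this reason I would instead run an integral (energy) argument. Set $E(s)\coloneqq\int_{\{s\}\times Q}|\overline{D}\psi|^2u^{-1}\dvol^{g_s}$ and differentiate in $s$, using Fubini as in the remark after~\eqref{eq:LeafSL}. Equivalently, use the integrated Dirac–Witten formula~\eqref{eq:IDS_SL}, which has no boundary terms, together with the vanishing of $\overline{D}^2$-type boundary contributions. The goal is a Grönwall inequality $|E'(s)|\le C\bigl(E(s)+F(s)\bigr)$ with $F(s)\coloneqq\int|\overline{\nabla}\psi|^2u^{-1}$. By~\eqref{eq:LeafSL}, $F\le E$, so this becomes $|E'|\le 2CE$. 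Combined with $E(s_0)=0$, Grönwall gives $E\equiv 0$ on compact subintervals, hence on all of $I$. The constant $C$ is locally uniform because $Q$ is compact and the data $u,g_s,k$ are smooth.

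The main obstacle is the derivative estimate for $E$. Here one must use \cref{Cor:JEq} precisely to kill the only term not bounded by $\overline{\nabla}\psi$. One must also check that the commutator terms coming from $[\nu,e_i]$, from the $s$-derivative of the volume form and of $u^{-1}$, and from $\overline{\nabla}_\nu e_i$ all pair $\overline{D}\psi$ with zeroth-order expressions in $\overline{\nabla}\psi$. I would carry this out in a frame $e_i$ Lie-transported along $\partial_s$ and then Gram–Schmidt orthonormalized, and bound every term by Cauchy–Schwarz.
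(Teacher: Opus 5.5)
Your proposal is correct and is essentially the paper's argument: a Grönwall estimate for $E(s)=\int_{\{s\}\times Q}|\overline{D}\psi|^2u^{-1}\dvol^{g_s}$ with $E(s_0)=0$, where \cref{Cor:JEq} removes the curvature term in $\overline{\nabla}_\nu\overline{D}\psi$ and \eqref{eq:LeafSL} with $\scal^{g_s}\ge 0$ bounds the remaining terms $\sum_i(\ldots)\cdot\overline{\nabla}_{e_i}\psi$ by $\overline{D}\psi$ in $L^2$. The one item missing from your bookkeeping is that differentiating $|\overline{D}\psi|^2$ requires the metric connection $\nabla^\gamma_\nu=\overline{\nabla}_\nu+\frac{1}{2}e_0\cdot k(\nu,\cdot)^{\sharp}\cdot$ rather than $\overline{\nabla}_\nu$; the resulting extra term is bounded by $C\,E(s)$ and is harmless.
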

\begin{proof}
	Let $J \subset I$ be any compact interval with $s_0 \in J$.
	It suffices to show that $\overline{\nabla} \psi = 0$ on $J \times Q$, because all the possible $J$ exhaust the interval $I$.
	First, we note the following consequence of the Schrödinger-Lichnerowicz type formula~\eqref{eq:LeafSL}.
	Since $\scal^{g_s} \geq 0$ by assumption, there exists a constant $C > 0$ just depending on the maxima of $u$ and $u^{-1}$ on the compact set $J \times Q$ such that
	\begin{gather*}
		\|\overline{\nabla} \psi\|_{L^2(\{s\} \times Q)} \leq C\|\overline{D} \psi\|_{L^2(\{s\} \times Q)}
	\end{gather*}
	for all $s \in J$.
	
	We now consider $G(s) \coloneqq \|\overline{D} \psi\|_{L^2(\{s\} \times Q)}^2$ on $J$.
	Our aim is to show that $G$ vanishes identically on~$J$, while we already know that $G(s_0) = 0$ as $\psi_{|\{s_0\} \times Q}$ is $\overline{\nabla}$-parallel along $\{s_0\} \times Q$ by \cref{Lem:RestrictSpinorBundlesQ} and $\overline{\nabla}_\nu \psi = 0$ everywhere.
	We calculate its derivative
	\begin{align*}
		G^\prime(s) &= \frac{\upd}{\upd s} \int_{\{s\} \times Q} |\overline{D} \psi|^2 u^{-1} \dvol^{g_s} \\
			&= 2 \int_{\{s\} \times Q} \langle \nabla_{u^{-1}\nu} \overline{D} \psi, \overline{D} \psi \rangle u^{-1} \dvol^{g_s} +  \int_{\{s\} \times Q} |\overline{D} \psi|^2 \frac{\upd}{\upd s} \left(u^{-1} \dvol^{g_s} \right)
	\end{align*}
	and try to estimate the appearing summands.
	Clearly, the absolute value of second integral can be estimated by $C_3 G(s)$ for some constant $C_3 > 0$ just depending on $u_{|J \times Q}$ and $g_s$, $s \in J$.
	For the first integral, we use a local orthonormal frame $e_1, \ldots, e_n$ of $TM$ with $e_n = \nu$ and the connection~\eqref{eq:HypersurfaceConnection} in order to calculate
	\begin{align*}
		\overline{\nabla}_\nu \overline{D} \psi - \sum_{i = 1}^{n} e_i \cdot \overline{R}(\nu,e_i) \psi &= \sum_{i = 1}^{n-1} \overline{\nabla}_\nu (e_i \cdot \overline{\nabla}_{e_i} \psi) - \left(\sum_{i = 1}^{n-1} e_i \cdot \overline{\nabla}_{\nu} \overline{\nabla}_{e_i}  \psi - \sum_{i = 1}^{n-1} e_i \cdot \overline{\nabla}_{[\nu,e_i]} \psi \right)\\
	&= \sum_{i = 1}^{n-1} (\overline{\nabla}_\nu e_i) \cdot \overline{\nabla}_{e_i} \psi + \sum_{j = 1}^{n-1} e_j \cdot \overline{\nabla}_{[\nu,e_j]} \psi \\
	&= -\sum_{i = 1}^{n-1} \overline{g}(\overline{\nabla}_\nu e_i, e_0) e_0 \cdot \overline{\nabla}_{e_i} \psi + \sum_{i, j = 1}^{n-1} \overline{g}(\overline{\nabla}_\nu e_i, e_j) e_j \cdot \overline{\nabla}_{e_i} \psi \\
	&\phantom{=}\;+\sum_{i = 1}^{n-1} \overline{g}(\overline{\nabla}_\nu e_i, \nu) \nu \cdot \overline{\nabla}_{e_i} \psi+ \sum_{i,j = 1}^{n-1} \overline{g}([\nu,e_j], e_i) e_j \cdot \overline{\nabla}_{e_i} \psi
	\end{align*}
	using $\overline{\nabla}_{\nu} \psi = 0$.
	Due to $\overline{g}(\overline{\nabla}_\nu e_i, e_0) = -\overline{g}(e_i, \overline{\nabla}_\nu e_0) = \overline{g}(e_i, \overline{\nabla}_\nu \nu) = -\overline{g}(\overline{\nabla}_\nu e_i, e_0)$, the first and the third summand add up to $\sum_{i = 1}^{n-1} \overline{g}(\overline{\nabla}_\nu e_i, \nu) (e_0 + \nu)\cdot \overline{\nabla}_{e_i} \psi = 0$, as $\overline{\nabla}_{e_i} \psi \in \ker((e_0 + \nu) \cdot)$.
	Moreover, we use $\overline{g}(\overline{\nabla}_\nu e_i, e_j) = -\overline{g}(e_i, \overline{\nabla}_\nu e_j)$ and that the second fundamental form $\overline{g}(-\overline{\nabla}_{\argu} \nu, \argu) \in T^*(\{s\} \times Q) \otimes T^*(\{s\} \times Q)$ is symmetric to conclude
	\begin{align*}
		\overline{\nabla}_\nu \overline{D} \psi - \sum_{i = 1}^{n} e_i \cdot \overline{R}(\nu,e_i) \psi &= \sum_{i,j = 1}^{n-1} \overline{g}(-\overline{\nabla}_{e_j} \nu, e_i) e_j \cdot \overline{\nabla}_{e_i} \psi \\
			&=  \sum_{i = 1}^{n-1} (-\nabla_{e_i} \nu) \cdot \overline{\nabla}_{e_i} \psi.
	\end{align*}
	By virtue of \cref{Cor:JEq}, the second summand on the left hand side is zero.
	Thus we have	
	\begin{gather*}
		\nabla_{\nu} \overline{D} \psi = \overline{\nabla}_{\nu} \overline{D} \psi + \frac{1}{2} e_0 \cdot k(\nu,\argu)^\sharp \cdot \overline{D}\psi = \sum_{i=1}^{n-1} (-\nabla_{e_i} \nu) \cdot \overline{\nabla}_{e_i} \psi + \frac{1}{2} e_0 \cdot k(\nu,\argu)^\sharp \cdot \overline{D}\psi,
	\end{gather*}
	which leads us to the respective estimates
	\begin{align*}
		2 \left| \sum_{i=1}^{n-1} \int_{\{s\} \times Q} \langle (-\nabla_{e_i} \nu) \cdot \overline{\nabla}_{e_i} \psi, \overline{D} \psi \rangle u^{-1} \dvol^{g_s} \right| &\leq C_0 \|\overline{\nabla} \psi\|_{L^2(\{s\} \times Q)} \|\overline{D} \psi\|_{L^2(\{s\} \times Q)} \leq C_1 G(s) \\
	\intertext{and}
		\left|\int_{\{s\} \times Q} \langle e_0 \cdot k(\nu,\argu)^\sharp \cdot \overline{D}\psi, \overline{D} \psi \rangle u^{-1} \dvol^{g_s} \right| &\leq C_2 G(s)
	\end{align*}
	with constants again only depending on $u_{|J \times Q}$ and $g_s$, $s \in J$, which determine $\nabla_{e_i}\nu$ and $k$.
	Overall, we obtain $|G^\prime(s)| \leq (C_1 + C_2 + C_3) G(s)$ and Grönwall's lemma then shows $G \equiv 0$ and so $\overline{\nabla} \psi \equiv 0$ on~$J \times Q$.
\end{proof}
\begin{remark}
	Since for any $s \in I$ restriction of $u^{-\frac{1}{2}}\psi$ to $\{s\} \times Q$ along~\eqref{eq:RestrictSpinorBundlesQ} produces a parallel spinor on $(Q,g_s)$, all the metrics $g_s$ are a posteriori Ricci-flat admitting a non-trivial parallel spinor.
	Thus \cref{Thm:ExParSpinors} reproduces a special case of the theorem by Dai, Wang and Wei \cite[Thm.~4.2~+~subseq.~Rem.]{dai.wang.wei:05} that any sequence of non-negative scalar curvature metrics on a closed manifold converging to a limit metric admitting a non-trivial parallel spinor must eventually consist of Ricci-flat metrics admitting a non-trivial parallel spinor.
	Besides requiring a smooth path of such metrics instead of barely a sequence, the main drawback of~\cref{Thm:ExParSpinors} is that we require the j-equation to hold.
	This can potentially be overcome by pulling back a given smooth family $g_s$, $s \in I$, of non-negative scalar curvature metrics by a suitable smooth family of diffeomorphisms.
\end{remark}

\begin{proof}[Proof of \cref{thm:par_spinor}]
	First of all, notice that if there is a lightlike parallel spinor $\Psi$ with Dirac current $\frac{\del}{\del v}$, then it is uniquely determined by its restriction $\Psi_{|\{0\} \times \{s_0\} \times Q}$ and thus by its image $\phi$ along the isomorphism~\eqref{eq:IdentifySpinors}.
	
	We show conversely that the preimage of a given unit spinor $\phi$ of $(Q, g_{s_0})$ parallely extends.
	By assumption, the induced initial data set on the spacelike hypersurface $M = \{0\} \times I \times Q$ in the given pp-wave spacetime is of the form considered in \cref{Setup:SimpleIDS} with $Q$ closed and $g_s$, $s \in I$, a smooth family of Ricci-flat metrics subject to the j-equation.
	Thus \cref{Thm:ExParSpinors} shows that there is a $\overline{\nabla}$-parallel hypersurface spinor $\psi \in \Gamma(\overline{\Sigma} M)$ such that $\psi_{|\{s_0\} \times Q}$ gets mapped to $\sqrt{u}\phi$ via~\eqref{eq:RestrictSpinorBundlesQ}.
	Under~\eqref{eq:RestrictSpinorBundles}, $\psi$ corresponds to a $\nabla^{\overline{g}}$-parallel spinor $\Psi_{|M}$ with the property that $\Psi_{|\{0\} \times \{s_0\} \times Q}$ gets sent to $\phi$ under~\eqref{eq:IdentifySpinors}.
	Note that the Dirac-current of $\Psi_{|M}$ is given by $V \coloneqq \frac{\del}{\del v}$ by \cref{Cor:RestrictSpinorBundlesAll}.
	This means that $\psi$ is lightlike with Dirac current $-u^2\frac{\del}{\del s}$ (\cref{Lem:RestrictSpinors}).
	By \cref{Cor:SimpleIDSSpinors}, there is a unique parallel spinor $\Psi$ on the spacetime with Dirac current $V$, whose restriction to $M$ gets sent to $\psi$ under~\eqref{eq:RestrictSpinorBundles} and thus coincides with $\Psi_{|M}$ considered before.
\end{proof}

We conclude the section with a corollary on a spinorial curvature condition.
Note that if the considered initial data set $(\gamma, k)$ is contained in a spacetime $(\overline{M},\overline{g})$, then the curvatures $R^{\overline{g}}(X,Y,Z,W)$ for $X,Y,Z,W \in TM$ are determined by $(\gamma, k)$ in terms of the Gauß equation.
We use the suggestive notation $\overline{R}$ for the corresponding expression.
Moreover, $j(X) = \ric^{\overline{g}}(e_0,X) = -\ric^{\overline{g}}(\nu,X)$ for all $X \in TM$ and thus the condition $j_{|\nu^\perp} = 0$ is equivalent to~\eqref{eq:TracedCurv}.
Hence, on a pointwise level, the strength of the spinorial condition in \cref{Cor:SpinCurvVan} ranges between the two curvature conditions considered in \cref{subsec:CurvVan}.

\begin{corollary} \label{Cor:SpinCurvVan}
	Assume \cref{Setup:SimpleIDS} for a closed manifold $Q$ and with a family of Ricci-flat (or scalar-non-negative) Riemannian metrics $g_s$, $s \in I$.
	Assume furthermore that for some $s_0 \in I$, there is a parallel spinor $\phi \in \Gamma(\Sigma Q)$ for $(Q, g_{s_0})$.
	Then the following two curvature conditions are equivalent:
	\begin{itemize}
		\item $j(X) = 0$ for all $X \in TM$ with $X \perp \nu$.
		\item For all $p \in M$, there is a spinor $\psi \in \Sigma_p M$ such that
		\begin{align*}
		\sum_{i,j = 1}^{n-1} \overline{R}(\nu,X,e_i,e_j) e_i \cdot e_j \cdot \psi = 0
		\end{align*}
		for all $X \in TM$, where $e_1, \ldots, e_n$ is an orthonormal basis of $\nu^{\perp} \cap T_pM$.
	\end{itemize}
\end{corollary}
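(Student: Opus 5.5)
We prove the two implications separately. For the first, assume $j(X)=0$ for all $X\perp\nu$. By \cref{Prop:CharJEq} this is the same as saying that $g_s$, $s\in I$, satisfies the j-equation. \Cref{Thm:ExParSpinors} then applies and gives a $\overline{\nabla}$-parallel hypersurface spinor $\psi$ with $-\nu\cdot\psi=e_0\cdot\psi$; it is nowhere vanishing because it is parallel and nonzero at $s_0$. Parallelism gives $\overline{R}(\nu,X)\psi=0$ for all $X$. We expand $\overline{R}(\nu,X)\psi$ exactly as in the proof of \cref{Prop:DerJEq}, in the generalized orthonormal frame $e_0,e_1,\dots,e_{n-1},\nu$ of a Killing development:
\begin{align*}
\overline{R}(\nu,X)\psi=\frac14\sum_{a,b}\epsilon_a\epsilon_b\,R^{\overline{g}}(\nu,X,e_a,e_b)\,e_a\cdot e_b\cdot\psi .
\end{align*}
All terms containing $e_0$ or $\nu$ are handled by two facts: $u(e_0+\nu)$ is parallel, so $R^{\overline{g}}(\nu,X)e_0=-R^{\overline{g}}(\nu,X)\nu$, and $(e_0+\nu)\cdot\psi=0$. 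I expect these terms either to cancel or to combine into a multiple of $(e_0+\nu)\cdot(\dots)\cdot\psi$. That would leave $\frac14\sum_{i,j}\overline{R}(\nu,X,e_i,e_j)e_i\cdot e_j\cdot\psi=0$ pointwise, which is the second condition with the spinor $\psi(p)$.

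For the converse, take $p$ and a spinor $\psi\in\Sigma_pM$ as in the second condition. We may assume $\psi\neq0$; otherwise the condition is empty. Set $X=e_k$, Clifford multiply by $e_k$ and sum over $k=1,\dots,n-1$. The triple products $e_k\cdot e_i\cdot e_j$ are then reduced as in the proof of \cref{Prop:DerJEq}. The Codazzi equation gives $\overline{R}(\nu,e_k,e_i,e_j)=\overline{R}(e_i,e_j,\nu,e_k)$, which is expressed through $\nabla k$ restricted to the leaf. The totally antisymmetric part drops out by the first Bianchi identity, and the terms with a repeated index contract to a Ricci-type term. This should yield
\begin{align*}
0=\sum_{k}e_k\cdot\sum_{i,j}\overline{R}(\nu,e_k,e_i,e_j)e_i\cdot e_j\cdot\psi = c\sum_{j=1}^{n-1}\tr^{g_s}\bigl((\nabla^\gamma k)(e_j,\argu)-\nabla^\gamma_{e_j}k\bigr)e_j\cdot\psi
\end{align*}
with a nonzero constant $c$ (probably $c=\pm2$). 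By \cref{Prop:CharJEq} the bracket equals $j(e_j)$. Hence $j^\top\cdot\psi=0$, where $j^\top$ is the part of $j^\sharp$ tangent to the leaf. Multiplying again by $j^\top$ gives $-|j^\top|^2\psi=0$, so $j^\top=0$ at $p$. Since $p$ was arbitrary, the first condition holds.

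The main obstacle is the contraction in the converse. The computation in \cref{Prop:DerJEq} used the full Lorentzian curvature acting on $e_0$ and $\nu$ together with the constraint $-\nu\cdot\psi=e_0\cdot\psi$, and here $\psi$ is arbitrary. I would therefore redo the contraction purely with the Riemannian-type tensor $\overline{R}(\nu,\cdot,\cdot,\cdot)$ on $\nu^\perp$. I would also double check that the Bianchi identity $\overline{R}(\nu,e_k,e_i,e_j)+\text{cyclic}(k,i,j)=0$ holds for the Gauß–Codazzi expressions. It should, because these are components of an honest curvature tensor of the Killing development. A secondary point is whether the parallel spinor hypothesis is needed at all in the converse. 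It seems not, which is consistent with the statement.
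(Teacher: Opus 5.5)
Your proposal follows the paper's proof in both directions. For the forward implication the paper likewise uses \cref{Prop:CharJEq} to get the j-equation, applies \cref{Thm:ExParSpinors}, and reduces $\overline{R}(\nu,X)\psi=0$ to the stated equation via the first half of the computation in \cref{Prop:DerJEq}. Your expectation there is right: the $e_0$- and $\nu$-terms cancel by exactly the two facts you name. For the converse the paper, like you, traces with Clifford multiplication and then uses the second half of \cref{Prop:DerJEq} together with \cref{Prop:CharJEq}.

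The one step you omit concerns where the parallel spinor lives. It is a section of $\overline{\Sigma}M$, and for odd $n=\dim M$ this bundle is $\Sigma^+M\oplus\Sigma^-M$, not $\Sigma M$. The paper handles this case as follows:
\begin{itemize}
\item The curvature endomorphism $\sum_{i,j}\overline{R}(\nu,X,e_i,e_j)\,e_i\cdot e_j\cdot$ is built from $TM$-Clifford multiplication, so it preserves the two summands.
\item The relation $-\nu\cdot\psi=e_0\cdot\psi$, where $e_0\cdot$ swaps the summands and $\nu\cdot$ preserves them, forces both components of $\psi\neq0$ to be non-zero.
\end{itemize}
Hence either component serves as the required spinor in $\Sigma_pM$.

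Also, the second condition has to be read with $\psi\neq 0$, since otherwise it is vacuous and the converse fails. So this is part of the hypothesis, not something you ``may assume''.
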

\begin{proof}
	The second condition actually implies the first one pointwise.
	This follows by taking Clifford multiplication with $X$ and afterwards tracing over $X$: 
	\begin{align*}
	\sum_{i,j,k = 1}^{n-1} \overline{R}(\nu,e_k,e_i,e_j) e_k \cdot e_i \cdot e_j \cdot \psi = 0.
	\end{align*}
	The remaining calculations are performed in second half of the proof of \cref{Prop:DerJEq} and in \cref{Prop:CharJEq}.
	
	For the converse direction, we note that by \cref{Prop:CharJEq} the family $g_s$, $s \in I$, satisfies the j-equation.
	Hence we can apply \cref{Thm:ExParSpinors} to obtain a non-trivial $\overline{\nabla}$-parallel hypersurface spinor $\Psi$ on $(\gamma,k)$ with $-\nu  \cdot \Psi = e_0 \cdot \Psi$.
	The spinor $\Psi$ has to lie in the kernel of the curvature endomorphism everywhere and the first part of the calculations in \cref{Prop:DerJEq} shows this implies the stated equation for $\Psi$.
	To obtain a spinor on $(M,\gamma)$, we distinguish cases. 
	If $n = \dim(M)$ is even, then $\overline{\Sigma} M = \Sigma M$ and we can take $\psi = \Psi$.
	In the case where $n$ is odd, we have $\overline{\Sigma} M = \Sigma^+ M \oplus \Sigma^- M$, but the condition $-\nu \cdot \Psi = e_0 \cdot \Psi$ shows that $\Psi$ has a non-trivial component $\psi$ in either of the spinor bundles of $M$.
	In any case, $\psi$ will be subject to the curvature equation as well.
\end{proof}

\section{Rigidity in the spatially compact case with energy conditions}\label{sec.spatially_compact}
We recall that the definitions of various standard energy conditions (\cf \eg \cite{Curiel:2017}) for a Lorentzian manifold $(\overline{M},\overline{g})$, where the Einstein curvature is $\Ein^{\overline{g}} = \ric^{\overline{g}} - \frac{1}{2}\scal^{\overline{g}} \overline{g}$.

\begin{definition} \label{Def:EnergyCond}
	$(\overline{M},\overline{g})$ is said to satisfy
	\begin{itemize}
		\item the \emph{dominant energy condition} if $\Ein^{\overline{g}}(W,\tilde{W}) \geq 0$ for all causal vectors $W$, $\tilde{W}$ in the same half of the light cone,
		\item the \emph{weak energy condition} if $\Ein^{\overline{g}}(W,W) \geq 0$ for all causal vectors $W$,
		\item the \emph{strong energy condition} if $\ric^{\overline{g}}(W,W) \geq 0$ for all causal vectors $W$, and
		\item the \emph{null energy condition} if $\Ein^{\overline{g}}(W,W) = \ric^{\overline{g}}(W,W) \geq 0$ for all lightlike vectors $W$.
	\end{itemize}
\end{definition}

\begin{lemma}
	Let $(\overline{M},\overline{g})$ be a Lorentzian manifold with lightlike parallel vector field $V$ and null Ricci curvature $\ric^{\overline{g}} = \rc V^\flat \otimes V^\flat$.
	Then all the energy conditions from \cref{Def:EnergyCond} are equivalent to $\rc \geq 0$.
\end{lemma}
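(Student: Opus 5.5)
The plan is to compute the Einstein tensor explicitly and reduce each energy condition to a sign condition on $\rc$. Since $\ric^{\overline{g}} = \rc V^\flat \otimes V^\flat$, the scalar curvature is $\scal^{\overline{g}} = \rc\, \overline{g}(V,V) = 0$, because $V$ is lightlike. Hence
\begin{align*}
\Ein^{\overline{g}} = \ric^{\overline{g}} = \rc\, V^\flat \otimes V^\flat .
\end{align*}
In particular $\Ein^{\overline{g}}(W,\tilde{W}) = \rc\, \overline{g}(V,W)\,\overline{g}(V,\tilde{W})$ for all vectors $W,\tilde{W}$, and all four conditions in \cref{Def:EnergyCond} can be read off from this single formula.

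First I show that $\rc \geq 0$ implies the dominant energy condition, which is the strongest of the four. Take causal $W,\tilde{W}$ in the same half of the light cone. The vector $V$ is lightlike and lies in one of the two halves. By the reverse Cauchy--Schwarz type fact, two causal vectors in the same half have $\overline{g}$-product $\leq 0$, and two causal vectors in opposite halves have product $\geq 0$. So $\overline{g}(V,W)$ and $\overline{g}(V,\tilde{W})$ have the same sign, their product is $\geq 0$, and therefore $\Ein^{\overline{g}}(W,\tilde{W}) \geq 0$.

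Next, each of the weak, strong and null energy conditions involves only a quadratic expression of the form $\rc\, \overline{g}(V,W)^2$, since $\ric^{\overline{g}} = \Ein^{\overline{g}}$. This is clearly $\geq 0$ when $\rc \geq 0$. Moreover, the dominant energy condition implies the weak one (take $\tilde{W} = W$), and the weak and strong conditions each imply the null one.

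It therefore remains to show that the null energy condition implies $\rc \geq 0$. At any point $p$, choose a lightlike vector $W$ with $\overline{g}(V_p,W) \neq 0$; for example $W$ is the other null direction in a plane through $V_p$ and a timelike vector. Then $0 \leq \ric^{\overline{g}}(W,W) = \rc(p)\,\overline{g}(V_p,W)^2$ forces $\rc(p) \geq 0$.

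I expect no real obstacle here. The only point needing a little care is the sign fact about causal vectors in the same versus opposite halves of the light cone. It is needed in the dominant energy step, because $V$ may be future- or past-pointing relative to $W$.
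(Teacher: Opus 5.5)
Your proof is correct and takes essentially the same route as the paper. Both arguments use $\scal^{\overline{g}} = 0$, so that $\Ein^{\overline{g}} = \ric^{\overline{g}} = \rc\, V^\flat \otimes V^\flat$; the same-half sign argument on $\overline{g}(V,W)$ and $\overline{g}(V,\tilde{W})$ then gives the energy conditions from $\rc \geq 0$, and testing on a lightlike $W$ with $\overline{g}(V,W) \neq 0$ gives the converse.
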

\begin{proof}
	First note that null Ricci curvature implies that the scalar curvature is zero.
	Thus $\Ein^{\overline{g}} = \ric^{\overline{g}}$ and the dominant energy condition implies all the others.
	Two causal vectors $W$ and $\tilde{W}$ are in the same half of the light cone if and only if $\overline{g}(V,W)$ and $\overline{g}(V,\tilde{W})$ have the same sign.
	In this case, $\Ein^{\overline{g}}(W,\tilde{W})$ has the same sign as $\rc$.
	So $\rc \geq 0$ implies all the mentioned energy conditions.
	
	For the converse direction, let $W$ be a lightlike vector that is linearly independent of $V$.
	Then $\overline{g}(V,W) \neq 0$.
	Thus $\Ein^{\overline{g}}(W,W) = \ric^{\overline{g}}(W,W) = \rc \overline{g}(V,W)^2$ has the same sign as $\rc$.
	Hence $\rc \geq 0$ holds if any of the above energy conditions is satisfied.
\end{proof}

If $\rc \geq 0$, then all the terms on the right hand side of~\eqref{eq:ScaleODE} are non-negative.
This leads to the following statement in the case where the spatial topology of $\overline{M}$ is closed.

\begin{theorem}\label{thm:mapping_torus}
	Let $(\overline{M},\overline{g})$ be a connected Lorentzian manifold with lightlike parallel vector field $V$ and null Ricci curvature $\ric^{\overline{g}} = \rc V^\flat \otimes V^\flat$.
	Suppose that there is a closed spacelike hypersurface $M \subset \overline{M}$ that intersects each integral curve of $V$ precisely once and that $M \cap \mathcal{L}$ is compact for some leaf $\mathcal{L}$ of the foliation defined by $V^\flat$.
	Assume that $\rc \geq 0$.
	Then there is a closed, connected Riemannian manifold $(Q,g)$, an isometry $\phi$ of $(Q,g)$, a function $f \in C^\infty(Q)$ and a positive number $\ell \in \R$ such that $(\overline{M},\overline{g})$ isometrically embeds as open subset into the mapping torus
	\begin{align*}
		(\R \times [0,\ell] \times Q)/(v,0,x) \sim (v+f(x),\ell,\phi(x))
	\end{align*}
	equipped with the metric
	\begin{align*}
		\upd v \otimes \upd s + \upd s \otimes \upd v + \upd s^2 + g
	\end{align*}
	such that the vector field $V$ gets mapped to $\frac{\del}{\del v}$.
	In particular, $(\overline{M}, \overline{g})$ is a vacuum spacetime.
\end{theorem}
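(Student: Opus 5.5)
Our plan is to pass to a covering that is a simple pp-wave modeled on $Q$, and then exploit the ODE~\eqref{eq:ScaleODE} on all of $\R$. Because $M$ is closed and every integral curve of $V$ meets it exactly once, the flow of $V$ identifies $\overline{M}$ with $\R\times M$. Hence $V$ is complete, and $\overline{M}$ is the Killing development of the induced initial data set. The closed $1$-form $V^\flat$ restricts on $M$ to a nowhere vanishing closed $1$-form, since $V$ is never tangent to the spacelike $M$. On the cyclic (or universal) cover $\widehat{M}$ on which this form becomes exact, $V^\flat=\upd s$. The leaf $Q=M\cap\mathcal{L}$ is compact and the level sets of $s$ on $\widehat{M}$ are its lifts. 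Running the argument of \cref{lem:global_form}, we expect the lifted spacetime to be a simple pp-wave $\R\times I\times Q$ with metric \eqref{eq:AGK_metrics_global}, where $I=\R$ because the flow of $Z$ exists for all times (this uses cocompactness). The deck transformation then acts by an isometry of the form $(v,s,x)\mapsto(v+f(x),s+\ell,\phi(x))$ for some $\ell>0$; by \cref{Prop:InjModSp} it relates $g_{s+\ell}$ and $\phi^*g_s$. If $V^\flat$ is already exact on $M$, then $M$ itself fibers over a compact interval, which is impossible for a closed manifold with a nowhere vanishing gradient. So the cover is genuinely cyclic with $\ell>0$.

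Write $g_s=\lambda_s^2\hat g_s$ with $\vol(Q,\hat g_s)=1$. By \cref{prop:ScaleODE}, $\lambda$ is a positive solution on all of $\R$ of $\ddot\lambda=-\frac{1}{n-1}(\Rho+\frac14\Sigma)\lambda$. Here $\Rho\ge 0$ since $\rc\ge0$, and $\Sigma\ge0$ always. \Cref{lem:ODEComparison} with lower bound $c=0$ and $I=\R$ then forces $\lambda$ to be constant and $\Rho+\frac14\Sigma\equiv0$. Consequently $\Rho\equiv0$, which together with $\rc\ge0$ gives $\rc\equiv0$, so the spacetime is vacuum. In addition $\Sigma\equiv0$, so the TT-part of $\dot g_s$ vanishes. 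By \cref{prop_splitting_constraint_solutions} we then have $\dot g_s=c_sg_s+\nabla^2f_s$, and constancy of the volume gives $c_s=0$. Thus $\dot g_s=\Lie_{\grad f_s/2}g_s$, a pure Lie derivative by gradient fields.

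Next we apply \cref{Lem:ChangeHypersurface} with a suitable $f$ and the flow $\phi_s$ of $-\grad f_s$. Following the computation in \cref{Prop:WellDefModSp}, this pulls $g_s$ back to a constant metric $g$ and leaves $u^{-2}$ as a function $w_s$ with $\Delta^{g}w_s=0$ by \eqref{eq:CalcRc}, because $\rc=0$ and $\dot g=0$. Hence $w_s$ is constant on $Q$. Adjusting $f$ by a function of $s$ alone, as in \cref{Prop:WellDefModSp}, makes $w\equiv1$. The deck transformation remains of the same type after this change of coordinates, since the new coordinates are built canonically. It therefore becomes an isometry of $\upd v\otimes\upd s+\upd s\otimes\upd v+\upd s^2+g$ which preserves $\partial_v$ and shifts $s$ by $\ell$. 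Such a map has the form $(v,s,x)\mapsto(v+\tilde f(x),s+\ell,\phi(x))$, and the cross terms force $\upd\tilde f=0$ on $Q$. So, after possibly renaming, $\phi$ is an isometry of $g$. The quotient of this cover is the stated mapping torus, and $\overline{M}$ embeds in it as an open subset.

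The main obstacle is the global step: proving that the cover really has $I=\R$ and that the gauge change of \cref{Lem:ChangeHypersurface} can be made equivariant under the deck transformation. We would handle both by working on the cover and choosing $f$ through the canonical normalization fixed by $\dot g_s=\nabla^2 f_s$. That normalization is natural, so it commutes with the deck isometry up to constants, and those constants are absorbed into $\tilde f$.
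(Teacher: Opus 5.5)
Your analytic core is the paper's. On the unwrapped cover with $I=\R$, the last part of \cref{lem:ODEComparison} forces $\lambda$ to be constant and $\Rho=\Sigma=0$. Hence $\rc\equiv0$ and $\dot g_s=\nabla^2F_s$, after which one gauges to a constant metric and normalizes $u\equiv1$ via \eqref{eq:CalcRc}. Your direct gauge is a fine substitute for the paper's use of \cite[App.~D]{ammann.kroencke.mueller:21} followed by an isometry of the type in \cref{Lem:ChangeHypersurface}: flow by $-\frac12\grad F_s$ in \cref{Lem:ChangeHypersurface}, then add a function of $s$ to remove the harmonic, hence leafwise constant, remainder. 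Your observation that the cross terms force the twist to be constant is also correct.

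The problems are in the topological part. First, the claim that the flow of $V$ identifies $\overline{M}$ with $\R\times M$, so that $V$ is complete, is false. The hypothesis only identifies $\overline{M}$ with the open flow domain in $\R\times M$; restrict any spatially compact example to $\{|v|<1\}$ to see this. That is exactly why the theorem asserts only an open embedding. You have to extend the metric to the full Killing development \eqref{eq:KilligDev}. There $\partial_v$ is still parallel, and since the coefficients are $v$-independent, $\ric=\rc\,\upd s^2$ with $\rc\ge0$ persists. (Also, $V^\flat|_{TM}$ is nowhere zero because $V$ is never \emph{normal} to $M$, not because it is never tangent.)

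Second, and more substantially, the covering step is asserted rather than proved, and this is exactly where compactness of $M\cap\mathcal{L}$ must be used. You only show that the period group of $V^\flat|_M$ is nontrivial and then call the cover cyclic. A priori this group could be dense, as for an irrational linear foliation of a torus. The alternative ``(or universal)'' is wrong: on the universal cover the level sets of $s$ are coverings of $Q$ that may be non-compact. Then \cref{prop:ScaleODE}, \cref{prop_splitting_constraint_solutions} and the Laplace argument are unavailable.

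The paper supplies the missing argument:
\begin{enumerate}
\item $\pi=\mathrm{Fl}^Z|_{\R\times Q}$ is a local diffeomorphism whose image is open and a union of leaves, hence closed, so $\pi$ is onto.
\item The set $\{s\mid\pi(\{s\}\times Q)=Q\}$ is a subgroup of $\R$. It is discrete because $\mathrm{Fl}^Z$ is injective on $(-\epsilon,\epsilon)\times Q$ for compact $Q$, and nontrivial because $M$ is compact.
\item So this subgroup is $\ell\Z$ and $M\cong(\R\times Q)/\Z$.
\end{enumerate}
In your language the same argument reads as follows. On the minimal cover, the complete lifted flow of $Z$ gives $\widehat M\cong\R\times s^{-1}(0)$, so level sets are connected. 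They map injectively onto leaves because deck transformations shift $s$ by their period, so they are copies of $Q$. Proper discontinuity plus compactness of $Q$ then makes the period group discrete.

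Conversely, the obstacle you single out, equivariance of the gauge change, is not one. The gauge map $\Psi$ preserves $\partial_v$ and $s$, so $\Psi^{-1}\circ T\circ\Psi$ is automatically an isometry of the new metric that preserves $\partial_v$ and shifts $s$ by $\ell$. The paper simply pulls back the $\Z$-action along $\Psi$. No naturality of $F_s$ is needed, and your claim that it commutes with the deck map up to constants would itself require proof.
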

\begin{proof}
	First of all recall that the distribution defined by $V^\flat$ is integrable and that we call its connected integral manifolds the leaves of the associated foliation.
	For a leaf $\mathcal{L}$ of this foliation, the restriction $M \cap \mathcal{L}$ is a leaf of the foliation associated to the restriction of $V^\flat$ to $M$.
	Suppose that $M \cap \mathcal{L}$ is compact and set $Q \coloneqq M \cap \mathcal{L}$, which is a closed connected manifold.
	Decomposing $V_{|M} = ue_0 -U$ for a positive function $u \in C^\infty(M)$, a vector field $U \in \Gamma(TM)$ and $e_0$ a unit normal on $M$, we consider the flow $\mathrm{Fl}^Z \colon \R \times M \to M$ by $Z = -\frac{1}{u^2} U$.
	Note that it is globally defined since $M$ is compact and that it diffeomorphically maps leaves to leaves as shown in the proof of \cref{lem:global_form}.
	
	We shall now show that the restriction of $\mathrm{Fl}^Z$ to $\pi \colon \R \times Q \to M$ is a covering map.
	At first it is clear that this map is a local diffeomorphism since $Z$ is transversal to all leaves.
	In particular, its image is open.
	The image is also closed:
	As the flow maps leaves to leaves, a leaf is either completely contained in the image or disjoint from it.
	Hence the boundary of the image is a union of leaves.
	Transversality of $Z$ shows that every boundary component can be pushed into the image by either flowing in the direction of $Z$ or $-Z$, so it is part of the image as well.
	Since $M$ is connected and the image is open and closed, $\pi$ is surjective.
	
	Now notice that flowing along $Z$ provides an $\R$-action on the set of leaves of the foliation induced by $V^\flat$ on $M$.
	Hence the set $\{s \in \R \mid \pi(\{s\} \times Q) = Q\}$ is a subgroup of $\R$.
	Since $Q$ is compact, there is some $\epsilon > 0$ such that $\mathrm{Fl}^{Z}$ is injective on $(-\epsilon,\epsilon) \times Q$ and thus the subgroup is discrete.
	Because $M$ is compact and thus the infinite cover $\mathrm{Fl}^Z((k,k+2) \times Q)$, $k \in \Z$, has a finite subcover, we can also conclude that the subgroup is not the trivial group.
	So there is some $\ell > 0$ such that $\{s \in \R \mid \pi(\{s\} \times Q) = Q\} = \ell \Z$.
	Then $\mathrm{Fl}^Z(\ell,\bullette) \colon Q \to Q$ is a diffeomorphism and gives rise to the following $\Z$-action on $\R \times Q$:
	\begin{align*}
		\Z \times (\R \times Q) &\longrightarrow \R \times Q \\
			(k, (s,x)) &\longmapsto (s+k\ell,(\mathrm{Fl}^Z(\ell,\bullette))^k(x)).
	\end{align*}
	The map $\pi$ factors over the quotient $(\R \times Q)/\Z$ and the induced map $(\R \times Q)/\Z \to M$ is injective by construction.
	Since it is also surjective and a local diffeomorphism, it is a diffeomorphism.
	As the $\Z$-action is properly discontinuous, the projection $\R \times Q \to (\R \times Q)/\Z$ is a covering map and thus $\pi$ is a covering map as well.
	
	Since $M$ has the property that each flow line of $V$ intersects it precisely once, we can extend the inverse of the diffeomorphism $(\R \times Q)/\Z \to M$ to an embedding of codimension zero $\overline{M} \hookrightarrow \R \times (\R \times Q)/\Z$ mapping $M$ to $\{0\} \times (\R \times Q)/\Z$ and $V$ to $\frac{\del}{\del v}$, where $v$ denotes the first coordinate.
	Notice that the metric induced by $\overline{g}$ on the image of the embedding uniquely extends to $\R \times (\R \times Q)/\Z$ such that $\frac{\del}{\del v}$ is parallel.
	Moreover, notice that $\R \times (\R \times Q)/\Z$ may equally be viewed as a mapping torus $(\R \times \R \times Q)/\Z$, where $\Z$ acts on the last two factors as above and trivially on the first one.
	
	We now analyze the induced $\Z$-periodic metric on $\R \times \R \times Q$.
	It is clear that it is given by
	\begin{align} \label{eq:OrigMet}
		\upd v \otimes \upd s + \upd s \otimes \upd v + (\pi^*u)^{-2}\upd s^2 + g_s
	\end{align}
	and has null Ricci curvature $(\pi^*\rc) \upd s^2$, where $s$ denotes the second coordinate and $g_s$, $s \in \R$, is a family of Ricci-flat metrics on $Q$ with $(\mathrm{Fl}^Z(\ell,\bullette))^* g_{s+\ell} = g_s$ for all $s \in \R$.
	According to \cref{prop:ScaleODE}, the $\ell$-periodic positive function $\lambda_s = \sqrt[n-1]{\vol^{g_s}(Q)}$, $s \in \R$, satisfies the ODE~\eqref{eq:ScaleODE}.
	Since $\Sigma_s \geq 0$ by definition and $\Rho_s \geq 0$ due to $\rho \geq 0$, the last part of \cref{lem:ODEComparison} shows that $\Rho_s = \Sigma_s = 0$ for all $s \in \R$ and $\lambda$ is constant.
	But this implies that also $\rho \equiv 0$ and that both the trace- and the TT-part of $\dot{g}_s$ vanish for all $s \in \R$.
	According to \cite[App.~D]{ammann.kroencke.mueller:21}, there is a smooth family of diffeomorphisms $\phi_s$, $s \in \R$, such that $\div^{\tilde{g}_s}(\dot{\tilde{g}}_s) = 0$ for all $s \in \R$, where $\tilde{g}_s = \phi_s^* g_s$.
	Because also the trace- and the TT-part of $\dot{\tilde{g}}_s$ vanish for all $s \in \R$, the family is constant, meaning that there is a Ricci-flat metric $g$ on $Q$ such that $\tilde{g}_s = g$ for all $s \in \R$.
	Both the original metric~\eqref{eq:OrigMet} and
	\begin{align} \label{eq:EasyMet}
		\upd v \otimes \upd s + \upd s \otimes \upd v + \upd s^2 + g
	\end{align}
	are Ricci-flat; the latter due to~\eqref{eq:CalcRc}.
	Now \cref{Prop:InjModSp} applies and shows that there is a diffeomorphism $\Psi \colon \R \times \R \times Q \to \R \times \R \times Q$ as in \cref{Lem:ChangeHypersurface} which pulls back~\eqref{eq:OrigMet} to~\eqref{eq:EasyMet}.
	Pulling back the $\Z$-action along $\Psi$, the obtained isometric $\Z$-action on $\R \times \R \times Q$ with metric~\eqref{eq:EasyMet} has the property that operating by $1 \in \Z$ sends $(v,0,x) \mapsto (v+f(x),\ell,\phi(x))$ for some function $f \in C^{\infty}(Q)$ and a diffeomorphism $\phi$, which necessarily has to be an isometry of $(Q,g)$.
\end{proof}

\begin{remark}
	In \cref{thm:mapping_torus}, we fixed the lightlike vector field $V$ onto which $\frac{\del}{\del v} = \grad^{\overline{g}}(s)$ is supposed to map.
	As a consequence, the function $s$ is determined up to addition of constants and the quantity $\ell$ is fixed.
	If we allow for a rescaling of $V$, we can pull-back with the Lorentz boost in the $\R \times \R$-factor
	\begin{gather*}
		(v,s,x) \longmapsto \left(\ell^{-1} v - \frac{\ell-\ell^{-1}}{2}s, \ell s, x \right),
	\end{gather*}
	which preserves the metric~\eqref{eq:EasyMet}.
	This way, we obtain an isometric embedding of $(\overline{M},\overline{g})$ as in \cref{thm:mapping_torus} with $\ell = 1$, where $\frac{\del}{\del v}$ is only required to map to some positive constant multiple of $V$.
	Notice that this procedure generally changes $f$, while $\phi$ is left unchanged.
\end{remark}

\begin{proof}[Proof of \cref{thm:spatially_compact}]
	By definition, spatially compact pp-waves have a closed spacelike hypersurface $M$ intersecting each integral curve of the canonical lightlike vector field $\frac{\del}{\del v}$ precisely once.
	Moreover, since we assume that they are quotients of a simple pp-wave modeled on a closed manifold, all the intersections of $M$ with the leaves are compact.
	Because the dominant energy condition is assumed to hold, $\rc \geq 0$.
	Hence \cref{thm:mapping_torus} can be applied and directly gives the result.
	Notice in addition that in the setting of \cref{thm:spatially_compact} the vector field $\frac{\del}{\del v}$ is complete and thus the isometric embedding is actually an isometric diffeomorphism.
\end{proof}

\bibliographystyle{abbrv}
\bibliography{lps}

\end{document}